\documentclass[preprint,12pt]{elsarticle}

\usepackage{amssymb}
\usepackage{amsmath}
\usepackage{amsthm}

\usepackage{url}
\usepackage{doi} 

\usepackage[capitalize]{cleveref}

\theoremstyle{theorem}
\newtheorem{theorem}{Theorem}[section]
\newtheorem{lemma}[theorem]{Lemma}
\newtheorem{proposition}[theorem]{Proposition}
\newtheorem{corollary}[theorem]{Corollary} 

\theoremstyle{definition}
\newtheorem{definition}[theorem]{Definition}
\newtheorem{remark}[theorem]{Remark} 
\newtheorem{example}[theorem]{Example}

\journal{JPAA}

\usepackage{etoolbox} 
\usepackage{xcolor} 
\usepackage{xspace}
\usepackage{amsmath}
\usepackage{bbold}
\usepackage[all]{xy}
\usepackage{mathtools}
\usepackage[mathscr]{euscript}

 \def\ie{{\slshape i.e.}\xspace}
\def\cf{cf.~} 
\newcommand{\refItem}[2]{\cref{#1}(\ref{#1:#2})}


\def\ple#1{\ensuremath{{\langle #1 \rangle }}} 

\def\vuoto{}
\newcommand{\FUN}[4]{\ensuremath{{#2}#1{#3}\rightarrow{#4}}}
\newcommand{\fun}[3]{\relax\def\testa{#1}\relax\ifx\testa\vuoto
  \relax\FUN{}{}{{#2}}{{#3}}\else\relax\FUN{:}{{#1}}{{#2}}{{#3}}\fi}

\newcommand{\id}{\mathsf{id}}
\newcommand{\PW}[2][]{\ensuremath{\mathop{\mathscr{P}_{#1}{#2}}}}
\newcommand{\pw}[1]{\relax\def\testa{#1}\relax\ifx\testa\vuoto
  \relax\PW{}\else\relax\PW{\left(#1\right)}\fi}
\newcommand{\fpw}[1]{\relax\def\testa{#1}\relax\ifx\testa\vuoto
  \relax\PW[\omega]{}\else\relax\PW[\omega]{\left(#1\right)}\fi}

\newcommand{\eqc}[2][]{[#2]_{#1}} 


\newcommand{\R}{\mathbb{R}} 
\newcommand{\RPos}{\R_{\ge 0}} 
\newcommand{\Bool}{\mathbb{B}}
\newcommand{\PP}{\pw{}} 
 
\newcommand{\Rel}{\mathsf{Rel}} 
\newcommand{\BRel}{\mathsf{BRel}}

\def\tt{\ensuremath{\textsf{t\kern-.3ex t}}}
\def\ff{\ensuremath{\textsf{f\kern-.3ex f}}}
\let\Land\wedge

\let\ForalL\forall \def\Forall#1.{\ForalL_{#1}}
\let\ExistS\exists \def\Exists#1.{\ExistS_{#1}}

\DeclareFontFamily{OT1}{pzc}{}
\DeclareFontShape{OT1}{pzc}{m}{it}{<->s*[1.30]pzcmi7t}{}
\DeclareMathAlphabet{\mathpzc}{OT1}{pzc}{m}{it}
\def\ct#1{\ensuremath{\mathpzc{#1}}}
\def\Ct#1{\ensuremath{\mathbf{#1}}}

\def\op{^{\mbox{\normalfont\scriptsize op}}}

\newcommand{\bop}[1]{(#1\times#1)\op} 
\def\blank{\mathchoice{\mbox{--}}{\mbox{--}}
{\mbox{\scriptsize--}}{\mbox{\tiny--}}}


\newcommand{\CC}{\ct{C}\xspace}

\newcommand{\D}{\ct{D}\xspace}

\newcommand{\oneAr}[3]{\ensuremath{{#1}:{#2}\rightarrow{#3}}}
\newcommand{\twoAr}[3]{\ensuremath{{#1}:{#2}\Rightarrow{#3}}}
\def\tdot{\textbf{.}}
\def\lsta{\vrule depth4pt width0pt}
\def\lstb{\vrule height5pt width0pt}
\def\arnta[#1]{\ar[#1]|-*=0[@]{\lsta\tdot}}
\def\arntb[#1]{\ar[#1]|-*=0[@]{\lstb\tdot}}
\newcommand{\nt}[3]{\ensuremath{{#1}:{#2} \stackrel{\makebox{\kern-.3ex\tdot}}\rightarrow{#3}}}
\newcommand{\lnt}[3]{\ensuremath{{#1}:{#2} \stackrel{\makebox{\kern-.3ex\tdot}}\rightarrow_l{#3}}}
\newcommand{\Hom}[3]{\ensuremath{{#1}({#2},{#3})}}
\newcommand{\Id}{\mathsf{Id}}
\newcommand{\ID}{\mathrm{Id}}

\newcommand{\Set}{\ct{Set}\xspace}
\newcommand{\BSet}{\ct{BSet}\xspace}

\newcommand{\Pos}{\ct{Pos}\xspace}
\newcommand{\Top}{\ct{Top}\xspace}

\newcommand{\SMet}[1]{\ensuremath{{#1}\text{-}\ct{Met}_{\mathsf{s}}}\xspace} 
\newcommand{\Dtn}{\Ct{Dtn}\xspace}
\newcommand{\EED}{\Ct{EED}\xspace}
\newcommand{\RDtn}{\Ct{RD}\xspace}

\newcommand{\RDtnl}{\Ct{RD_l}\xspace} 
 
\newcommand{\QRDtn}{\Ct{QRD}\xspace} 
 
\newcommand{\QRDtnl}{\Ct{QRD_l}\xspace} 
\newcommand{\ERDtn}{\Ct{ERD}\xspace} 
 
\newcommand{\EQRDtn}{\Ct{EQRD}\xspace} 
 
\newcommand{\ERUCRDtn}{\ensuremath{\Ct{ERD}_{\RUC}\xspace}}
\newcommand{\One}{\mathbf{1}}

\newcommand{\order}{\leq} 
\newcommand{\PDoc}{P}

\newcommand{\QDoc}{Q} 
\newcommand{\RDoc}{R}

\newcommand{\SDoc}{S} 
\newcommand{\reidx}[1]{_{#1}}

\newcommand{\fn}[1]{\widehat{#1}}
\newcommand{\lift}[1]{\overline{#1}}

\newcommand{\relr}{\alpha}
\newcommand{\rels}{\beta}
\newcommand{\relt}{\gamma} 
\newcommand{\eqrelr}{\rho}
\newcommand{\eqrels}{\sigma}
\newcommand{\eqrelt}{\tau} 
\newcommand{\rid}{\mathsf{d}}
\newcommand{\rcomp}{\mathop{\mathbf{;}}}
\newcommand{\rconv}{^{\bot}} 
\newcommand{\rdconv}{^{\bot\bot}} 
\newcommand{\gr}[1]{\Gamma_{#1}}

\newcommand{\exteq}{\approx} 
\newcommand{\RUC}{\textsc{(ruc)}\xspace}

\newcommand{\QC}[1]{\ensuremath{\ct{Q}_{#1}}\xspace} 
\newcommand{\QR}[1]{({#1})^q}
\newcommand{\Des}[2]{\ensuremath{\ct{Des}_{#1,#2}}\xspace} 
\newcommand{\QAr}[1]{E^{#1}} 
\newcommand{\QMAr}[1]{M^{#1}} 
\newcommand{\QRFun}{\mathrm{U_q}}
\newcommand{\RQFun}{\mathrm{Q}} 
\newcommand{\QMnd}{\mathrm{T_q}}

\newcommand{\EC}[1]{\ensuremath{\ct{E}_{#1}}\xspace}
\newcommand{\ER}[1]{(#1)^e} 
\newcommand{\EAr}[1]{C^{#1}} 
\newcommand{\ERFun}{\mathrm{U_e}}
\newcommand{\REFun}{\mathrm{E}} 
\newcommand{\EMnd}{\mathrm{T_e}} 
\newcommand{\EQQFun}{\ERFun'}
\newcommand{\EQEFun}{\QRFun'}
\newcommand{\QEQFun}{\REFun'} 
\newcommand{\EQMndQ}{\EMnd'}
\newcommand{\EQMnd}{\mathrm{T_{eq}}} 
\newcommand{\EQRFun}{\mathrm{U_{eq}}}
\newcommand{\REQFun}{\mathrm{EQ}}
\newcommand{\EQR}[1]{(#1)^{eq}} 
\newcommand{\EEDRFun}{\mathrm{\mathsf{Rel}}}

\newcommand{\VRel}[1]{{#1}\text{-}\mathsf{Rel}}

\newcommand{\Qtl}{V}

\newcommand{\Car}[1]{|#1|}
\newcommand{\qord}{\preceq}
\newcommand{\qmul}{\cdot}
\newcommand{\qone}{1} 
\newcommand{\qsup}{\bigvee}
\newcommand{\qinf}{\bigwedge} 

\newcommand{\RR}{R}

\newcommand{\rord}{\preceq}
\newcommand{\rsum}{+}
\newcommand{\rzero}{0}
\newcommand{\rmul}{\cdot}
\newcommand{\rone}{1}
\newcommand{\Mat}[1]{{#1}\text{-}\mathsf{Mat}}



\newcommand{\DRel}[1]{\mathsf{Rel}^{#1}} 
\newcommand{\RelD}[1]{\mathsf{Doc}^{#1}} 

\def\RB#1{\mathchoice
  {\rotatebox[origin=c]{180}{$#1$}}
  {\rotatebox[origin=c]{180}{$#1$}}
  {\rotatebox[origin=c]{180}{$\scriptstyle#1$}}
  {\rotatebox[origin=c]{180}{$\scriptscriptstyle#1$}}}
\def\Ex{\RB{E}\kern-.3ex}
\def\Al{\RB{A}\kern-.6ex}

\newcommand{\Span}[1]{\mathsf{Spn}^{#1}}
\newcommand{\JSpan}[1]{\mathsf{JSpn}^{#1}}
\newcommand{\spn}[5]{\ensuremath{#1 \xleftarrow{#2} #3 \xrightarrow{#4} #5}} 

\newcommand{\Map}[1]{\ct{Map}(#1)}
\newcommand{\RMap}[1]{\mathsf{Map}^{#1}} 
\newcommand{\Ord}[1]{\ct{O}_{#1}} 
\newcommand{\OCI}{\Ct{OCI}\xspace} 
\newcommand{\ORFun}{\mathrm{Map}}  
\newcommand{\ROFun}{\mathrm{\ct{O}}} 

\newcommand{\VecDoc}{\mathsf{Vec}} 
 
\newcommand{\vecx}{{\bf x}}
\newcommand{\vecy}{{\bf y}} 
\newcommand{\vecz}{{\bf z}}
\newcommand{\veczero}{{\bf 0}}

\def\exl{_{\textrm{\scriptsize ex/wlex}}}

\newcommand{\kzd}[1]{\textbf{KZ#1}\xspace} 

\newcommand{\psAlg}[1]{{#1}\text{-}\ct{Alg}^{\mathsf{ps}}\xspace} 

\newcommand{\RTop}{\mathsf{TRel}}

\newcommand{\Bisim}[1]{\mathsf{bisim}^{#1}}
\newcommand{\CoAlg}[1]{\ensuremath{\ct{CoAlg}(#1)}\xspace}

\newif\ifsubmit
\submitfalse

\ifsubmit
\def\FDComm#1{} 
\def\FPComm#1{} 
\def\PRComm#1{} 
\else
\def\lrcom#1{\marginpar{\parbox{6em}
 {\raggedright\scriptsize{#1}}}}
\def\FDComm#1{\smash{\textcolor{orange}\textbullet}\lrcom
 {\textcolor{orange}{#1}}}
\def\FPComm#1{\smash{\textcolor{blue}\textbullet}\lrcom
 {\textcolor{blue}{#1}}}
\fi

\newcommand{\mnd}{\mathbb{T}}

\newcommand{\Proj}[1]{\ct{P}_{#1}}
\newcommand{\RProj}[1]{\mathsf{Proj}^{#1}} 

\newcommand{\xpr}[2][]{\pi_{#2}\ifblank{#1}{}{^{#1}}} 
\newcommand{\fpr}[1][]{\xpr[#1]{1}} 
\newcommand{\spr}[1][]{\xpr[#1]{2}} 
\newcommand{\tpr}[1][]{\xpr[#1]{3}} 
\newcommand{\terar}{!} 
\newcommand{\diagar}{\Delta} 
\newcommand{\TerDoc}{T} 
\newcommand{\TerAr}{\mathbf{!}}
\newcommand{\DiagAr}{\mathbf{\Delta}} 
\newcommand{\reltimes}{\dot{\times}} 
\newcommand{\MCRDtn}{\Ct{MCRD}\xspace}

  \crefname{fig}{Figure}{Figures}
  \crefname{thm}{Theorem}{Theorems}
  \crefname{lem}{Lemma}{Lemmas}
  \crefname{def}{Definition}{Definitions}
  \crefname{prop}{Proposition}{Propositions}
  \crefname{cor}{Corollary}{Corollaries}
  \crefname{ex}{Example}{Examples}
  \crefname{rem}{Remark}{Remarks}
  \crefname{asm}{Assumption}{Assumptions}

\begin{document}

\begin{frontmatter}


\title{The Relational Quotient Completion} 

\author[fd]{Francesco Dagnino}
\ead{francesco.dagnino@unige.it}

\author[fp]{Fabio Pasquali}
\ead{pasquali@dima.unige.it} 

\affiliation[fd]{organization={DIBRIS - University of Genoa},
            country={Italy}}
\affiliation[fp]{organization={DIMA - University of Genoa},
            country={Italy}}

\begin{abstract}
Taking a quotient roughly means changing the notion of equality on a given object, set or type. 
In a quantitative setting, equality naturally generalises to a distance, measuring how much elements are similar instead of just stating their equivalence. 
Hence, quotients can be understood quantitatively as a change of distance.
In this paper, we show how, combining Lawvere's doctrines and the calculus of relations, one can unify quantitative and usual quotients in a common picture. 
More in detail, we introduce relational doctrines as a functorial description of (the core of) the calculus of relations. 
Then, we define quotients and a universal construction adding them to any relational doctrine, generalising the quotient completion of existential elementary doctrine and also recovering many quantitative examples. 
This construction deals with an intensional notion of quotient and breaks extensional equality of morphisms. 
Then, we describe another construction forcing extensionality, showing how it abstracts several notions of separation in metric and topological structures. 
Combining these two constructions, we get  the extensional quotient completion, whose essential image is characterized through the notion of projective cover. 
As an application, we show that, under suitable conditions, relational doctrines of algebras arise as the extensional quotient completion of free algebras. 
Finally, we compare relational doctrines to other categorical structures where one can model the calculus of relations. 
\end{abstract}

\begin{keyword}
calculus of relations \sep hyperdoctrine \sep quotient \sep extensional equality \sep monad 



\end{keyword}

\end{frontmatter}


\section{Introduction}
\label{sect:intro}

Quotients are pervasive both in  mathematic and computer science, as they are crucial in carrying out many fundamental arguments. Quotients have been widely studied and several constructions have been refined to allow one to work with quotients even though they are not natively available in the setting in which one is reasoning (such as within a type theory, where usually quotients are not a primitive concept). 
The intuition behind these constructions is that
taking a quotient changes the notion of equality on an object to a given equivalence relation. 
Then, to work with (formal) quotients, one just endows  
each object (set, type, space, $\ldots$)  with an (abstract) equivalence relation and forces the object to ``believe'' that that equivalence relation is the equality.
This idea underlies the construction of setoids  in type theories \cite{BCP:2003,MR1485515}, which are the common solution to work with quotients  in that setting and 
underlies also the exact completion of a category with weak finite limits \cite{CarboniA:freecl,CarboniA:regec}, 
as well as the elementary quotient completion of an elementary doctrine \cite{MaiettiME:quofcm,MaiettiME:eleqc}. 

Following the approach pioneered by Lawvere \cite{Lawvere73}, 
we can regard distances as a quantitative counterpart of equivalence relations: 
they measure how much two elements are similar instead of just saying whether they are equivalent or not. 
Indeed, a distance on a set $X$ is just a function 
\fun{d}{X\times X}{[0,\infty]} taking values in the extended non-negative real numbers,  
which satisfies a form of reflexivity (every point is at distance $0$ from itself), 
symmetry (the distance from $x$ to $y$ is the same as the one from $y$ to $x$) and 
transitivity (which is given by the triangular inequality). 
In this way, a metric space can be viewed as a form of quantitative setoid and 
quotients can be understood quantitatively as a change of distance. 
In fact, this operation is often used when dealing with metric structures, see for instance the construction of monads associated with quantitative equational theories \cite{Adamek22,MardarePP16,MardarePP17}. 

A unified view of quotients covering both usual and quantitative settings is missing. The aim of this paper is to develop a notion of quotient, related concepts and constructions extending known results and  incorporating new quantitative examples.

Many mathematical tools have been adopted to study quotients. 
Among them, Lawvere's doctrines \cite{Lawvere69,Lawvere70} stand out as a simple and powerful framework capable to cope with a large variety of situations (see \cite{JacobsB:catltt, PittsCL, OostenJ:reaait} and references therein). Doctrines provide a functorial description of logical theories, 
abstracting the essential algebraic structure shared by both syntax and semantics of logics. 
In particular, Maietti and Rosolini \cite{MaiettiME:quofcm,MaiettiME:eleqc} identified doctrines modelling the conjunctive fragment of first order logic with equality as the minimal setting where to define equivalence relations and quotients. Then they defined a universal construction, named elementary quotient completion, that freely adds quotients to such doctrines, showing that it subsumes many others, such as setoids and the exact completion of a category with finite limits. 

In order to move this machinery to a quantitative setting, 
one may try to work with doctrines where the usual conjunction is replaced by its linear counterpart. 
In this way, equivalence relations becomes distances as transitivity becomes a triangular inequality. 
However smooth, this transition is less innocent than it appears. 
As shown in \cite{DagninoP22}, to properly deal with a quantitative notion of equality one needs a more sophisticated structure, which however fails to capture  important examples like the category of metric spaces and non-expansive maps. 
The main difficulty in working with Lawvere's doctrines is that doctrines, modelling usual predicate logic, take care of variables. This is problematic in a quantitative setting as the use of variables usually has an impact on the considered distances. 

For these reasons, in this paper we take a different approach: we work with doctrines abstracting the \emph{calculus of relations} \cite{Givant1,Peirce,Tarski41} which is a variable-free alternative to first order logic. Here one takes as primitive concept (binary) relations instead of (unary) predicates, together with some basic operations, such as relational identities, composition and the converse of a relation. 
Even though in general it is less expressive than first order logic,\footnote{The calculus of relations is equivalent to first order logic with three variables \cite{Givant06}.}
it is still quite expressive, for instance, one can axiomatise set theory in it \cite{tarski1988formalization}.
Moreover, being variable-free, it scales well to quantitative settings, as witnessed by the fruitful adoption of relational techniques to develop quantitative methods \cite{DalLagoG22,Gavazzo18,GavazzoF23}. 

Then, in this paper, we introduce \emph{relational doctrines}, 
as a functorial description of a core fragment of the calculus of relations. 
Relying on this structure, we define a notion of quotient capable to deal with also quantitative settings. We present a universal construction to add such quotients to any relational doctrine. The construction extends the one in \cite{MaiettiME:quofcm,MaiettiME:eleqc} and can also capture quantitative instances such as the category of metric spaces and non-expansive maps. 

Furthermore, related to quotients, we study the notion of extensional equality. 
Roughly, two functions or morphisms are extensionally equal if their outputs coincide on equal inputs. Even if quotients and extensionality are independent concepts, several known constructions that add quotients often force extensionality (see e.g., Bishop's sets, setoids over a type theory or the exact completion). Therefore the study of extensionality is essential to cover these well-known examples. We show that our quotient completion, changing the notion of equality on objects without affecting plain equality on arrows in the base category, may break this property. 
Thus, we define  another universal construction that forces extensionality and use it to obtain an extensional version of  our quotient completion. 
We show also how this logical principle captures many notions of separation in metric and topological structures.

These results are developed using the language of 2-categories \cite{Lack2010}. 
To this end, we organise relational doctrines in a suitable 2-category where morphisms  abstract the usual notion of relation lifting \cite{KurzV16}. 
The universality of our constructions is then expressed in terms of (lax) 2-adjunctions \cite{BettiP88}, thus describing their action not only on relational doctrines, but on their morphisms as well. 
We also prove that these constructions are 2-monadic \cite{BlackwellKP89}, showing that quotients and extensionality are algebraic concepts, that is, they can be described by (pseudo)algebra structures for certain 2-monads on a relational doctrine. 
Furthremore, we show that these algebra structures are essentially unique, proving that the associated 2-monads are (lax) idempotent \cite{Kock95,KellyL97}. 
This makes precise the fact that having quotients or being extensional is a property of a relational doctrine rather than a structure on it. 

Since many categorical concepts can be defined internally to any 2-category, we get them for free also for relational doctrines. 
For instance, following \cite{Street72}, we can consider monads on relational doctrines and the associated doctrines of algebras  where relations between two algebras are given by congruences, that is, relations closed under the operations of the algebras they relate. 
As an application of our construction, we  show that, under suitable hypotheses, these doctrines of algebras can be obtained as the extensional quotient completion of their restriction to free algebras, extending a similar result proved for the exact completion \cite{Vitale94}. 
To achieve this result, we rely on a characterization of relational doctrines obtained through the extensional quotient completion as those doctrines where every object can be presented as a quotient of a projective one, again generalizing  a similar result proved for the exact completion \cite{CarboniA:regec}. 

The paper is organised as follows. 
In \cref{sect:rel-doc} we introduce relational doctrines with their basic properties, presenting several examples. 
In \cref{sect:quotients} we define quotients and the intensional quotient completion, proving it is universal and 2-monadic, generating a lax idempotent 2-monad. 
In \cref{sect:ext-sep} we discuss extensionality, its connection with separation and the universal construction forcing it, proved again to be 2-monadic and to generate an idempotent 2-monad. 
In \cref{sect:eqc}, we combine these result to define the extensional quotient completion,  showing it is universal, 2-monadic and generating a lax idempotent 2-monad. 
\cref{sect:proj} provides the  characterization of doctrines obtained through the extensional quotient completion and apply it to doctrines of algebras. 
In \cref{sect:eed} we compare our approach with two important classes of examples: ordered categories with involution \cite{Lambek99}, which are a generalisation of both allegories and cartesian bicategories, and existential elementary doctrines \cite{MaiettiME:quofcm,MaiettiME:eleqc}, characterizing relational doctrines corresponding to them. 
Finally, \cref{sect:conclu} summarises our contributions and discusses directions for future work. 

\subsubsection*{Source of the material}
This paper is an extended version of \cite{DagninoP23}, which was presented at FSCD 2023. 
With respect to it,  here we include proofs of all our results, we discuss 2-monadicity of the presented constructions, we provide the characterization based on projective objects and the application to doctrines of algebras and we improve the comparison with existential elementary doctrines, highlighting the role of the modular law.


\section{Relational Doctrines: Definition and First Properties}
\label{sect:rel-doc}

Doctrines are a simple and powerful framework introduced by Lawvere \cite{Lawvere69,Lawvere70} to study several kinds of logics using categorical tools. 
A \emph{doctrine} $\PDoc$ on $\CC$ is a contravariant functor \fun{\PDoc}{\CC\op}{\Pos}, 
where  \Pos denotes  the category of posets and monotone functions. 
The category \CC is named the \emph{base} of the doctrine and, 
for $X$  in \CC, the poset $\PDoc(X)$ is called \emph{fibre over $X$}. For \fun{f}{X}{Y} an arrow in \CC, the monotone function \fun{\PDoc\reidx{f}}{\PDoc (Y)}{\PDoc (X)} is called \emph{reindexing along $f$}.
Roughly, the base category collects the objects one is interested in with their transformations, 
a fibre $\PDoc(X)$ collects predicates over the object $X$ ordered by logical entailment and 
reindexing allows to transport predicates between objects according to their transformations. 
An archetypal example of a doctrine  is the contravariant powerset functor \fun{\PP}{\Set\op}{\Pos}, where predicates are represented by subsets ordered by set inclusion. 

Doctrines capture the essence of predicate logic. 
In this section, we will introduce \emph{relational doctrines} as a functorial description of the essential structure of relational logics. 
To this end, since binary relations can be seen as predicates over a pair of objects, we will need to index posets over pairs of objects, that is, to consider functors \fun{\RDoc}{\bop\CC}{\Pos}, where each fibre $\RDoc(X,Y)$ collects relations from $X$ to $Y$. 
Here the reference example are set-theoretic relations: 
they can be organised into a functor \fun{\Rel}{\bop\Set}{\Pos} 
where $\Rel(X,Y)=\PP(X\times Y)$ and sending $f,g$ to the inverse image $(f\times g)^{-1}$.

We endow these functors with 
a structure modelling a core fragment of the calculus of relations given by 
relational identities, composition and converse \cite{Givant1,Peirce,Tarski41}. 
For set-theoretic relations, 
the identity relation on a set $X$ is the diagonal $\rid_X = \{\ple{x,x'}\in X \times X\mid x = x'\}$, 
the composition of $\relr\in \Rel(X, Y)$ with $\rels\in \Rel(Y, Z)$ is the set $\relr\rcomp\rels = \{\ple{x,z} \in X\times Z \mid \ple{x,y}\in \relr,\, \ple{y,z}\in \rels \text{ for some }y\in Y\}$, and 
the converse of $\relr\in\Rel(X,Y)$ is the set $\relr\rconv = \{\ple{y,x}\in Y\times X \mid \ple{x,y}\in \relr\}$. 
These operations interact with reindexing, \ie inverse images, by the following inclusions: 
$\rid_X \subseteq (f\times f)^{-1}(\rid_Y)$ and 
$(f\times g)^{-1}(\relr)\rcomp (g\times h)^{-1}(\rels)\subseteq (f\times h)^{-1}(\relr\rcomp\rels)$ and also
$((f\times g)^{-1}(\relr))\rconv \subseteq (g\times f)^{-1}(\relr\rconv)$. 
The first two inclusions are not equalities in general: 
the former is an equality when $f$ is injective, while the latter is an equality when $g$ is surjective. 
These observations lead us to the following definition.

\begin{definition}\label[def]{def:rel-doc}
A \emph{relational doctrine} consists of the following data: 
\begin{itemize}
\item a base category \CC, 
\item a functor \fun{\RDoc}{\bop\CC}{\Pos}, 
\item an element $\rid_X \in \RDoc(X,X)$, for every object $X$ in \CC, such that 
$\rid_X \order \RDoc\reidx{f,f}(\rid_Y)$, 
for every arrow \fun{f}{X}{Y} in \CC, 
\item a monotone function \fun{\blank\rcomp\blank}{\RDoc(X,Y)\times\RDoc(Y,Z)}{\RDoc(X,Z)}, for every triple of objects $X,Y,Z$ in \CC, such that 
$\RDoc\reidx{f,g}(\relr)\rcomp\RDoc\reidx{g,h}(\rels) \order \RDoc\reidx{f,h}(\relr\rcomp\rels)$, 
for all $\relr\in\RDoc(A,B)$, $\rels\in\RDoc(B,C)$ and 
\fun{f}{X}{A}, \fun{g}{Y}{B} and \fun{h}{Z}{C} arrows in \CC, 
\item a monotone function \fun{(\blank)\rconv}{\RDoc(X,Y)}{\RDoc(Y,X)}, for every pair of objects $X,Y$ in \CC,  such that 
$(\RDoc\reidx{f,g}(\relr))\rconv \order \RDoc\reidx{g,f}(\relr\rconv)$, 
for all $\relr\in\RDoc(A,B)$ and \fun{f}{X}{A} and \fun{g}{Y}{B}, 
\end{itemize}
satisfying the following equations for all 
$\relr\in\RDoc(X,Y)$, $\rels\in\RDoc(Y,Z)$ and $\relt\in\RDoc(Z,W)$
\begin{align*} 
\relr\rcomp(\rels\rcomp\relt) &= (\relr\rcomp\rels)\rcomp\relt 
& 
\rid_X\rcomp\relr &= \relr 
&
\relr\rcomp\rid_Y &= \relr 
\\
(\relr\rcomp\rels)\rconv &= \rels\rconv \rcomp \relr\rconv 
&
\rid_X\rconv &= \rid_X
&
\relr\rdconv &= \relr 
\end{align*} 
\end{definition}
The element $\rid_X$ is the \emph{identity} or \emph{diagonal} relation on $X$, 
$\relr\rcomp\rels$  is the \emph{relational composition} of $\relr$ followed by $\rels$, and 
$\relr\rconv$ is the \emph{converse} of the relation $\relr$.  
Note that all relational operations are lax natural transformations, but 
the operation of taking the converse, being  an involution, is actually strictly natural. 
Indeed, we have 
\[
\RDoc\reidx{g,f}(\relr\rconv) 
  = ((\RDoc\reidx{g,f}(\relr\rconv))\rconv)\rconv 
  \order (\RDoc\reidx{f,g}((\relr\rconv)\rconv))\rconv 
  = (\RDoc\reidx{f,g}(\relr))\rconv 
\]
Also, each one of the two axioms stating that $\rid$ is the neutral element of the composition, together with the other axioms, implies the other.
For instance, assuming the left identity, we derive 
$\relr\rcomp\rid_Y = (\rid_Y\rconv\rcomp\relr\rconv)\rconv = (\rid_Y\rcomp\relr\rconv)\rconv = \relr\rdconv =\relr$.

\begin{remark} \label[rem]{rem:internal-cat}
There are many alternative ways of defining relational doctrines.
A possibility is to see them as certain internal dagger categories in a category of doctrines. 
Doctrines are the objects of a 2-category \Dtn where a 1-arrow  from $\fun{\PDoc}{\CC\op}{\Pos}$ to $\fun{\QDoc}{\D\op}{\Pos}$ is
a pair $F=\ple{\fn{F},\lift{F}}$ where 
$\fun{\fn{F}}{\CC}{\D}$ is a functor and  
$\lnt{\lift{F}}{\PDoc}{\QDoc \fn{F}\op}$ is a lax natural transformation.
that is, for every object $X$ in \CC, a monotone function \fun{\lift{F}_X}{\PDoc(X)}{\QDoc(\fn{F} X)}  such that
$\lift{F}_X\circ\PDoc\reidx{f} \order \QDoc\reidx{\fn{F}f}\circ\lift{F}_Y$ holds, for every arrow \fun{f}{X}{Y} in \CC. 
Given 1-arrows \oneAr{F,G}{\ple{\CC,\PDoc}}{\ple{\D,\QDoc}}, 
a 2-arrow \twoAr{\theta}{F}{G} is a natural transformation \nt{\theta}{F}{G} such that 
$\lift{F}_X\order_{\fn{F}X}\QDoc\reidx{\theta_X}\circ \lift{G}_X$ for every object $X$ in \CC.
Compositions and identities are defined in the expected way. 
Then, the data defining a relational doctrine \fun{\RDoc}{\bop\CC}{\Pos} can be organized into the following diagram in $\Dtn$, describing an \emph{internal dagger category}: 
\[\xymatrix@C=15ex{
\RDoc^2 \ar[r]^-{\ple{\ple{\pi_1,\pi_3},\blank\rcomp\blank}}  & 
\RDoc \ar@(ul,ur)^-{\ple{\ple{\pi_2,\pi_1},(\blank)\rconv}}
      \ar@(ur,ul)[r]^-{\ple{\pi_2,\zeta}}
      \ar@(dr,dl)[r]_-{\ple{\pi_1,\zeta}}  & 
\One_\CC \ar[l]_-{\ple{\Delta,\rid}} 
}\]
Here, \fun{\One_\CC}{\CC\op}{\Pos} is the trivial doctrine, mapping every object of \CC to the singleton poset, 
$\zeta$ is the natural transformation whose components are the unique maps into the singleton poset,  and 
$\RDoc^2$ is the pullback of \ple{\pi_1,\zeta} against \ple{\pi_2,\zeta}, that is, 
the functor \fun{\RDoc^2}{(\CC\times\CC\times\CC)\op}{\Pos} defined by 
$\RDoc^2(X,Y,Z) = \RDoc(X,Y)\times\RDoc(Y,Z)$ and $\RDoc^2\reidx{f,g,h} = \RDoc\reidx{f,g}\times\RDoc\reidx{g,h}$. 

Alternatively, they can be regarded as faithful framed bicategories \cite{Shulman08,Lambert22} (a.k.a. equipements)
which are double categories with the additional structure of a fibration, extended with an appropriate involution. 
In \cref{def:rel-doc}, we give a more explicit and elementary description of relational doctrines,
to keep things simple and to stay closer to the usual language of doctrines.
Extending all our results to the more general and proof-relevant setting of framed bicategories is an interesting problem we leave for future work.
\end{remark}

The following list of examples is meant to give a broad range of situations that can be described by relational doctrines. Order categories and existential elementary doctrines provide two large classes of examples which are intentionally omitted as, due to their relevance, they will be discussed separately in \cref{sect:eed}.

\begin{example} \label[ex]{ex:rel-doc} 
\begin{enumerate}
\item\label{ex:rel-doc:vrel}
Let $\Qtl = \ple{\Car\Qtl,\qord,\qmul,\qone}$ be a commutative quantale. 
A \emph{$\Qtl$-relation}  \cite{HofmannST14} between sets $X$ and $Y$ is a function \fun{\alpha}{X\times Y}{\Car\Qtl}, where 
$\alpha(x,y)\in\Car\Qtl$ intuitively  measures how much elements $x$ and $y$ are related by $\alpha$. 
Then, we consider 
the functor \fun{\VRel\Qtl}{\bop\Set}{\Pos} where 
$\VRel\Qtl(X,Y) = \Car\Qtl^{X\times Y}$ is the set of $\Qtl$-relations from $X$ to $Y$  with the pointwise order, 
$\VRel\Qtl\reidx{f,g}$ is precomposition with $f\times g$ and 
The identity relation, composition and converse are defined as follows: 
\[
\rid_X(x,x') = \begin{cases}
\qone & x = x' \\
\bot  & x \ne x' 
\end{cases}
\qquad 
(\relr\rcomp\rels)(x,z) = \qsup_{y\in Y} (\relr(x,y)\qmul\rels(y,z))
\qquad 
\relr\rconv(y,x) = \relr(x,y) 
\]
where $\relr\in\VRel\Qtl(X,Y)$ and $\rels\in\VRel\Qtl(Y,Z)$. 
Special cases of this doctrine are
\fun{\Rel}{\bop\Set}{\Pos} , when the quantale is $\Bool = \ple{\{0,1\},\leq,\land,1}$, and 
metric relations, when one considers the Lawvere's quantale $\RPos = \ple{[0,\infty],\ge,+,0}$ as in \cite{Lawvere73}.
\item\label{ex:rel-doc:mat}
Let $\RR = \ple{\Car\RR,\rord,\rsum,\rmul,\rzero,\rone}$ be a continuous semiring \cite{LairdMMP13,Ong17}, that is, an ordered semiring where \ple{\Car\RR,\rord} is a directed complete partial order (DCPO), $\rzero$ is the least element and $\rsum$ and $\rmul$ are Scott-continuous functions. 
In this setting, we can compute sums of arbitrary arity. 
For a function \fun{f}{X}{\Car\RR},  we can define its sum
$\sum f$, also denoted by 
$\sum_{x \in X} f(x)$, 
as
\[\sum f = \qsup_{I \in \fpw{X}} \sum_{i \in I} f(i) \]
where $\fpw{X}$ is the finite powerset of $X$. 
Consider  
\fun{\Mat\RR}{\bop\Set}{\Pos} where 
$\Mat\RR(X,Y)$ is the set of functions \fun{}{X\times Y}{\Car\RR} with the pointwise order, 
$\Mat\R\reidx{f,g}$ is precomposition with $f\times g$.
Elements in $\Mat\RR(X,Y)$ are a matrices with entries in $\Car\RR$ and indices for rows and columns taken from $X$ and $Y$.
The identity relation, composition and converse are 
given by the Kronecker's delta (\ie the identity matrix), matrix multiplication and transpose, 
defined as follows: 
\[
\rid_X(x,x') = \begin{cases}
\rone  & x = x'  \\ 
\rzero & x \ne x' 
\end{cases}
\qquad 
(\relr\rcomp\rels)(x,z) = \sum_{y \in Y} (\relr(x,y) \rmul \rels(y,z)) 
\qquad 
\relr\rconv(y,x) = \relr(x,y)
\]
where $\relr\in\Mat\RR(X,Y)$ and $\rels\in\Mat\RR(Y,Z)$. 
This relational doctrine generalises $\Qtl$-relations since 
any quantale is a continuous semiring (binary/arbitrary joins give addition/infinite sum). 
The paradigmatic example of a continuous semiring which is not a quantale is that of extended non-negative real numbers $[0,\infty]$, with the usual order, addition and multiplication. 
Restricting the base to finite sets
all sums become finite, hence  the definition works also for a plain ordered semiring.
\item\label{ex:rel-doc:span}
Let \CC be a category with weak pullbacks. 
Denote by $\Span\CC(X,Y)$ the poset reflection of the preorder whose objects are spans in \CC between $X$ and $Y$ and 
$\spn{X}{p_1}{A}{p_2}{Y} \order \spn{X}{q_1}{B}{q_2}{Y}$ iff there is an arrow \fun{f}{A}{B} such that 
$p_1 = q_1 \circ f $ and $p_2 = q_2 \circ f$. 
Given a span $\relr = \spn{X}{p_1}{A}{p_2}{Y}$ and arrows \fun{f}{X'}{X} and \fun{g}{Y'}{Y} in \CC, define 
$\Span\CC\reidx{f,g}(\relr) \in \Span\CC(X',Y')$ by one of the following equivalent diagrams: 
\[
\vcenter{\xymatrix@C=3ex@R=3ex{
&& W \ar[ld] \ar[rrdd] \ar@{}[rddd]|{wpb} && \\
& W'\ar[ld] \ar[rd] \ar@{}[dd]|{wpb} &&& \\  
X' \ar[rd]_-{f} && A \ar[ld]^-{p_1} \ar[rd]_-{p_2} && Y' \ar[ld]^-{g} \\ 
& X && Y 
}} 
\quad 
\vcenter{\xymatrix@C=3ex@R=3ex{
&& W \ar[rd] \ar[lldd] \ar@{}[lddd]|{wpb} && \\
&&& W'\ar[rd] \ar[ld] \ar@{}[dd]|{wpb} & \\  
X' \ar[rd]_-{f} && A \ar[ld]^-{p_1} \ar[rd]_-{p_2} && Y' \ar[ld]^-{g} \\ 
& X && Y 
}} 
\]
The functor \fun{\Span\CC}{\bop\CC}{\Pos} is a relational doctrine where, for 
$\relr = \spn{X}{p_1}{A}{p_2}{Y}$ and 
$\rels = \spn{X}{q_1}{B}{q_2}{Y}$ it is 
\[
\rid_X = \vcenter{\xymatrix@R=3ex@C=3ex{
& X \ar[ld]_-{\id_X} \ar[rd]^-{\id_X} & \\ 
X && X
}} \qquad 
\relr \rcomp \rels = \vcenter{\xymatrix@R=3ex@C=3ex{
&& W \ar[ld] \ar[rd] \ar@{}[dd]|{wpb} && \\ 
& A \ar[ld]^-{p_1}\ar[rd]_-{p_2} && B \ar[ld]^-{q_1} \ar[rd]_-{q_2} & \\ 
X && Y && Z 
}}
\qquad 
\relr\rconv = \vcenter{\xymatrix@C=3ex@R=3ex{
& A\ar[ld]_{p_2} \ar[rd]^{p_1} & \\ 
Y && X 
}}
\]
One can do a similar construction for jointly monic spans, provided that the category \CC has strong pullbacks and a proper factorisation system, as happens for instance in a locally regular category. 
In particular, the relational doctrine of jointly monic spans over $\ct{Set}$ is the relational doctrine $\Rel$ of set-based relations already mentioned in \cref{ex:rel-doc:vrel}.
\item\label{ex:rel-doc:vec}
Let $\ct{Vec}$ be the category of vector spaces over real numbers and linear maps. 
Write $\Car X$ for the underlying set of the vector space $X$ and $X\times Y$ for the cartesian product of vector spaces. 
A semi-norm on a vector space $X$ is a function \fun{\alpha}{\Car{X}}{[0,\infty]} which is 
lax monoidal, i.e., $0\geq\alpha(\veczero)$ and $\alpha(\vecx)+\alpha(\vecy) \geq \alpha(\vecx+\vecy)$, and 
homogeneous, i.e., $\alpha(a\cdot \vecx) = |a|\cdot \alpha(\vecx)$. 
The functor 
\fun{\VecDoc}{\bop{\ct{Vec}}}{\Pos} sends $X,Y$ to the suborder of $\VRel\RPos(\Car{X},\Car{Y})$ on semi-norms on $X\times Y$ and acts on linear maps by precomposition. 
The functor $\VecDoc$ is a relational doctrine where 
\[
\rid_X(\vecx,\vecx') = \begin{cases}
0 & \vecx = \vecx' \\
\infty & \vecx \ne \vecx' 
\end{cases}
\quad 
(\relr\rcomp\rels)(\vecx,\vecz) = \inf_{\vecy\in\Car Y} (\relr(\vecx,\vecy) + \rels(\vecy,\vecz)) 
\quad 
\relr\rconv(\vecy,\vecx) = \relr(\vecx,\vecy)
\]
\item\label{ex:rel-doc:cb}
Let \fun{\RDoc}{\bop\CC}{\Pos} be a relational doctrine and \fun{F}{\D}{\CC} a functor. 
The change-of-base of $\RDoc$ along $F$ is the relational doctrine 
\fun{F^\star\RDoc}{\bop\D}{\Pos} obtained precomposing $\RDoc$ with $\bop{F}$. 
The change of base allows to use relations of $\RDoc$ to reason about the category \D. 
For example the forgetful functor \fun{U}{\CC}{\Set} of a concrete category $\CC$ allows the use of set-theoretic relations to reason about \CC, considering the doctrine $U^\star\Rel$ which maps a pair of objects $X,Y$ in \CC to $\pw{UX\times UY}$. 
\end{enumerate}
\end{example}

Let  $\RDoc$ be  a relational doctrine on $\CC$. 
We identify some special classes of relations in $\RDoc(X,Y)$, generalising usual set-theoretic notions. 
A relation  $\relr\in\RDoc(X,Y)$ is said to be 
\begin{itemize} 
\item \emph{functional} if $\relr\rconv \rcomp \relr \order \rid_Y$, 
\item \emph{total} if $\rid_X \order \relr\rcomp\relr\rconv$, 
\item \emph{injective} if $\relr\rcomp\relr\rconv \order \rid_X$, and 
\item \emph{surjective} if $\rid_Y \order \relr\rconv \rcomp \relr$. 
\end{itemize} 
Finally, we say that $\relr$ is \emph{bijective} if it is both injective and surjective. 

The next proposition shows that 
 functional and total relations are discretely ordered.

\begin{proposition}\label[prop]{prop:fun-ord}
Let $\RDoc$ be a relational doctrine over \CC. 
For functional and total relations $\relr,\rels\in\RDoc(X,Y)$
if $\relr\order\rels$, then $\relr = \rels$. 
\end{proposition}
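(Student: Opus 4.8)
The plan is to prove the reverse inequality $\rels \order \relr$; since the hypothesis already gives $\relr \order \rels$ and the fibre $\RDoc(X,Y)$ is a poset, antisymmetry then yields $\relr = \rels$. The guiding idea, familiar from the calculus of relations, is to \emph{expand} $\rels$ by inserting the totality witness of $\relr$ on the left and then \emph{collapse} the result back down to $\relr$ using the functionality of $\rels$. The hypothesis $\relr\order\rels$ is precisely what lets these two moves meet in the middle.

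Concretely, I would run the following chain, using only the axioms of \cref{def:rel-doc} together with monotonicity of composition and converse:
\begin{align*}
\rels = \rid_X\rcomp\rels
&\order (\relr\rcomp\relr\rconv)\rcomp\rels
= \relr\rcomp(\relr\rconv\rcomp\rels) \\
&\order \relr\rcomp(\rels\rconv\rcomp\rels)
\order \relr\rcomp\rid_Y = \relr.
\end{align*}
The opening equality is the left-unit law; the first inequality uses totality of $\relr$, i.e.\ $\rid_X\order\relr\rcomp\relr\rconv$, together with monotonicity; the next equality is associativity. The second inequality applies $\relr\order\rels$, whence $\relr\rconv\order\rels\rconv$ by monotonicity of converse; the third inequality is functionality of $\rels$, i.e.\ $\rels\rconv\rcomp\rels\order\rid_Y$; and the final step is the right-unit law. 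This gives $\rels\order\relr$, and with $\relr\order\rels$ antisymmetry delivers $\relr=\rels$.

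The argument in fact uses strictly less than the statement assumes: only the \emph{totality} of $\relr$ and the \emph{functionality} of $\rels$ enter, so the same computation proves the sharper claim that $\relr\order\rels$ already forces equality whenever $\relr$ is total and $\rels$ is functional. I do not expect a genuine obstacle here. Once one commits to proving $\rels\order\relr$, the shape of the computation is essentially forced: the only way to create room on the left of $\rels$ is to unfold $\rid_X$ and replace it by $\relr\rcomp\relr\rconv$, and the only way to eliminate the resulting middle factor $\relr\rconv\rcomp\rels$ is to bound it by $\rid_Y$ via functionality, after first lowering $\relr\rconv$ to $\rels\rconv$ using the hypothesis. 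The single point requiring care is orienting the three inequalities correctly, but each is handed to us directly---by totality of $\relr$, by the hypothesis $\relr\order\rels$ through monotonicity of converse, and by functionality of $\rels$---so no real difficulty arises.
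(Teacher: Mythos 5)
Your proof is correct and is essentially identical to the paper's: the same five-step chain $\rels = \rid_X\rcomp\rels \order \relr\rcomp\relr\rconv\rcomp\rels \order \relr\rcomp\rels\rconv\rcomp\rels \order \relr\rcomp\rid_Y = \relr$, using totality of $\relr$, the hypothesis via monotonicity of the converse, and functionality of $\rels$. Your closing remark that the argument only needs $\relr$ total and $\rels$ functional is a valid (if minor) sharpening, but the computation itself matches the paper exactly.
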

\begin{proof} 
If $\relr\order\rels$, then 
$\rels = \rid_X\rcomp\rels 
       \order \relr \rcomp \relr\rconv \rcomp \rels 
       \order \relr \rcomp \rels\rconv \rcomp \rels 
       \order \relr \rcomp \rid_Y 
       = \relr$. 
\end{proof}

Every arrow \fun{f}{X}{Y} defines a relation $\gr{f}=\RDoc\reidx{f,\id_Y}(\rid_Y)\in\RDoc(X,Y)$, called the \emph{graph} of $f$ whose converse is given by 
$\gr{f}\rconv = \RDoc\reidx{f,\id_Y}(\rid_Y)\rconv=\RDoc\reidx{\id_Y,f}(\rid_Y\rconv) =  \RDoc\reidx{\id_Y,f}(\rid_Y)$. 
Then, it is easy to see the next proposition holds. 

\begin{proposition}\label[prop]{prop:graph-fun} 
Let $\RDoc$ be a relational doctrine over \CC and 
\fun{f}{X}{Y} be an arrow in \CC. 
Then, $\gr{f}$ is functional and total. 
\end{proposition}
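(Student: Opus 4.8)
The plan is to prove the two halves---functionality and totality---separately, using the explicit descriptions $\gr{f}=\RDoc\reidx{f,\id_Y}(\rid_Y)$ and $\gr{f}\rconv=\RDoc\reidx{\id_Y,f}(\rid_Y)$ recalled just above the statement.

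First I would dispatch functionality, i.e.\ $\gr{f}\rconv\rcomp\gr{f}\order\rid_Y$, directly from the lax naturality of composition. Writing both factors as reindexings of $\rid_Y$ sharing the middle map $f$, and applying the composition axiom $\RDoc\reidx{f,g}(\relr)\rcomp\RDoc\reidx{g,h}(\rels)\order\RDoc\reidx{f,h}(\relr\rcomp\rels)$ with the two outer maps instantiated to $\id_Y$, the middle map to $f$, and $\relr=\rels=\rid_Y$, one obtains $\gr{f}\rconv\rcomp\gr{f}\order\RDoc\reidx{\id_Y,\id_Y}(\rid_Y\rcomp\rid_Y)=\rid_Y$, the final equality by the unit law and functoriality of $\RDoc$ on identities. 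This step is routine.

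Totality, i.e.\ $\rid_X\order\gr{f}\rcomp\gr{f}\rconv$, is the delicate half and I expect it to be the main obstacle: every lax axiom bounds a composite of reindexed relations \emph{from above}, whereas totality asks for a \emph{lower} bound on exactly such a composite, so it cannot follow from composition laxity by itself. My plan is to start from the identity axiom $\rid_X\order\RDoc\reidx{f,f}(\rid_Y)$ and then establish $\RDoc\reidx{f,f}(\rid_Y)\order\gr{f}\rcomp\gr{f}\rconv$. Using functoriality of $\RDoc$ on the two factorisations $(f,f)=(f,\id_Y)\circ(\id_X,f)$ and $(f,f)=(\id_Y,f)\circ(f,\id_X)$ in $\CC\times\CC$, I would rewrite $\RDoc\reidx{f,f}(\rid_Y)$ both as $\RDoc\reidx{\id_X,f}(\gr{f})$ and as $\RDoc\reidx{f,\id_X}(\gr{f}\rconv)$. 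The key computation then reads
\[
\RDoc\reidx{f,f}(\rid_Y)
=\RDoc\reidx{\id_X,f}(\gr{f})
=\RDoc\reidx{\id_X,f}(\gr{f})\rcomp\rid_X
\order\RDoc\reidx{\id_X,f}(\gr{f})\rcomp\RDoc\reidx{f,\id_X}(\gr{f}\rconv)
\order\gr{f}\rcomp\gr{f}\rconv,
\]
where the first inequality uses the unit law together with the identity axiom in the reindexed form $\rid_X\order\RDoc\reidx{f,\id_X}(\gr{f}\rconv)=\RDoc\reidx{f,f}(\rid_Y)$ and monotonicity of composition, and the last inequality is one further application of composition laxity with shared middle map $f$. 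Chaining this with $\rid_X\order\RDoc\reidx{f,f}(\rid_Y)$ yields totality.

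The crux, and the only genuinely non-formal move, is to feed the identity axiom back in as a lower bound $\rid_X\order\RDoc\reidx{f,\id_X}(\gr{f}\rconv)$ \emph{inside} the composite, so that a single extra use of composition laxity closes the gap that the lax inequalities would otherwise leave pointing the wrong way. I would be careful with the bookkeeping of the two functoriality factorisations of $\RDoc\reidx{f,f}$---one yielding $\gr{f}$, the other $\gr{f}\rconv$---since conflating them is the easiest place to slip into a type mismatch.
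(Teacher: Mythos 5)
Your proof is correct and takes essentially the same route as the paper's: functionality by a single application of composition laxity to the two reindexed presentations of $\rid_Y$, and totality by feeding the identity axiom $\rid_X\order\RDoc\reidx{f,f}(\rid_Y)$ into the two factorisations $(f,f)=(f,\id_Y)\circ(\id_X,f)=(\id_Y,f)\circ(f,\id_X)$ and closing with one more use of composition laxity. The paper merely orders the steps differently (starting from $\rid_X=\rid_X\rcomp\rid_X$ and bounding both factors by $\RDoc\reidx{f,f}(\rid_Y)$ at once), which is an immaterial rearrangement of your argument.
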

\begin{proof} 
Functionality follows from 
$
\gr{f}\rconv \rcomp \gr{f} 
  = \RDoc\reidx{\id_Y,f}(\rid_Y) \rcomp \RDoc\reidx{f,\id_Y}(\rid_Y) 
  \order \RDoc\reidx{\id_Y,\id_Y}(\rid_Y\rcomp \rid_Y) 
  = \rid_Y\rcomp\rid_Y = \rid_Y 
$. 
From $\rid_X\order\RDoc\reidx{f,f}(\rid_Y)$ and since $(f,f)=(f,\id_Y)\circ (\id_X,f)=(\id_X,f)\circ (f,\id_Y)$ in $\CC\times\CC$ 
we derive 
\begin{align*} 
\rid_X 
  &= \rid_X\rcomp\rid_X
   \order \RDoc\reidx{f,f}(\rid_Y) \rcomp \RDoc\reidx{f,f}(\rid_Y) 
   = \RDoc\reidx{\id_X,f}(\RDoc\reidx{f,\id_Y}(\rid_Y)) \rcomp \RDoc\reidx{f,\id_X}(\RDoc\reidx{\id_Y,f}(\rid_Y)) 
\\ 
  &\order \RDoc\reidx{\id_X,\id_X}(\RDoc\reidx{f,\id_Y}(\rid_Y)\rcomp \RDoc\reidx{\id_Y,f}(\rid_Y)) 
   = \gr{f}\rcomp \gr{f}\rconv 
\end{align*} 
that proves totality of $\gr{f}$.
\end{proof}

Overloading the terminology, 
we will saty that an arrow \fun{f}{X}{Y} is 
injective, surjective or bijective if so is its graph $\gr{f}$. 
Note that in general epimorphism and surjective arrows, as well as monomorphisms and injective arrows, are incomparable classes of arrows. 
Nevertheless, it is easy to see that split epimorphisms are always surjective, as well as split monomorphisms are always injective. 
Hence, isomorphisms are necessarily bijective. 
However, the converse of these implications does  not hold in general, thus we introduce the following definition. 

\begin{definition}\label[def]{def:balance}
Let \fun{\RDoc}{\bop\CC}{\Pos} be a relational doctrine. 
We say that $\RDoc$ is \emph{balanced} if every bijective arow in \CC is an isomorphism. 
\end{definition}

Relational composition allows us to express reindexing in relational terms and to show it has left adjoints, as proved below.  
Recall that in \Pos a left adjoint of a monotone function $\fun{g}{K}{H}$ is a monotone  function $\fun{f}{H}{K}$ such that for every $x$ in $K$ and $y$ in $H$, both $y\le gf(y)$ and $fg(x)\le x$ hold, or, equivalently, $y\le g(x)$ if and only if $f(y)\le x$. 

\begin{proposition}\label[cor]{prop:left-adj}
Let $\RDoc$ be a relational doctrine over \CC. 
For \fun{f}{A}{X} and \fun{g}{B}{Y}  in \CC the reindexing
\fun{\RDoc\reidx{f,g}}{\RDoc(X,Y)}{\RDoc(A,B)} has a left adjoint 
\fun{\Ex^\RDoc\reidx{f,g}}{\RDoc(A,B)}{\RDoc(X,Y)} and for $\relr\in\RDoc(X,Y)$ and $\rels\in\RDoc(A,B)$ we have 
\[
\RDoc\reidx{f,g}(\relr) = \gr{f} \rcomp \relr \rcomp \gr{g}\rconv 
\qquad 
\Ex^\RDoc\reidx{f,g}(\rels) = \gr{f}\rconv \rcomp \rels \rcomp \gr{g} 
\]
\end{proposition}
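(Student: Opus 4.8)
The plan is to prove the explicit formula for reindexing first, and then to read the left adjoint off from it, after which the two adjunction inequalities become almost immediate. Throughout I would freely use the graph identities $\gr{f}=\RDoc\reidx{f,\id_X}(\rid_X)$ and $\gr{f}\rconv=\RDoc\reidx{\id_X,f}(\rid_X)$ (and the analogous ones for $g$), the lax naturality of $\rcomp$ from \cref{def:rel-doc}, and the monoid and involution axioms.

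First I would establish the equality $\RDoc\reidx{f,g}(\relr)=\gr{f}\rcomp\relr\rcomp\gr{g}\rconv$. The inequality $\gr{f}\rcomp\relr\rcomp\gr{g}\rconv\order\RDoc\reidx{f,g}(\relr)$ is a pure lax-naturality computation: writing $\relr=\RDoc\reidx{\id_X,\id_Y}(\relr)$ and applying the lax naturality of $\rcomp$ twice lets me absorb the reindexings $\RDoc\reidx{f,\id_X}$ and $\RDoc\reidx{\id_Y,g}$ into a single reindexing, so that the composite is bounded by $\RDoc\reidx{f,g}(\rid_X\rcomp\relr\rcomp\rid_Y)=\RDoc\reidx{f,g}(\relr)$ after the identity laws. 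The same absorption, applied now to the sandwich $\gr{f}\rconv\rcomp\RDoc\reidx{f,g}(\relr)\rcomp\gr{g}$, yields a companion inequality $\gr{f}\rconv\rcomp\RDoc\reidx{f,g}(\relr)\rcomp\gr{g}\order\relr$, which I will reuse below.

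The reverse inequality $\RDoc\reidx{f,g}(\relr)\order\gr{f}\rcomp\relr\rcomp\gr{g}\rconv$ is the only nontrivial point, and it is precisely where \emph{totality} of graphs enters. By \cref{prop:graph-fun} the graphs $\gr{f}$ and $\gr{g}$ are total, that is, $\rid_A\order\gr{f}\rcomp\gr{f}\rconv$ and $\rid_B\order\gr{g}\rcomp\gr{g}\rconv$. Using the identity laws and monotonicity of $\rcomp$ I sandwich
\[
\RDoc\reidx{f,g}(\relr)\order \gr{f}\rcomp\bigl(\gr{f}\rconv\rcomp\RDoc\reidx{f,g}(\relr)\rcomp\gr{g}\bigr)\rcomp\gr{g}\rconv ,
\]
and then the companion inequality collapses the middle factor to $\relr$, giving the claim. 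Combining the two bounds proves the reindexing formula.

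Finally I would define $\Ex^\RDoc\reidx{f,g}(\rels)=\gr{f}\rconv\rcomp\rels\rcomp\gr{g}$ and check the two adjunction inequalities. The counit $\Ex^\RDoc\reidx{f,g}(\RDoc\reidx{f,g}(\relr))\order\relr$ is exactly the companion inequality already obtained. For the unit $\rels\order\RDoc\reidx{f,g}(\Ex^\RDoc\reidx{f,g}(\rels))$, I substitute the reindexing formula to rewrite the right-hand side as $\gr{f}\rcomp\gr{f}\rconv\rcomp\rels\rcomp\gr{g}\rcomp\gr{g}\rconv$, and conclude by totality and monotonicity, since $\rels=\rid_A\rcomp\rels\rcomp\rid_B\order\gr{f}\rcomp\gr{f}\rconv\rcomp\rels\rcomp\gr{g}\rcomp\gr{g}\rconv$. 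The main obstacle is thus localised in the reverse inequality of the formula; everything else is bookkeeping with lax naturality and the monoid axioms. It is worth noting that functionality of graphs is never used here, only totality.
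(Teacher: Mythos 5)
Your proof is correct and follows the same overall decomposition as the paper's: first establish the formula $\RDoc\reidx{f,g}(\relr)=\gr{f}\rcomp\relr\rcomp\gr{g}\rconv$ (pure lax naturality for one inclusion, totality of graphs plus an absorption argument for the other), then define $\Ex^\RDoc\reidx{f,g}(\rels)=\gr{f}\rconv\rcomp\rels\rcomp\gr{g}$ and verify the two adjunction inequalities. The one genuine difference is in the counit. The paper substitutes the reindexing formula and reduces the counit to $\gr{f}\rconv\rcomp\gr{f}\rcomp\relr\rcomp\gr{g}\rconv\rcomp\gr{g}\order\relr$, which it discharges by \emph{functionality} of graphs via \cref{prop:graph-fun}; you instead keep $\RDoc\reidx{f,g}(\relr)$ unexpanded and observe that the counit is literally your companion inequality $\gr{f}\rconv\rcomp\RDoc\reidx{f,g}(\relr)\rcomp\gr{g}\order\relr$, which follows from lax naturality alone. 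This makes your closing remark both correct and a slight sharpening of what the paper's argument shows: the entire proposition (formula and adjunction) uses only totality of graphs, never functionality. The only point you leave implicit, monotonicity of $\Ex^\RDoc\reidx{f,g}$, is immediate from monotonicity of composition, exactly as the paper also notes, so there is no gap.
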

\begin{proof} 
First of all we prove that
$\RDoc\reidx{f,g}(\relr) = \gr{f} \rcomp \relr \rcomp \gr{g}\rconv$. 
We have that 
\begin{align*}
\RDoc\reidx{f,g}(\relr) 
  &
   \order \gr{f}\rcomp \gr{f}\rconv \rcomp \RDoc\reidx{f,g}(\relr) \rcomp \gr{g} \rcomp \gr{g}\rconv 
  = \gr{f}\rcomp \RDoc\reidx{\id_X,f}(\rid_X)\rcomp \RDoc\reidx{f,g}(\relr) \rcomp \RDoc\reidx{g,\id_Y}(\rid_Y)\rcomp \gr{g}\rconv 
  \\ 
  &\order \gr{f} \rcomp \RDoc\reidx{\id_X,\id_Y}(\rid_X\rcomp\relr\rcomp\rid_Y)\rcomp \gr{g}\rconv 
   = \gr{f}\rcomp \relr \rcomp \gr{g}\rconv 
  \\
\gr{f}\rcomp\relr\rcomp\gr{g}\rconv 
  &= \RDoc\reidx{f,\id_X}(\rid_X) \rcomp \RDoc\reidx{\id_X,\id_Y}(\relr) \rcomp \RDoc\reidx{\id_Y,g}(\rid_Y) 
   \order \RDoc\reidx{f,g}(\rid_X\rcomp\relr\rcomp\rid_Y) 
   = \RDoc\reidx{f,g}(\relr) 
\end{align*}
Let us set $\Ex^\RDoc\reidx{f,g}(\rels) = \gr{f}\rconv\rcomp\rels\rcomp\gr{g}$. 
It is immediate to see that $\Ex^\RDoc\reidx{f,g}$ is monotone. 
To check that it is the left adjoint of $\RDoc\reidx{f,g}$ it suffices to show that 
$\gr{f}\rconv\rcomp \gr{f}\rcomp \relr \rcomp \gr{g}\rconv\rcomp\gr{g}\order \relr$ and 
$\rels \order \gr{f}\rcomp\gr{f}\rconv\rcomp\rels\rcomp\gr{g}\rcomp\gr{g}\rconv$ hold. 
This follows by \cref{prop:graph-fun} and monotonicity of composition. 
\end{proof}

An easy consequence of this proposition is that the graph of an arrow can be defined by the left adjoints as well. 
Indeed, we have 
$\gr{f} = \Ex^\RDoc\reidx{\id_X,f}(\rid_X)$ and $\gr{f}\rconv = \Ex^\RDoc\reidx{f,\id_X}(\rid_X)$. 
Moreover, one can easily prove that  the graph of arrows respects composition and identities, that is, 
$\gr{g\circ f} = \gr{f}\rcomp\gr{g}$ and 
$\gr{\id_X} = \rid_X$.

We conclude the section describing the 2-categories of relational doctrines we will consider in the rest of the paper. 
The 2-category \RDtnl has relational doctrins as objects, 
while a 1-arrow \oneAr{F}{\RDoc}{\SDoc} between \fun{\RDoc}{\bop\CC}{\Pos} and \fun{\SDoc}{\bop\D}{\Pos} 
is a pair \ple{\fn{F},\lift{F}} consisting of 
a functor \fun{\fn{F}}{\CC}{\D} and a natural transformation 
\nt{\lift{F}}{\RDoc}{\SDoc\circ \bop{\fn{F}}} \emph{laxly} preserving relational identities, composition and converse, that is, satisfying 
$\rid_{\fn{F}X} \order \lift{F}_{X,X}(\rid_X)$ and 
$\lift{F}_{X,Y}(\relr)\rcomp \lift{F}_{Y,Z}(\rels) \order \lift{F}_{X,Z}(\relr\rcomp\rels)$ and 
$(\lift{F}_{X,Y}(\relr))\rconv \order \lift{F}_{Y,X}(\relr\rconv)$, 
for $\relr\in\RDoc(X,Y)$ and $\rels\in\RDoc(Y,Z)$. 
Finally, given 1-arrows \oneAr{F,G}{\RDoc}{\SDoc}, 
a 2-arrow \twoAr{\theta}{F}{G} is a natural transformation \nt{\theta}{\fn{F}}{\fn{G}} such that 
$\lift{F}_{X,Y} \order \SDoc\reidx{\theta_X,\theta_Y}\circ \lift{G}_{X,Y}$, for all objects $X,Y$ in the base of $\RDoc$. 
By \cref{prop:graph-fun,prop:left-adj} the condition of a 2-arrow \twoAr{\theta}{F}{G} is equivalent to both 
$\lift{F}_{X,Y}(\relr) \order \gr{\theta_X}\rcomp \lift{G}_{X,Y}(\relr) \rcomp \gr{\theta_Y}\rconv$ and 
$\lift{F}_{X,Y}(\relr) \rcomp \gr{\theta_Y} \order \gr{\theta_X}\rcomp \lift{G}_{X,Y}(\relr)$, for $\relr\in\RDoc(X,Y)$. 
 
It is easy to see that 
1-arrows actually strictly preserve the converse, since it is an involution, and 
laxly preserve graphs of arrows, that is, 
$\gr{\fn{F}f} \order \lift{F}_{X,Y}(\gr{f})$ and $\gr{\fn{F}f}\rconv \order \lift{F}_{Y,X}(\gr{f}\rconv)$, for every arrow \fun{f}{X}{Y} in the base of $\RDoc$. 
A 1-arrow is called \emph{strict} if it strictly preserves relational identities and composition. 
In this case, it also strictly preserves graphs of arrows. 
We denote by \RDtn the the 2-full 2-subcategory of \RDtnl where 1-arrows are strict. 
We will mainly work with the 2-category \RDtn, which in general is much better behaved than \RDtnl. 
However, we consider also the latter one since lax 1-arrows abstract a widely used notion in relational methods, as the next example shows.

\begin{example}[Relation lifting] \label[ex]{ex:rel-lift} 
A key notion used in relational methods is that of \emph{relation lifting} or \emph{lax extension} or \emph{relator} \cite{Barr70,KurzV16,Thijs96}. 
It can be used to formulate bisimulation for coalgebras or other notions of program equivalence. 
A (conversive) relation lifting of a functor \fun{F}{\Set}{\Set} is a family of monotonic maps \fun{\lift{F}_{X,Y}}{\Rel(X,Y)}{\Rel(FX,FY)}, indexed by sets $X$ and $Y$, such that 
$\lift{F}_{X;Y}(\relr)\rconv \subseteq \lift{F}_{Y,X}(\relr\rconv)$, $\lift{F}_{X,Y}(\relr)\rcomp\lift{F}_{Y,Z}(\rels) \subseteq \lift{F}_{X,Z}(\relr\rcomp\rels)$ and $Ff \subseteq \lift{F}_{X,Y}(f)$, where 
$\relr$ and $\rels$ are relations and \fun{f}{X}{Y} is a function. 
Note that in the last condition we are using the function to denote its graph, which is perfectly fine since set-theoretic functions coincide with their graph. 
It is easy to see that these requirements ensure that \oneAr{\ple{F,\lift{F}}}{\Rel}{\Rel} is a 1-arrow in \RDtnl. Conversely any 1-arrow \oneAr{G}{\Rel}{\Rel} is such that $\lift{G}$ is a relation lifting of $\fn{G}$, showing that 1-arrows between $\Rel$ and $\Rel$ are exactly the relation liftings. 
Hence, 1-arrows of the form \fun{F}{\RDoc}{\RDoc} in \RDtnl can be regarded as a generalisation of relation lifting to an arbitrary relational doctrine $\RDoc$. 

Finally, relying on the 2-categorical structure of \RDtnl, we get for free a notion of monad on a relational doctrine. 
A monad consists of a 1-arrow \oneAr{T}{\RDoc}{\RDoc} together with 2-arrows \twoAr{\eta}{\Id_\RDoc}{T} and \twoAr{\mu}{T\circ T}{T} satisfying usual diagrams: 
\[\vcenter{\xymatrix{
T \ar@{=>}[rd]_{\id} \ar@{=>}[r]^{\eta T} & T^2 \ar@{=>}[d]^{\mu} & T \ar@{=>}[l]_{T\eta} \ar@{=>}[ld]^{\id} \\ 
& T & 
}} \quad 
\vcenter{\xymatrix{
T^3 \ar@{=>}[r]^{T\mu} \ar@{=>}[d]_{\mu T} & T^2 \ar@{=>}[d]^{\mu} \\
T^2 \ar@{=>}[r]^{\mu} & T 
}}\]
Thanks to the conditions that 2-arrows in \RDtnl have to satisfy, such monads capture precisely the notion of monadic relation lifting used to reason about effectful programs \cite{Gavazzo18}. 
Similarly, comonads in \RDtnl abstracts comonadic relation liftings \cite{DalLagoG22}. 
\end{example} 

\begin{example}\label[ex]{ex:pw-1ar}
Recall \fun{\VRel\Qtl}{\bop\Set}{\Pos} the doctrine of $\Qtl$-relations from \refItem{ex:rel-doc}{vrel}. 
Consider the 1-arrow \oneAr{P}{\VRel\Qtl}{\VRel\Qtl} where $\fun{\fn{P}}{\Set}{\Set}$ is the covariant powerset functor  and 
$\fun{\lift{P}_{X,Y}}{\VRel\Qtl(X,Y)}{\VRel\Qtl(\fn{P}X,\fn{P}Y)}$ maps a $\Qtl$-relation $\relr$ to the function 
$ \lift{P}_{X,Y}(\relr)(A,B) = h_\relr(A,B) \land h_{\relr\rconv}(B,A)$ where $\land$ denotes the binary meet operation in $\Qtl$ and
for every \fun{\rels}{Z\times W}{\Car\Qtl}, we set 
\[ h_\rels(A,B) = \qinf_{x\in A} \qsup_{y \in B} \rels(x,y)  \qquad \qquad 
\text{for $A \subseteq Z$ and $B\subseteq W$} \] 
It is easy to check that this is indeed a 1-arrow. 
In particular, when considering the boolean quantale $\Bool$, given \fun{\relr}{X\times Y}{\{0,1\}} we have that  $\lift{P}_{X,Y}(\relr)$ relates $A$ and $B$ iff 
for all $x\in A$, there is $y \in B$ s.t. $\relr(x,y) = 1$ and viceversa; 
considering instead Lawvere's quantale $\RPos$, $\lift{P}_{X,Y}(\relr)$ is a generalisation to arbitrary $\RPos$-relations of the Hausdorff pseudometric on subsets of (pseudo)metric spaces. 
\end{example}

\begin{example}[Bisimulations]\label[ex]{ex:bisim}
We can express the notion of bisimulation for coalgebras in an arbitrary relational doctrine, thus covering both usual and quantitative versions of bisimulation. 
If \oneAr{F}{\RDoc}{\RDoc} is a 1-arrow in \RDtnl and \ple{X,c} and \ple{Y,d} two $\fn{F}$-coalgebras, then 
a relation $\relr\in\RDoc(X,Y)$ is a $F$-bisimulation from \ple{X,c} to \ple{Y,d}  if 
$\relr \order \gr{c}\rcomp\lift{F}_{X,Y}(\relr)\rcomp \gr{d}\rconv$ or, equivalently, $\relr\rcomp \gr{d} \order \gr{c}\rcomp \lift{F}_{X,Y}(\relr)$. 
This means that $\relr$ has to agree with the dynamics of the two coalgebras. 
Indeed, if $\RDoc$ is $\Rel$ (the doctrine of set-theoretic relations), 
this condition states that, if $x \in X$ is related to $y \in Y$ by $\relr$ and $y$ evolves to $B\in \fn{F}Y$ through $d$, then 
$x$ evolves to some $A \in \fn{F}X$ through $c$ and $A$ is related to $B$ by the lifted relation $\lift{F}_{X,Y}(\relr)$. 
This definition looks very much like that of simulation, but, since 1-arrows preserve the converse, it is easy to check that, if $\relr$ is a bisimulation, then $\relr\rconv$ is a bisimulation as well, thus justifying the name. 
Furthermore, one can easily check that $F$-bisimulations are closed under relational identities and composition. 
Then, the category of $\fn{F}$-coalgebras is the base of a relational doctrine $\Bisim{F}$ where relations in $\Bisim{F}(\ple{X,c},\ple{Y,d})$ are $F$-bisimulations between coalgebras \ple{X,c} and \ple{Y,d}. 

As a concrete example, let us consider the 1-arrow \oneAr{P}{\VRel\Qtl}{\VRel\Qtl} of \cref{ex:pw-1ar}. 
A $\fn{P}$-coalgebra is a usual (non-deterministic) transition system and a $P$-bisimulation from \ple{X,c} to \ple{Y,d} is a $\Qtl$-relation \fun{\relr}{X\times Y}{\Car\Qtl} such that 
$\relr(x,y) \qord h_\relr(c(x),d(y)) \land h_\relr(d(y),c(x))$, for all $x\in X$ and $y \in Y$. 
Roughly, this means that similar states reduce to similar states. 
When considering the boolean quantale $\Bool$, we get the usual notion of bisimulation, while considering Lawvere's quantale $\RPos$ we get a form of metric bisimulation. 
\end{example}

\begin{example}[Barr lifting] \label[ex]{ex:barr-lift} 
Let \fun{F}{\CC}{\D} be a weak pullback preserving functor between categories with weak pullbacks. 
It induces a strict  1-arrow \oneAr{\ple{F,\lift{F}}}{\Span\CC}{\Span\D} mapping a span $\spn{X}{p_1}{A}{p_2}{Y}$ to $\spn{FX}{Fp_1}{FA}{Fp_2}{FY}$. 
When $F$ is an endofunctor, this construction provides an abstract version of the well-known Barr lifting for set-theoretic relations \cite{Barr70}. 
It is easy to see that this construction extends to a 2-functor $\Span{\blank}$ from the 2-category of categories with weak pullbacks, functor preserving them and natural transformations to the 2-category \RDtn. 
Hence, every weak pullbacks preserving monad on a category \CC with weak pullbacks induces a monad on $\Span\CC$ in \RDtn. 
\end{example}


\section{The intensional quotient completion}
\label{sect:quotients} 

Here we show how one can deal with quotients in relational doctrines extending the quotient completion in \cite{MaiettiME:eleqc, MaiettiME:quofcm} which we used as inspiration for many  notions and constructions. 
We present instances having a quantitative flavour that usual doctrines do not cover, showing that quotients are the key structure characterizing  them.

In  a relational doctrine \fun{\RDoc}{\bop\CC}{\Pos} an \emph{$\RDoc$-equivalence relation} on an object $X$ in \CC is a relation 
$\eqrelr \in \RDoc(X,X)$  satisfying the following properties: 
\begin{center}
\begin{tabular}{lclcl}
reflexivity: $\rid_X\order \eqrelr$&\ &
symmetry: $\eqrelr\rconv \order \eqrelr$&\ &
transitivity: $\eqrelr\rcomp\eqrelr \order \eqrelr$
\end{tabular} 
\end{center}

\begin{example}\label[ex]{ex:eq-rel}
\begin{enumerate}
\item\label{ex:eq-rel:vrel} 
In the doctrine of $\Qtl$-relations $\VRel\Qtl$ (\cf \refItem{ex:rel-doc}{vrel}), an equivalence relation \fun{\eqrelr}{X\times X}{\Car\Qtl} on a set $X$  is a (symmetric) $\Qtl$-metric \cite{HofmannST14}: 
reflexivity is $\qone\qord \eqrelr(x,x)$, for all $x \in X$, 
symmetry is $\eqrelr(x,y)\qord\eqrelr(y,x)$, for all $x,y\in X$, and 
transitivity is $\qsup_{y\in X} \eqrelr(x,y)\qmul\eqrelr(y,z)\qord\eqrelr(x,z)$, which is equivalent to 
$\eqrelr(x,y)\qmul\eqrelr(y,z)\qord\eqrelr(x,z)$, for all $x,y,z\in X$, by properties of suprema. 
For the boolean quantale $\Bool$ these are usual equivalence relations, while 
for the Lawvere's quantale $\RPos$ these are the so-called pseudometrics as the transitivity property is exactly the triangular inequality. 
\item\label{ex:eq-rel:span} 
In the doctrine $\Span\CC$ (\cf \refItem{ex:rel-doc}{span})  of spans in a category with weak pullbacks, an equivalence relation 
on  $X$  is a pair of parallels arrows $\fun{r_1,r_2}{A}{X}$ such that there are  arrows 
$\fun{r}{X}{A}$ with $r_1r=r_2r=\id_X$ (reflexivity), 
$\fun{s}{A}{A}$ with $r_1s=r_2$ and $r_2s=r_1$ (symmetry), and 
$\fun{t}{W}{A}$ with $r_1t=r_1d_1$ and $r_2t=r_2d_2$ where 
\[\xymatrix@C=3ex@R=3ex{
& W\ar[ld]_-{d_1} \ar[rd]^-{d_2} \ar@{}[dd]|{wpb} & \\  
A\ar[rd]_-{r_2} && A \ar[ld]^-{r_1} \\ 
& X 
} \]
is a weak pullback. 
These spans are the pseudo-equivalence relations of \cite{CarboniA:freecl, CarboniA:regec}.
\item\label{ex:eq-rel:vec}
In the relational doctrine \fun{\VecDoc}{\bop{\ct{Vec}}}{\Pos} (\cf \refItem{ex:rel-doc}{vec}) an equivalence relation over a vector space $X$ is 
a semi-norm \fun{\eqrelr}{\Car{X}\times\Car{X}}{[0,\infty]} which is reflexive, that is, 
$0\geq\eqrelr(\vecx,\vecx)$, for all $\vecx\in\Car{X}$. 
Indeed, by reflexivity,  one can show that 
$\eqrelr(\vecx,\vecy) = \eqrelr(\veczero,\vecy-\vecx)$, noting that 
We have $
\eqrelr(\vecx, \vecy) \geq \eqrelr(\vecx, \vecy) + \eqrelr(-\vecx, -\vecx) \geq \eqrelr(0, \vecy-\vecx) \geq \eqrelr(0, \vecy-\vecx)+ \eqrelr(\vecx, \vecx) \geq  \eqrelr(\vecx, \vecy)
$. 
From this we can derive 
symmetry follows by 
$\eqrelr(\vecx,\vecy) = \eqrelr(\veczero,\vecy-\vecx) = |-1|\eqrelr(\veczero,\vecx-\vecy) = \eqrelr(\vecy,\vecx)$ and 
transitivity by 
$\eqrelr(\vecx,\vecy) + \eqrelr(\vecy,\vecz) = \eqrelr(\veczero,\vecy-\vecx) + \eqrelr(\veczero,\vecz-\vecy) \ge \eqrelr(\veczero + \veczero, (\vecy-\vecx) + (\vecz - \vecy)) = \eqrelr(\veczero,\vecz-\vecx) = \eqrelr(\vecx,\vecz)$. 
Hence a $\VecDoc$-equivalence on a vector space $X$ is a lax monoidal and homogeneous pseudometric on it. 
\end{enumerate}
\end{example}

Every arrow \fun{f}{X}{Y} in \CC induces a $\RDoc$-equivalence relation on $X$, dubbed \emph{kernel} of $f$, given by $\gr{f}\rcomp\gr{f}\rconv$. 
The fact that this is an equivalence follows immediately since $\gr{f}$ is a total and functional relation. 
Roughly, the kernel of $f$ relates those elements which are identified by $f$; 
indeed, for the relational doctrine \fun{\Rel}{\bop\Set}{\Pos} of set-theoretic relations it is defined exactly in this way. 
Kernels are crucial to talk about quotients as the following definition shows. 

\begin{definition}\label[def]{def:quotient}
Let \fun{\RDoc}{\bop\CC}{\Pos}  be a relational doctrine on $\CC$ and $\eqrelr$ a $\RDoc$-equivalence relation on an object $X$ in \CC. 
A \emph{quotient arrow} of $\eqrelr$ is an arrow \fun{q}{X}{W} in \CC such that 
$\eqrelr \order \gr{q}\rcomp\gr{q}\rconv$ and, 
for every arrow \fun{f}{X}{Z} with $\eqrelr\order\gr{f}\rcomp\gr{f}\rconv$, there is a unique arrow \fun{h}{W}{Z} such that $f = h\circ q$.  
The quotient arrow $q$ is \emph{effective} if  $\eqrelr = \gr{q}\rcomp\gr{q}\rconv$ and 
it is \emph{descent} if $\rid_W \order \gr{q}\rconv\rcomp\gr{q}$. 

We say that \emph{$\RDoc$ has quotients} if every $\RDoc$-equivalence relation admits an effective descent  quotient arrow. 
\end{definition} 

Intuitively, a quotient of $\eqrelr$ is the ``smallest'' arrow $q$ which transforms the equivalence $\eqrelr$ into the relational identity, that is, such that $\eqrelr$ is smaller than the kernel of $q$. 
The quotient $q$ is effective when its kernel $\gr{q}\rcomp\gr{q}\rconv$ coincides with the equivalence relation $\eqrelr$ and it is descent when  its graph is surjective. 
 
\begin{example} \label[ex]{ex:rel-quot} 
To exemplify the definition above, let us unfold it for the relational doctrine \fun{\Rel}{\bop\Set}{\Pos},  which has quotients. 
Recall from \refItem{ex:eq-rel}{vrel} that a $\Rel$-equivalence is just a usual equivalence relation. 
Here, a quotient arrow for an equivalence relation $\eqrelr$ on a set $X$ is a function \fun{q}{X}{W} 
which is universal among those functions $f$ whose kernel includes the equivalence $\eqrelr$, that is, such that 
$\eqrelr(x,x')$ implies $f(x) = f(x')$. 
Effectiveness requires the converse inclusion, \ie $q(x) = q(x')$ implies $\eqrelr(x,x')$.  
Finally, the descent condition amounts to requiring $q$ to be surjective in the usual sense.
A choice for such a function $q$ 
is the usual quotient projection from $X$ to the set $X/\eqrelr$ of $\eqrelr$-equivalence classes, which maps $x\in X$ to its equivalence class $[x]$. 
Indeed, by definition this function is surjective and $\eqrelr(x,x')$ holds iff $[x] = [x']$. 
Moreover, for every function $f$ such that $\eqrelr(x,x')$ implies $f(x) = f(x')$, 
the function $[x] \mapsto f(x)$ turns out to be well-defined, proving that the quotient projection is universal. 
\end{example}

\begin{example}\label[ex]{ex:eq-rel-quo}
Consider the relational doctrine $\VRel\RPos$ of $\RPos$-relations  where $\RPos$ is the Lawvere's quantale \ple{[0,\infty],\ge,+,0}  (\cf \refItem{ex:rel-doc}{vrel})  and 
suppose \fun{\eqrelr}{X\times X}{[0,\infty]} is a $\VRel\RPos$-equivalence relation , \ie a pseudometric on $X$ (\cf \refItem{ex:eq-rel}{vrel}). 
Define an equivalence relation on $X$ setting $x\sim_\eqrelr y$ whenever $\eqrelr(x,x')\ne\infty$, that is, when $x$ and $x'$ are connected. 
The canonical surjection $\fun{q}{X}{X/\sim}$ mapping $x$ to $q(x)=[x]$ is a quotient arrow for $\eqrelr$. 
It is immediate to see that $\eqrelr(x,x')\ge \rid_{X/\sim_\eqrelr}([x],[x'])$ as $\rid_X([x],[x'])$ is either $0$ or $\infty$ and 
$\rid_{X/\sim_\eqrelr}([x],[x']) = \infty$ precisely when $x$ and $x'$ are not connected, that is, when $\eqrelr(x,x') = \infty$. 
The universality of $q$ easily follows from its universal property as a quotient of $\sim_\eqrelr$ in $\Rel$ (\cf \cref{ex:rel-quot}). 
This shows that $\VRel\RPos$ has quotient arrows for all pseudometrics, which are descent: 
for \fun{q}{X}{X/\sim_\eqrelr} a quotient of $\eqrelr$, 
the descent condition becomes 
$ 
\rid_{X/\sim_\eqrelr}(y,y')\ge \inf_{x\in X}\left( \rid_{X/\sim_\eqrelr}(y,q(x))+\rid_{X/\sim_\eqrelr}(q(x),y')\right)
$, 
which trivially holds since $q$ is surjective and $\rid_{X/\sim_\eqrelr}$ is either $0$ or $\infty$. 
However, such quotient arrows cannot be effective. 
Indeed, if \fun{f}{X}{Y}  is a function, since the relational identity $\rid_Y$ is only either $0$ or $\infty$, 
the kernel of $f$ is given by $x,x' \mapsto \rid_Y(f(x),f(x'))$, thus it takes values in $\{0,\infty\}$. 
Hence, if a quotient arrow $q$ for a pseudometric $\eqrelr$ were effective, then $\eqrelr$ would be either $0$ or $\infty$, as it would coincide with the kernel of $q$ and clearly this is not the case in general. 
This shows that $\VRel\RPos$ has not quotients in the sense of \cref{def:quotient}. 
\end{example}

\begin{example}\label[ex]{ex:quot-coeq}
Let \CC be a regular category and consider the relational doctrine $\JSpan\CC$ of jointly monic spans in \CC. 
A $\JSpan\CC$-equivalence relation  on an object $X$ is a jointly monic span $\spn{X}{p_1}{A}{p_2}{X}$ satisfying reflexivity, symmetry and transitivity. 
Then, one can easily see that a quotient arrow for such a span is just a coequalizer of $p_1$ and $p_2$, hence it is a regular epimorphism. 
Moreover, every regular epimorphism is the coequalizer of its kernel pair, which is  a jointly monic span that can be easily proved to be a $\JSpan\CC$-equivalence relation. 
Therefore, quotient arrows in $\JSpan\CC$ are exactly regular epimorphisms. 
Finally, note that \CC is exact precisely when $\JSpan\CC$ has quotients. 
\end{example}

In a relational doctrine with quotients, a form of the homomorphism theorem holds. 
More precisely, quotient and injective arrows determine an orthogonal factorization system, as the following results show. 

\begin{proposition}\label[prop]{prop:qi-lift}
Let \fun\RDoc{\bop\CC}{\Pos} be a relational doctrine with quotients. 
For every commutative square in \CC 
\[\xymatrix{
  X \ar[r]^-{f} \ar[d]_-{q} 
& Y \ar[d]^-{i} \\
  W \ar[r]_-{g} \ar@{..>}[ru]^-{h} 
& Z 
}\]
where $q$ is a quotient arrow and $i$ is injective, 
there exists a unique arrow $h$ making the two triangles commute. 
\end{proposition}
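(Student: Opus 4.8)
The plan is to reduce the existence of $h$ to the universal property of the quotient arrow $q$, and to obtain both uniqueness and the lower triangle from the fact that quotient arrows are epimorphisms. So first I would fix an $\RDoc$-equivalence relation $\eqrelr$ of which $q$ is a quotient arrow, so that $\eqrelr \order \gr{q}\rcomp\gr{q}\rconv$ and $q$ satisfies the universal property of \cref{def:quotient}. Throughout I use that graphs turn composition into relational composition, $\gr{g\circ q} = \gr{q}\rcomp\gr{g}$, that $(\relr\rcomp\rels)\rconv = \rels\rconv\rcomp\relr\rconv$, and that every graph is total and functional (\cref{prop:graph-fun}).

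The crucial step is to verify that the kernel of $f$ dominates $\eqrelr$, i.e.\ $\eqrelr \order \gr{f}\rcomp\gr{f}\rconv$, which is exactly the hypothesis needed to factor $f$ through $q$. I would establish the chain
\[
\eqrelr \order \gr{q}\rcomp\gr{q}\rconv \order \gr{q}\rcomp\gr{g}\rcomp\gr{g}\rconv\rcomp\gr{q}\rconv = \gr{g\circ q}\rcomp\gr{g\circ q}\rconv = \gr{i\circ f}\rcomp\gr{i\circ f}\rconv = \gr{f}\rcomp\gr{i}\rcomp\gr{i}\rconv\rcomp\gr{f}\rconv \order \gr{f}\rcomp\gr{f}\rconv,
\]
read left to right as follows: the first inequality is the quotient property of $q$; the second uses totality of $g$, namely $\rid_W \order \gr{g}\rcomp\gr{g}\rconv$; the two central equalities combine the graph identity for composition with commutativity of the square, $i\circ f = g\circ q$; and the last inequality is injectivity of $i$, $\gr{i}\rcomp\gr{i}\rconv \order \rid_Y$, together with monotonicity of $\rcomp$. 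Once $\eqrelr \order \gr{f}\rcomp\gr{f}\rconv$ holds, the universal property of $q$ yields a unique arrow $h$ with $f = h\circ q$, which is the upper triangle and already gives uniqueness of $h$ among arrows satisfying $h\circ q = f$.

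It then remains to check the lower triangle $i\circ h = g$. Here I would first record that quotient arrows are epimorphisms: given $a\circ q = b\circ q$, totality of $a$ gives $\eqrelr \order \gr{q}\rcomp\gr{q}\rconv \order \gr{q}\rcomp\gr{a}\rcomp\gr{a}\rconv\rcomp\gr{q}\rconv = \gr{a\circ q}\rcomp\gr{a\circ q}\rconv$, so $a\circ q$ factors uniquely through $q$ and hence $a = b$. Applying this to $(i\circ h)\circ q = i\circ(h\circ q) = i\circ f = g\circ q$ yields $i\circ h = g$, closing the square. Uniqueness of $h$ is already supplied by the universal property of $q$ (equivalently, by $q$ being epic).

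The main obstacle is the middle step: recognizing that injectivity of $i$ forces the kernels of $f$ and of $i\circ f$ to coincide, so that the commutativity $i\circ f = g\circ q$ can transport the containment $\eqrelr \order \gr{q}\rcomp\gr{q}\rconv$ across the square to $\eqrelr \order \gr{f}\rcomp\gr{f}\rconv$. Everything else is a direct application of the universal property together with the elementary relational identities for graphs.
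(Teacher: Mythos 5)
Your proof is correct and follows essentially the same route as the paper's: the same chain of inequalities transporting $\eqrelr \order \gr{q}\rcomp\gr{q}\rconv$ across the square via totality of $g$ and injectivity of $i$ to get $\eqrelr \order \gr{f}\rcomp\gr{f}\rconv$, then the universal property of $q$ for existence and uniqueness, and the epi-like argument for the lower triangle. You merely spell out explicitly (via totality of $\gr{a}$) why $q$ cancels on the right, a step the paper leaves implicit in the phrase ``and $q$ is a quotient arrow.''
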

\begin{proof}
Since the square commutes and $i$ is injective, we have the following inequalities:
\[ 
\gr{q}\rcomp\gr{q}\rconv 
  \order \gr{q}\rcomp\gr{g}\rcomp\gr{q}\rconv\rcomp\gr{q}\rconv 
  = \gr{f}\rcomp\gr{i}\rcomp\gr{i}\rconv\rcomp\gr{f}\rconv 
  \order \gr{f}\rcomp\gr{f}\rconv 
\]
Hence, since $q$ is a quotient arrow, we get a unique arrow \fun{h}{W}{Y} such that 
$f = h\circ q$. 
Moreover, we have $g = i\circ h$ because 
$g\circ q = i\circ f = i\circ h\circ q$ and $q$ is a quotient arrow. 
\end{proof}

\begin{proposition}\label[prop]{prop:qi-factor}
Let \fun\RDoc{\bop\CC}{\Pos} be a relational doctrine with quotients and \fun{f}{X}{Y} an arrow in \CC. 
Then, $f$ factors as $i\circ q$ where $q$ is a quotient arrow and $i$ is injective. 
\end{proposition}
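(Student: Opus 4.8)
The plan is to factor $f$ through the quotient of its own kernel. I would set $\eqrelr = \gr{f}\rcomp\gr{f}\rconv \in \RDoc(X,X)$; this is an $\RDoc$-equivalence relation on $X$, being the kernel of $f$ (recall that $\gr{f}$ is total and functional by \cref{prop:graph-fun}). Since $\RDoc$ has quotients, $\eqrelr$ admits an effective descent quotient arrow $\fun{q}{X}{W}$. As $\eqrelr \order \gr{f}\rcomp\gr{f}\rconv$ holds (in fact with equality), the universal property of $q$ from \cref{def:quotient}, applied to $f$ itself, yields a unique arrow $\fun{i}{W}{Y}$ with $f = i\circ q$. This $q$ is the quotient-arrow component of the factorization, so the whole task reduces to checking that $i$ is injective, \ie that $\gr{i}\rcomp\gr{i}\rconv \order \rid_W$.

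For the injectivity of $i$ I would first record the facts about $q$ that drive the argument. Since $\gr{q}$ is functional we have $\gr{q}\rconv\rcomp\gr{q}\order\rid_W$, and since $q$ is descent we have $\rid_W\order\gr{q}\rconv\rcomp\gr{q}$; together these give the equality $\gr{q}\rconv\rcomp\gr{q} = \rid_W$ (descent says precisely that $\gr{q}$ is surjective). From $f = i\circ q$ and functoriality of graphs, $\gr{f} = \gr{q}\rcomp\gr{i}$, hence $\gr{f}\rcomp\gr{f}\rconv = \gr{q}\rcomp\gr{i}\rcomp\gr{i}\rconv\rcomp\gr{q}\rconv$. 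Then I would pad the kernel of $i$ on both sides by $\gr{q}$ and substitute:
\begin{align*}
\gr{i}\rcomp\gr{i}\rconv
&= \gr{q}\rconv\rcomp\gr{q}\rcomp\gr{i}\rcomp\gr{i}\rconv\rcomp\gr{q}\rconv\rcomp\gr{q}
= \gr{q}\rconv\rcomp\eqrelr\rcomp\gr{q} \\
&\order \gr{q}\rconv\rcomp\gr{q}\rcomp\gr{q}\rconv\rcomp\gr{q}
= \rid_W
\end{align*}
where the first equality uses $\gr{q}\rconv\rcomp\gr{q}=\rid_W$ twice, the middle step uses $\eqrelr = \gr{f}\rcomp\gr{f}\rconv = \gr{q}\rcomp\gr{i}\rcomp\gr{i}\rconv\rcomp\gr{q}\rconv$, and the inequality uses the quotient-arrow property $\eqrelr\order\gr{q}\rcomp\gr{q}\rconv$ with monotonicity of composition. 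This gives $\gr{i}\rcomp\gr{i}\rconv\order\rid_W$, as required.

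The main obstacle I anticipate is getting the direction of the inequalities right and, relatedly, identifying exactly which clauses of \cref{def:quotient} are needed. The equality $\gr{q}\rconv\rcomp\gr{q}=\rid_W$ is what lets me wrap $\gr{i}\rcomp\gr{i}\rconv$ on both sides so as to expose the kernel of $f$, and this is the only place the descent hypothesis is used; effectiveness is not actually required here, since the one-sided inequality $\eqrelr\order\gr{q}\rcomp\gr{q}\rconv$ built into every quotient arrow already suffices to close the bound. Finally, I note that the statement asks only for the existence of the factorization; uniqueness and the associated lifting property are treated separately in \cref{prop:qi-lift}, so I would not attempt to establish them in this proof.
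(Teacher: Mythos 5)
Your proof is correct and follows essentially the same route as the paper's: factor $f$ through an effective descent quotient arrow $q$ of its kernel $\gr{f}\rcomp\gr{f}\rconv$, obtain $i$ from the universal property, and verify injectivity of $i$ by a relational computation exploiting $\gr{q}\rconv\rcomp\gr{q}=\rid_W$ (functionality plus descent). The only minor difference is that the paper invokes effectiveness to get the equality $\gr{i}\rcomp\gr{i}\rconv=\rid_W$, whereas you correctly observe that the one-sided inequality $\eqrelr\order\gr{q}\rcomp\gr{q}\rconv$ built into every quotient arrow already yields the bound $\gr{i}\rcomp\gr{i}\rconv\order\rid_W$, which is all that injectivity requires.
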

\begin{proof}
Let \fun{q}{X}{W} be a quotient arrow for the $\RDoc$-equivalence $\eqrelr = \gr{f}\rcomp\gr{f}\rconv$. 
Hence, we get an arrow \fun{i}{W}{Y} such that $f = i\circ q$. 
Towards a proof that $i$ is injective, note that 
$\gr{f} = \gr{q}\rcomp\gr{i}$ implies $\gr{i} = \gr{q}\rconv\rcomp\gr{f}$ because $q$ is surjective. 
Hence, we conclude 
$\gr{i}\rcomp\gr{i}\rconv = \gr{q}\rconv\rcomp\gr{f}\rcomp\gr{f}\rconv\rcomp\gr{q} = \gr{q}\rconv\rcomp\gr{q}\rcomp\gr{q}\rconv\rcomp\gr{q} = \rid_W$, as needed. 
\end{proof}

\cref{prop:qi-factor} shows that any arrow factors as a surjective arow followed by an injective one. 
However, in general surjections and injections do not form a factorization system essentially because not all surjections are quotients and the latter property is crucial  in the proof of \cref{prop:qi-lift,prop:qi-factor}. 
Indeed, one can easly prove that, if \fun{s}{X}{Y} is a surjective arrow, then in the factorization $s = i\circ q$ of \cref{prop:qi-factor} the arrow $i$ is  bijective and, unless the doctrine is balanced (\cf\cref{def:balance}) there is no way to deduce that $i$ is an isomorphism. 
In fact, the following result holds. 

\begin{proposition}\label[prop]{prop:quot-surj-balance}
Let $\RDoc$ be a relational doctrine with quotients. 
Then, $\RDoc$ is balanced if and only if every surjective arrow in the base of $\RDoc$ is a quotient arrow. 
\end{proposition}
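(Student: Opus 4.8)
The plan is to prove the two implications separately, after two preliminary reductions that will be used in both. First, I would observe that an arrow is a quotient arrow exactly when it is a quotient arrow of its own kernel $\gr{f}\rcomp\gr{f}\rconv$: if $q$ is a quotient of an equivalence $\eqrelr$, then $\eqrelr\order\gr{q}\rcomp\gr{q}\rconv$, and since the universal property quantifies over all $f$ with $\eqrelr\order\gr{f}\rcomp\gr{f}\rconv$, it transfers unchanged to the larger relation $\gr{q}\rcomp\gr{q}\rconv$, so $q$ is a quotient of its kernel as well. Second, I would record that quotient arrows are epimorphisms: given $a\circ q=b\circ q$, setting $g=a\circ q$ and using $\gr{g}=\gr{q}\rcomp\gr{a}$ together with totality $\rid\order\gr{a}\rcomp\gr{a}\rconv$ (\cref{prop:graph-fun}), one gets $\eqrelr\order\gr{q}\rcomp\gr{q}\rconv\order\gr{g}\rcomp\gr{g}\rconv$, so the uniqueness clause in the universal property of $q$ applied to $g$ forces $a=b$.

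For the implication from ``every surjection is a quotient arrow'' to balancedness, I would take a bijective arrow $f\colon X\to Y$. Injectivity gives $\gr{f}\rcomp\gr{f}\rconv\order\rid_X$, while totality of graphs gives the reverse inequality, so the kernel of $f$ is exactly $\rid_X$. Since $f$ is surjective, by hypothesis it is a quotient arrow, hence by the first reduction a quotient of $\rid_X$. Because $\id_X$ has kernel $\rid_X$, the universal property of $f$ produces a unique $g\colon Y\to X$ with $g\circ f=\id_X$. Then $f\circ g\circ f=f=\id_Y\circ f$, and since $f$ is an epimorphism I cancel $f$ on the right to obtain $f\circ g=\id_Y$; thus $f$ is an isomorphism and $\RDoc$ is balanced.

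For the converse, assume $\RDoc$ balanced and let $s\colon X\to Y$ be surjective. Using \cref{prop:qi-factor} I factor $s=i\circ q$ with $q$ a quotient arrow and $i$ injective. The crucial step is to show $i$ is also surjective: from $\gr{s}=\gr{q}\rcomp\gr{i}$ and functionality of $\gr{q}$, namely $\gr{q}\rconv\rcomp\gr{q}\order\rid_W$, I compute $\rid_Y\order\gr{s}\rconv\rcomp\gr{s}=\gr{i}\rconv\rcomp\gr{q}\rconv\rcomp\gr{q}\rcomp\gr{i}\order\gr{i}\rconv\rcomp\gr{i}$, which is precisely surjectivity of $i$. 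Hence $i$ is bijective, and balancedness upgrades it to an isomorphism. Finally, a quotient arrow postcomposed with an isomorphism is again a quotient arrow: the kernel is unchanged since $\gr{i}\rcomp\gr{i}\rconv=\rid_W$ for an iso $i$, and any factorization of an arrow through $s$ corresponds to a factorization through $q$ reindexed by $i^{-1}$. Therefore $s=i\circ q$ is a quotient arrow.

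The main obstacle I anticipate is the backward direction's surjectivity of $i$, where one must track carefully which of the four relational inequalities (functionality, totality, injectivity, surjectivity) points in the direction needed and feed functionality of $\gr{q}$ into the composite $\gr{s}\rconv\rcomp\gr{s}$. The other delicate point is the epimorphism lemma in the forward direction, where the right kernel inequality must be supplied to the uniqueness part of the universal property; beyond these, the argument is bookkeeping with $\gr{g\circ f}=\gr{f}\rcomp\gr{g}$ and the relational description of graphs from \cref{prop:left-adj}.
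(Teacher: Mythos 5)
Your proposal is correct and takes essentially the same route as the paper: the direction from balancedness uses the factorization of \cref{prop:qi-factor} and shows the injective part $i$ is also surjective, hence an isomorphism, while the converse shows a bijective arrow is a split monomorphism via the universal property of the quotient and cancels using that quotient arrows are epimorphisms. The only differences are cosmetic: you derive surjectivity of $i$ directly from $\gr{s}=\gr{q}\rcomp\gr{i}$, surjectivity of $s$ and functionality of $\gr{q}$, where the paper instead uses $\gr{i}=\gr{q}\rconv\rcomp\gr{s}$ and effectiveness of $q$, and you spell out the auxiliary facts (quotient arrows are epimorphisms, a quotient arrow followed by an isomorphism is again a quotient arrow, and a quotient arrow is a quotient of its own kernel) that the paper uses implicitly.
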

\begin{proof}
To prove the left-to-right implication, consider a surjective arrow \fun{s}{X}{Y} and its factorization $s = i\circ q$ as in \cref{prop:qi-factor}. 
Then, from $\gr{i} = \gr{q}\rconv\rcomp\gr{s}$ we deduce 
$ \gr{i}\rconv\rcomp\gr{i} 
  = \gr{s}\rconv\rcomp\gr{q}\rcomp\gr{q}\rconv\rcomp\gr{s} 
  = \gr{s}\rconv\rcomp\gr{s}\rcomp\gr{s}\rconv\rcomp\gr{s} 
  = \rid_Y$, 
hence $i$ is surjective. 
Then, since $\RDoc$ is balanced, we conclude that $i$ is an isomorphism and thus $s$ is a quotient arrow. 

Towards a proof of the right-to-left implication, consider a bijective arrow \fun{f}{X}{Y}. 
Then, $f$ is surjective and so it is a quotient arrow of $\gr{f}\rcomp\gr{f}\rconv = \rid_X$. 
Therefore, there is a unique arrow \fun{g}{Y}{X} such that $g\circ f = \id_X$, that is, $f$ is a split monomorphism. 
Finally, since $f$ is a quotient arrow, it is an epimorphism and so we conclude that $f$ is an isomorphism, as needed. 
\end{proof}

Therefore, in a balanced relational doctrine with quotients 
surjective and injective arrows form an orthogonal factorization system. 

\medskip 

\cref{ex:eq-rel-quo} shows that relational doctrines need not have quotients in general.
Hence, we now describe a free construction that takes a relational doctrine \fun{\RDoc}{\bop\CC}{\Pos} and builds a new one 
\fun{\QR\RDoc}{\bop{\QC\RDoc}}{\Pos} which has (effective descent) quotients for all equivalence relations. 
The construction is inspired by the quotient completion in \cite{MaiettiME:eleqc,MaiettiME:quofcm} and a comparison with it is delayed to \cref{sect:eed}.

The  category \QC\RDoc is defined as follows: 
\begin{itemize}
\item an object is a pair \ple{X,\eqrelr}, where $X$ is an object in \CC and $\eqrelr$ is a $\RDoc$-equivalence relation on $X$,
\item an arrow \fun{f}{\ple{X,\eqrelr}}{\ple{Y,\eqrels}} is an arrow \fun{f}{X}{Y} in \CC such that 
$\eqrelr \order \RDoc\reidx{f,f}(\eqrels)$, and 
\item composition and identities are those of \CC. 
\end{itemize} 
By \cref{prop:left-adj} the condition $\eqrelr \order \RDoc\reidx{f,f}(\eqrels)$ is equivalent to
both $\eqrelr\order \gr{f}\rcomp \eqrels\rcomp \gr{f}\rconv$ 
and $\gr{f}\rconv\rcomp \eqrelr\rcomp\gr{f} \order \eqrels$. 

Given $\RDoc$-equivalence relations $\eqrelr$ and $\eqrels$ over $X$ and $Y$, respectively, 
the suborder $\Des\eqrelr\eqrels(X,Y)$ of $\RDoc(X,Y)$ of \emph{descent data} with respect to $\eqrelr$ and $\eqrels$ is defined by 
\[\Des\eqrelr\eqrels(X,Y) = \{\relr\in\RDoc(X,Y) \mid \eqrelr\rconv \rcomp \relr \rcomp \eqrels \order \relr\}\] 
Roughly, a descent datum is a relation which is closed w.r.t. $\eqrelr$ on the left and $\eqrels$ on the right. 
For every arrow \fun{f}{\ple{X,\eqrelr}}{\ple{X',\eqrelr'}} and \fun{g}{\ple{Y,\eqrels}}{\ple{Y',\eqrels'}} in \QC\RDoc, 
the monotone function \fun{\RDoc\reidx{f,g}}{\RDoc(X',Y')}{\RDoc(X,Y)} applies $\Des{\eqrelr'}{\eqrels'}(X',Y')$ into $\Des\eqrelr\eqrels(X,Y)$ as
Indeed, for $\relr\in\Des{\eqrelr'}{\eqrels'}(X',Y')$, we have 
\[ \eqrelr\rconv \rcomp \RDoc\reidx{f,g}(\relr) \rcomp \eqrels 
  \order \RDoc\reidx{f,f}({\eqrelr'}\rconv) \rcomp \RDoc\reidx{f,g}(\relr) \rcomp \RDoc\reidx{g,g}(\eqrels') 
  \order \RDoc\reidx{f,g}({\eqrelr'}\rconv\rcomp\relr\rcomp\eqrels') 
  \order \RDoc\reidx{f,g}(\relr)\]
Therefore the assignments
$\QR\RDoc(\ple{X,\eqrelr},\ple{Y,\eqrels}) = \Des\eqrelr\eqrels(X,Y)$ and $\QR\RDoc\reidx{f,g} = \RDoc\reidx{f,g}$ determine a functor  \fun{\QR\RDoc}{\bop{\QC\RDoc}}{\Pos}.

\begin{proposition}\label[prop]{prop:qc-doc}
Let $\RDoc$ be a relational doctrine over \CC. 
The functor \fun{\QR\RDoc}{\bop{\QC\RDoc}}{\Pos} is a relational doctrine, 
where composition and converse are those of $\RDoc$ and $\rid_{\ple{X,\eqrelr}} = \eqrelr$. 
\end{proposition}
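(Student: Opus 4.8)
The plan is to verify each clause of \cref{def:rel-doc} for the stated data. The fact that $\QR\RDoc$ is a functor $\bop{\QC\RDoc}\to\Pos$ is already in place (it is exactly the computation showing reindexing preserves descent data), so the remaining task is to check that the chosen identity element, composition and converse genuinely equip $\QR\RDoc$ with relational structure. Because composition, converse and reindexing on $\QR\RDoc$ are \emph{literally} those of $\RDoc$ restricted to the descent-data fibres $\Des\eqrelr\eqrels(X,Y)$, the associativity law, the converse laws $(\relr\rcomp\rels)\rconv=\rels\rconv\rcomp\relr\rconv$ and $\relr\rdconv=\relr$, and the lax-naturality inequalities for composition and converse hold automatically, each being an instance of the corresponding fact in $\RDoc$. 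Thus the real content is threefold: the three operations must land in the descent-data fibres, the element $\eqrelr$ must be a legitimate identity, and $\eqrelr$ must act as a two-sided unit for composition. Throughout I will use the two consequences of the equivalence-relation axioms that $\eqrelr=\eqrelr\rconv$ (symmetry, promoted to an equality via the involution as in the discussion following \cref{def:rel-doc}) and $\eqrelr=\eqrelr\rcomp\eqrelr$ (reflexivity gives $\eqrelr\order\eqrelr\rcomp\eqrelr$, transitivity the reverse).

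First I would check that $\rid_{\ple{X,\eqrelr}}=\eqrelr$ lies in $\Des\eqrelr\eqrelr(X,X)$, i.e.\ that $\eqrelr\rconv\rcomp\eqrelr\rcomp\eqrelr\order\eqrelr$; rewriting $\eqrelr\rconv=\eqrelr$ and applying transitivity twice gives this. The lax-naturality requirement for the identity, $\rid_{\ple{X,\eqrelr}}\order\QR\RDoc\reidx{f,f}(\rid_{\ple{Y,\eqrels}})$, unwinds to $\eqrelr\order\RDoc\reidx{f,f}(\eqrels)$, which is precisely the defining condition for $f$ to be an arrow $\ple{X,\eqrelr}\to\ple{Y,\eqrels}$ in $\QC\RDoc$, so nothing new is needed. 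For converse, given $\relr\in\Des\eqrelr\eqrels(X,Y)$ I apply $(\blank)\rconv$ to the closure inequality $\eqrelr\rconv\rcomp\relr\rcomp\eqrels\order\relr$ and use the converse laws to obtain $\eqrels\rconv\rcomp\relr\rconv\rcomp\eqrelr\order\relr\rconv$, which says exactly $\relr\rconv\in\Des\eqrels\eqrelr(Y,X)$.

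The crux is the good behaviour of composition. To see that $\relr\rcomp\rels\in\Des\eqrelr\eqrelt(X,Z)$ whenever $\relr\in\Des\eqrelr\eqrels(X,Y)$ and $\rels\in\Des\eqrels\eqrelt(Y,Z)$, I would bound $\eqrelr\rconv\rcomp\relr\rcomp\rels\rcomp\eqrelt$: first pad between $\relr$ and $\rels$ using reflexivity of $\eqrels$ (so $\rels\order\eqrels\rcomp\rels$), then split the inserted copy as $\eqrels\rcomp\eqrels$, and finally regroup into $(\eqrelr\rconv\rcomp\relr\rcomp\eqrels)\rcomp(\eqrels\rcomp\rels\rcomp\eqrelt)$. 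The left factor collapses to $\relr$ by the descent condition on $\relr$, and the right factor, rewritten as $\eqrels\rconv\rcomp\rels\rcomp\eqrelt$, collapses to $\rels$ by the descent condition on $\rels$; hence the whole expression is $\order\relr\rcomp\rels$. The same two identities drive the unit laws: reflexivity of $\eqrelr$ gives $\relr\order\eqrelr\rcomp\relr$, while for the reverse inequality I pad $\relr$ on the right by $\eqrels$ and then apply $\eqrelr\rconv\rcomp\relr\rcomp\eqrels\order\relr$ together with $\eqrelr=\eqrelr\rconv$, yielding $\eqrelr\rcomp\relr=\relr$; by the remark following \cref{def:rel-doc} this one unit law, with the other axioms, forces $\relr\rcomp\eqrels=\relr$ as well.

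I expect the only genuinely delicate point to be keeping the bookkeeping of the one-sided closure conditions straight in the composition argument: the descent property of $\relr$ controls padding by $\eqrelr$ and $\eqrels$ but says nothing about $\eqrelt$, and dually for $\rels$, so the argument must route each absorbed copy of an equivalence to the factor that can actually absorb it. This is exactly what forces the step of splitting $\eqrels$ as $\eqrels\rcomp\eqrels$ rather than trying to discharge everything with a single application of transitivity; a naive grouping leaves a stray $\eqrelr\rconv$ or $\eqrelt$ with no matching copy of $\eqrels$ to absorb it.
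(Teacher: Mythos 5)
Your proposal is correct and follows essentially the same route as the paper's proof: inherit the purely compositional/converse axioms from $\RDoc$, verify that identity, composition and converse land in the descent-data fibres (with the same padding-and-regrouping computation for composition), establish $\eqrelr=\eqrelr\rconv$ from symmetry, prove one unit law directly, and discharge the other via the redundancy remark following \cref{def:rel-doc}. The only differences are cosmetic — you prove the left unit law where the paper proves the right one, and you explicitly note that lax naturality of the identity is exactly the arrow condition in $\QC\RDoc$, which the paper leaves implicit.
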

\begin{proof} 
We first check that composition and converse are well-defined. 
For  $\relr\in\Des\eqrelr\eqrels(X,Y)$ and $\rels\in\Des\eqrels\eqrelt(Y,Z)$, 
the properties of the converse lead to 
$
\eqrels\rconv\rcomp  \relr\rconv \rcomp \eqrelr 
  = (\eqrelr\rconv \rcomp \relr \rcomp \eqrels)\rconv 
  \order \relr\rconv 
$, proving that $\relr\rconv\in\Des\eqrels\eqrelr(Y,X)$. 
By reflexivity and symmetry of $\eqrels$, we get 
\[
\eqrelr\rconv\rcomp\relr\rcomp\rels\rcomp\eqrelt 
  = \eqrelr\rconv\rcomp\relr\rcomp\rid_Y\rcomp\rid_Y\rcomp\rels\rcomp\eqrelt 
  \order \eqrelr\rconv\rcomp\relr\rcomp\eqrels\rcomp\eqrels\rconv\rcomp\rels\rcomp\eqrelt 
  \order \relr\rcomp\rels
\]
showing that $\relr\rcomp\rels\in\Des\eqrelr\eqrelt(X,Z)$. 
Finally, by transitivity of $\eqrelr$, we have 
$
\eqrelr\rconv\rcomp\rid_{\ple{X,\eqrelr}}\rcomp\eqrelr 
  = \eqrelr\rcomp\eqrelr\rcomp\eqrelr 
  \order \eqrelr 
  = \rid_{\ple{X,\eqrelr}}
$, proving that $\rid_{\ple{X,\eqrelr}}\in\Des\eqrelr\eqrelr(X,X)$. 

Since composition and converse are those of $\RDoc$, the equational axioms of \cref{def:rel-doc} concerning  only composition and converse hold trivially. 
Hence, we have only to check those involving the relaitonal identity. 
By simmetry of $\eqrelr$, we have $\rid_{\ple{X,\eqrelr}}\rconv = \eqrelr\rconv = \eqrelr = \rid_{\ple{X,\eqrelr}}$. 
By the descent condition and reflexivity and symmetry of $\eqrelr$ and $\eqrels$, we have 
\[
\relr\rcomp\rid_{\ple{Y,\eqrels}} =\rid_X\rcomp\relr\rcomp\rid_{\ple{Y,\eqrels}}
  = \rid_X\rcomp \relr\rcomp\eqrels 
  \order \eqrelr \rcomp \relr \rcomp\eqrels   
  = \eqrelr\rconv \rcomp \relr \rcomp\eqrels 
  \order \relr 
  = \relr\rcomp\rid_Y 
  \order \relr \rcomp \eqrels 
  = \relr\rcomp\rid_{\ple{Y,\eqrels}} 
\]
proving that $\relr\rcomp\rid_{\ple{Y,\eqrels}} = \relr$. 
The equality $\rid_{\ple{X,\eqrelr}}\rcomp\relr = \relr$ is redundant.
\end{proof}

A $\QR\RDoc$-equivalence relation over an object \ple{X,\eqrelr} is a $\RDoc$-equivalence $\eqrels$ over $X$ such that $\eqrelr\order\eqrels$. 
Note that these conditions imply that $\eqrels$ is a descent datum in $\Des\eqrelr\eqrelr(X,X)$. 
Then, \ple{X,\eqrels} is an object of \QC\RDoc and 
\fun{\id_X}{\ple{X,\eqrelr}}{\ple{X,\eqrels}} 
is a well-defined arrow in \QC\RDoc, which turns out to be 
an effective descent quotient arrow for $\eqrels$. 
In this way we construct quotient arrows for all $\QR\RDoc$-equivalence relations, thus obtaining the following result. 

\begin{proposition}\label[prop]{prop:qc-quot-doc}
Let $\RDoc$ be a relational doctrine over $\CC$. 
The relational doctrine $\QR\RDoc$ over $\QC\RDoc$ has effective descent quotients. 
\end{proposition}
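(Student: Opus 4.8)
The plan is to verify that the effective descent quotient arrow for a $\QR\RDoc$-equivalence is exactly the one indicated just before the statement: an identity of \CC that merely refines the equivalence relation carried by the object. Throughout, the crucial bookkeeping point is that, while relational composition and converse in $\QR\RDoc$ are inherited verbatim from $\RDoc$, the relational identity at an object $\ple{Z,\eqrelt}$ is $\eqrelt$ and not $\rid_Z$; hence graphs and kernels computed in $\QR\RDoc$ must first be translated back into $\RDoc$-data before the equivalence-relation axioms can be applied, and \cref{prop:left-adj} is the tool that makes this translation possible.

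First I would fix the characterization of $\QR\RDoc$-equivalences. A $\QR\RDoc$-equivalence on $\ple{X,\eqrelr}$ is a descent datum $\eqrels\in\Des\eqrelr\eqrelr(X,X)$ that is reflexive, symmetric and transitive for the operations of $\QR\RDoc$. Reflexivity reads $\eqrelr=\rid_{\ple{X,\eqrelr}}\order\eqrels$, while symmetry and transitivity coincide with the $\RDoc$-axioms since the two operations agree. Conversely, any $\RDoc$-equivalence $\eqrels$ with $\eqrelr\order\eqrels$ satisfies $\rid_X\order\eqrelr\order\eqrels$, is symmetric and transitive, and lies in $\Des\eqrelr\eqrelr(X,X)$ because $\eqrelr\order\eqrels$ yields $\eqrelr\rconv\rcomp\eqrels\rcomp\eqrelr\order\eqrels\rcomp\eqrels\rcomp\eqrels\order\eqrels$ by symmetry and transitivity. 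So the $\QR\RDoc$-equivalences on $\ple{X,\eqrelr}$ are precisely the $\RDoc$-equivalences $\eqrels$ on $X$ with $\eqrelr\order\eqrels$; for each, $\ple{X,\eqrels}$ is an object of $\QC\RDoc$ and $q\coloneqq\id_X$ is an arrow $\ple{X,\eqrelr}\to\ple{X,\eqrels}$ of $\QC\RDoc$.

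Next I would compute graphs and kernels in $\QR\RDoc$. For any $\fun{f}{\ple{X,\eqrelr}}{\ple{Z,\eqrelt}}$, its graph in $\QR\RDoc$ is $\QR\RDoc\reidx{f,\id_Z}(\rid_{\ple{Z,\eqrelt}})=\RDoc\reidx{f,\id_Z}(\eqrelt)$, which by \cref{prop:left-adj} equals $\gr{f}\rcomp\eqrelt$ with $\gr{f}$ the $\RDoc$-graph; taking converses and using symmetry and idempotency $\eqrelt\rcomp\eqrelt=\eqrelt$, its kernel in $\QR\RDoc$ is $\gr{f}\rcomp\eqrelt\rcomp\eqrelt\rcomp\gr{f}\rconv=\gr{f}\rcomp\eqrelt\rcomp\gr{f}\rconv=\RDoc\reidx{f,f}(\eqrelt)$, again by \cref{prop:left-adj}. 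Specialising to $q=\id_X$ (so $\eqrelt=\eqrels$ and $\gr{f}=\rid_X$) shows that the graph of $q$ in $\QR\RDoc$ is $\eqrels$ and its kernel is $\eqrels\rcomp\eqrels=\eqrels$. This gives at once effectiveness, $\eqrels=\gr{q}\rcomp\gr{q}\rconv$, and, since $\rid_{\ple{X,\eqrels}}=\eqrels$ and $\gr{q}\rconv\rcomp\gr{q}=\eqrels\rcomp\eqrels=\eqrels$, the descent condition.

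Finally I would check universality. Given $\fun{f}{\ple{X,\eqrelr}}{\ple{Z,\eqrelt}}$ with $\eqrels\order\gr{f}\rcomp\gr{f}\rconv$ in $\QR\RDoc$, the kernel computation rewrites this hypothesis as $\eqrels\order\RDoc\reidx{f,f}(\eqrelt)$, which is exactly the condition for the underlying \CC-arrow $f$ to define an arrow $h\coloneqq f\colon\ple{X,\eqrels}\to\ple{Z,\eqrelt}$ of $\QC\RDoc$. Since $q=\id_X$ and composition in $\QC\RDoc$ is that of \CC, we get $f=h\circ q$, and any $h'$ with $f=h'\circ q$ has the same underlying \CC-arrow $f$, hence equals $h$, giving existence and uniqueness. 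The main obstacle, such as it is, is the translation in the third paragraph: one must consistently replace the $\QR\RDoc$-identity by the relevant equivalence relation and invoke \cref{prop:left-adj} to pass between reindexing and relational composition, after which every quotient axiom collapses to symmetry and idempotency of the equivalence relations involved.
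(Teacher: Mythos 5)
Your proof is correct and takes essentially the same route as the paper's: identify $\QR\RDoc$-equivalences on $\ple{X,\eqrelr}$ with $\RDoc$-equivalences $\eqrels$ satisfying $\eqrelr\order\eqrels$, take the quotient arrow to be \fun{\id_X}{\ple{X,\eqrelr}}{\ple{X,\eqrels}}, and verify universality, effectiveness and descent by translating $\QR\RDoc$-graphs and kernels back into $\RDoc$-data. Your explicit kernel computation via \cref{prop:left-adj} is the same identification the paper uses when it writes the hypothesis as $\eqrels\order\QR\RDoc\reidx{f,f}(\rid_{\ple{Y,\eqrelt}})=\RDoc\reidx{f,f}(\eqrelt)$, so the two arguments match step for step.
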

\begin{proof} 
Let \ple{X,\eqrelr} be an object in \QC\RDoc and $\eqrels$ a $\QR\RDoc$-equivalence over it. That is $\eqrels\order\eqrels\rconv$ and $\eqrels\rcomp\eqrels \order \eqrels$ and $\rid_{\ple{X,\eqrelr}} = \eqrelr \order \eqrels$. By reflexivity of $\eqrelr$, we get $\rid_X \order \eqrelr \order \eqrels$, proving that $\eqrels$ is a $\RDoc$-equivalence over $X$. 
Therefore, \ple{X,\eqrels} is an object in \QC\RDoc and 
\fun{\id_X}{\ple{X,\eqrelr}}{\ple{X,\eqrels}} is a well-defined arrow in \QC\RDoc, since $\eqrelr \order \eqrels = \RDoc\reidx{\id_X,\id_X}(\eqrels)$. 
To prove this is a quotient arrow for $\eqrels$, let us consider  an arrow \fun{f}{\ple{X,\eqrelr}}{\ple{Y,\eqrelt}} in \QC\RDoc such that $\eqrels \order \QR\RDoc\reidx{f,f}(\rid_{\ple{Y,\eqrelt}}) = \RDoc\reidx{f,f}(\eqrelt)$. 
So the arrow \fun{f}{\ple{X,\eqrels}}{\ple{Y,\eqrelt}} is well-defined in \QC\RDoc and the diagram 
\[\xymatrix{
\ple{X,\eqrelr} \ar[d]_-{\id_X} \ar[rd]^-{f} \\
\ple{X,\eqrels} \ar[r]^-f & \ple{Y,\eqrelt} 
}\]
trivially commutes. Uniqueness of $f$ is obvious. 
Effectiveness follows  from $\eqrels = \RDoc\reidx{\id_X,\id_X}(\eqrels)=\QR\RDoc\reidx{\id_X,\id_X}(\rid_{\ple{X,\eqrels}})$.
Finally, \fun{\id_X}{\ple{X,\rid_X}}{\ple{X,\eqrelr}} is descent because 
its graph in $\QR\RDoc$ is given by $\QR\RDoc\reidx{\id_X,\id_{\ple{X,\eqrelr}}}(\rid_{\ple{X,\eqrelr}}) = \eqrelr$, hence we have 
$\rid_{\ple{X,\rid_X}} = \rid_X = \rid_X\rcomp\rid_X \order \eqrelr\rconv\rcomp\eqrelr$, by reflexivity and symmetry of $\eqrelr$, which proves that the graph of $\id_X$ is surjective in $\QR\RDoc$. 
\end{proof}

\begin{example} \label[ex]{ex:qcat}
\begin{enumerate}
\item\label{ex:qcat:vrel} 
For the doctrine $\VRel\Qtl$ of $\Qtl$-relations, the category $\QC{\VRel\Qtl}$ is the category of $\Qtl$-metric spaces with non-expansive maps. 
By \refItem{ex:eq-rel}{vrel}, an object \ple{X,\eqrelr} is a $\Qtl$-metric space and \fun{f}{\ple{X,\eqrelr}}{\ple{Y,\eqrels}} has to satisfy $\eqrelr(x,x') \qord \eqrels(f(x),f(x'))$.
\item\label{ex:qcat:vec} 
For the relational doctrine $\VecDoc$ over the category of real vector spaces, 
 $\QC{\VecDoc}$ is the category of semi-normed vector spaces with short maps. 
An object \ple{X,\eqrelr} in \QC\VecDoc is a vector space with a lax monoidal and homogeneous pseudometric on it. Such a pseudometric satisfies $\eqrelr(\vecx,\vecy) = \eqrelr(\veczero,\vecy-\vecx)$ (see \refItem{ex:eq-rel}{vec}). 
Then, it is easy to see that the assignment $\| \vecx \|_\eqrelr = \eqrelr(\veczero,\vecx)$ defines a semi-norm on $X$, whose induced pseudometric is exactly $\eqrelr$. 
\end{enumerate}
\end{example}

Following Lawvere's structural approach to  logic, 
we can characterise
the property of having effective descent quotients by an adjunction in \RDtnl (hence, also in \RDtnl). 
First observe that 
the doctrine $\RDoc$ is embedded into $\QR\RDoc$ by the (strict) 1-arrow  \oneAr{\QAr\RDoc}{\RDoc}{\QR\RDoc} in \RDtnl defined as follows: 
the functor \fun{\fn{\QAr\RDoc}}{\CC}{\QC\RDoc} maps $\fun{f}{X}{Y}$ in \CC to \fun{f}{\ple{X,\rid_X}}{\ple{Y,\rid_Y}}; 
the natural transformation \nt{\lift{\QAr\RDoc}}{\RDoc}{\QR\RDoc\circ \bop{\fn{\QAr\RDoc}}} is the family of identities $\RDoc(X,Y) =\Des{\rid_X}{\rid_Y}(X,Y)$.
The 1-arrow $\QAr\RDoc$ shows that constructing $\QR\RDoc$ ``extends'' $\RDoc$ adding (effective descent) quotients for any equivalence relation. 

\begin{lemma}\label[lem]{lem:quot-adj}
A relational doctrine $\RDoc$ has effective descent quotients if and only if 
$\QAr\RDoc$ has a strict reflection left adjoint \oneAr{F}{\QR\RDoc}{\RDoc}. 
\end{lemma}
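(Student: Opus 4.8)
The plan is to prove both implications by producing the reflector explicitly and matching its universal property with \cref{def:quotient}. Throughout I write $X_\eqrelr$ for the codomain of a chosen quotient and read reindexing relationally via \cref{prop:left-adj}.

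For the left-to-right implication, assume $\RDoc$ has effective descent quotients and choose, for every $\RDoc$-equivalence $\eqrelr$ on $X$, an effective descent quotient arrow $\fun{q_\eqrelr}{X}{X_\eqrelr}$, taking $q_{\rid_X}=\id_X$. I would set $\fn F\ple{X,\eqrelr}=X_\eqrelr$; on an arrow $\fun{f}{\ple{X,\eqrelr}}{\ple{Y,\eqrels}}$ the relation $\gr{f}\rcomp\eqrels\rcomp\gr{f}\rconv$ is the kernel of $q_\eqrels\circ f$ by effectiveness of $q_\eqrels$, and the arrow condition $\eqrelr\order\gr{f}\rcomp\eqrels\rcomp\gr{f}\rconv$ (\cref{prop:left-adj}) lets the universal property of $q_\eqrelr$ produce a unique $\fn F f$ with $\fn F f\circ q_\eqrelr=q_\eqrels\circ f$, functoriality following from uniqueness. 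On relations I would put $\lift F_{\ple{X,\eqrelr},\ple{Y,\eqrels}}(\relr)=\Ex^\RDoc\reidx{q_\eqrelr,q_\eqrels}(\relr)=\gr{q_\eqrelr}\rconv\rcomp\relr\rcomp\gr{q_\eqrels}$, the left adjoint to reindexing from \cref{prop:left-adj}.

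The core of this direction is that $F$ is a \emph{strict} $1$-arrow in $\RDtn$. Using effectiveness ($\eqrels=\gr{q_\eqrels}\rcomp\gr{q_\eqrels}\rconv$) and descent (so $\gr{q_\eqrelr}\rconv\rcomp\gr{q_\eqrelr}=\rid_{X_\eqrelr}$, as $\gr{q_\eqrelr}$ is functional by \cref{prop:graph-fun} and surjective by descent) one computes $\lift F(\rid_{\ple{X,\eqrelr}})=\lift F(\eqrelr)=\rid_{X_\eqrelr}$ and $\lift F(\relr\rcomp\rels)=\lift F(\relr)\rcomp\lift F(\rels)$, the cross term $\gr{q_\eqrels}\rcomp\gr{q_\eqrels}\rconv=\eqrels$ being absorbed because descent data are closed on the right under $\eqrels$; strict preservation of $\rconv$ is immediate, and naturality of $\lift F$ follows from $\fn F f\circ q_\eqrelr=q_\eqrels\circ f$ and \cref{prop:graph-fun}. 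Since $q_{\rid_X}=\id_X$, we get $F\circ\QAr\RDoc=\Id_\RDoc$ on the nose, so the counit is the identity; the unit is the $2$-cell $\twoAr{\eta}{\Id_{\QR\RDoc}}{\QAr\RDoc\circ F}$ whose component at $\ple{X,\eqrelr}$ is $q_\eqrelr$, viewed as $\ple{X,\eqrelr}\to\ple{X_\eqrelr,\rid_{X_\eqrelr}}$. Its $2$-cell inequality reduces, through the pushforward--reindexing adjunction, to the identity $\eqrelr\rcomp\relr\rcomp\eqrels=\relr$ on descent data, and the triangle identities hold by uniqueness of quotient factorizations, yielding the strict reflection $F\dashv\QAr\RDoc$.

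For the right-to-left implication, assume a strict reflection $F\dashv\QAr\RDoc$ with unit $\eta$, normalized so that $F\circ\QAr\RDoc=\Id_\RDoc$. Given an $\RDoc$-equivalence $\eqrelr$ on $X$, put $q=\eta_{\ple{X,\eqrelr}}\colon X\to X_\eqrelr:=\fn F\ple{X,\eqrelr}$, an arrow $\ple{X,\eqrelr}\to\ple{X_\eqrelr,\rid_{X_\eqrelr}}$ in $\QC\RDoc$, so $\eqrelr\order\gr{q}\rcomp\gr{q}\rconv$ by \cref{prop:left-adj}. The base adjunction $\fn F\dashv\fn{\QAr\RDoc}$ gives a bijection $\CC(X_\eqrelr,Z)\cong\QC\RDoc(\ple{X,\eqrelr},\ple{Z,\rid_Z})$ given by precomposition with $q$; since arrows $\ple{X,\eqrelr}\to\ple{Z,\rid_Z}$ are exactly the $\fun{f}{X}{Z}$ with $\eqrelr\order\gr{f}\rcomp\gr{f}\rconv$, this bijection is precisely the universal property making $q$ a quotient arrow. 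For effectiveness and descent I would transport the effective descent quotients of $\QR\RDoc$ (\cref{prop:qc-quot-doc}) along the reflector: naturality of $\eta$ on the quotient $\fun{\id_X}{\ple{X,\rid_X}}{\ple{X,\eqrelr}}$ of \cref{prop:qc-quot-doc} gives $q=\fn F(\id_X)$, while $\lift F(\eqrelr)=\eqrelr$ since $F\circ\QAr\RDoc=\Id_\RDoc$; identifying $\lift F$ on descent data with the pushforward $\Ex^\RDoc\reidx{\eta,\eta}$ (one inequality from the $2$-cell condition on $\eta$ and functionality of graphs, the reverse from lax preservation of graphs together with $\fn F(\eta)=\id$) then forces $\gr{q}\rcomp\gr{q}\rconv=\eqrelr$ and $\rid_{X_\eqrelr}\order\gr{q}\rconv\rcomp\gr{q}$.

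The main obstacle is the relation-level bookkeeping rather than the categorical skeleton: in both directions the universal property at the level of base categories is routine, but upgrading it to \emph{effective descent} quotients requires pinning down the action $\lift F$ of the reflector on descent data as the left adjoint to reindexing, equivalently showing the reflector preserves effective descent quotients. This is where \cref{prop:graph-fun,prop:left-adj}, the descent-datum closure condition, and the effectiveness/descent equations must be combined carefully, and I expect it to be the delicate step.
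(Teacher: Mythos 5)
Your proposal is correct and follows essentially the same route as the paper: in the forward direction you build the reflector from a choice of quotient arrows, with $\fn{F}$ defined via the universal property and $\lift{F}$ given by the pushforward $\Ex^\RDoc\reidx{q,q'}$, and in the converse direction you read the quotient arrows off the unit components and verify effectiveness and descent through strictness and naturality of $\lift{F}$, exactly as in the paper's proof (your normalization $q_{\rid_X}=\id_X$, which makes the counit an identity, is a harmless streamlining of the paper's step showing $q_{\ple{X,\rid_X}}$ is an isomorphism). The one point you should make explicit is the WLOG normalization $F\circ\QAr\RDoc = \Id_\RDoc$ in the converse direction: the hypothesis only provides an invertible counit $\epsilon$, so you must either justify the normalization (conjugating $F$ by $\epsilon$, which works because $\fn{\QAr\RDoc}$ is injective on objects and the 2-arrow inequality for an invertible 2-arrow becomes an equality) or, as the paper does, carry $\epsilon^{-1}$ through the computations by setting $q_\eqrelr = \fn{F}(\id_X)\circ\epsilon_X^{-1}$.
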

This means that the 1-arrow \oneAr{F}{\QR\RDoc}{\RDoc} is strict and it is a left adjoint of $\QAr\RDoc$ in \RDtnl and the counit of this adjunction is an isomorphism, hence $F\circ\QAr\RDoc \cong \Id_\RDoc$. 
Intuitively, the 1-arrow \oneAr{F}{\QR\RDoc}{\RDoc} computes quotients of $\RDoc$-equivalence relations: the object $\fn{F}\ple{X,\eqrelr}$ is the codomain of a quotient arrow obtained by applying $\fn{F}$ to \fun{\id_X}{\ple{X,\rid_X}}{\ple{X,\eqrelr}} which is the quotient arrow of $\eqrelr$ in $\QR\RDoc$ viewed as a $\QR\RDoc$-equivalence over \ple{X,\rid_X}. 

\begin{proof}[of \cref{lem:quot-adj}]
Let \fun{\RDoc}{\bop\CC}{\Pos} be a relational doctrine and assume that \oneAr{F}{\QR\RDoc}{\RDoc} is a reflection left adjoint of \oneAr{\QAr\RDoc}{\RDoc}{\QR\RDoc} in \RDtnl and denote by $\epsilon$ the counit of the adjunction $F\dashv\QAr\RDoc$, which is an isomorphism. 
We have to prove that $\RDoc$ has effective descent quotients.
Let us consider a $\RDoc$-equivalence $\eqrelr$ over $X$ in \CC. 
Denote by $q_\eqrelr$ the arrow 
\fun{\fn{F}(\id_X)\circ\epsilon_X^{-1}}{X}{\fn{F}\ple{X,\eqrelr}} in \CC, where 
\fun{\id_X}{\ple{X,\rid_X}}{\ple{X,\eqrelr}} is the quotient arrow of $\eqrelr$ over \ple{X,\rid_X} in $\QR\RDoc$. 
We show  this is a quotient arrow for $\eqrelr$ in $\RDoc$. 
Let \fun{f}{X}{Y} be an arrow in \CC such that $\eqrelr \order \gr{f}\rcomp\gr{f}\rconv$, then \fun{f}{\ple{X,\eqrelr}}{\ple{Y,\rid_Y}} is an arrow in \QC\RDoc and we denote by $\hat f$ its transpose along the adjunction, that is, 
\fun{\hat f = \epsilon_Y\circ \fn{F}(f)}{\fn{F}\ple{X,\eqrelr}}{Y}. 
Then we have 
\[
(\epsilon_Y\circ \fn{F}(f))\circ(\fn{F}(\id_X)\circ \epsilon_X^{-1}
  = \epsilon_Y\circ \fn{F}(\fn{\QAr\RDoc}(f)) \circ \epsilon_X^{-1}
  =f \circ \epsilon_X \circ \epsilon_X^{-1}
  = f 
\]
because in \QC\RDoc we have $f\circ\id_X = \fun{\QAr\RDoc}(f)$. 
Now, let \fun{g}{F\ple{X,\eqrelr}}{Y} be an arrow in \CC such that 
$g\circ q_\eqrelr = f$. 
Then, \fun{\fn{\QAr\RDoc}(g)\circ\eta_{\ple{X,\eqrelr}}}{\ple{X,\eqrelr}}{\ple{Y,\rid_Y}} satisfies 
\begin{align*} 
(\fn{\QAr\RDoc}(g)\circ\eta_{\ple{X,\eqrelr}})\circ \id_X 
  &= \fn{\QAr\RDoc}(g) \circ \fn{\QAr\RDoc}(F(\id_X))\circ \eta_{\ple{X,\rid_X}}
   = \fn{\QAr\RDoc}(g) \circ \fn{\QAr\RDoc}(q_\eqrelr\circ\epsilon_X) \circ \eta_{\ple{X,\rid_X}}
\\
  &= \fn{\QAr\RDoc}(f) \circ \fn{\QAr\RDoc}(\epsilon_X) \circ \eta_{\ple{X,\rid_X}} 
   = \fn{\QAr\RDoc}(f)
\end{align*} 
Therefore, since \fun{\id_X}{\ple{X,\rid_X}}{\ple{X,\eqrelr}} is a quotient arrow and both $\QAr\RDoc(g)\circ\eta_{\ple{X,\eqrelr}}$ and \fun{f}{\ple{X,\eqrelr}}{\ple{Y,\rid_Y}} are factorisation of $\QAr\RDoc(f)$ along $\id_X$, they must be equal. 
Hence, we get 
$g = \epsilon_Y\circ \fn{F}(f)$, 
proving that the factorisation of $f$ along $q_\eqrelr$ is unique. 

To check that $q_\eqrelr$ is efective, 
note that $\rid_{\fn{F}\ple{X,\eqrelr}} = \lift{F}_{\ple{X,\eqrelr},\ple{X,\eqrelr}}(\rid_{\ple{X,\eqrelr}})$, as $\lift{F}$ preserves identity relations. 
Hence we get 
\begin{align*} 
\gr{q_\eqrelr}\rcomp \rid_{\fn{F}\ple{X,\eqrelr}} \rcomp \gr{q\eqrelr}\rconv 
  &= \gr{\epsilon_X^{-1}}\rcomp \RDoc\reidx{\fn{F}(\id_X),\fn{F}(\id_X)}(\rid_{\fn{F}\ple{X,\eqrelr}}) \rcomp \gr{\epsilon_X^{-1}} 
\\
  &= \gr{\epsilon_X^{-1}} \rcomp \lift{F}_{\ple{X,\rid_X},\ple{X,\rid_X}}(\QR\RDoc\reidx{\id_X,\id_X}(\rid_{\ple{X,\eqrelr}})) \rcomp \gr{\epsilon_X^{-1}} 
\\
  &= \gr{\epsilon_X^{-1}} \rcomp \lift{F}_{\fn{\QAr\RDoc}X,\fn{\QAr\RDoc}X}(\eqrelr) \rcomp\gr{\epsilon_X^{-1}}
   = \eqrelr 
\end{align*} 
because $\lift{F}$ is natural and $\epsilon$ is an invertible 2-arrow in \RDtnl. 
To check that $q_\eqrelr$ is descent,  note that 
$ \gr{\fn{F}(\id_X)}\rconv\rcomp\gr{\fn{F}(\id_X)} 
    = \lift{F}_{\ple{X,\eqrelr},\ple{X,\eqrelr}}(\eqrelr)\rconv\rcomp \lift{F}_{\ple{X,\eqrelr},\ple{X,\eqrelr}}(\eqrelr) 
    = \lift{F}_{\ple{X,\eqrelr},\ple{X,\eqrelr}}(\eqrelr)$, 
because $\lift{F}$ preserve composition and graphs and $\eqrelr$ is reflexive, symmetric and transitive. 
Hence, we get 
\begin{align*} 
\rid_{\fn{F}\ple{X,\eqrelr}} 
  &= \lift{F}_{\ple{X,\eqrelr},\ple{X,\eqrelr}}(\rid_{\ple{X,\eqrelr}}) 
   = \lift{F}_{\ple{X,\eqrelr},\ple{X,\eqrelr}}(\eqrelr)  
\\
  &= \gr{\fn{F}(\id_X)}\rconv\rcomp \gr{\fn{F}(\id_X)} 
   = \gr{\fn{F}(\id_X)}\rconv\rcomp \gr{\epsilon_X^{-1}}\rconv\rcomp\gr{\epsilon_X^{-1}}\rcomp \gr{\fn{F}(\id_X)} 
   = \gr{q_\eqrelr}\rconv\rcomp\gr{q_\eqrelr}
\end{align*} 
because $\lift{F}$ preserves relational identities and $\epsilon$ is an isomorphism, hance both functional and sujective. 

\medskip

Let now \fun{\RDoc}{\bop\CC}{\Pos} be a relational doctrine with effective descent quotients. 
For every $\RDoc$-equivalence relation $\eqrelr$ over an object $X$, let us choose a quotient arrow 
\fun{q_{\ple{X,\eqrelr}}}{X}{X/\eqrelr}. 
For every arrow \fun{f}{\ple{X,\eqrelr}}{\ple{Y,\eqrels}}, using the universal property of $q_{\ple{X,\eqrelr}}$, there is a unique arrow \fun{\hat{f}}{X/\eqrelr}{Y/\eqrels} in \CC such that $\hat{f}\circ q_{\ple{X,\eqrelr}} = q_{\ple{Y,\eqrels}}\circ f$. 
Then, let us set 
$\fn{F}\ple{X,\eqrelr} = X/\eqrelr$ and $\fn{F}(f) = \hat{f}$, hence 
\fun{\fn{F}}{\QC\RDoc}{\CC} is a functor thanks to the fact that $\hat f$ is uniquely determined by $f$. 
Moreover, let us set $\lift{F}_{\ple{X,\eqrelr},\ple{Y,\eqrels}}(\relr) = \gr{q_{\ple{X,\eqrelr}}}\rconv \rcomp \relr\rcomp \gr{q_{\ple{Y,\eqrels}}}$, for every $\relr\in\QR\RDoc(\ple{X,\eqrelr},\ple{Y,\eqrels})$. 
In order to check that $\lift{F}$ is a natural transformation, let us note that given \fun{f}{\ple{X,\eqrelr}}{\ple{X',\eqrelr'}}, since $\hat{f}\circ q_{\ple{X,\eqrelr}} = q_{\ple{X',\eqrelr'}}\circ f$ and $\gr{q_{\ple{X',\eqrelr'}}}$ is surjective, we have 
$\gr{q_{\ple{X,\eqrelr}}}\rconv\rcomp\gr{f} = \gr{\hat f}\circ\gr{q_{\ple{X',\eqrelr'}}}\rconv$. 
Then, for \fun{f}{\ple{X,\eqrelr}}{\ple{X',\eqrelr'}} and \fun{g}{\ple{Y,\eqrels}}{\ple{Y',\eqrels'}} and $\relr\in\QR\RDoc(\ple{X',\eqrelr'},\ple{Y',\eqrels'})$, we  get 
\begin{align*}
\gr{q_{\ple{X,\eqrelr}}}\rconv \rcomp \QR\RDoc\reidx{f,g}(\relr) \rcomp\gr{q_{\ple{Y,\eqrels}}} 
  &= \gr{q_{\ple{X,\eqrelr}}}\rconv \rcomp \gr{f}\rcomp\eqrelr'\relr\rcomp {\eqrels'}\rconv \rcomp\gr{g}\rconv\rcomp\gr{q_{\ple{Y,\eqrels}}} 
\\
  &= \gr{\hat f} \rcomp \gr{q_{\ple{X',\eqrelr'}}}\rconv \rcomp \relr \rcomp \gr{q_{\ple{Y',\eqrels'}}} \rcomp \gr{\hat g}\rconv 
\\
  &= \RDoc\reidx{\hat f, \hat g}(\gr{q_{\ple{X',\eqrelr'}}}\rconv \rcomp \relr \rcomp \gr{q_{\ple{Y',\eqrels'}}} ) 
\end{align*}
which proves naturality. 
For every object \ple{X,\eqrelr} in \QC\RDoc, since $q_{\ple{X,\eqrelr}}$ is effective, we have 
$\eqrelr = \gr{q_{\ple{X,\eqrelr}}}\rcomp\gr{q_{\ple{X,\eqrelr}}}\rconv$, which implies 
$\lift{F}_{\ple{X,\eqrelr},\ple{X,\eqrelr}}(\rid_{\ple{X,\eqrelr}}) = \gr{q_{\ple{X,\eqrelr}}}\rconv\rcomp\eqrelr\rcomp\gr{q_{\ple{X,\eqrelr}}} \order \rid_{X/\eqrelr}$. 
On the other hand, since $q_{\ple{X,\eqrelr}}$ is descente, $\gr{q_{\ple{X,\eqrelr}}}$ is surjective, hence we have 
\begin{align*}
\rid_{X/\eqrelr} 
  &\order \gr{q_{\ple{X,\eqrelr}}}\rconv\rcomp\gr{q_{\ple{X,\eqrelr}}} 
   \order \gr{q_{\ple{X,\eqrelr}}}\rconv\rcomp\eqrelr\rcomp \gr{q_{\ple{X,\eqrelr}}} 
   = \lift{F}_{\ple{X,\eqrelr},\ple{X,\eqrelr}}(\rid_{\ple{X,\eqrelr}}) 
\end{align*}
thus proving that $\lift{F}$ preserves relational identities. 
For every $\relr\in\QR\RDoc(\ple{X,\eqrelr},\ple{Y,\eqrels})$ and $\rels\in\QR\RDoc(\ple{Y,\eqrels},\ple{Z,\eqrelt})$, since $q_{\ple{Y,\eqrels}}$ is effective and $\rels$ is a descent datum, we have 
\begin{align*}
\lift{F}_{\ple{X,\eqrelr},\ple{Y,\eqrels}}(\relr)\rcomp\lift{F}_{\ple{Y,\eqrels},\ple{Z,\eqrelt}}(\rels) 
  &= \gr{q_{\ple{X,\eqrelr}}}\rconv\rcomp\relr\rcomp\gr{q_{\ple{Y,\eqrels}}}\rcomp\gr{q_{\ple{Y,\eqrels}}}\rconv\rcomp\rels\rcomp\gr{q_{\ple{Z,\eqrelt}}} 
   = \gr{q_{\ple{X,\eqrelr}}}\rconv\rcomp\relr\rcomp \eqrels \rcomp\rels\rcomp\gr{q_{\ple{Z,\eqrelt}}} 
\\
  &\order \gr{q_{\ple{X,\eqrelr}}}\rconv\rcomp\relr\rcomp\rels\rcomp\gr{q_{\ple{Z,\eqrelt}}} 
   = \lift{F}_{\ple{X,\eqrelr},\ple{Z,\eqrelt}}(\relr\rcomp\rels)
\end{align*}
The other inequality follows just because $\gr{q_{\ple{Y,\eqrels}}}$ is total, thus proving that $\lift{F}$ preserves relational composition. 
Preservation of the converse is straightforward. 

What we have observed shows that \oneAr{F}{\QR\RDoc}{\RDoc} is a 1-arrow in \RDtnl. 
To prove it is a reflection left adjoint of $\QAr\RDoc$, we have to define the unit and the counit of the adjunction. 
Note that, for every $X$ in \CC, the arow \fun{q_{\ple{X,\rid_X}}}{X}{X/\rid_X} is an iso: 
by the universal property of $q_{\ple{X,\rid_X}}$ we gat an arrow $p_X$ such that $p_X \circ q_{\ple{X,\rid_X}} = \id_X$ and, 
since $q_{\ple{X,\rid_X}}\circ p_X\circ q_{\ple{X,\rid_X}}$ and $q_{\ple{X,\rid_X}}$ is an epi, we get $q_{\ple{X,\rid_X}}\circ p_X = \id_{X/\rid_X}$. 
Then, let us set 
\fun{\epsilon_X = q_{\ple{X,\rid_X}}^{-1}}{X/\rid_X}{X}  and 
\fun{\eta_{\ple{X,\eqrelr}} = q_{\ple{X,\eqrelr}}}{\ple{X,\eqrelr}}{\ple{X/\eqrelr,\rid_{X,\eqrelr}}}. 
These are both easily natural and
the latter is well-defined because $a_{\ple{X,\eqrelr}}$ is a quotient arrow. 
Finally, the inequalities of 2-arrows in \RDtnl and the triangular identities are easy to verify. 
\end{proof}

The construction of $\QR\RDoc$ is universal as it gives rise to a left biadjoint. 
More precisely, we will first show that is is part of a lax 2-adjunction \cite{BettiP88} involving \RDtnl, 
which restricts to a 2-mondic 2-adjunction when 1-arrows are strict, that is, when considering the 2-subcategory \RDtnl. 

To show this, we first introduce the 2-category \QRDtnl as the 2-full 2-subcategory of \RDtnl whose objects are relational doctrines with quotients and whose 1-arrows are those of \RDtnl that preserve quotient arrows, \ie  
1-arrows \oneAr{F}{\RDoc}{\SDoc} in \RDtnl mapping a quotient arrow for a $\RDoc$-equivalence $\eqrelr$ over $X$ to a quotient arrow for
$\lift{F}_{X,X}(\eqrelr)$, which  can be easily proved to be a $\SDoc$-equivalence over $\fn{F}X$.
There is an obvious inclusion 2-functor \fun{\QRFun}{\QRDtnl}{\RDtnl} which simply forgets quotients. 

The construction of the doctrine $\QR\RDoc$ determines a 2-functor \fun{\RQFun}{\RDtnl}{\QRDtnl}, defined as follows: 
For a relational doctrine $\RDoc$, we set $\RQFun(\RDoc) = \QR\RDoc$. 
for a 1-arrow \oneAr{F}{\RDoc}{\SDoc} in \RDtnl, the 1-arrow \oneAr{\RQFun(F) = \QR{F}}{\QR\RDoc}{\QR\SDoc} is given by 
$\fn{\QR{F}}\ple{X,\eqrelr} = \ple{\fn{F}X,\lift{F}_{X,X}(\eqrelr)}$ and $\fn{\QR{F}} f = \fn{F}f$ and $\lift{\QR{F}}_{\ple{X,\eqrelr},\ple{Y,\eqrels}}(\relr) = \lift{F}_{X,Y}(\relr)$, and 
for a 2-arrow \twoAr{\theta}{F}{G} in \RDtnl, the 2-arrow \twoAr{\RQFun(\theta) = \QR\theta}{\QR{F}}{\QR{G}} is given by 
$\QR\theta_{\ple{X\eqrelr}} = \theta_X$. 

\begin{proposition}\label[prop]{prop:qc-2fun}
$\RQFun$ is a well-defined 2-functor.
\end{proposition}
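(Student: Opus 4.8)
The plan is to verify, in order, that $\RQFun$ is well defined on objects, $1$-arrows and $2$-arrows, and that it preserves the two compositions and the identities. The guiding observation is that composition, converse and reindexing in $\QR\RDoc$ are literally those of $\RDoc$ (\cref{prop:qc-doc}), so essentially every verification reduces to an axiom of a $1$- or $2$-arrow of $\RDtnl$. That $\RQFun$ lands on objects of $\QRDtnl$, i.e.\ that each $\QR\RDoc$ has quotients, is precisely \cref{prop:qc-quot-doc}.

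First I would check that $\fn{\QR{F}}$ is a functor $\QC\RDoc\to\QC\SDoc$. On objects, one must see that $\lift{F}_{X,X}(\eqrelr)$ is an $\SDoc$-equivalence on $\fn{F}X$: reflexivity comes from $\rid_{\fn{F}X}\order\lift{F}_{X,X}(\rid_X)\order\lift{F}_{X,X}(\eqrelr)$, using lax preservation of identities and monotonicity, while symmetry and transitivity follow from strict preservation of converse and lax preservation of composition by $F$, together with the corresponding properties of $\eqrelr$. On arrows, for $f\colon\ple{X,\eqrelr}\to\ple{Y,\eqrels}$ one applies the monotone map $\lift{F}_{X,X}$ to $\eqrelr\order\RDoc\reidx{f,f}(\eqrels)$ and invokes the (strict) naturality of $\lift{F}$, which turns $\lift{F}_{X,X}(\RDoc\reidx{f,f}(\eqrels))$ into $\SDoc\reidx{\fn{F}f,\fn{F}f}(\lift{F}_{Y,Y}(\eqrels))$, giving exactly the condition for $\fn{F}f$ to be an arrow of $\QC\SDoc$; functoriality is then inherited from $\fn{F}$.

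Next I would treat the fibred part. That $\lift{\QR{F}}_{\ple{X,\eqrelr},\ple{Y,\eqrels}}=\lift{F}_{X,Y}$ sends a descent datum in $\Des\eqrelr\eqrels(X,Y)$ to one in $\Des{\lift{F}_{X,X}(\eqrelr)}{\lift{F}_{Y,Y}(\eqrels)}(\fn{F}X,\fn{F}Y)$ is a single chain of inequalities using strict preservation of converse, lax preservation of composition, monotonicity, and the descent condition $\eqrelr\rconv\rcomp\relr\rcomp\eqrels\order\relr$. Naturality of $\lift{\QR{F}}$ is the naturality of $\lift{F}$, and lax preservation of identities, composition and converse is inherited verbatim from $F$ (identities being in fact strict, since $\rid_{\ple{X,\eqrelr}}=\eqrelr$ is sent to $\rid_{\fn{\QR{F}}\ple{X,\eqrelr}}=\lift{F}_{X,X}(\eqrelr)$). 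The step I expect to be the genuine obstacle is showing that $\QR{F}$ preserves quotient arrows, which is what places it in $\QRDtnl$. Here I would use the explicit description from \cref{prop:qc-quot-doc}: the canonical quotient arrow of a $\QR\RDoc$-equivalence $\eqrels$ over $\ple{X,\eqrelr}$ is $\id_X\colon\ple{X,\eqrelr}\to\ple{X,\eqrels}$, and $\fn{\QR{F}}$ sends it to $\id_{\fn{F}X}\colon\ple{\fn{F}X,\lift{F}_{X,X}(\eqrelr)}\to\ple{\fn{F}X,\lift{F}_{X,X}(\eqrels)}$, which is exactly the canonical quotient arrow for the image equivalence $\lift{F}_{X,X}(\eqrels)$ (note $\lift{F}_{X,X}(\eqrelr)\order\lift{F}_{X,X}(\eqrels)$ by monotonicity). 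Since quotient arrows for a fixed equivalence are unique up to a unique isomorphism, functors preserve isomorphisms, and being a quotient arrow is stable under postcomposition with an isomorphism, checking the canonical representative is enough to conclude that $\QR{F}$ sends \emph{every} quotient arrow to a quotient arrow.

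Finally, for a $2$-arrow $\theta\colon F\Rightarrow G$ I would check that each component $\theta_X$ is a well-defined arrow $\ple{\fn{F}X,\lift{F}_{X,X}(\eqrelr)}\to\ple{\fn{G}X,\lift{G}_{X,X}(\eqrelr)}$ of $\QC\SDoc$, which is precisely the $2$-arrow inequality $\lift{F}_{X,X}\order\SDoc\reidx{\theta_X,\theta_X}\circ\lift{G}_{X,X}$ for $\theta$; that naturality of $\QR\theta$ is naturality of $\theta$; and that the $2$-arrow inequality for $\QR\theta$ coincides with that for $\theta$. Being a $2$-arrow of $\RDtnl$ between $\QR{F}$ and $\QR{G}$, it is automatically a $2$-arrow of $\QRDtnl$, the latter being $2$-full. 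The functoriality of $\RQFun$ on both compositions and on identities is then immediate, since the object, arrow and fibre data of $\RQFun(G\circ F)$ and $\RQFun(G)\circ\RQFun(F)$ (and likewise for identities and for the vertical and horizontal composites of $2$-arrows) are built directly from the corresponding data of the underlying $1$- and $2$-arrows, hence agree by the definition of composition in $\RDtnl$.
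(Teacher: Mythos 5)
Your proposal is correct and takes essentially the same approach as the paper: the same decomposition into checking that $\lift{F}_{X,X}(\eqrelr)$ is an $\SDoc$-equivalence, that $\fn{F}f$ respects equivalences (via naturality of $\lift{F}$), that $\lift{F}_{X,Y}$ preserves descent data, that $\QR{F}$ preserves quotient arrows, and that $\theta_X$ gives a well-defined arrow in $\QC\SDoc$, with functoriality then immediate. The only (welcome) difference is that you make explicit the uniqueness-up-to-isomorphism argument reducing preservation of arbitrary quotient arrows to the canonical ones $\id_X\colon\ple{X,\eqrelr}\to\ple{X,\eqrels}$, a point the paper dispatches with the terse remark that the underlying arrows of quotient arrows are identities.
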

\begin{proof}
Just observe that 
\begin{itemize} 
\item if $\eqrelr$ is a $\RDoc$-equivalence on $X$ then $\lift{F}_{X,X}(\eqrelr)$ is a $\SDoc$-equivalence on $\fn{F}X$; 
\item if \fun{f}{\ple{X,\eqrelr}}{\ple{Y,\eqrels}} is an arrow in $\QC\RDoc$, then $\eqrelr \order \RDoc\reidx{f,f}(\eqrels)$ implies by naturality of $\lift{F}$ that $\lift{F}_{X,X}(\eqrelr)\order \SDoc\reidx{\fn{F}f,\fn{F}f}(\lift{F}_{Y,Y}(\eqrels))$; 
\item if $\relr\in\QR\RDoc(\ple{X,\eqrelr},\ple{Y,\eqrels})$, then we have 
$\lift{F}_{X,X}(\eqrelr)\rconv \rcomp \lift{F}_{X,Y}(\relr)\rcomp \lift{Y,Y}(\eqrels) 
   \order \lift{F}_{X,Y}(\eqrelr\rconv\rcomp\relr\rcomp\eqrels) 
   \order \lift{F}_{X,Y}(\relr)$, 
because $\relr$ is a descent datum, $\lift{F}$ is natural and laxly preserves relational composition and converse; 
\item if \oneAr{F}{\RDoc}{\SDoc} is a 1-arrow, then $\QR{F}$ preserves quotient arrows, because their underlying arrows are identities; 
\item if \twoAr{\theta}{F}{G} is a 2-arrow and \oneAr{F,G}{\RDoc}{\SDoc}, then $\lift{F}_{X,Y}(\relr) \order \SDoc\reidx{\theta_X,\theta_Y}(\lift{G}_{X,Y}(\relr))$  holds, hence in particular we have $\lift{F}_{X,X}(\eqrelr) \order \SDoc\reidx{\theta_X,\theta_X}(\lift{G}_{X,X}(\eqrels))$, which implies that \fun{\theta_X}{\ple{\fn{F}X,\lift{F}_{X,X}(\eqrelr)}}{\ple{\fn{G}X,\lift{G}_{X,X}(\eqrels)}} is an arrow in \QC\SDoc. 
\end{itemize}
The fact that $\RQFun$ preserves compositions and identities is straightforward. 
\end{proof}

We observe the following property of left adjoints in \RDtnl with respect to quotients, which is instrumental to the proof of our main result (\cref{thm:qc-univ}). 

\begin{lemma}\label[lem]{lem:ladj-quot}
Let \oneAr{F}{\RDoc}{\SDoc} be a left adjoint 1-arrow in \RDtnl. Then, $F$ preserves quotient arrows. 
\end{lemma}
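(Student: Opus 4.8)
The plan is to fix a right adjoint $U$ of $F$ in \RDtnl, with unit $\eta\colon\Id_\RDoc\Rightarrow U\circ F$ and counit $\epsilon\colon F\circ U\Rightarrow\Id_\SDoc$, so that on base categories $\fn F\dashv\fn U$ with components $\eta_X\colon X\to\fn U\fn F X$ and $\epsilon_Z\colon\fn F\fn U Z\to Z$ satisfying the triangle identities. Given a quotient arrow $q\colon X\to W$ of an \RDoc-equivalence $\eqrelr$, I must check that $\fn F q$ is a quotient arrow of the \SDoc-equivalence $\lift{F}_{X,X}(\eqrelr)$, that is, verify the kernel inequality $\lift{F}_{X,X}(\eqrelr)\order\gr{\fn F q}\rcomp\gr{\fn F q}\rconv$ together with the associated universal property in the base of \SDoc.

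For the universal property I would transpose along the adjunction. Given $f\colon\fn F X\to Z$ with $\lift{F}_{X,X}(\eqrelr)\order\gr f\rcomp\gr f\rconv$, set $\widetilde f=\fn U f\circ\eta_X\colon X\to\fn U Z$. The 2-cell condition defining the unit reads $\eqrelr\order\gr{\eta_X}\rcomp\lift{U}_{\fn F X,\fn F X}(\lift{F}_{X,X}(\eqrelr))\rcomp\gr{\eta_X}\rconv$ (using \cref{prop:left-adj} to rewrite reindexing as a composition of graphs); combined with monotonicity and naturality of $\lift{U}$ applied to the hypothesis on $f$ — the last step using that $\lift{U}$ preserves the relational identity, discussed below — this yields $\eqrelr\order\gr{\widetilde f}\rcomp\gr{\widetilde f}\rconv$. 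Hence the universal property of $q$ produces a unique $\widetilde h\colon W\to\fn U Z$ with $\widetilde f=\widetilde h\circ q$, and $h:=\epsilon_Z\circ\fn F\widetilde h$ satisfies $f=h\circ\fn F q$ by the triangle identities. Uniqueness transports back: any competitor $h'$ transposes to $\fn U h'\circ\eta_W$, which by naturality of $\eta$ factors $\widetilde f$ through $q$, so it equals $\widetilde h$ and therefore $h'=h$. The kernel inequality is obtained in the same spirit, pushing $\eqrelr\order\gr q\rcomp\gr q\rconv$ through $\lift{F}$ and rewriting with naturality.

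The hard part, and the single point on which both transfers rest, is that $\lift{F}$ and $\lift{U}$ must carry kernels to kernels: by naturality one has $\lift{F}_{X,X}(\gr q\rcomp\gr q\rconv)=\gr{\fn F q}\rcomp\lift{F}_{W,W}(\rid_W)\rcomp\gr{\fn F q}\rconv$, and likewise for $\lift{U}$, so everything reduces to the strict preservation of relational identities $\lift{F}_{W,W}(\rid_W)=\rid_{\fn F W}$ and $\lift{U}_{Z,Z}(\rid_Z)=\rid_{\fn U Z}$. I therefore expect the crux of the argument to be the statement that a left adjoint 1-arrow in \RDtnl is strict. This should follow from a doctrinal-adjunction (mate) computation: the lax comparison cells of $U$ and the mates under $F\dashv U$ of the lax comparison cells of $F$ witness the two opposite inequalities between $\lift{U}_{Z,Z}(\rid_Z)$ and $\rid_{\fn U Z}$ (and between the corresponding composition cells), which in the locally posetal 2-category \RDtnl forces equalities, making both $F$ and $U$ strict. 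Once strictness is in place the comparison inequalities become equalities and the kernel transfers above, hence the whole argument, go through routinely.
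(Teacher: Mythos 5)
Your skeleton is the same as the paper's: transpose $f$ to $\widetilde f=\fn{U}f\circ\eta_X$, feed $\widetilde f$ to the universal property of $q$, transpose back, and transport uniqueness along the hom-set bijection. You are also right that everything hinges on \emph{strict}, not lax, preservation of relational identities — this is exactly the point the paper's proof glosses over. But your crux is only half available, and the missing half is the half your existence step needs. Doctrinal adjunction does strictify the \emph{left} adjoint: from the doctrine axiom $\rid_X\order\RDoc\reidx{\eta_X,\eta_X}(\rid_{\fn{U}\fn{F}X})$, laxity of $U$ on identities, naturality of $\lift{F}$, the counit 2-cell condition and one triangle identity, one gets $\lift{F}(\rid_X)\order\rid_{\fn{F}X}$, hence equality; this is what legitimises your kernel inequality for $\fn{F}q$. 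The analogous computation for the \emph{right} adjoint is circular (it terminates in $\lift{U}(\rid_Z)\order\lift{U}(\rid_Z)$), and right adjoints in \RDtnl genuinely need not be strict. Concretely, let $\Qtl$ be the quantale on the chain $\bot<c<e<\top$ with unit $e$ and $c\qmul c=\bot$, $c\qmul\top=c$, $\top\qmul\top=\top$; the monotone maps $h(\bot)=h(c)=0$, $h(e)=h(\top)=1$ into $\Bool$ and $k(0)=c$, $k(1)=\top$ back are lax monoid morphisms forming a Galois connection $h\dashv k$, so postcomposition gives an adjunction $h_*\dashv k_*$ between $\VRel\Qtl$ and $\Rel$ in \RDtnl in which $\lift{k_*}(\rid_Z)$ (value $\top$ on the diagonal, $c$ off it) lies strictly above $\rid_Z$.

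This gap is fatal, not cosmetic. Without $\lift{U}(\rid_Z)\order\rid_{\fn{U}Z}$, the unit condition only yields $\eqrelr\order\gr{\widetilde f}\rcomp\lift{U}(\rid_Z)\rcomp\gr{\widetilde f}\rconv$, which does not entail the kernel inequality $\eqrelr\order\gr{\widetilde f}\rcomp\gr{\widetilde f}\rconv$ that the universal property of $q$ requires; and the same example refutes the statement at this generality: on $X=\{x_1,x_2\}$, the $\VRel\Qtl$-equivalence $\eqrelr$ with value $e$ on the diagonal and $c$ off it admits $q\colon X\to 1$ as a (non-effective) quotient arrow, yet $\lift{h_*}(\eqrelr)=\rid_X$, and $q$ is not a quotient arrow of $\rid_X$ in $\Rel$, since $\id_X$ satisfies the kernel hypothesis but does not factor through $q$. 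In fairness, you have reproduced and made visible a jump the paper's own proof makes silently — its displayed equality $\lift{G}_{\fn{F}X,\fn{F}X}(\gr{f}\rcomp\gr{f}\rconv)=\gr{\fn{G}f}\rcomp\gr{\fn{G}f}\rconv$ is precisely strictness of the right adjoint on identities — and the lemma is only ever invoked in the paper for an adjunction whose right adjoint is the strict 1-arrow $\QAr\SDoc$ of \cref{lem:quot-adj}, where both arguments go through. The honest repair is therefore not a cleverer mate computation but an added hypothesis (the right adjoint strictly preserves relational identities), under which your argument works verbatim.
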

\begin{proof}
Let \oneAr{G}{\SDoc}{\RDoc} be the right adjoint of $F$ in \RDtnl and \twoAr{\eta}{\id_\RDoc}{G\circ G} the unit of this adjunction. 
Let \fun{q}{X}{W} be a quotient arrow for the $\RDoc$-equivalence relation $\eqrelr$ on $X$. 
We have to prove that \fun{\fn{F}q}{\fn{F}X}{\fn{F}W} is a quotient arrow for the $\SDoc$-equivalence relation $\eqrels = \lift{F}_{X,X}(\eqrelr)$ on $\fn{F}X$. 
Consider an arrow \fun{f}{\fn{F}X}{Z} in the base of $\SDoc$ such that 
$\eqrels \order \gr{f}\rcomp\gr{f}\rconv$. 
Then, for every \fun{h}{\fn{F}W}{Z}, 
we have $f = h\circ \fn{F}Q$ if and only if $f^\sharp = h^\sharp \circ q$, where $f^\sharp$ and $h^\sharp$ are the transpose of $f$ and $h$, respectively, along the adjunction $\fn{F}\dashv \fn{G}$. 
By definition of transpose, we have $f^\sharp = \fn{G}f\circ\eta_X$, hence we get 
\begin{align*} 
\eqrelr
  & \order gr{\eta_X}\rcomp \lift{G}_{\fn{F}X,\fn{F}X}(\lift{F}_{X,X}(\eqrelr)) \rcomp \gr{\eta_X}\rconv 
    \order \gr{\eta_X}\rcomp \lift{G}_{\fn{F}X,\fn{F}X}(\gr{f}\rcomp\gr{f}\rconv)\rcomp\gr{\eta_X}\rconv 
\\
  & = \gr{\eta_X}\rcomp\gr{\fn{G}f}\rcomp\gr{\fn{G}f}\rconv\rcomp\gr{\eta_X}\rconv 
    = \gr{f^\sharp}\rcomp\gr{f^\sharp}\rconv 
\end{align*} 
Hence, since $q$ is a quotient arrow, we deduce that there is a unique arow \fun{g}{W}{\fn{G}Z} such that $f^\sharp = g\circ q$. 
Hence, we conclude that there is a unique \fun{h}{\fn{F}W}{Z} such that $f = h\circ\fn{F}q$, with $h = g^\sharp$, 
proving that $\fn{F}q$ is indeed a quotient arrow. 
\end{proof}

It is easy to see that the 1-arrow \oneAr{\QAr\RDoc}{\RDoc}{\QR\RDoc} in \RDtnl is the component of a lax natural transformation from the identity on \RDtnl to the composite $\QRFun\circ\RQFun$. 
The naturality square at \oneAr{F}{\RDoc}{\SDoc} is filled by a 2-arrow  \twoAr{\lambda}{\QAr\SDoc\circ F}{\QR{F}\circ\QAr\RDoc} where, for every object $X$ in the base of $\RDoc$, 
the component \fun{\lambda_X = \id_X}{\ple{\fn{F}X,\rid_{\fn{F}X}}}{\ple{\fn{F}X,\lift{F}_{X,X}(\rid_X)}} is a quotient arrow but not an identity because $F$ laxly preserves relational identities, hence we only have $\rid_{\fn{F}X}\order \lift{F}_{X,X}(\rid_X)$. 
Note that $\lambda$ is an identity, hence the naturality square strictly commutes, exactly when $F$ strictly preserves relational identities. 
Then, we can prove the following result. 

\begin{theorem}\label[thm]{thm:qc-univ}
The 2-functors $\RQFun$ and  $\QRFun$ are such that 
$\RQFun\dashv_l \QRFun$ is a lax 2-adjunction. 
That is, for every relational doctrine $\RDoc$ and every relational doctrine with quotients $\SDoc$, the functor 
\begin{equation}\label[eq]{eq:qc-adj}
 \fun{\QRFun(\blank)\circ\QAr\RDoc}{\Hom{\QRDtnl}{\QR\RDoc}{\SDoc}}{\Hom{\RDtnl}{\RDoc}{\QRFun(\SDoc)}} 
\end{equation}
has a reflection left adjoint. 
\end{theorem}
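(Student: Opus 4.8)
The plan is to exhibit the reflection left adjoint explicitly and to reduce everything to the single-object adjunction already established in \cref{lem:quot-adj}. Fix a relational doctrine $\RDoc$ and a relational doctrine with quotients $\SDoc$, and abbreviate by $U = \QRFun(\blank)\circ\QAr\RDoc$ the functor in \eqref{eq:qc-adj}. Since $\SDoc$ has quotients, \cref{lem:quot-adj} supplies a reflection left adjoint $F_\SDoc\colon\QR\SDoc\to\SDoc$ of $\QAr\SDoc$ in \RDtnl, whose counit $\epsilon^\SDoc\colon F_\SDoc\circ\QAr\SDoc\Rightarrow\Id_\SDoc$ is invertible, and by \cref{lem:ladj-quot} this $F_\SDoc$ preserves quotient arrows, hence is a $1$-arrow of \QRDtnl. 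I would then define the candidate left adjoint $L$ on a $1$-arrow $G\colon\RDoc\to\SDoc$ by $L(G)=F_\SDoc\circ\RQFun(G)$, and on a $2$-arrow $\theta$ by whiskering $F_\SDoc$ with $\RQFun(\theta)$; since \QRDtnl is $2$-full in \RDtnl, these whiskerings stay in the correct hom-category, and functoriality of $L$ follows from that of $\RQFun$ (\cref{prop:qc-2fun}). Concretely $\fn{L(G)}\ple{X,\eqrelr}$ is a quotient object of $\fn{G}X$ by the $\SDoc$-equivalence $\lift{G}_{X,X}(\eqrelr)$, and $\lift{L(G)}_{\ple{X,\eqrelr},\ple{Y,\eqrels}}(\relr)=\gr{q_X}\rconv\rcomp\lift{G}_{X,Y}(\relr)\rcomp\gr{q_Y}$ for the relevant quotient arrows, exactly as in the left adjoint built in \cref{lem:quot-adj}.

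Next I would produce the unit $\eta_G\colon G\Rightarrow U(L(G))=F_\SDoc\circ\RQFun(G)\circ\QAr\RDoc$. Here I would use the lax naturality $2$-cell $\lambda\colon\QAr\SDoc\circ G\Rightarrow\RQFun(G)\circ\QAr\RDoc$ described just before the statement: taking the vertical composite of the inverse counit $G\Rightarrow F_\SDoc\circ\QAr\SDoc\circ G$ with the whiskering $F_\SDoc\lambda\colon F_\SDoc\circ\QAr\SDoc\circ G\Rightarrow F_\SDoc\circ\RQFun(G)\circ\QAr\RDoc$ gives $\eta_G$. One checks that $\eta_G$ is a genuine $2$-arrow of \RDtnl, the required inequality between lax structures following from lax preservation of relational identities by $G$ together with naturality of $\lambda$ and $\epsilon^\SDoc$. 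Note that $\eta_G$ is in general \emph{not} invertible, since $\lambda$ is not, and this is precisely the point where the resulting adjunction is lax rather than strict.

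The core of the argument is to show that $U$ is fully faithful, equivalently that the counit $\epsilon_H\colon L(U(H))\Rightarrow H$ is invertible for every $H\in\Hom{\QRDtnl}{\QR\RDoc}{\SDoc}$; this is the ``reflection'' clause. The key observation is that in $\QR\RDoc$ every object $\ple{X,\eqrelr}$ is a quotient of $\ple{X,\rid_X}=\fn{\QAr\RDoc}X$, namely $\id_X\colon\ple{X,\rid_X}\to\ple{X,\eqrelr}$ is the quotient arrow of the $\QR\RDoc$-equivalence $\eqrelr$ (\cref{prop:qc-quot-doc}). Since $H$ preserves quotient arrows, $\fn{H}(\id_X)\colon\fn{H}\ple{X,\rid_X}\to\fn{H}\ple{X,\eqrelr}$ is a quotient arrow in $\SDoc$ for the $\SDoc$-equivalence $\lift{H}_{\ple{X,\rid_X},\ple{X,\rid_X}}(\eqrelr)=\lift{(U H)}_{X,X}(\eqrelr)$. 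On the other hand $\fn{L(U H)}\ple{X,\eqrelr}$ is by construction a quotient of $\fn{(U H)}X=\fn{H}\ple{X,\rid_X}$ by the very same equivalence. Uniqueness of quotient arrows (the universal property in \cref{def:quotient}) then yields a canonical isomorphism $\fn{L(U H)}\ple{X,\eqrelr}\cong\fn{H}\ple{X,\eqrelr}$, natural in $\ple{X,\eqrelr}$ and compatible with the lifted structure, because both sides compute $\lift{H}$ conjugated by the same quotient arrows; this assembles into the invertible $\epsilon_H$. In short, a quotient-preserving $1$-arrow out of $\QR\RDoc$ is determined, up to coherent iso, by its restriction along $\QAr\RDoc$.

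Finally I would verify the triangle identities relating $\eta$ and $\epsilon$ (the one involving $\epsilon$ up to the coherent isomorphisms just produced), which together with invertibility of $\epsilon$ establishes that $L$ is a reflection left adjoint of $U$, and hence that $\RQFun\dashv_l\QRFun$ is a lax $2$-adjunction with unit $\QAr$. I expect the main obstacle to be the systematic bookkeeping of the $2$-cell inequalities of \RDtnl throughout: in particular, checking that the comparison $\epsilon_H$ and the transposed $2$-cells are valid morphisms in the intended hom-categories (the conditions $\lift{F}_{X,Y}\order\SDoc\reidx{\theta_X,\theta_Y}\circ\lift{G}_{X,Y}$), and that every construction is independent, up to the coherent isomorphisms, of the chosen quotient arrows. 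The lax preservation of relational identities—so that $\lambda$ and $\eta_G$ fail to be invertible—is exactly what forces the adjunction to be lax and demands the most care in the triangle identities, whereas the conceptual heart, that objects of $\QC\RDoc$ are quotients of objects in the image of $\QAr\RDoc$, is clean.
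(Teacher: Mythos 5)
Your proposal is correct and takes essentially the same route as the paper's proof: both hinge on \cref{lem:quot-adj} to obtain the strict reflection left adjoint $Q$ of $\QAr\SDoc$, define the transpose of a 1-arrow $F$ as $Q\circ\QR{F}$, build the unit out of the quotient arrows supplied by $Q$, and derive invertibility of the counit from the fact that every object $\ple{X,\eqrelr}$ of $\QC\RDoc$ is a quotient of $\fn{\QAr\RDoc}X$, so that a quotient-preserving 1-arrow out of $\QR\RDoc$ is determined up to isomorphism by its restriction along $\QAr\RDoc$. The only difference is packaging: the paper establishes the adjunction by verifying the universal property of the unit (so functoriality of the left adjoint and the triangle identities come for free), whereas you construct the left adjoint functor explicitly and must check the triangle identities by hand, an equivalent but slightly more laborious formulation.
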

\begin{proof} 
Let $\RDoc$ be a relational doctrine and $\SDoc$ a relational doctrine with quotients. 
Let \oneAr{F}{\RDoc}{\SDoc} be a 1-arrow in \RDtnl and \oneAr{G}{\QR\RDoc}{\SDoc} a 1-arrow in \QRDtnl. 
We define a 1-arrow \oneAr{F'}{\QR\RDoc}{\SDoc} and a 2-arrow \twoAr{\eta}{F}{F'\circ\QAr\RDoc}. 
By \cref{lem:quot-adj}, there is a strict 1-arrow \oneAr{Q}{\QR\SDoc}{\SDoc} which is a reflection left adjoint of $\QAr\SDoc$ and such that, 
for every object \ple{X,\eqrelr} in \QC\SDoc, there is a quotient arrow \fun{\theta_{\ple{X,\eqrelr}}}{X}{Q\ple{X,\eqrelr}} such that, 
for every arrow \fun{f}{\ple{X,\eqrelr}}{\ple{Y,\eqrels}} in \QC\SDoc,  we have $\theta_{\ple{Y,\eqrels}}\circ f = \fn{Q}f \circ \theta_{\ple{X,\eqrelr}}$, and, 
for every $\relr\in\QR\SDoc(\ple{X,\eqrelr},\ple{Y,\eqrels})$, we have $\lift{Q}_{\ple{X,\eqrelr},\ple{Y,\eqrels}}(\relr) = \Ex^\SDoc\reidx{\theta_{\ple{X,\eqrelr}},\theta_{\ple{Y,\eqrels}}}(\relr)$. 
We set $F' = Q\circ \QR{F}$ and $\eta_X = \theta_{\QR{F}\ple{X,\rid_X}}$. 
It is not difficult to check that $F'$ and $\eta$ are well-defind, that is, $F'$ preserves quotient arrows (using \cref{lem:ladj-quot}) and $\eta$ is a natural transformation satisfying the condition of 2-arrows in \RDtnl. 
Consider a 2-arrow \twoAr{\phi}{F}{G\circ\QAr\RDoc} in \RDtnl. 
Then, we have to show there is a unique 2-arrow \twoAr{\psi}{F'}{G} in \QRDtnl such that $\phi = (\psi\QAr\RDoc)\eta$. 
For every object $X$ in the base of $\RDoc$, 
the component at $X$ of $\phi$ is an arrow \fun{\phi_X}{\fn{F}X}{\fn{G}\ple{X,\rid_X}} in the base of $\SDoc$ satisfying 
$\lift{F}_{X,X}(\relr) \order \SDoc\reidx{\phi_X,\phi_X}(\lift{G}_{\ple{X,\rid_X},\ple{X,\rid_X}}(\relr)$, for all $\relr\in\RDoc(X,X)$. 
We define \fun{\psi_\ple{X,\eqrelr}}{\fn{F'}\ple{X,\eqrelr}}{\fn{G}\ple{X,\eqrelr}} as the unique arrow making the following diagram commute: 
\[\xymatrix{
  \fn{F}X 
  \ar[d]_-{\phi_X}
  \ar[r]^-{\theta_\ple{X,\eqrelr}}
& \fn{F'}\ple{X,\eqrelr}
  \ar@{..>}[d]^-{\psi_\ple{X,\eqrelr}}
\\
  \fn{G}\ple{X,\rid_X}
  \ar[r]^-{\fn{G}\id_X}
& \fn{G}\ple{X,\eqrelr} 
}\]
where \fun{\id_X}{\ple{X,\rid_X}}{\ple{X,\eqrelr}} is a quotient arrow for $\eqrelr \in \QR\RDoc(\ple{X,\rid_X},\ple{X,\rid_X})$. 
Observe that $\fn{G}\id_X$ is a quotient arrow for $\lift{G}_{\ple{X,\rid_X},\ple{X,\rid_X}}(\eqrelr)$ as $G$ preserves quotients, hence we have 
$\lift{G}_{\ple{X,\rid_X},\ple{X,\rid_X}}(\eqrelr) = \SDoc\reidx{\fn{G}\id_X,\fn{G}\id_X}(\rid_{\fn{G}\ple{X,\eqrelr}}$, which implies 
$ \lift{F}_{X,X}(\eqrelr) 
  \order \SDoc\reidx{\phi_X,\phi_X}(\lift{G}_{\ple{X,\rid_X},\ple{X,\rid_X}}(\eqrelr)) 
  = \SDoc\reidx{\fn{G}\id_X\circ\phi_X,\fn{G}\id_X\circ\phi_X}(\rid_{\fn{G}\ple{X,\eqrelr}})$. 
Then, since $\theta_\ple{X,\eqrelr}$ is a quotient arrow for $\lift{F}_{X,X}(\eqrelr)$, we conclude that $\psi_X$ is well defined. 
Moreover, for every arrow \fun{f}{\ple{X,\eqrelr}}{\ple{Y,\eqrels}} in $\QC\RDoc$, the following cube commutes
\[\xymatrix{
& \fn{F}X
  \ar[rr]^-{f}
  \ar[dd]^-{\phi_X}
  \ar[ld]_-{\theta_\ple{X,\eqrelr}} 
& 
& \fn{F}Y 
  \ar[dd]^-{\phi_Y}
  \ar[ld]^-{\theta_\ple{Y,\eqrels}} 
\\
  \fn{F'}\ple{X,\eqrelr}
  \ar[rr]_-{\fn{F'}f} 
  \ar[dd]_-{\psi_\ple{X,\eqrelr}} 
& 
& \fn{F'}\ple{Y,\eqrels} 
  \ar[dd]^-{\psi_Y} 
\\ 
& \fn{G}\ple{X,\rid_X} 
  \ar[rr]_-{\fn{G}\QAr\RDoc f} 
  \ar[ld]^-{\fn{G}\id_X} 
&
& \fn{G}\ple{Y,\rid_Y}
  \ar[ld]^-{\fn{G}\id_Y} 
\\ 
  \fn{G}\ple{X,\eqrelr}
  \ar[rr]_-{\fn{G} f}
&
& \fn{G}\ple{Y,\eqrels}  
}\]
proving that \nt{\psi}{\fn{F'}}{\fn{G}} is a natural transformation. 
Furthermore, the defining diagram of $\psi_\ple{X,\eqrelr}$ ensures that, for every $\relr\in\QR\RDoc(\ple{X,\eqrelr},\ple{Y,\eqrels})$, we have 
\begin{align*} 
\lift{F'}_{\ple{X,\eqrelr},\ple{Y,\eqrels}}(\relr) 
  & = \Ex^\SDoc\reidx{\theta_\ple{X,\eqrelr},\theta_\ple{Y,\eqrels}}(\lift{F}_{X,Y}(\relr)) 
\\ 
  & \order \Ex^\SDoc\reidx{\theta_\ple{X,\eqrelr},\ple{Y,\eqrels}}(\SDoc\reidx{\phi_X,\phi_Y}(\lift{G}_{\ple{X,\rid_X},\ple{Y,\rid_Y}}(\relr))) 
\\
  & \order \SDoc\reidx{\psi_\ple{X,\eqrelr},\psi_\ple{Y,\eqrels}}(\Ex^\SDoc\reidx{\fn{G}\id_X,\fn{G}\id_Y}(\lift{G}_{\ple{X,\rid_X},\ple{Y,\rid_Y}}(\relr))) 
\\ 
  & \order \SDoc\reidx{\psi_\ple{X,\eqrelr},\ple{Y,\eqrels}}(\lift{G}_{\ple{X,\eqrelr},\ple{Y,\eqrels}}(\relr)) 
\end{align*}
showing that \twoAr{\psi}{F'}{G} is a well-defined 2-arrow in \QRDtnl. 

Finally, note that when $\eqrelr = \rid_X$ we have $\fn{G}\id_X = \id_{\fn{G}\ple{X,\rid_X}}$, by functoriality of $\fn{G}$,  and 
$\theta_\ple{X,\rid_X} = \eta_X$, thus proving that $\phi_X = \psi_\ple{X,\rid_X}\circ \eta_X$ as needed. 
In addition,  $\psi_X$ is unique with this property as $\eta_X$ is a quotient arrow.  

In order to check that this adjunction is a reflection, consider again the quotients preserving 1-arrow \oneAr{G}{\QR\RDoc}{\SDoc}. 
The counit at $G$ is a 2-arrow \twoAr{\epsilon^G}{Q\circ\QR{G\circ\QAr\RDoc}}{G}, whose component at \ple{X,\eqrelr} is the unique arrow making the following traingle commute: 
\[\xymatrix@C=6ex{
  \fn{G}\ple{X,\rid_X}
  \ar[r]^-{\theta_{\ple{\fn{G}\ple{X,\rid_X},\lift{G}_{\ple{X,\rid_X},\ple{X,\rid_X}}(\eqrelr)}}} 
  \ar[rd]_-{\fn{G}\id_X}
& Q\ple{\fn{G}\ple{X,\rid_X},\lift{G}_{\ple{X,\rid_X},\ple{X,\rid_X}}} 
  \ar[d]^-{\epsilon^G_\ple{X,\eqrelr}} 
\\
& \fn{G}\ple{X,\eqrelr} 
}\]
which is well defined by the universal property of the quotient arrow $\theta_\ple{\fn{G}\ple{X,\rid_X},\lift{G}_{\ple{X,\rid_X},\ple{X,\rid_X}}(\eqrelr)}$ for the $\SDoc$-equivalence $\lift{G}_{\ple{X,\rid_X},\ple{X,\rid_X}}(\eqrelr)$ on its domain. 
But, since $\fn{G}\id_X$ is a quotient arrow for the same equivalence relation, the arrow $\epsilon^G_\ple{X,\eqrelr}$ is actually an isomorphism, as needed. 
\end{proof}

\begin{remark} \label[rem]{rem:qc-counit-pseudo} 
Let $\SDoc$ be a relational doctrine with quotients. 
The component of the counit of the lax 2-adjunction at $\SDoc$ is obtained by transposing the identity 1-arrow on $\SDoc$. 
Hence, it is exactly the left adjoint \oneAr{Q}{\QR\SDoc}{\SDoc} of the 1-arrow \oneAr{\QAr\SDoc}{\SDoc}{\QR\SDoc} given by \cref{lem:quot-adj}. 
Moreover, since the adjunction between the hom-categories is a reflection, the counit of the lax 2-adjunction is actually a pseudo-natural transformation, that is, naturality squares commute up to  an invertible 2-arrow. 
This shows that the laxness of the adjunction  originates only from the laxness of the unit, which is due to the fact that 1-arrows in $\RDtnl$ laxly preserve relational identities. 
\end{remark}

\begin{remark} \label[rem]{rem:qc-counit-strict}
Let $\RDoc$ be a relational doctrine. 
The component of the counit at $\QR\RDoc$ is the 1-arrow 
\oneAr{\QMAr\RDoc}{\QR{\QR\RDoc}}{\QR\RDoc} where 
$\fn{\QMAr\RDoc}\ple{\ple{X,\eqrelr},\eqrels} = \ple{X,\eqrels}$ and $\fn{\QMAr\RDoc} f = f$ and 
$\lift{\QMAr\RDoc}$ is componentwise the identity. 
Then, it is easy to see that, for any 1-arrow \oneAr{F}{\RDoc}{\RDoc'} in \RDtnl, the naturality square at $\QR{F}$ strictly commutes, that is, 
the equality $\QR{F} \circ \QMAr\RDoc = \QMAr{\RDoc'}\circ \QR{\QR{F}}$ holds. 
\end{remark}

\begin{example}
Let $\RDoc$ be a relational doctrine with quotients and 
\oneAr{F}{\RDoc}{\RDoc} be a 1-arrow in \QRDtnl, that is, it preserves quotient arrows. 
Recall from \cref{ex:bisim} the doctrine $\Bisim{F}$ on the category $\CoAlg{\fn{F}}$ of $\fn{F}$-coalgebras, where relations between coalgebras are $F$-bisimulations.
It is easy to see that $\Bisim{F}$ has quotients. 
Indeed, a $\Bisim{F}$-equivalence relation $\eqrelr$ on a $\fn{F}$-coalgebra \ple{X,c} is an $F$-bisimulation which is also a $\RDoc$-equivalence relation on $X$. 
Since $\RDoc$ has quotients, $\eqrelr$ admits an effective descent quotient arrow \fun{q}{X}{W} in the base of $\RDoc$. 
To conclude, it suffices to endow $W$ with an $\fn{F}$-coalgebra structure, making $q$ an $\fn{F}$-coalgebra homomorphism. 
To this end, note that, since $\eqrelr$ is a $F$-bisimulation and $\fn{F}q$ is a quotient arrow for $\lift{F}_{X,X}(\eqrelr)$, we get 
$\eqrelr \order \gr{\fn{F}q\circ c}\rcomp \gr{\fn{F}q\circ c}\rconv$. 
Thus by the universal property of quotients, we get a unique arrow \fun{c_\eqrelr}{W}{\fn{F}W} making the following diagram commute:
\[\xymatrix{
X \ar[d]_-{c} \ar[r]^-{q} & W \ar@{..>}[d]^-{c_\eqrelr} \\
\fn{F}X \ar[r]^-{\fn{F}q} & \fn{F}W 
}\]
This shows that the doctrine of $F$ bisimulations inherits quotients, provided that $F$ preserves them. 
If however quotients are not available in $\RDoc$ and/or $F$ does not preserve them, 
we can use the intensional quotient completion to freely add them to $\Bisim{F}$. 
In this way, we get the doctrine $\QR{\Bisim{F}}$ whose base category has as objects triple \ple{X,c,\eqrelr} where \ple{X,c} is an $\fn{F}$-coalgebra and $\eqrelr$ is an $F$-bisimulation equivalence on it. 
Notice that, applying $\RQFun$ to the 1-arrow $F$, we get a 1-arrow \oneAr{\QR{F}}{\QR\RDoc}{\QR\RDoc}. 
Then, we can construct the doctrine $\Bisim{\QR{F}}$ of $\QR{F}$-bisimulations. 
It is easy to check that $\QR{\Bisim{F}}$ is isomorphic to $\Bisim{\QR{F}}$, that is, the costruction of coalgebras commutes with the quotient completion. 
\end{example}

The 2-adjunction of \cref{thm:qc-univ}, being lax, establishes a weak correspondence between \RDtnl and \QRDtnl: 
between their hom-categories there is neither an isomorphism, nor an equivalence, but just an adjunction. 
Moreover, the family of 1-arrows $\QAr\RDoc$ is only a lax natural transformation. 
As already noticed, this is essentially due to the fact that 1-arrows of \RDtnl and \QRDtnl laxly preserve relational operations, in particular, relational identities. 
Hence, a way to recover a stronger correspondence may be to restrict to strict 1-arrows. 

Denote by \QRDtn the 2-full 2-subcategory of \QRDtnl whose 1-arrows are strict. 
Then, it is easy to see that $\RQFun$ applies \RDtn into \QRDtn, obtaining the following result. 

\begin{theorem}\label[thm]{thm:qc-univ-strict}
The lax 2-adjunction $\RQFun \dashv_l \QRFun$ restricts to a (pseudo) 2-adjunction between \QRDtn and \RDtn. 
That is, for every relational doctrine $\RDoc$ and every relational doctrine with quotients $\SDoc$, the functor 
\[ 
 \fun{\QRFun(\blank)\circ\QAr\RDoc}{\Hom{\QRDtn}{\QR\RDoc}{\SDoc}}{\Hom{\RDtn}{\RDoc}{\QRFun(\SDoc)}} 
\] 
is an equivalence of categories. 
\end{theorem}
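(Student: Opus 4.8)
The plan is to upgrade the hom-category adjunction constructed in the proof of \cref{thm:qc-univ} to an equivalence once we restrict to strict 1-arrows. Write $R=\QRFun(\blank)\circ\QAr\RDoc$ for the functor appearing in \cref{thm:qc-univ} and let $L$ be its reflection left adjoint, so that $L(F)=Q\circ\QR{F}$ and the counit $LR\Rightarrow\Id$ is invertible; since the counit of $L\dashv R$ is an isomorphism, $R$ is fully faithful on the lax hom-categories. I would first observe that the strict hom-categories $\Hom{\QRDtn}{\QR\RDoc}{\SDoc}$ and $\Hom{\RDtn}{\RDoc}{\QRFun(\SDoc)}$ are \emph{full} subcategories of their lax counterparts, since \QRDtn and \RDtn are 2-full in \QRDtnl and \RDtnl; moreover $R$ sends the former into the latter, because $\QAr\RDoc$ is strict and strict 1-arrows are closed under composition. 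A fully faithful functor restricts to a fully faithful functor between full subcategories, so the functor $R'$ in the statement is fully faithful.

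It remains to show that $R'$ is essentially surjective. Given a strict 1-arrow \oneAr{F}{\RDoc}{\QRFun(\SDoc)}, I take $G=L(F)=Q\circ\QR{F}$ and claim that $G$ lies in \QRDtn with $R'(G)\cong F$. For membership in \QRDtn I must check that $G$ is strict and preserves quotients. The crucial observation is that $\QR{F}$ \emph{always} preserves relational identities strictly: the relational identity of $\QR{F}\ple{X,\eqrelr}=\ple{\fn{F}X,\lift{F}_{X,X}(\eqrelr)}$ is $\lift{F}_{X,X}(\eqrelr)=\lift{\QR{F}}_{\ple{X,\eqrelr},\ple{X,\eqrelr}}(\rid_{\ple{X,\eqrelr}})$, and when $F$ is strict $\QR{F}$ also strictly preserves composition; combined with \cref{prop:qc-2fun}, which gives preservation of quotient arrows, this shows $\QR{F}\in\QRDtn$. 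Since $Q$ is a strict left adjoint (\cref{lem:quot-adj}) it preserves quotients by \cref{lem:ladj-quot}, so the composite $G=Q\circ\QR{F}$ is a strict, quotients-preserving 1-arrow, hence an object of \QRDtn.

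To obtain $R'(G)\cong F$ I use the unit \twoAr{\eta}{F}{R(G)} built in the proof of \cref{thm:qc-univ}, whose component at $X$ is $\eta_X=\theta_{\QR{F}\ple{X,\rid_X}}$. Here strictness of $F$ is decisive: it yields $\lift{F}_{X,X}(\rid_X)=\rid_{\fn{F}X}$, so $\QR{F}\ple{X,\rid_X}=\ple{\fn{F}X,\rid_{\fn{F}X}}$ and $\eta_X$ is a quotient arrow for the \emph{relational identity} $\rid_{\fn{F}X}$, which is an isomorphism by the argument in the proof of \cref{lem:quot-adj}. Thus $\eta$ is an invertible 2-arrow, and since the 2-arrows of \RDtn coincide with those of \RDtnl it witnesses $F\cong R'(G)$ in the strict hom-category. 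Hence $R'$ is fully faithful and essentially surjective, that is, an equivalence of categories; the packaging of these equivalences into a pseudo 2-adjunction then follows from \cref{thm:qc-univ} together with \cref{rem:qc-counit-strict}, the strictness of the relevant naturality squares removing the laxness. The main obstacle is precisely this last step: seeing that the genuinely lax unit of \cref{thm:qc-univ} collapses to an isomorphism exactly because strict 1-arrows preserve relational identities on the nose, which is the source of laxness identified in \cref{rem:qc-counit-pseudo}.
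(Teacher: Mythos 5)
Your overall strategy is the paper's own: reuse the reflection left adjoint $L(F)=Q\circ\QR{F}$ from \cref{thm:qc-univ}, check that both $L$ and $\QRFun(\blank)\circ\QAr\RDoc$ preserve strictness so that the adjunction restricts to the strict hom-categories, and then argue that strictness of $F$ collapses the laxness of the unit. The fully-faithfulness half (counit iso on the lax level, plus 2-fullness of \RDtn in \RDtnl and of \QRDtn in \QRDtnl) is correct, as are your verifications that $G=L(F)$ is strict and preserves quotients, and your observation that $\QR{F}$ always preserves relational identities on the nose.

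The gap is the final inference: from ``each component $\eta_X$ is an isomorphism in the base'' you conclude ``$\eta$ is an invertible 2-arrow''. This is not a valid principle in \RDtn or \RDtnl. A 2-arrow \twoAr{\theta}{F}{G} must satisfy the \emph{inequality} $\lift{F}_{X,Y}\order\SDoc\reidx{\theta_X,\theta_Y}\circ\lift{G}_{X,Y}$, and the componentwise inverse of $\theta$ is a 2-arrow from $G$ to $F$ only when this inequality is an \emph{equality}; invertibility of the components does not force that. Concretely, take the change of base of $\Rel$ along the functor $\One\to\Set$ picking a singleton, so the unique fibre is $\{0\le 1\}$ with $\relr\rcomp\rels=\relr\land\rels$ and $\rid=1$; let $F$ be the identity 1-arrow and $G$ the strict 1-arrow with $\lift{G}$ constantly $1$. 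The identity component gives a 2-arrow $F\Rightarrow G$ whose component is an isomorphism, yet there is no 2-arrow $G\Rightarrow F$ at all. So your $\eta$ needs a further argument, which is available in two ways: (i) by the construction of $Q$ in \cref{lem:quot-adj} one has $\lift{R'(G)}_{X,Y}(\relr)=\gr{\eta_X}\rconv\rcomp\lift{F}_{X,Y}(\relr)\rcomp\gr{\eta_Y}$, and conjugating with the graphs of the isomorphisms $\eta_X,\eta_Y$ turns the 2-arrow inequality into an equality, so the componentwise inverse is indeed a 2-arrow; or (ii) as the paper does, use that for strict $F$ the square $\QAr\SDoc\circ F=\QR{F}\circ\QAr\RDoc$ commutes strictly, whence $R'(G)=Q\circ\QAr\SDoc\circ F$ and $\eta$ is the whiskering by $F$ of the inverse of the counit of the reflection $Q\dashv\QAr\SDoc$, which \cref{lem:quot-adj} already provides as an invertible 2-arrow. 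With either patch your proof is complete and coincides in substance with the paper's.
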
 
\begin{proof}
By \cref{thm:qc-univ}, we know that the functor \fun{\QRFun(\blank)\circ\QAr\RDoc}{\Hom{\QRDtnl}{\\QR\RDoc}{\SDoc}}{\Hom{\RDtnl}{\RDoc}{\SDoc}}  has a reflection left adjoint 
sending a 1-arrow \oneAr{F}{\RDoc}{\SDoc} to a 1-arrow \oneAr{F'}{\QR\RDoc}{\SDoc}, which preserves quotients, 
and the counit of such an adjunction is an isomorphism. 
Moreover, we know that $F' = Q\circ\QR{F}$, where$Q$ is a strict reflection left adjoint of $\QAr\SDoc$, which exists by \cref{lem:quot-adj}. 
Hence, when $F$ is strict, $F'$ is strict as well, 
hence the adjunction restricts to the categories $\Hom{\QRDtn}{\QR\RDoc}{\SDoc}$ and $\Hom{\RDtn}{\RDoc}{\SDoc}$. 
Moreover, when $F$ is strict, the unit at $F$ of this adjunction is an isomorphism becuase 
the naturality square of $\QAr{}$ at $F$ strictly commutes, that is, $\QAr\SDoc\circ F = \QR{F}\circ\QAr\RDoc$, and 
$Q\circ\QAr\SDoc$ is isomorphic to the identity on $\SDoc$ as $Q$ is a reflection left adjoint. 
Therefore, we can conclude that  this adjunction is actually an equivalence. 
\end{proof}

Like any 2-adjunction, also the one in \cref{thm:qc-univ-strict} induces a 2-monad on \RDtn, which is actually a strict 2-monad. 
Indeed, the underlying functor is $\QMnd = \QRFun\circ\RQFun$, which is strict and maps a relational doctrine \RDoc to its quotient completion $\QR\RDoc$; 
the unit consists of the 1-arrows $\QAr\RDoc$, which form a strict 2-natural transformation; and 
the multiplication is given by the 1-arrows $\QMAr\RDoc$, which form a strict 2-natural transformation as noticed in \cref{rem:qc-counit-strict}. 
 
The 2-monad $\QMnd$ induces a 2-category $\psAlg\QMnd$ of pseudoalgebras and pseudomorphisms \cite{BlackwellKP89}. 
Our goal now is to compare the 2-category $\QRDtn$ to the 2-category $\psAlg\QMnd$. 
As usual, there is a comparison 2-functor mapping a relational doctrine with quotients $\SDoc$ to a pseudoalgebra on $\SDoc$ whose structure 1-arrow is the component at $\SDoc$ of the counit of the 2-adjunction in \cref{thm:qc-univ-strict}. 
Hence, as observed in \cref{rem:qc-counit-pseudo}, this structure map is a reflection left adjoint in $\RDtn$ of the 1-arrow 
\oneAr{\QAr\SDoc}{\SDoc}{\QR\SDoc}. 
This shows that relational doctrines with quotients correspond to quite special pseudoalgebras in $\psAlg\QMnd$. 
However, we will shortly prove hat actually all objects in $\psAlg\QMnd$ are actually of this kind, that is, 
all pseudoalgebras have a structure map which is a reflection left adjoint of the unit of the 2-monad. 
This result is achieved by proving that the 2-monad is \emph{lax idempotent} \cite{Kock95,KellyL97}. 
This amounts to show that, for every relational doctrine $\RDoc$ in \RDtn, 
there is a 2-arrow 
\[ \twoAr{\lambda^\RDoc}{\QMnd(\QAr\RDoc)}{\QAr{\QMnd(\RDoc)}} \] 
such that 
$\lambda^\RDoc$ is natural in $\RDoc$ and the following equations hold:
\begin{description}
\item [\kzd{2}] $\lambda^\RDoc\QAr\RDoc = \id_{\QMnd(\QAr\RDoc)\circ\QAr\RDoc}$; 
\item [\kzd{3}] $M^\RDoc\lambda^\RDoc = \id_{\id_\RDoc}$. 
\end{description} 

\begin{lemma}\label[lem]{lem:qc-mnd}
The 2-monad $\QMnd$ is lax idempotent. 
\end{lemma}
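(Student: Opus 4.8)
The plan is to exhibit the required 2-cell $\lambda^\RDoc$ explicitly and then observe that, since all its components are identities on underlying arrows, every verification collapses to a triviality. First I would compute the two competing 1-arrows on objects. Unfolding the definitions of $\RQFun$ and of the unit, the 1-arrow $\QMnd(\QAr\RDoc)=\QR{\QAr\RDoc}$ sends an object $\ple{X,\eqrelr}$ of $\QC\RDoc$ to $\ple{\ple{X,\rid_X},\eqrelr}$ (using that $\lift{\QAr\RDoc}$ is the family of identities), whereas $\QAr{\QR\RDoc}$ sends it to $\ple{\ple{X,\eqrelr},\rid_{\ple{X,\eqrelr}}}=\ple{\ple{X,\eqrelr},\eqrelr}$ (using $\rid_{\ple{X,\eqrelr}}=\eqrelr$). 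Thus the two 1-arrows differ only in the inner equivalence relation, $\rid_X$ versus $\eqrelr$, and I would bridge them by $\id_X$.

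Concretely, I would set $\lambda^\RDoc_{\ple{X,\eqrelr}}$ to be $\id_X$, regarded as an arrow $\ple{\ple{X,\rid_X},\eqrelr}\to\ple{\ple{X,\eqrelr},\eqrelr}$ in $\QC{\QR\RDoc}$. This is well defined: first $\id_X\colon\ple{X,\rid_X}\to\ple{X,\eqrelr}$ is an arrow of $\QC\RDoc$ because $\rid_X\order\eqrelr$ by reflexivity, and then it is an arrow of $\QC{\QR\RDoc}$ because $\QR\RDoc\reidx{\id_X,\id_X}(\eqrelr)=\eqrelr$, so the required inequality $\eqrelr\order\QR\RDoc\reidx{\id_X,\id_X}(\eqrelr)$ holds.

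Next I would check that $\lambda^\RDoc$ is a 2-arrow of $\RDtn$. Naturality is immediate: both 1-arrows act as the identity on underlying arrows, so every naturality square reduces to the same underlying arrow in $\CC$ and hence commutes. For the 2-arrow inequality I would use that $\lift{\QR{\QAr\RDoc}}_{\ple{X,\eqrelr},\ple{Y,\eqrels}}$, $\lift{\QAr{\QR\RDoc}}_{\ple{X,\eqrelr},\ple{Y,\eqrels}}$ and the reindexing $\QR{\QR\RDoc}\reidx{\id_X,\id_Y}$ are all the identity, so the condition reduces to $\relr\order\relr$. Naturality of the family $\{\lambda^\RDoc\}$ in $\RDoc$ follows along the same lines: for a strict 1-arrow $H$, all the functors involved act as $\fn{H}$ on underlying data while $\lambda$ is pointwise an identity, so the modification condition holds strictly.

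Finally, I would verify \kzd{2} and \kzd{3}. For \kzd{2}, the whiskering $\lambda^\RDoc\QAr\RDoc$ (on the right by $\QAr\RDoc$) has component at $X$ equal to $\lambda^\RDoc_{\ple{X,\rid_X}}$; here both source and target equal $\ple{\ple{X,\rid_X},\rid_X}$ (again using $\rid_{\ple{X,\rid_X}}=\rid_X$), so this component is an identity arrow and the whiskering is the identity 2-cell. For \kzd{3}, the whiskering $\QMAr\RDoc\lambda^\RDoc$ (on the left by $\QMAr\RDoc$) has component at $\ple{X,\eqrelr}$ equal to $\fn{\QMAr\RDoc}(\id_X)$; since $\fn{\QMAr\RDoc}$ sends both $\ple{\ple{X,\rid_X},\eqrelr}$ and $\ple{\ple{X,\eqrelr},\eqrelr}$ to $\ple{X,\eqrelr}$ and acts as the identity on underlying arrows, this is $\id_{\ple{X,\eqrelr}}$, so again the whiskering is the identity 2-cell. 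The only genuine care needed, and the main obstacle if any, is the bookkeeping of objects of the doubly iterated completion $\QC{\QR\RDoc}$ and the recognition that the single nontrivial ingredient is reflexivity $\rid_X\order\eqrelr$, which is exactly what makes $\id_X$ a legal comparison arrow.
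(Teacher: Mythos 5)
Your proof is correct and follows essentially the same route as the paper: the same comparison 2-cell $\lambda^\RDoc$ with components $\id_X$ from $\ple{\ple{X,\rid_X},\eqrelr}$ to $\ple{\ple{X,\eqrelr},\eqrelr}$, and the same verifications of \kzd{2} and \kzd{3}. The only cosmetic difference is that for \kzd{3} you use the explicit description of $\QMAr\RDoc$ (as in \cref{rem:qc-counit-strict}) where the paper invokes the universal property of the quotient arrow \fun{\id_X}{\ple{X,\rid_X}}{\ple{X,\eqrelr}}, both yielding the identity 2-cell; your extra well-definedness and 2-arrow checks simply spell out details the paper leaves implicit.
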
 
\begin{proof} 
Let $\RDoc$ be a relational doctrine in $\RDtn$. 
The 2-arrow $\lambda^\RDoc$ is given by the family of arrows \fun{\lambda^\RDoc_{\ple{X,\eqrelr}}}{\ple{\ple{X,\rid_X},\eqrelr}}{\ple{\ple{X,\eqrelr},\eqrelr}} indexed by objects \ple{X,\eqrelr} in \QC\RDoc where $\lambda^\RDoc_{\ple{X,\eqrelr}} = \id_X$. 
It is not difficult to see that $\lambda^\RDoc$ is natural in $\RDoc$, mainly because 1-arrow in \RDtn strictly preserve relational identities. 
Hence, we only check conditions \kzd{2} and \kzd{3}.
\begin{description}
\item [\kzd{2}]
Let $X$ be an object in the base of $\RDoc$. 
Then, the component of $\lambda^\RDoc$ at $\fn{\QAr\RDoc}X = \ple{X,\rid_X}$, is 
\fun{\lambda^\RDoc_{\ple{X,\rid_X}} = \id_X}{\ple{\ple{X,\rid_X}, \rid_X}}{\ple{\ple{X,\rid_X}, \rid_X}}, which is the identity on \ple{\ple{X,\rid_X},\rid_X} in the base of $\QR{\QR\RDoc}$, as needed. 
\item [\kzd{3}]
Let \ple{X,\eqrelr} be an object in \QC\RDoc. 
The component of $\lambda^\RDoc$ at  \ple{X,\eqrelr} is the arrow 
\fun{\lambda^\RDoc_{\ple{X,\eqrelr}} = \id_X}{\ple{\ple{X,\rid_X},\eqrelr}}{\ple{\ple{X,\eqrelr},\eqrelr}}. 
Applying $\fn{\QMAr\RDoc}$, we get  the arrow 
\fun{\fn{\QMAr\RDoc}\lambda^\RDoc_{\ple{X,\eqrelr}}}{\ple{X,\eqrelr}}{\ple{X,\eqrelr}}, 
which is defined by the universal property of the quotient arrow \fun{\id_X}{\ple{X,\rid_X}}{\ple{X,\eqrelr}} in $\QC\RDoc$ as depicted in the following diagram: 
\[\xymatrix@C=5ex{
  \ple{X,\rid_X} 
  \ar[r]^-{\id_X} 
  \ar[d]_-{\id_X} 
& \ple{X,\eqrelr}
  \ar[d]^-{\id_{\ple{X,\eqrelr}}} 
\\
  \ple{X,\eqrelr}
  \ar@{..>}[r]_-{\QMAr\RDoc\lambda^\RDoc_{\ple{X,\eqrelr}}} 
& \ple{X,\eqrelr} 
}\]
Therefore, this arrow is the identity on \ple{X,\eqrelr}, as needed. 
\end{description}
\end{proof}

As proved in \cite{Kock95,KellyL97}, since $\QMnd$ is a lax idempotent monad, we know 
that any pseudoalgebra for $\QMnd$ is a reflection left adjoint in \RDtn of the unit $\QAr{}$ of $\QMnd$. 
Then, since left adjoints are unique up to isomorphism, we know that any relational doctrine carries \emph{at most one} pseudoalgebra structure for $\QMnd$, that is, the monad describes a property of relational doctrines rather than a structure on them. 
Moreover, by \cref{lem:quot-adj}, this also implies that the carrier of a pseudoalgebra for $\QMnd$ is actually a relational doctrine with quotients. 
The following result strengthens this observation proving that the pseudo 2-adjunction $\RQFun\dashv\QRFun$ of \cref{thm:qc-univ-strict} is 2-monadic. 

\begin{theorem}\label[thm]{thm:qc-mnd}
The 2-categories $\QRDtn$ and $\psAlg\QMnd$ are equivalent. 
\end{theorem}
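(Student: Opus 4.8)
The plan is to show that the canonical comparison 2-functor \oneAr{K}{\QRDtn}{\psAlg\QMnd}, which sends a relational doctrine with quotients \SDoc\ to the pseudoalgebra whose structure 1-arrow is the component at \SDoc\ of the counit of the 2-adjunction of \cref{thm:qc-univ-strict}, is a biequivalence. By \cref{rem:qc-counit-pseudo} this structure 1-arrow is exactly the reflection left adjoint \oneAr{Q^\SDoc}{\QR\SDoc}{\SDoc} of \oneAr{\QAr\SDoc}{\SDoc}{\QR\SDoc} provided by \cref{lem:quot-adj}. To conclude it suffices to verify that $K$ is essentially surjective on objects and that each hom-functor \fun{K}{\Hom{\QRDtn}{\RDoc}{\SDoc}}{\Hom{\psAlg\QMnd}{K\RDoc}{K\SDoc}} is an equivalence of categories.

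For objects I would invoke lax idempotency. Since $\QMnd$ is lax idempotent (\cref{lem:qc-mnd}), a pseudoalgebra structure on a relational doctrine \RDoc\ is precisely a reflection left adjoint of $\QAr\RDoc$, and such a structure is unique up to isomorphism when it exists. By \cref{lem:quot-adj} such a left adjoint exists if and only if \RDoc\ has effective descent quotients. Hence the carriers of pseudoalgebras are exactly the objects of \QRDtn, each giving a single isomorphism class in $\psAlg\QMnd$, and $K$ is essentially surjective on objects.

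The heart of the argument is the behaviour on 1-cells. Given strict 1-arrows \oneAr{F}{\RDoc}{\SDoc}, recall from \cref{rem:qc-counit-strict} that strictness forces the naturality square of the unit to commute on the nose, $\QAr\SDoc\circ F = \QR F\circ\QAr\RDoc$. Taking the mate of this identity across the adjunctions $Q^\RDoc\dashv\QAr\RDoc$ and $Q^\SDoc\dashv\QAr\SDoc$ yields a canonical 2-arrow \twoAr{\bar F}{Q^\SDoc\circ\QR F}{F\circ Q^\RDoc}, which is exactly the pseudomorphism structure cell assigned to $F$; by lax idempotency this is the only such structure, so being a pseudomorphism is a \emph{property} of $F$ rather than extra data. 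Computing $\bar F$ at an object \ple{X,\eqrelr}, it is the unique comparison arrow from the quotient $\fn F X/\lift F_{X,X}(\eqrelr)$ to $\fn F(X/\eqrelr)$ induced by applying $\fn F$ to a quotient arrow of $\eqrelr$. This arrow is invertible precisely when $\fn F$ carries a quotient arrow for $\eqrelr$ to a quotient arrow for $\lift F_{X,X}(\eqrelr)$, that is, exactly when $F$ preserves quotient arrows. Therefore the pseudomorphisms $K\RDoc\to K\SDoc$ are in bijection with the strict quotient-preserving 1-arrows $\RDoc\to\SDoc$, i.e.\ with the 1-arrows of \QRDtn. The main obstacle is precisely this computation: identifying the mate $\bar F$ with the stated comparison map and proving that its invertibility is equivalent to quotient preservation, which rests on the universal property of quotients and on the invertibility of the counits of $Q^\RDoc\dashv\QAr\RDoc$ and $Q^\SDoc\dashv\QAr\SDoc$.

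It remains to match 2-cells. A 2-arrow of pseudoalgebras between two pseudomorphisms is a 2-arrow $\theta$ in \RDtn\ between the underlying strict 1-arrows that is compatible with the structure cells $\bar F$ and $\bar G$. Again by lax idempotency this compatibility is automatic for every such $\theta$, so the 2-arrows of $\psAlg\QMnd$ between $KF$ and $KG$ coincide with the 2-arrows of \QRDtn\ between $F$ and $G$. Hence each hom-functor $K$ is also full and faithful on 2-arrows, and together with the previous paragraph it is an equivalence of categories. As $K$ is moreover essentially surjective on objects, it is a biequivalence, so \QRDtn\ and $\psAlg\QMnd$ are equivalent as 2-categories.
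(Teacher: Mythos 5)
Your proposal is correct and follows essentially the same route as the paper: both exploit lax idempotency of $\QMnd$ (\cref{lem:qc-mnd}) together with \cref{lem:quot-adj} to identify pseudoalgebras with relational doctrines having effective descent quotients, and pseudomorphisms with quotient-preserving strict 1-arrows. The only organizational difference is that the paper exhibits an explicit quasi-inverse (the forgetful 2-functor $\mathrm{F}$, well defined because pseudomorphisms preserve the units of the adjunctions and hence quotient arrows) and then checks $\mathrm{F}\circ\mathrm{K}=\Id$ and $\mathrm{K}\circ\mathrm{F}\cong\Id$, whereas you verify directly that the comparison $\mathrm{K}$ is a biequivalence, computing the unique lax structure cell on a strict 1-arrow as a mate and characterizing its invertibility as quotient preservation --- the same underlying facts, packaged hom-wise rather than via an explicit inverse.
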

\begin{proof}
We know that there is a comparison 2-functor \fun{\mathrm{K}}{\QRDtn}{\psAlg\QMnd} mapping a relational doctrine with quotients $\SDoc$ to the pseudoalgebra 
\oneAr{Q^\SDoc}{\QMnd(\SDoc)}{\SDoc} 
which is the component at $\SDoc$ of the counit of the pseudo 2-adjunction $\RQFun\dashv\QRFun$ of \cref{thm:qc-univ-strict}.  

By \cref{lem:qc-mnd} and results in \cite{Kock95,KellyL97}, we also know that every pseudoalgebra
\oneAr{L}{\QMnd(\RDoc)}{\RDoc} is a reflection left adjoint of $\QAr\RDoc$ in \RDtn and every pseudomorphism is also a morphism of adjunctions in \RDtn. 
Hence, if \oneAr{\ple{F,\phi}}{\ple{\RDoc_1,L_1}}{\ple{\RDoc_2,L_2}} is pseudomorphism of $\QMnd$-pseudoalgebras and $\eta_1$ and $\eta_2$ are the units the adjunctions $L_1\dashv\QAr{\RDoc_1}$ and $L_2\dashv\QAr{\RDoc_2}$, respectively, 
we have that the equality of 2-arrows $(\eta_2 \QMnd(F))(\QAr{\RDoc_2}\phi) = \QMnd(F)\eta_1$ holds in \RDtn. 
Then, if \oneAr{L}{\QMnd(\RDoc)}{\RDoc} is a pseudoalgebra and $\eta$ the unit of the adjunction $L\dashv\QAr\RDoc$, 
by \cref{lem:quot-adj}, we deduce that $\RDoc$ has quotients and $L$ provides a choice of them: 
for every $\RDoc$-equivalence $\eqrelr$ on an object $X$, the underlying arrow of \fun{\eta_{\ple{X,\eqrelr}}}{\ple{X,\eqrelr}}{\QAr\RDoc L\ple{X,\eqrelr}} is a quotient arrow of $\eqrelr$ in $\RDoc$. 
Therefore, since pseudomorphisms in $\psAlg{\QMnd}$ preserve the units (up to iso), we derive that they also preserve quotient arrows. 
This proves that the forgetful 2-functor from $\psAlg\QMnd$ to $\RDtn$ factors through the 2-category $\QRDtn$, providing us with a 2-functor 
\fun{\mathrm{F}}{\psAlg\QMnd}{\QRDtn}
that simply forgets the pseudoalgebra structure. 
It is easy to see that $\mathrm{F}\circ\mathrm{K} = \Id_\QRDtn$ and 
$\mathrm{K}\circ\mathrm{F}\cong\Id_{\psAlg\QMnd}$, because left adjoints are unique up to isomorphism. 
Hence, we conclude that $\mathrm{K}$ is an equivalence of 2-categories, as needed. 
\end{proof}


\section{Extensional equality}
\label{sect:ext-sep}

An important logical principle commonly assumed is the \emph{extensionality} of equality. 
Intuitively, it means that two functions $f$ and $g$ are equal exactly when their outputs coincide on equal inputs, that is, 
whenever $x = y$ we have $f(x) = g(y)$. 
This is the usual notion of equality for set-theoretic functions, however, if we move to more constructive settings such as Type Theory, it is not necessarily the case that extensionality holds. 
Relational doctrines are able to distinguish the two notions of equality of arrows.

\begin{definition} \label[def]{def:ext-eq}
Let \fun{\RDoc}{\bop\CC}{\Pos} be a relational doctrine and \fun{f,g}{X}{Y} two parallel arrows in \CC. 
We say that $f$ and $g$ are \emph{$\RDoc$-equal}, notation $f \exteq g$, if 
$\rid_X \order \RDoc\reidx{f,g}(\rid_Y)$. 
We say that $\RDoc$ is \emph{extensional} if 
for every $f,g$ in \CC, $f \exteq g$ implies $f = g$.  
\end{definition}
That is, $\RDoc$ is extensional if $\RDoc$-equality implies equality of arrows. The other implication always holds, therefore in an extensional relational doctrine  $f \exteq g$ if and only if $f = g$.

\begin{proposition}\label[prop]{prop:ext-eq}
Let \fun{\RDoc}{\bop\CC}{\Pos} be a relational doctrine and \fun{f,g}{X}{Y} two parallel arrows in \CC. 
Then, $f \exteq g$ iff $\gr{f} = \gr{g}$. 
\end{proposition}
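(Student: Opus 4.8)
The plan is to translate both sides of the claimed equivalence into purely relational terms and then lean on the fact that graphs are functional and total. First I would apply \cref{prop:left-adj} to the reindexing occurring in the definition of $\exteq$: since $\RDoc\reidx{f,g}(\relr) = \gr{f}\rcomp\relr\rcomp\gr{g}\rconv$, taking $\relr = \rid_Y$ and using the unit laws for composition yields $\RDoc\reidx{f,g}(\rid_Y) = \gr{f}\rcomp\gr{g}\rconv$. Hence, by \cref{def:ext-eq}, the condition $f\exteq g$ is equivalent to the single inequality $\rid_X \order \gr{f}\rcomp\gr{g}\rconv$. The whole proof then reduces to showing that this inequality holds if and only if $\gr{f} = \gr{g}$.

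For the implication $\gr{f}=\gr{g}\Rightarrow f\exteq g$, I would simply substitute $\gr{g}$ by $\gr{f}$ and invoke totality of $\gr{f}$ from \cref{prop:graph-fun}, giving $\rid_X \order \gr{f}\rcomp\gr{f}\rconv = \gr{f}\rcomp\gr{g}\rconv$, which is exactly $f\exteq g$. This direction is immediate.

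The converse is where the actual work lies. Assuming $\rid_X \order \gr{f}\rcomp\gr{g}\rconv$, I would derive a single one-sided comparison of the graphs, namely
\[
\gr{g} = \rid_X\rcomp\gr{g} \order \gr{f}\rcomp\gr{g}\rconv\rcomp\gr{g} \order \gr{f}\rcomp\rid_Y = \gr{f},
\]
where the first inequality uses the hypothesis together with monotonicity of composition, and the second uses functionality of $\gr{g}$, i.e.\ $\gr{g}\rconv\rcomp\gr{g}\order\rid_Y$, again from \cref{prop:graph-fun}. Thus $\gr{g}\order\gr{f}$.

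The key point — and what I expect to be the only real subtlety — is that a one-sided inequality already suffices: since both $\gr{f}$ and $\gr{g}$ are functional and total by \cref{prop:graph-fun}, \cref{prop:fun-ord} tells us that such relations are discretely ordered, so $\gr{g}\order\gr{f}$ forces $\gr{g}=\gr{f}$. This avoids having to reprove the symmetric inequality by hand and closes the argument.
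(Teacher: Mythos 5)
Your proof is correct and follows essentially the same route as the paper: both reduce $f\exteq g$ to $\rid_X\order\gr{f}\rcomp\gr{g}\rconv$ via \cref{prop:left-adj}, establish the one-sided inequality $\gr{g}\order\gr{f}$ using functionality/totality of graphs from \cref{prop:graph-fun}, and then invoke the discrete ordering of functional total relations (\cref{prop:fun-ord}) to upgrade it to an equality. Your write-up merely spells out the intermediate inequalities that the paper's chain of ``iff''s leaves implicit.
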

\begin{proof} 
By \cref{prop:left-adj}, we have $\RDoc\reidx{f,g}(\rid_Y) = \gr{f}\rcomp\rid_Y\rcomp\gr{g}\rconv$. 
Then, we have 
$f \exteq g$ iff 
$\rid_X \order \gr{f}\rcomp\gr{g}\rconv$ iff 
$\gr{g} \order \gr{f}$ iff
$\gr{g} = \gr{f}$ (by \cref{prop:fun-ord}). 
\end{proof}

\cref{prop:ext-eq} with \cref{prop:left-adj} mean that  $\RDoc$-equal arrows cannot be distinguished by the logic of $\RDoc$ since they behave in the same way w.r.t. reindexing. 
Indeed given \fun{f,f'}{X}{A} and \fun{g,g'}{Y}{B} in the base, 
$f\exteq f'$ and $g\exteq g'$ imply $\RDoc\reidx{f,g} = \RDoc\reidx{f',g'}$.

From a quantitative or topological perspective, extensional equality is related to various notions of \emph{separation}. 
Take for example the doctrine $\QR{\VRel\RPos}$ over the category \QC{\VRel\RPos} of pseudometric spaces and non-expansive maps (\cf \refItem{ex:qcat}{vrel}). Functions 
\fun{f,g}{\ple{X,\eqrelr}}{\ple{Y,\eqrels}} are $\QR{\VRel\RPos}$-equal iff $\eqrels(f(x),g(x)) = 0$
, which implies $f = g$ exactly when \ple{Y,\eqrels} 
satisfies the identity of indiscernibles, \ie the axiom stating that $\eqrels(x,y) = 0$ implies $x = y$. 
This requirement turns a pseudometric space into a usual metric space
and forces a strong separation property: 
the topology associated with the metric space is Hausdorff. 

This observation shows that the intensional quotient completion does not preserve extensionality. 
Indeed the relational doctrine $\VRel\RPos$ on \Set is extensional, while $\QR{\VRel\RPos}$ is not as not all pseudometric spaces are separated. 
This is due to the fact that the intensional quotient completion changes equality, as it modifies identity relations, while the equality between arrows of the base category remains unchanged.

We now introduce a completion enforcing extensionality or, in quantitative terms,  separation. 
As for the intensional quotient completion, it is inspired by the extensional collapse of an elementary doctrines introduced in \cite{MaiettiME:quofcm}.

\begin{proposition}\label[prop]{prop:ext-eq-cong}
Let $\RDoc$ be a relational doctrine and \fun{f,f'}{X}{Y} and \fun{g,g'}{Y}{Z} are arrows in the base \CC. Then 
$f\exteq f'$ and $g\exteq g'$ imply $g\circ f \exteq g'\circ f'$. 
\end{proposition}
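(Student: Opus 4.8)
The plan is to reduce the statement about extensional equality of the composites to a plain equality of their graphs, using the characterisation provided by \cref{prop:ext-eq}. By that proposition, the hypotheses $f \exteq f'$ and $g \exteq g'$ are equivalent to $\gr{f} = \gr{f'}$ and $\gr{g} = \gr{g'}$, respectively, and likewise the desired conclusion $g\circ f \exteq g'\circ f'$ is equivalent to the equality $\gr{g\circ f} = \gr{g'\circ f'}$. Hence it suffices to prove this last equality of relations.

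For this, I would invoke the functoriality of graphs with respect to composition, recorded in the paragraph following \cref{prop:left-adj}, namely $\gr{g\circ f} = \gr{f}\rcomp\gr{g}$. Combining this law with the hypotheses rewritten as graph equalities yields the chain
\[
\gr{g\circ f} = \gr{f}\rcomp\gr{g} = \gr{f'}\rcomp\gr{g'} = \gr{g'\circ f'},
\]
where the middle step merely substitutes $\gr{f'}$ for $\gr{f}$ and $\gr{g'}$ for $\gr{g}$. This establishes the required equality of graphs, and a final application of \cref{prop:ext-eq} delivers $g\circ f \exteq g'\circ f'$.

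There is essentially no obstacle in this argument: it is a one-line computation once extensional equality has been translated into equality of graphs. The only point requiring care is the direction of composition in the functoriality law, since graphs turn composition of arrows into relational composition in the reversed order; but this is exactly the identity $\gr{g\circ f} = \gr{f}\rcomp\gr{g}$ stated earlier, so no sign-of-composition mismatch can arise.
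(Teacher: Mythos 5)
Your proof is correct, but it follows a different route from the paper's. The paper proves the proposition directly from \cref{def:ext-eq}, in one chain of inequalities using only monotonicity and contravariant functoriality of $\RDoc$:
\[
\rid_X \order \RDoc\reidx{f,f'}(\rid_Y) \order \RDoc\reidx{f,f'}(\RDoc\reidx{g,g'}(\rid_Z)) = \RDoc\reidx{g\circ f,\,g'\circ f'}(\rid_Z),
\]
where the first step is the hypothesis $f\exteq f'$, the second applies $\RDoc\reidx{f,f'}$ to the hypothesis $g\exteq g'$, and the last is functoriality of $\RDoc$. You instead translate everything into equalities of graphs via \cref{prop:ext-eq} and then use the composition law $\gr{g\circ f} = \gr{f}\rcomp\gr{g}$. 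Both arguments are one-liners and both rely only on material established before the proposition, so there is no circularity; note, though, that your route is less self-contained, since \cref{prop:ext-eq} itself rests on \cref{prop:left-adj} and \cref{prop:fun-ord}, and the graph composition law is only asserted (not proved) in the paper, whereas the paper's computation needs nothing beyond the definition of a relational doctrine. What your version buys in exchange is conceptual clarity: it exhibits the congruence property as an immediate consequence of the fact that $f\mapsto\gr{f}$ is a functorial assignment that identifies exactly the $\RDoc$-equal arrows.
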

\begin{proof} 
$ \rid_X
  \order \RDoc\reidx{f,f'}(\rid_Y) 
  \order \RDoc\reidx{f,f'}(\RDoc\reidx{g,g'}(\rid_Z)) 
  = \RDoc\reidx{g\circ f,g'\circ f'}(\rid_Z)$ 
\end{proof} 

This proposition shows that $\exteq$ is a congruence on  \CC. Let \EC\RDoc be the quotient of \CC modulo $\exteq$, notably, 
objects are those of \CC and arrows are equivalence classes of arrows in \CC modulo $\exteq$, denoted by $\eqc{f}$. 
Define a functor \fun{\ER\RDoc}{\bop{\EC\RDoc}}{\Pos} by 
$\ER\RDoc(X,Y) = \RDoc(X,Y)$ and $\ER\RDoc\reidx{\eqc{f},\eqc{g}}(\relr) = \RDoc\reidx{f,g}(\relr)$. It is well-defined on arrows by \cref{prop:left-adj,prop:ext-eq}. 

\begin{lemma}\label[lem]{lem:ec-doc}
The functor \fun{\ER\RDoc}{\bop{\EC\RDoc}}{\Pos} together with relational operations of $\RDoc$ is an extensional relational doctrine. 
\end{lemma}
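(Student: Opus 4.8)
The plan is to observe that every piece of structure on $\ER\RDoc$ is literally inherited from $\RDoc$, so that almost all the axioms transfer for free, and then to check that extensionality holds essentially by construction.

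First I would verify that $\ER\RDoc$ is a relational doctrine in the sense of \cref{def:rel-doc}. Since the posets $\ER\RDoc(X,Y) = \RDoc(X,Y)$ coincide and the relational identity, composition and converse are exactly those of $\RDoc$, all the equational axioms of \cref{def:rel-doc} (associativity of composition, the two identity laws, the interaction of converse with composition, the converse of identities and the involution law) hold verbatim, being the very equations already satisfied in $\RDoc$. It then remains only to check the three lax-naturality conditions. For every arrow $\eqc{f}$ in $\EC\RDoc$ we have $\ER\RDoc\reidx{\eqc{f},\eqc{f}}(\rid_Y) = \RDoc\reidx{f,f}(\rid_Y)$ for any representative $f$, so $\rid_X \order \ER\RDoc\reidx{\eqc{f},\eqc{f}}(\rid_Y)$ reduces to the corresponding inequality in $\RDoc$; likewise the lax-naturality inequalities for composition and for converse reduce, along chosen representatives, to those already holding in $\RDoc$. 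The one point to notice is that these inequalities are phrased in terms of reindexings indexed by arrows of $\EC\RDoc$: this causes no trouble, because $\ER\RDoc\reidx{\eqc{f},\eqc{g}}$ is already known to be well defined (independent of the choice of representatives) by \cref{prop:left-adj,prop:ext-eq}, and the relevant $\RDoc$-inequalities hold for \emph{every} arrow of \CC, hence for any chosen representative.

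Finally I would check extensionality. Applying \cref{def:ext-eq} to the doctrine $\ER\RDoc$, two arrows $\eqc{f},\eqc{g}\colon X\to Y$ satisfy $\eqc{f}\exteq\eqc{g}$ precisely when $\rid_X \order \ER\RDoc\reidx{\eqc{f},\eqc{g}}(\rid_Y) = \RDoc\reidx{f,g}(\rid_Y)$, which is exactly the condition $f\exteq g$ in the original doctrine $\RDoc$. By the very construction of the quotient category $\EC\RDoc$, the relation $f\exteq g$ holds if and only if $f$ and $g$ denote the same equivalence class, that is, $\eqc{f} = \eqc{g}$. Hence $\eqc{f}\exteq\eqc{g}$ forces $\eqc{f} = \eqc{g}$, so $\ER\RDoc$ is extensional.

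I do not expect a genuine obstacle here: the whole argument is a matter of unwinding definitions and quoting properties of $\RDoc$. The only care required is the bookkeeping around representatives, and that is already discharged by the well-definedness of $\ER\RDoc$ established before the statement.
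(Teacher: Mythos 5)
Your proof is correct and takes essentially the same route as the paper, whose entire proof reads ``Immediate by definition of $\ER\RDoc$''; your write-up is simply the careful unwinding of that observation (equational axioms hold verbatim, lax-naturality reduces along representatives, extensionality holds because $\EC\RDoc$ is precisely the quotient by $\exteq$). No gaps.
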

\begin{proof}
Immediate by definition of $\ER\RDoc$. 
\end{proof}

Taking terminology from \cite{MaiettiME:quofcm}, the doctrine $\ER\RDoc$ is the \emph{extensional collapse} of $\RDoc$. 
The following examples show some connections between the extensional collapse and notions of separation in metric and topological structures. 

\begin{example}\label[ex]{ex:sep-metric}
\begin{enumerate}
\item\label{ex:sep-metric:vrel} 
Let $\Qtl = \ple{\Car\Qtl,\qord,\qmul,\qone}$ be a commutative quantale. 
Recall from \refItem{ex:qcat}{vrel} that the category \QC{\VRel\Qtl} is the category of $\Qtl$-metric spaces and non-expansive maps. 
It is the base of the doctrine $\QR{\VRel\Qtl}$, whose identity relation is given by $\rid_{\ple{X,\eqrelr}} = \eqrelr$ for every $\Qtl$-metric space  \ple{X,\eqrelr}. 
A $\Qtl$-metric sapce \ple{X,\eqrelr} is \emph{separated} if $\qone \qord \eqrelr(x,y)$ implies $x = y$. 
Notice that a separated $\RPos$-metric space is the usual notion of metric space.
Denote by \SMet\Qtl the full subcategory of \QC{\VRel\Qtl} of separated $\Qtl$-metric spaces. 
Applying the extensional collapse to $\QR{\VRel\Qtl}$ we get $\ER{\QR{\VRel\Qtl}}$ where two arrows \fun{[f],[g]}{\ple{X,\eqrelr}}{\ple{Y,\eqrels}} of its base $\EC{\QR{\VRel\Qtl}}$ are equal when $\eqrelr(x,y) \qord \eqrels(f(x),g(y))$.
There is an obvious inclusion of \SMet\Qtl  into $\EC{\QR{\VRel\Qtl}}$ sending a non-expansive map \fun{f}{\ple{X,\eqrelr}}{\ple{Y,\eqrels}} between separated $\Qtl$-metric spaces to its equivalence class $\eqc{f}$ with respect to $\exteq$. 
Hence, this functor is obviously full. 
To check it is faithful, let \fun{f,g}{\ple{X,\eqrelr}}{\ple{Y,\eqrels}} be arrows in \SMet\Qtl such that $\eqc f = \eqc g$, then, by reflexivity of $\eqrelr$ and definition of $\exteq$, we have 
$\rone \qord \eqrelr(x,x) \qord \eqrels(f(x),g(x))$, for all $x \in X$. 
Since \ple{Y,\eqrels} is separated, we deduce 
$f(x) = g(x)$, for all $x \in X$, and so $f = g$. 
Finally, to verify essential surjectivity, recall that, for any $\Qtl$-metric space \ple{X,\eqrelr}, we can define  an equivalence relation on $X$ by $x\sim y$ iff $\qone\qord\eqrelr(x,y)$, for all $x,y \in X$, and 
the quotient space \ple{X/\sim,\eqrelr_\sim}, where $\eqrelr_\sim(\eqc x, \eqc y) = \eqrelr(x,y)$, is separated. 
The equivalence class of the projection map \fun{\eqc q}{\ple{X,\eqrelr}}{\ple{X/\sim,\eqrelr_\sim}}  is an isomorphism in $\EC{\QR{\VRel\Qtl}}$. 
Indeed, since the function \fun{q}{X}{X/\sim} is surjective, by the axiom of choice, it has a section \fun{s}{X/\sim}{X}, 
hence we have $\eqc x = \eqc{s(\eqc x)}$, which implies $\qone\qord \eqrelr(x,s(\eqc x))$, for all $x \in X$. 
Therefore, we get 
$\eqrelr_\sim(\eqc x, \eqc y) = \eqrelr(x,y) \qord \eqrelr(s(\eqc x),x)\qmul\eqrelr(x,y)\qmul\eqrelr(y,s(\eqc y)) \qord \eqrelr(s(\eqc x),s(\eqc y))$, 
proving that $s$ is non-expansive. 
We know that $\eqc{q}\circ \eqc{s} = \eqc{q\circ s} = \eqc{\id_{\ple{X/\sim,\eqrelr_\sim}}}$, hence, 
to conclude, we have only to check that $\eqc{s}\circ \eqc{q} = \eqc{\id_{\ple{X,\eqrelr}}}$, that follows since $x \sim s(\eqc x) = s(q(x))$. 
The equivalence between \SMet\Qtl and \EC{\QR{\VRel\Qtl}} provides evidence that extensionality is a good way to talk about separation in an abstract and point free setting.
\item\label{ex:sep-metric:vec} 
Recall from \refItem{ex:qcat}{vec} that 
the base \QC{\VecDoc} of the relational doctrine $\QR\VecDoc$ is the category of semi-normed real vector spaces and short linear maps: an object \ple{X,\eqrelr} is a real vector space $X$ with a lax monoidal and homogeneous pseudometric $\eqrelr$, which is equivalent to a semi-norm on $X$ given by $\|\vecx\| = \eqrelr(\veczero,\vecx)$. 
A semi-norm is a norm when $\|\vecx\| = 0 $ implies $\vecx=\veczero$, which is equivalent to $\eqrelr$ being separated. 
The category $\ct{NVec}$ of normed vector spaces is equivalent to the base category $\EC{\QR\VecDoc}$ of the extensional collapse of $\QR\VecDoc$. The proof of the essential surjectivity of the obvious inclusion of $\ct{NVec}$ into $\EC{\QR\VecDoc}$ uses arguments similar to those used in \refItem{ex:sep-metric}{vrel}. In particular it relies on the axiom of choice. There is only a little care in taking sections $\fun{s}{X/\sim}{X}$ of a quotient map $q$ in $\ct{Vec}$ as these have to be linear. But from a section $s$ one cane take its values on the vectors of a chosen base of $X/\sim$ and generate from this assignment a linear map $\fun{s'}{X/\sim}{X}$ which is easily proved to be a section of $q$.
\end{enumerate}
\end{example}

\begin{example}\label[ex]{ex:top} 
Let \Top be the category of topological spaces and continuous functions and \fun {\RTop}{\bop\Top}{\Pos} be the change-of-base $U^\star\Rel$ along the forgetful functor \fun{U}{\Top}{\Set} as in \refItem{ex:rel-doc}{cb}. 
The base \QC\RTop of the intensional quotient completion of $\RTop$ provides an ``intensional'' version of Scott's equilogical spaces\footnote{Applying the extensional collapse we get exactly the category of equilogical spaces.} \cite{ScottD:newcds}. 
Objects of \QC\RTop are pairs \ple{X,\eqrelr} of a topological space $X$ and an equivalence relation $\eqrelr$ on the underlying set of $X$ and arrows are continuous maps preserving the equivalences. 
Any section \fun{S}{\Top}{\QC\RTop} of the forgetful functor \fun{}{\QC\RTop}{\Top} picks an equivalence relation over every space in a way that relations are compatible with continuous maps.
The change-of-base $S^\star\QR\RTop$ provides a new logic on \Top where identity relations are changed according to $S$. 
For a space $X$, the doctrine $S^\star\RTop$ can not distinguish points which are related by $\eqrelr_X$, while such points may differ in the base. 
The extensional collapse makes such points indistinguishable in the base as well. Instances of this construction are the category $\ct{Top_0}$ of $T_0$-spaces and the homotopy category $\ct{hTop}$.
The former is given by defining $\eqrelr_X$ as follows: 
$\ple{x,y}\in\eqrelr_X$ iff $x$ and $y$ are topologically indistinguishable, that is, 
for every open subset $U\subseteq X$, $x\in U$ iff $y \in U$. 
The latter is given by defining $\eqrelr_X$ as follows: 
$\ple{x,y}\in\eqrelr_X$ iff there is a continuous path from $x$ to $y$, that is, 
there is a continuous function \fun{h}{[0,1]}{X} such that $h(0) = x$ and $h(1) = y$. 
\end{example}

The relational doctrine $\ER\RDoc$ comes together with a strict 1-arrow 
\oneAr{\EAr\RDoc}{\RDoc}{\ER\RDoc} where 
\fun{\fn{\EAr\RDoc}}{\CC}{\EC\RDoc} 
is the identity on objects and maps an arrow $f$ to its equivalence class $\eqc{f}$
 and 
$\lift{\EAr\RDoc}_{X,Y}$ is the identity on $\RDoc(X,Y)$. 
 
The extensional collapse is universal if we restrict to strict 1-arrows. 
Let $\ERDtn$ denote the full 2-subcategory of $\RDtn$ whose objects are extensional relational doctrines and  
\fun{\ERFun}{\ERDtn}{\RDtn} the obvious inclusion 2-functor.
The extensional collpase extends to a 2-functor \fun{\REFun}{\RDtn}{\ERDtn} defined as follows. 
For a relational doctrine $\RDoc$, we set $\REFun(\RDoc) = \ER\RDoc$. 
For a strict 1-arrow \oneAr{F}{\RDoc}{\SDoc} we define \oneAr{\REFun(F) = \ER{F}}{\ER\RDoc}{\ER\SDoc} where 
$\fn{\ER{F}}X = \fn{F}X$, $\fn{\ER{F}}\eqc{f} = \eqc{\fn{F}f}$ and $\lift{\ER{F}}_{X,Y} = \lift{F}_{X,Y}$, 
for all objects $X,Y$ and arrow \fun{f}{X}{Y} in the base of $\RDoc$. 

Finally, for a 2-arrow \twoAr{\theta}{F}{G}, we define a 2-arrow \twoAr{\REFun(\theta) = \ER\theta}{\ER{F}}{\ER{G}} as 
$\ER\theta_X = \eqc{\theta_X}$. 
Then, we can prove the following result. 

\begin{theorem}\label[thm]{thm:ec-univ}
The 2-functors $\ERFun$ and $\REFun$ are such that 
$\REFun\dashv\ERFun$ is a (strict) 2-adjunction.  
That is, for every relational doctrine $\RDoc$ and every extensional relational doctrine $\SDoc$, the functor 
\[
\fun{\blank\circ\EAr\RDoc}{\Hom{\ERDtn}{\ER\RDoc}{\SDoc}}{\Hom\RDtn{\RDoc}{\SDoc}}
\]
is an equivalence. 
\end{theorem}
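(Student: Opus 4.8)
The plan is to show directly that the comparison functor $\blank\circ\EAr\RDoc$ is an \emph{isomorphism} of categories: this is stronger than the claimed equivalence and, being $2$-natural in $\RDoc$ and $\SDoc$, it yields the strict $2$-adjunction $\REFun\dashv\ERFun$. Concretely, I would prove that every strict $1$-arrow $\oneAr{F}{\RDoc}{\SDoc}$ factors \emph{uniquely} through $\EAr\RDoc$ along a strict $1$-arrow $\oneAr{\tilde F}{\ER\RDoc}{\SDoc}$, and that the analogous unique factorisation holds at the level of $2$-arrows. Since the target $\SDoc$ is extensional, this is exactly where the hypotheses are genuinely used.

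For the factorisation on objects of the hom-category, given a strict $\oneAr{F}{\RDoc}{\SDoc}$ I would set $\fn{\tilde F}X = \fn F X$ on objects, $\fn{\tilde F}\eqc{f} = \fn F f$ on arrows, and $\lift{\tilde F}_{X,Y} = \lift F_{X,Y}$ on fibres, which is legitimate since $\ER\RDoc(X,Y) = \RDoc(X,Y)$. The crucial point is well-definedness of $\fn{\tilde F}$ on arrows: if $f\exteq f'$ then $\gr{f} = \gr{f'}$ by \cref{prop:ext-eq}; since $F$ is strict it preserves graphs, so $\gr{\fn F f} = \lift F_{X,Y}(\gr{f}) = \lift F_{X,Y}(\gr{f'}) = \gr{\fn F f'}$, whence $\fn F f \exteq \fn F f'$ again by \cref{prop:ext-eq}, and extensionality of $\SDoc$ forces $\fn F f = \fn F f'$. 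Functoriality of $\fn{\tilde F}$, naturality of $\lift{\tilde F}$, and strict preservation of relational identities, composition and converse are then inherited verbatim from $F$, so $\tilde F$ is a strict $1$-arrow with $\tilde F\circ\EAr\RDoc = F$ on the nose. Uniqueness is immediate: every arrow of $\EC\RDoc$ has the form $\eqc{f} = \fn{\EAr\RDoc}f$ and $\lift{\EAr\RDoc}$ is the identity, so $G\circ\EAr\RDoc = F$ determines $\fn G$ and $\lift G$ completely; thus $G\mapsto G\circ\EAr\RDoc$ is a bijection on objects.

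For $2$-arrows, fix strict $\oneAr{G,H}{\ER\RDoc}{\SDoc}$ and a $2$-arrow $\twoAr{\theta}{G\circ\EAr\RDoc}{H\circ\EAr\RDoc}$. Because $\fn{\EAr\RDoc}$ is the identity on objects, the components $\theta_X$ already have the correct domains and codomains to assemble a transformation $\nt{\theta}{\fn G}{\fn H}$; and since every arrow of $\EC\RDoc$ is some $\eqc{f}$, naturality of $\theta$ over the arrows $\eqc{f}$ is exactly naturality over all arrows of $\EC\RDoc$. Moreover, as $\lift{\EAr\RDoc}$ is the identity and $\ER\RDoc(X,Y) = \RDoc(X,Y)$, the $2$-arrow inequality for $G\circ\EAr\RDoc$ and $H\circ\EAr\RDoc$, namely $\lift G_{X,Y}\order \SDoc\reidx{\theta_X,\theta_Y}\circ\lift H_{X,Y}$, is literally the $2$-arrow condition for $G$ and $H$. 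Hence $\theta$ \emph{is}, with no change of data, the unique $2$-arrow $\twoAr{\tilde\theta}{G}{H}$ whose whiskering with $\EAr\RDoc$ equals $\theta$. Together with the previous paragraph this gives the isomorphism of hom-categories.

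The main obstacle is the well-definedness of $\tilde F$, that is, proving that a strict $1$-arrow into an extensional doctrine respects $\exteq$; this is precisely where both hypotheses enter and where restricting to strict $1$-arrows is indispensable. For a merely lax $1$-arrow one has only $\gr{\fn F f}\order\lift F_{X,Y}(\gr{f})$, so $\gr{f} = \gr{f'}$ would no longer yield $\gr{\fn F f} = \gr{\fn F f'}$ and the factorisation would break down. All the remaining verifications are routine bookkeeping, and the resulting isomorphism is straightforwardly $2$-natural in $\RDoc$ and $\SDoc$, delivering the desired strict $2$-adjunction.
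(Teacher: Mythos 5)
Your proof is correct and follows essentially the same route as the paper's: the heart of both arguments is the well-definedness of $\tilde F$ on arrows, obtained from strict preservation of graphs together with extensionality of $\SDoc$ (via \cref{prop:ext-eq}), and the factorisation data $\fn{\tilde F}X = \fn{F}X$, $\fn{\tilde F}\eqc{f}=\fn{F}f$, $\lift{\tilde F}=\lift{F}$ are identical to those in the paper. The only difference is that you also verify uniqueness of the factorisation on $1$- and $2$-arrows, upgrading the paper's fully-faithful-plus-surjective equivalence to an isomorphism of hom-categories, a harmless strengthening that in fact matches the ``strict'' qualifier in the statement.
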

\begin{proof}
Let $\RDoc$ be a relational doctirne and $\SDoc$ an extensional relaitonal doctrine. 
The functor is obviously fully faithful because $\fn{\EAr\RDoc}$ is full and the identity on objects and $\lift{\EAr\RDoc}$ is the identity. 
Towards a proof of essential surjectivity, let us consider a 1-arrow \oneAr{F}{\RDoc}{\SDoc} and define a 1-arrow \oneAr{F'}{\ER\RDoc}{\SDoc} as follows: 
$\fn{F'}X = \fn{F}X$, $\fn{F'}\eqc{f} = \fn{F}f$ and $\lift{F'}_{X,Y} = \lift{F}_{X,Y}$, for all objects $X,Y$ and arrow \fun{f}{X}{Y} in the base of $\RDoc$. 
The action of $\fn{F'}$ on arrow is well defined because $\SDoc$ is extensional $F$ is strict. 
Indeed, given arrows $\fun{f,g}{X}{Y}$ in the base of $\RDoc$ such that $\gr{f} = \gr{g}$, we have 
$\gr{\fn{F}f} = \lift{F}_{X,Y}(\gr{f}) = \lift{F}_{X,Y}(\gr{g}) = \gr{\fn{F}g}$, 
hence, by extensionality, we get $\fn{F}f = \fn{F}g$, as needed. 
Finally, the equality $F = F'\circ\EAr\RDoc$ is immediate by definition of $F'$, thus proving the thesis. 
\end{proof}

Like any (strict) 2-adjunction, the one in \cref{thm:ec-univ} induces a (strict) 2-monad $\EMnd = \ERFun\circ\REFun$ on $\RDtn$. 
Moreover, since the right 2-adjoint $\ERFun$ is fully faithful, the counit of this adjunction is an isomorphism. 
This implies that the 2-monad $\EMnd$ is idempotent, hence every pseudoalgebra for it is actually an equivalence of relational doctrines. 
Moreover, the following corollary immediately holds. 

\begin{corollary}\label[cor]{cor:ec-mnd}
The 2-adjunction $\REFun\dashv\ERFun$ is 2-monadic. 
That is, the 2-categories $\ERDtn$ and $\psAlg\EMnd$ are equivalent. 
\end{corollary}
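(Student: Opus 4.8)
The plan is to follow the blueprint of \cref{thm:qc-mnd}, which becomes substantially simpler here because $\EMnd$ is idempotent rather than merely lax idempotent. As for that theorem, there is a comparison 2-functor \fun{\mathrm{K}}{\ERDtn}{\psAlg\EMnd} sending an extensional relational doctrine $\SDoc$ to the $\EMnd$-pseudoalgebra whose carrier is $\SDoc$ and whose structure 1-arrow is the component $\cun_\SDoc$ at $\SDoc$ of the counit of the 2-adjunction $\REFun\dashv\ERFun$ from \cref{thm:ec-univ}, that is, the 1-arrow $\cun_\SDoc\colon\ER\SDoc\to\SDoc$. Since $\ERFun$ is the inclusion of a full 2-subcategory, hence fully faithful, this counit is invertible; in fact, for extensional $\SDoc$ the unit \oneAr{\EAr\SDoc}{\SDoc}{\ER\SDoc} is already an isomorphism in \RDtn (its underlying functor is the identity on objects and sends $f$ to its $\exteq$-class, which is injective precisely by extensionality, while $\lift{\EAr\SDoc}$ is the identity), and the triangle identity $\cun_\SDoc\circ\EAr\SDoc = \id_\SDoc$ forces $\cun_\SDoc = (\EAr\SDoc)^{-1}$.

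Conversely, I would build a 2-functor \fun{\mathrm{F}}{\psAlg\EMnd}{\ERDtn} forgetting the pseudoalgebra structure. The crucial point is that the carrier of any pseudoalgebra is extensional. Indeed, by idempotence of $\EMnd$ every pseudoalgebra structure \oneAr{L}{\EMnd(\RDoc)}{\RDoc} is an inverse equivalence of the unit $\EAr\RDoc$, so $\EAr\RDoc$ is an equivalence in \RDtn. Applying the 2-functor sending a relational doctrine to its base category---which, like any 2-functor, preserves equivalences---the underlying functor \fun{\fn{\EAr\RDoc}}{\CC}{\EC\RDoc} is an equivalence of categories, hence faithful. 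Since $\fn{\EAr\RDoc}$ maps $f$ to its equivalence class $\eqc{f}$ modulo $\exteq$, faithfulness says exactly that $f\exteq g$ implies $f = g$, that is, $\RDoc$ is extensional. Because pseudomorphisms of $\EMnd$-pseudoalgebras commute (up to isomorphism) with the units, $\mathrm{F}$ is well defined on 1-arrows and 2-arrows by simply discarding the structure.

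Finally, I would check $\mathrm{F}\circ\mathrm{K} = \Id_\ERDtn$ and $\mathrm{K}\circ\mathrm{F}\cong\Id_{\psAlg\EMnd}$. The first equality holds strictly, since $\mathrm{K}$ leaves the carrier unchanged and attaches a structure map that $\mathrm{F}$ then discards. For the second, idempotence guarantees that the pseudoalgebra structure on a fixed carrier is essentially unique: any structure map is an inverse equivalence of the same unit $\EAr\RDoc$, and inverse equivalences are isomorphic, so reattaching the canonical structure via $\mathrm{K}$ recovers the original pseudoalgebra up to isomorphism. Hence $\mathrm{K}$ is an equivalence of 2-categories, proving 2-monadicity. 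The only genuine step is the extensionality of carriers, which reduces to the elementary fact that equivalences of ordinary categories are faithful; everything else is the same bookkeeping as in \cref{thm:qc-mnd}, made routine by full (rather than lax) idempotence.
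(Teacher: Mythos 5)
Your proposal is correct and follows essentially the same route as the paper: the paper derives the corollary immediately from the idempotence of $\EMnd$ (the counit being invertible because $\ERFun$ is fully faithful), exactly as you do, mirroring the proof of \cref{thm:qc-mnd} in the simpler idempotent setting. The only step you make explicit that the paper leaves implicit is the verification that every pseudoalgebra carrier is extensional, which you correctly reduce to faithfulness of the base-category equivalence $\fn{\EAr\RDoc}$; this is a sound and welcome filling-in of the paper's ``immediately holds''.
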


\section{The extensional quotient completion} 
\label{sect:eqc} 

We now focus on the interaction between the extensional collapse and  quotients in relational doctrines. 
It is easy to see that if $\RDoc$ is a relational doctrine with quotients, then its extensional collapse $\ER\RDoc$ has quotients as well. 
More precisely, let us denote by $\EQRDtn$ the full 2-subcategory of $\QRDtn$ whose objects are extensional relational doctrines with quotients. 
We get two obvious inclusion 2-functors 
\fun{\EQQFun}{\EQRDtn}{\QRDtn} and \fun{\EQEFun}{\EQRDtn}{\ERDtn} which respectively forget extensionality and quotients.\footnote{$\EQRDtn$ can be seen as the pullback of $\ERFun$ against $\QRFun$. } 
Note that $\EQQFun$ is fully faithful, that is, the functors on hom-categories are isomorphisms, while $\EQEFun$ is only locally fully faithful, that is, the functors on hom-categories are only fully faithful. 
Then, we get the following result. 

\begin{theorem} \label[thm]{thm:qc-ec} 
The 2-adjunction $\REFun\dashv\ERFun$ restricts to a 2-adjunction $\QEQFun\dashv\EQQFun$ between $\EQRDtn$ and $\QRDtn$. 
\end{theorem}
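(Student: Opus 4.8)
The plan is to derive the statement by restricting the 2-adjunction $\REFun\dashv\ERFun$ of \cref{thm:ec-univ} to the sub-2-categories $\QRDtn$ and $\EQRDtn$. Since $\EQRDtn$ is a full 2-subcategory of $\QRDtn$, the 2-functor $\EQQFun$ is simply the evident inclusion: every extensional relational doctrine with quotients is a relational doctrine with quotients, and the strict quotient-preserving 1-arrows, as well as the 2-arrows between them, coincide on the two sides. Thus $\ERFun$ restricts to $\EQQFun$ with no work, and the whole argument reduces to controlling how the extensional collapse interacts with quotients: I must show that $\REFun$ corestricts to a 2-functor $\QEQFun\colon\QRDtn\to\EQRDtn$ and that the unit, with components $\EAr\RDoc$, and the hom-category equivalence of \cref{thm:ec-univ} survive the restriction to quotient-preserving 1-arrows.

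The crux is that the extensional collapse preserves quotients. If $\RDoc$ has quotients, then an $\ER\RDoc$-equivalence relation on $X$ is literally an $\RDoc$-equivalence $\eqrelr$, because $\ER\RDoc(X,X)=\RDoc(X,X)$ with the same relational structure; choosing an effective descent quotient arrow $q\colon X\to W$ of $\eqrelr$ in $\RDoc$ (\cref{def:quotient}), I claim $\eqc q$ is one in $\ER\RDoc$. Since $\gr{\eqc q}=\gr q$ in the common fibre, the effectiveness and descent conditions transfer verbatim, and the factorisation demanded by the universal property exists because it does for $q$ in $\RDoc$. The only genuine point is uniqueness: if $\eqc h\circ\eqc q=\eqc{h'}\circ\eqc q$, then $\gr q\rcomp\gr h=\gr q\rcomp\gr{h'}$ by \cref{prop:ext-eq}, and since $q$ is functional (\cref{prop:graph-fun}) and surjective (descent), we have $\gr q\rconv\rcomp\gr q=\rid_W$, so composing on the left with $\gr q\rconv$ gives $\gr h=\gr{h'}$, i.e.\ $\eqc h=\eqc{h'}$. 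The same computation shows that $\EAr\RDoc$ sends $q$ to the quotient arrow $\eqc q$, hence preserves quotients; together with \cref{lem:ec-doc} this gives $\QEQFun(\RDoc)=\ER\RDoc\in\EQRDtn$. An entirely analogous check shows that a strict quotient-preserving $F$ is sent to a strict quotient-preserving $\ER F$, so that $\QEQFun$ is a well-defined 2-functor.

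With both 2-functors in place, I would restrict the equivalence of \cref{thm:ec-univ}. For $\RDoc$ in $\QRDtn$ and $\SDoc$ in $\EQRDtn$, that theorem provides an equivalence $\blank\circ\EAr\RDoc\colon\Hom{\ERDtn}{\ER\RDoc}{\SDoc}\to\Hom{\RDtn}{\RDoc}{\SDoc}$. The hom-categories relevant to the desired adjunction, namely $\Hom{\EQRDtn}{\ER\RDoc}{\SDoc}$ and $\Hom{\QRDtn}{\RDoc}{\SDoc}$, are exactly the full subcategories spanned by the quotient-preserving 1-arrows. This equivalence respects these subcategories in both directions: precomposition with the quotient-preserving $\EAr\RDoc$ keeps quotient-preservation, and the quasi-inverse of \cref{thm:ec-univ}, which extends $F$ to $F'$ with $\fn{F'}\eqc f=\fn F f$, sends a quotient-preserving $F$ to a quotient-preserving $F'$ (a quotient arrow in $\ER\RDoc$ is, up to isomorphism, some $\eqc q$, and $\fn{F'}\eqc q=\fn F q$ is a quotient arrow as $F$ preserves them). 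An equivalence that, together with its quasi-inverse, maps a pair of full subcategories into each other restricts to an equivalence between them; hence $\blank\circ\EAr\RDoc$ yields the required equivalence of hom-categories, 2-natural in $\RDoc$ and $\SDoc$ as inherited from \cref{thm:ec-univ}. This is exactly the 2-adjunction $\QEQFun\dashv\EQQFun$.

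I expect the main obstacle to be the second step, verifying that quotient arrows stay quotient arrows after the collapse, since everything else is formal bookkeeping about restricting an adjunction to full sub-2-categories. Within that step the delicate point is the cancellation in the uniqueness argument, which is precisely where effectiveness and the descent (surjectivity) hypothesis on quotient arrows are used: without surjectivity of $\gr q$ one could not pass from $\gr q\rcomp\gr h=\gr q\rcomp\gr{h'}$ to $\gr h=\gr{h'}$.
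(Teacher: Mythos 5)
Your proposal is correct and takes essentially the same route as the paper: the core of both arguments is the verification that the extensional collapse of a doctrine with quotients again has quotients, with quotient arrows given by equivalence classes of quotient arrows, and your cancellation of $\gr{q}$ via $\gr{q}\rconv\rcomp\gr{q}=\rid_W$ (functionality plus descent) is the same use of surjectivity the paper makes in its uniqueness step. The extra bookkeeping you spell out (well-definedness of $\QEQFun$ on 1-arrows and restriction of the hom-category equivalences) is exactly what the paper compresses into its opening ``it suffices to show'' reduction.
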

\begin{proof}
Is suffices to show that if $\RDoc$ is a relational doctrine with quotients, then  $\ER\RDoc$ has quotients as well and quotient arrows in $\ER\RDoc$ are exactly equivalence classes of quotient arrows in $\RDoc$. 
To this end, consider a $\ER\RDoc$-equivalence relation $\eqrelr$ on $X$, then, since $\ER\RDoc(X,X) = \RDoc(X,X)$ by definition, 
$\eqrelr$ is also a $\RDoc$-equivalence relation on $X$. 
Let \fun{q}{X}{W} be a quotient arrow for $\eqrelr$ in $\RDoc$ and prove that $\eqc{q}$ is a quotient arrow for $\eqrelr$ in $\ER\RDoc$. 
Consider an arrow \fun{\eqc{f}}{X}{Y} such that $\eqrelr \order \gr{\eqc{f}}\rcomp\gr{\eqc{f}}\rconv$ holds in $\ER\RDoc(X,X)$. 
Then, since $\gr{\eqc{f}} = \gr{f}$, we have that $\eqrelr\order\gr{f}\rcomp\gr{f}\rconv$ holds in $\RDoc(X,X)$. 
By the universal property of $q$, we get a unique arrow \fun{h}{W}{Y} such that $f = h\circ q$, hence we get $\eqc{f} = \eqc{h}\circ\eqc{q}$. 
Finally, observe that for any $h'$ such that $\eqc{f} = \eqc{h'}\circ\eqc{q}$, we have 
$\gr{f} = \gr{q}\rcomp\gr{h'}$, which implies $\gr{h'} = \gr{q}\rconv\rcomp\gr{f}$, because $q$ is surjective. 
Hence, all such $h'$ are extensionally equal, proving that $\eqc{h}$ is unique and so $\eqc{q}$ is a quotient arrow as needed. 
\end{proof}

Again the associated 2-monad $\EQMndQ = \EQQFun\circ\QEQFun$ on $\QRDtn$ is an idempotent 2-monad and the adjunction is 2-monadic, since the right adjoint $\EQQFun$ is fully faithful. 
Moreover, it is easy to see that $\EQMndQ$ is a lifting of $\EMnd$ along the 2-functor \fun{\QRFun}{\QRDtn}{\RDtn}, that is the equality 
$\QRFun\circ\EQQFun = \EMnd\circ\QRFun$ holds. 

In summary, by \cref{thm:qc-univ-strict,thm:ec-univ,thm:qc-ec}, we get the following diagram 
\[\xymatrix@R=7ex@C=7ex{
\EQRDtn \ar@/_8pt/[r]_-{\EQQFun} \ar@{}[r]|{\bot} \ar[d]_-{\EQEFun} & \QRDtn \ar@/_8pt/[d]_-{\QRFun} \ar@{}[d]|{\vdash} \ar@/_8pt/[l]_-{\QEQFun} \\ 
\ERDtn  \ar@/^8pt/[r]^-{\ERFun}  \ar@{}[r]|{\top} & \RDtn \ar@/^8pt/[l]^-{\REFun} \ar@/_8pt/[u]_-{\RQFun} 
}\]
where the external square commutes and $\QEQFun$ is a lifting of $\REFun$, that is, $\EQEFun\circ\QEQFun = \REFun\circ\QRFun$. 
Note that, instead, the 2-functor $\RQFun$ does not restrict to a left biadjoint of $\EQEFun$, because the intensional quotient completion does not preserve extensionality.

Composing the 2-adjunctions  in the diagram above, we derive that 
the composite 2-functor $\REQFun = \QEQFun\circ\RQFun$ is a left biadjoint of the forgetful 2-functor $\EQRFun = \QRFun\circ\EQQFun$. 
It provides a universal construction that freely adds (effective descent) quotients and then forces extensionality. 
Furthermore, since both $\RQFun\dashv\QRFun$ and $\QEQFun\dashv\EQQFun$ are 2-monadic and the latter arises by lifting a 2-monadic 2-adjunction along a 2-monadic 2-functor, we obtain the following corollary. 

\begin{corollary}\label[cor]{cor:eqc-mnd}
The 2-adjunction $\REQFun\dashv\EQRFun$ is 2-monadic for the lax idempotent 2-monad $\EQMnd = \EMnd\circ\QMnd$. 
\end{corollary}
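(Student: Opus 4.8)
The plan is to read off the composite 2-monad first and then transport the two established 2-monadicity results through the lifting. Since $\REQFun\dashv\EQRFun$ is obtained by pasting the 2-adjunctions $\RQFun\dashv\QRFun$ and $\QEQFun\dashv\EQQFun$, it induces a strict 2-monad on $\RDtn$ whose underlying 2-functor is $\EQRFun\circ\REQFun = \QRFun\circ\EQQFun\circ\QEQFun\circ\RQFun$. Recalling that $\EQMndQ = \EQQFun\circ\QEQFun$, that $\EQMndQ$ lifts $\EMnd$ along $\QRFun$ (whence $\QRFun\circ\EQMndQ = \EMnd\circ\QRFun$), and that $\QRFun\circ\RQFun = \QMnd$, I would compute
\[
\EQRFun\circ\REQFun = \QRFun\circ\EQMndQ\circ\RQFun = \EMnd\circ\QRFun\circ\RQFun = \EMnd\circ\QMnd = \EQMnd,
\]
with unit and multiplication inherited from the composite 2-adjunction. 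The latter is exactly the multiplication of the composite 2-monad $\EMnd\circ\QMnd$ determined by the strict distributive law $\QMnd\circ\EMnd\Rightarrow\EMnd\circ\QMnd$ that the lifting $\EQMndQ$ encodes. This identifies the induced 2-monad with $\EQMnd$.

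For 2-monadicity I would invoke the principle that a 2-monadic 2-adjunction composed with the lift of a 2-monadic one is again 2-monadic. Concretely, by \cref{thm:qc-mnd} the inner 2-adjunction is 2-monadic, so $\QRDtn\simeq\psAlg\QMnd$; the lift $\EQMndQ$ of $\EMnd$ to $\QRDtn$ corresponds, via Beck's theorem on distributive laws in its 2-dimensional (pseudoalgebra) form, to the composite 2-monad $\EMnd\circ\QMnd=\EQMnd$ together with an equivalence $\psAlg{\EQMndQ}\simeq\psAlg\EQMnd$ over $\psAlg\QMnd$. Since $\QEQFun\dashv\EQQFun$ is itself 2-monadic, we have $\EQRDtn\simeq\psAlg{\EQMndQ}$, and composing the two equivalences yields $\EQRDtn\simeq\psAlg\EQMnd$, compatibly with the forgetful 2-functors down to $\RDtn$. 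Hence $\REQFun\dashv\EQRFun$ is 2-monadic.

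It remains to prove that $\EQMnd$ is lax idempotent. By \cref{lem:qc-mnd} the 2-monad $\QMnd$ is lax idempotent, while $\EMnd$ is idempotent, hence also lax idempotent. I would deduce lax idempotence of $\EQMnd=\EMnd\circ\QMnd$ either by appealing to the stability of lax-idempotent 2-monads under composition along their canonical distributive law, or, more in the spirit of \cref{lem:qc-mnd}, by exhibiting the Kock--Zöberlein 2-cell $\twoAr{\lambda^\RDoc}{\EQMnd(\eta^{\EQMnd}_{\RDoc})}{\eta^{\EQMnd}_{\EQMnd\RDoc}}$ directly, where $\eta^{\EQMnd}_{\RDoc} = \EAr{\QR\RDoc}\circ\QAr\RDoc$ is the composite unit. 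As for $\QMnd$, its components should again be identity arrows, assembled from the identity components of the Kock--Zöberlein cell of $\QMnd$ and from the identity-on-objects units of the idempotent 2-monad $\EMnd$; one then checks conditions \kzd{2} and \kzd{3} by the same diagram chase as in \cref{lem:qc-mnd}. Equivalently, tracing the equivalence $\EQRDtn\simeq\psAlg\EQMnd$ through the two component 2-monadicities shows that an $\EQMnd$-pseudoalgebra structure on $\RDoc$ is precisely a reflection left adjoint of the unit $\eta^{\EQMnd}_{\RDoc}$ in $\RDtn$, equivalently a witness that $\RDoc$ is an extensional relational doctrine with quotients, which by \cite{Kock95,KellyL97} is exactly lax idempotence.

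The main obstacle is the 2-dimensional bookkeeping around the distributive law: verifying that the lift $\EQMndQ$ of $\EMnd$ to $\QRDtn\simeq\psAlg\QMnd$ genuinely induces a coherent strict distributive law, so that both Beck's equivalence $\psAlg{\EQMndQ}\simeq\psAlg\EQMnd$ and the lax idempotence of the composite go through. This is where the facts that $\EMnd$ is idempotent and $\QMnd$ is lax idempotent do the essential work, since for such 2-monads the relevant distributive law is essentially unique and its coherence conditions are automatic.
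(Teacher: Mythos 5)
Your proposal is correct and takes essentially the same route as the paper: the paper obtains this corollary precisely by composing the two 2-monadic 2-adjunctions $\RQFun\dashv\QRFun$ and $\QEQFun\dashv\EQQFun$, identifying the induced 2-monad as $\EQMnd = \EMnd\circ\QMnd$ via the strict lifting equations, and invoking the fact that the second adjunction is a lifting of the 2-monadic adjunction $\REFun\dashv\ERFun$ along the 2-monadic 2-functor $\QRFun$ — exactly the distributive-law argument you unpack. If anything, the paper is terser than you are: it states this in one sentence, leaving both the 2-dimensional coherence bookkeeping and the lax-idempotence verification (which your direct Kock--Zöberlein-cell sketch, mirroring \cref{lem:qc-mnd}, handles soundly) implicit.
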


We dub this construction the \emph{extensional quotient completion} and we denote the resulting doctrine by $\EQR\RDoc$ and use a similar notation for 1- and 2-arrows. 

\begin{example}\label[ex]{ex:quot-sep-comp}
\begin{enumerate}
\item\label{ex:quot-sep-comp:sets} Recall from \cite{bishop1967foundations,bishop2012constructive} that a Bishop's set, or setoid, is a pair $\ple{A,\eqrelr}$ of a set $A$ and an equivalence relation $\eqrelr\subseteq A\times A$. A Bishop's function from the setoid $\ple{A,\eqrelr}$ to the setoid $\ple{B,\eqrels}$ is an equivalence class of functions $\fun{f}{A}{B}$ preserving the equivalence relations, where $f$ and $g$ belong to the same equivalence class if $f(a)\eqrels g(a)$ for all $a\in A$. A relation from $\ple{A,\eqrelr}$ to $\ple{B,\eqrels}$ is a subset $U\subset A\times B$ such that $(a,b)\in U$, $a\eqrelr a'$,  $b\eqrels b'$ imply  $(a',b')\in U$. Call $\BSet$ the category of Bishop's sets and functions and $\fun {\BRel}{\bop\BSet}{\Pos}$ the relational doctrine that maps two setoids to the collection of relations between them.
The relational doctrine $\ER{\QR{\Rel}}$ (obtained completing  $\fun {\Rel}{\bop\Set}{\Pos}$ first with quotients and then forcing extensionality) is $\BRel$.
\item\label{ex:quot-sep-comp:span} 
One of the most widely used constructions to complete a category with quotients is the exact completion of a weakly lex category presented in $\CC$ \cite{CarboniA:freecl,CarboniA:regec}. 
This is an instance of our constructions. Recall the relational doctrine $\Span\CC$ from \refItem{ex:rel-doc}{span}. Complete it first with quotients and then force extensionality. One get the relational doctrine $\ER{\QR{\Span\CC}}$ whose base is $\CC\exl$. If products of $\CC$ are strong, the construction coincides with the elementary quotient completion of the doctrines of weak subobjects of $\CC$ shown in \cite{MaiettiME:eleqc,MaiettiME:quofcm}. A comparison between these two constructions is in \cref{sect:eed}.
\end{enumerate}
\end{example}

\section{Projective objects and doctrines of algebras} 
\label{sect:proj} 

In this section we provide a characterization of the essential image of the extensional quotient completion. 
This is inspired and generalizes an analogous characterization for the exact completion of a category with (weak) finite limits \cite{CarboniA:freecl,CarboniA:regec} and another one for the elementary quotient completion of existential elementary doctrines \cite{MaiettiPR24}. 
Then, we will apply this result to show that doctrines of algebras for monads on extensional relaitonal doctrines with quotients can be obtained as the extensional quotient completion  of a suitable relational doctrine on (a subcategory of) free algebras for the same monad, provided that the original doctrine has enough structure. 
This again generalizes similar results proved for the exact completion \cite{Vitale94}. 
To achieve this, we have to introduce the notion of \emph{projective object} with respect to quotient arrows in a relational doctrine. 

\begin{definition}\label[def]{def:proj-obj}
Let \fun{\RDoc}{\bop\CC}{\Pos} be a relational doctrine with quotients. 
An object $P$ in \CC is \emph{$\RDoc$-projective} (with respect to quotient arrows)  if 
for every arrow \fun{f}{P}{Y} and every quotient arrow \fun{q}{X}{Y}  \fun{h}{P}{X} such that $f = q\circ h$, as in the following diagram 
\[\xymatrix{
& X \ar[d]^-{q} 
\\
  P \ar@{..>}[ru]^-{h}
    \ar[r]^-{f} 
& Y 
}\]
\end{definition}

Intuitively, a projective object $P$ is such that, whenever we have an arrow $f$ from $P$ into an object $Y$ which is a quotient of another object $X$ for an equivalence relation $\eqrelr$, 
for every ``element'' $p$ in $P$ we can \emph{choose} an ``element'' $h(p)$ in $X$ in the fibre over $f(p)$. 

\begin{example}\label[ex]{ex:proj-obj}
\begin{enumerate}
\item\label{ex:proj-obj:set}
In the doctrine $\Rel$ of set-theoretic relations, if we assume the Axiom of Choice, every object is projective, because every surjective function has a section. 
\item\label{ex:proj-obj:top}
Let \fun{\RTop}{\bop\Top}{\Pos} be the relational doctrine on the category of topological spaces and continuous functions from \cref{ex:top}. 
It is easy to see that $\RTop$ has quotients, given by the usual quotient space construction. 
Assuming the Axiom of Choice, we can show that discrete spaces are projective objects. 
Indeed, if $X$ is a discrete space, \fun{q}{Y}{Z} is a quotient arrow and \fun{f}{X}{Z} is a continuous function, then 
there is a section \fun{s}{Z}{Y} of $q$ in \Set and the function \fun{s\circ f}{X}{Y} is obviously continuous because $X$ is discrete. 
\item\label{ex:proj-obj:jspan}
Let \CC be an exact category and consider the relational doctrine $\JSpan\CC$ of jointly monic spans. 
This has quotients and quotient arrows are exactly regular epimorphisms. 
Then, an object $P$ in \CC is $\JSpan\CC$-projective if and only if it is regular projective. 
\end{enumerate}
\end{example}

A wide range of examples comes from the quotient completion, as the following proposition shows. 

\begin{proposition}\label[prop]{prop:proj-obj-qc}
Let $\RDoc$ be a relational doctrine and $\QR\RDoc$ its intensional quotient completion. 
An object \ple{X,\eqrelr} in $\QC\RDoc$ is $\QR\RDoc$-projective if and only if $\eqrelr = \rid_X$. 
\end{proposition}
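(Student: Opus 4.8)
The plan is to prove the two implications separately, and in each direction to work with a single explicitly chosen quotient arrow. I will use repeatedly that an arrow of $\QC\RDoc$ is literally an arrow of $\CC$ subject to a fibre compatibility condition, and that, by the paragraph preceding \cref{prop:qc-quot-doc}, whenever $\eqrelt$ is a $\RDoc$-equivalence on $Y$ with $\eqrels \order \eqrelt$, the relation $\eqrelt$ is a $\QR\RDoc$-equivalence on $\ple{Y,\eqrels}$ and $\fun{\id_Y}{\ple{Y,\eqrels}}{\ple{Y,\eqrelt}}$ is an effective descent quotient arrow for it.

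For the implication from right to left, the first step would be the key observation that every quotient arrow of $\QR\RDoc$ is an isomorphism when regarded as an arrow of $\CC$. Indeed, such a $q$ is a quotient arrow for some $\QR\RDoc$-equivalence $\eqrelt$ on its domain $\ple{Y,\eqrels}$, and the canonical $\fun{\id_Y}{\ple{Y,\eqrels}}{\ple{Y,\eqrelt}}$ is another quotient arrow for $\eqrelt$; by the universal property in \cref{def:quotient}, quotient arrows for a fixed equivalence are unique up to isomorphism, so $q$ and $\id_Y$ differ by an isomorphism of $\QC\RDoc$. Since the forgetful functor $\fun{}{\QC\RDoc}{\CC}$ preserves composition and identities it sends that isomorphism to one of $\CC$, and since $\id_Y$ is carried by the identity, the underlying $\CC$-arrow of $q$ is an isomorphism. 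Granting this, projectivity of $\ple{X,\rid_X}$ is immediate: given a quotient arrow $q$ with domain $\ple{Y,\eqrels}$ and an arrow $f$ from $\ple{X,\rid_X}$ into the codomain of $q$, the $\CC$-arrow $h = q^{-1}\circ f$ satisfies $q\circ h = f$, and it is a legitimate arrow $\fun{h}{\ple{X,\rid_X}}{\ple{Y,\eqrels}}$ because the required inequality holds for free: $\rid_X \order \RDoc\reidx{h,h}(\rid_Y) \order \RDoc\reidx{h,h}(\eqrels)$, using the lax naturality of $\rid$ and reflexivity of $\eqrels$.

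For the implication from left to right, the plan is to test projectivity of $\ple{X,\eqrelr}$ against one cleverly chosen quotient arrow. Since $\rid_X \order \eqrelr$ by reflexivity, $\eqrelr$ is a $\QR\RDoc$-equivalence on $\ple{X,\rid_X}$ and $\fun{\id_X}{\ple{X,\rid_X}}{\ple{X,\eqrelr}}$ is a quotient arrow for it. Applying projectivity to this quotient arrow and to the test map $f = \id_{\ple{X,\eqrelr}}$, I obtain a lift $\fun{h}{\ple{X,\eqrelr}}{\ple{X,\rid_X}}$ with $\id_X\circ h = \id_{\ple{X,\eqrelr}}$, which forces the underlying $\CC$-arrow of $h$ to be $\id_X$. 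But for $h$ to be a well-defined arrow of $\QC\RDoc$ one needs $\eqrelr \order \RDoc\reidx{\id_X,\id_X}(\rid_X) = \rid_X$; together with reflexivity $\rid_X \order \eqrelr$ this gives $\eqrelr = \rid_X$.

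The only step demanding genuine care is the key observation in the second paragraph, that quotient arrows of $\QR\RDoc$ act as isomorphisms on the underlying objects; this is where the uniqueness up to isomorphism of quotients and the explicit canonical form from \cref{prop:qc-quot-doc} are needed. Once it is in place, both directions reduce to unwinding the definitions, the decisive point being the minimality of $\rid_X$, which renders the fibre compatibility condition for arrows out of $\ple{X,\rid_X}$ automatically satisfied.
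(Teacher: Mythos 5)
Your proof is correct and takes essentially the same approach as the paper's. The left-to-right direction coincides with the paper's argument (testing projectivity of \ple{X,\eqrelr} against the canonical quotient arrow \fun{\id_X}{\ple{X,\rid_X}}{\ple{X,\eqrelr}} with the identity as test map), and your key observation for the converse---that every quotient arrow of $\QR\RDoc$ is carried by an isomorphism of \CC, obtained by comparing it with the canonical identity-carried quotient---is precisely the step the paper performs inline: its isomorphism $i$ satisfying $\id_Y = i\circ q$ is your $q^{-1}$, and its lift $h = i\circ f$ is your $h = q^{-1}\circ f$.
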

\begin{proof}
Towards a proof of the right-to-left implication, 
consider an arrow \fun{f}{\ple{X,\rid_X}}{\ple{Z,\eqrels}} and a quotient arrow \fun{q}{\ple{Y,\eqrelr}}{\ple{Z,\eqrels}} for the $\QR\RDoc$-equivalence $\eqrels' = \gr{q}\rcomp\eqrels\rcomp\gr{q}\rconv$. 
Since \fun{\id_Y}{\ple{Y,\eqrelr}}{\ple{Y,\eqrels'}} is also a quotient arrow, we deduce that there is an isomorphism 
\fun{i}{\ple{Z,\eqrels}}{\ple{Y,\eqrels'}} such that $\id_Y = i\circ q$.
Consider the arrow \fun{h = i\circ f}{X}{Y}. 
We have $\rid_X \order \gr{h}\rcomp\gr{h}\rconv \order \gr{h}\rcomp\eqrelr\rcomp\gr{h}\rconv$, hence \fun{h}{\ple{X,\rid_X}}{\ple{Y,\eqrelr}} is an arrow in $\QC\RDoc$. 
Finally, we have 
$i\circ q \circ h = h = i\circ f$ and then, postcomposing with $i^{-1}$, we conclude 
$q\circ h = f$, as needed. 

To prove the other direction, 
let \ple{X,\eqrelr} be $\QR\RDoc$-projective. 
Sine \fun{\id_X}{\ple{X,\rid_X}}{\ple{X,\eqrelr}} is a quotient arrow, 
by projectivity we get an arrow \fun{f}{\ple{X,\eqrelr}}{\ple{X,\rid_X}} such that $\id_X\circ f = \id_{\ple{X,\eqrelr}}$. 
This implies that $f = \id_X$ and, since $\eqrelr \order \gr{f}\rcomp\gr{f}\rconv$ holds by definition of arrows in $\QC\RDoc$, we conclude that 
$\eqrelr \order \rid_X$ and so $\eqrelr = \rid_X$, as needed. 
\end{proof}

For the extensional quotient completion, we can only prove one of the two implications above, as stated below. 

\begin{corollary}\label[cor]{cor:proj-obj-eqc}
Let $\RDoc$ be a relational doctrine and $\EQR\RDoc$ its extensional quotient completion. 
The objects \ple{X,\rid_X} in $\QC\RDoc$ are $\EQR\RDoc$-projective. 
\end{corollary}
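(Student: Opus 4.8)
The plan is to reduce $\EQR\RDoc$-projectivity of $\ple{X,\rid_X}$ to its $\QR\RDoc$-projectivity, which is already available from \cref{prop:proj-obj-qc}. First I would recall that the extensional quotient completion factors as $\EQR\RDoc = \ER{\QR\RDoc}$: it is the extensional collapse of the intensional quotient completion. Hence its base $\EC{\QR\RDoc}$ has the very same objects as $\QC\RDoc$, while its arrows are the $\exteq$-classes $\eqc{f}$ of arrows $f$ of $\QC\RDoc$, and the canonical strict $1$-arrow \fun{\EAr{\QR\RDoc}}{\QR\RDoc}{\ER{\QR\RDoc}} is the identity on objects and sends each arrow to its class.

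Next I would verify the defining lifting property directly. Given an arrow \fun{\eqc{f}}{\ple{X,\rid_X}}{\ple{Z,\eqrelt}} and a quotient arrow \fun{\eqc{q}}{\ple{Y,\eqrels}}{\ple{Z,\eqrelt}} in $\EQR\RDoc$, the first step is to promote $\eqc{q}$ to an honest quotient arrow in $\QR\RDoc$: by \cref{thm:qc-ec}, quotient arrows in $\ER{\QR\RDoc}$ are precisely the $\exteq$-classes of quotient arrows in $\QR\RDoc$, so I may pick a representative $q$ of $\eqc{q}$ which is a quotient arrow of $\QR\RDoc$, and let $f$ be any representative of $\eqc{f}$, an arrow \fun{f}{\ple{X,\rid_X}}{\ple{Z,\eqrelt}} in $\QC\RDoc$. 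Since $\ple{X,\rid_X}$ is $\QR\RDoc$-projective by \cref{prop:proj-obj-qc}, projectivity against $q$ yields an arrow \fun{h}{\ple{X,\rid_X}}{\ple{Y,\eqrels}} in $\QC\RDoc$ with $f = q\circ h$. Applying $\EAr{\QR\RDoc}$ to this equation gives $\eqc{f} = \eqc{q}\circ\eqc{h}$ in $\EQR\RDoc$, which is exactly the required factorisation.

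I expect the only real content to be the invocation of \cref{thm:qc-ec}: the crux is that a quotient arrow downstairs, in the collapsed doctrine, genuinely arises from a quotient arrow upstairs, so that the projective lift computed in $\QR\RDoc$ can be transported back. Everything else is a routine passage through the quotient functor $\EAr{\QR\RDoc}$, with no calculation on the relational structure. Conceptually, the reason only this implication holds (and not the converse of \cref{prop:proj-obj-qc}) is that the collapse merely identifies extensionally equal arrows, so it can only create new factorisations, never destroy them; projectivity is therefore preserved and possibly gained, which is precisely why \cref{cor:proj-obj-eqc} asserts a one-directional statement rather than the equivalence available in \cref{prop:proj-obj-qc}.
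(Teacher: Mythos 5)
Your reduction strategy is reasonable, but the step where you invoke \cref{thm:qc-ec} contains a genuine gap. What the proof of \cref{thm:qc-ec} actually establishes is only one inclusion: if $q$ is a quotient arrow in $\QR{\RDoc}$, then $\eqc{q}$ is a quotient arrow in $\EQR{\RDoc}$ (this is all that theorem needs, even though its proof announces the stronger ``exactly''). The converse — that every quotient arrow of $\EQR{\RDoc}$ admits a representative which is a quotient arrow of $\QR{\RDoc}$ — is false in general, because the collapse creates new isomorphisms, and any isomorphism composed with a quotient arrow is again a quotient arrow. Concretely, take $\RDoc = \Rel$ and $X = \{a,b\}$, with the equivalence $X\times X$ on $\ple{X,\rid_X}$. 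In $\QR{\Rel}$ every quotient arrow has a bijective underlying function: it is the canonical quotient $\id$ followed by an isomorphism of $\QC{\Rel}$. But in $\EQR{\Rel}$ (setoids), the class of the unique function $i$ from $X$ to a singleton $1$, seen as an arrow $\ple{X,\rid_X}\to\ple{1,\rid_1}$, is a quotient arrow for $X\times X$ (indeed effective and descent): it is $\eqc{\id_X}$ followed by the setoid isomorphism $\ple{X,X\times X}\cong\ple{1,\rid_1}$. Its only representative is $i$, which is not injective, hence not a quotient arrow of $\QR{\Rel}$. At this point your appeal to \cref{prop:proj-obj-qc} has nothing to apply to, and the argument breaks.

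The missing idea is to absorb this discrepancy into an isomorphism, which is legitimate because projectivity only needs to be tested up to isomorphism. Given a quotient arrow $\eqc{q}\colon\ple{Y,\eqrels}\to\ple{Z,\eqrelt}$ of $\EQR{\RDoc}$ for an equivalence $\eqrels'$ (also a $\QR{\RDoc}$-equivalence, since $\lift{\EAr{\QR\RDoc}}$ is the identity), choose a quotient arrow $q'$ for $\eqrels'$ in $\QR{\RDoc}$, which exists by \cref{prop:qc-quot-doc}; by the proven direction of \cref{thm:qc-ec}, $\eqc{q'}$ is also a quotient arrow for $\eqrels'$ in $\EQR{\RDoc}$, so uniqueness of quotients yields an isomorphism $\eqc{u}$ with $\eqc{q} = \eqc{u}\circ\eqc{q'}$. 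Now lift a representative of $\eqc{u}^{-1}\circ\eqc{f}$ against $q'$ using \cref{prop:proj-obj-qc}, pass to classes, and compose with $\eqc{u}$ to get $\eqc{f} = \eqc{q}\circ\eqc{h}$. This is in substance what the paper does: its proof reruns the argument of \cref{prop:proj-obj-qc} inside the collapse, and the two facts it records — $\fn{\EAr{\QR\RDoc}}$ is full and $\lift{\EAr{\QR\RDoc}}$ is the identity on relations — are precisely what makes that argument, including its isomorphism-juggling step, go through. Note finally that your closing intuition (``the collapse can only create new factorisations, never destroy them'') tells only half the story: the collapse also creates new quotient arrows to test projectivity against, and that is exactly where the extra step is needed.
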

\begin{proof}
It follows from \cref{prop:proj-obj-qc}, observing that 
the 1-arrow \oneAr{\EAr{\QR\RDoc}}{\QR\RDoc}{\EQR\RDoc} is such that 
$\fn{\EAr{\QR\RDoc}}$ is full and $\lift{\EAr{\QR\RDoc}}$ is the identity. 
\end{proof}

Indeed, it is not difficult to find examples of extensional quotient completions with more projective objects. 
For instance, let $\RDoc$ be a relational doctrine and suppose that \fun{q}{X}{W} is a quotient arrow for a $\RDoc$-equivalence relation $\eqrelr$ on $X$ and suppose also that $q$ has a section \fun{s}{W}{X}. 
Then, the arrow \fun{\eqc{q}}{\ple{X,\eqrelr}}{\ple{W,\rid_W}} in the base of $\EQR\RDoc$ is an isomorphism, whose inverse is \fun{\eqc{s}}{\ple{W,\rid_W}}{\ple{X,\eqrelr}}. 
We obviously have $\eqc{q}\circ\eqc{s} = \eqc{\id_W}$ and, on the other hand, we have 
$ \gr{q}\rcomp\gr{s}\rcomp\eqrelr  
  = \gr{q}\rcomp\gr{s}\rcomp\gr{q}\rcomp\gr{q}\rconv 
  = \gr{q}\rcomp\gr{q}\rconv
  = \eqrelr $ 
proving that $\eqc{s}\circ\eqc{q} = \eqc{\id_X}$, as needed. 
More concretely, this shows, again assuming the Axiom of Choice, that all objects in the base of $\QR\Rel$ are projective. 

The following proposition provides a useful result: projective objects are preserved by left adjoints. 

\begin{proposition}\label[prop]{prop:proj-obj-ladj}
Let \oneAr{F}{\RDoc}{\SDoc} be a 1-arrow in \QRDtn such that $\fn{F}$ has a left adjoint $L$. 
Then, if $P$ is $\SDoc$-projective, $LP$ is $\RDoc$-projective. 
\end{proposition}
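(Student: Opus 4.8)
The plan is to unfold the definition of $\RDoc$-projectivity for $LP$ and reduce the required lifting problem, which lives in the base $\CC$ of $\RDoc$, to a lifting problem in the base $\D$ of $\SDoc$, where projectivity of $P$ is available, transporting the data back and forth along the adjunction $L\dashv\fn{F}$. Write $\eta$ for the unit of $L\dashv\fn{F}$, and recall that the natural bijection $\Hom{\CC}{LP}{X}\cong\Hom{\D}{P}{\fn{F}X}$ sends an arrow $k$ to its transpose $\fn{F}k\circ\eta_P$.

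First I would take an arbitrary arrow \fun{f}{LP}{Y} in $\CC$ together with a quotient arrow \fun{q}{X}{Y} for some $\RDoc$-equivalence $\eqrelr$ on $X$, and transpose $f$ to $f^\sharp = \fn{F}f\circ\eta_P\colon P\to\fn{F}Y$. The key observation is that, since $F$ is a 1-arrow in $\QRDtn$, it preserves quotient arrows, so $\fn{F}q\colon\fn{F}X\to\fn{F}Y$ is a quotient arrow in $\SDoc$, namely for the $\SDoc$-equivalence $\lift{F}_{X,X}(\eqrelr)$ on $\fn{F}X$. Now $\SDoc$-projectivity of $P$ applies to the arrow $f^\sharp$ and this quotient arrow $\fn{F}q$, yielding an arrow $g\colon P\to\fn{F}X$ with $\fn{F}q\circ g = f^\sharp$.

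Next I would transpose $g$ back, letting $h\colon LP\to X$ be its mate, that is, the unique arrow with $\fn{F}h\circ\eta_P = g$. It remains to check $q\circ h = f$, which I would do by comparing transposes: the transpose of $q\circ h$ is
\[
\fn{F}(q\circ h)\circ\eta_P = \fn{F}q\circ\fn{F}h\circ\eta_P = \fn{F}q\circ g = f^\sharp,
\]
which is exactly the transpose of $f$; since transposition is a bijection, $q\circ h = f$. This exhibits the desired factorisation of $f$ through $q$, proving that $LP$ is $\RDoc$-projective.

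Finally, I expect no deep obstacle here: the bulk of the argument is adjunction bookkeeping. The one step that genuinely uses the hypotheses is the assertion that $\fn{F}q$ is a quotient arrow in $\SDoc$, which is precisely where membership of $F$ in $\QRDtn$ (preservation of quotient arrows) is invoked; without it the transposed lifting problem would not be one to which projectivity of $P$ could be applied. The only point demanding mild care is to record explicitly the $\RDoc$-equivalence $\eqrelr$ of which $q$ is a quotient, so that $\lift{F}_{X,X}(\eqrelr)$ is available as the $\SDoc$-equivalence witnessing that $\fn{F}q$ is a quotient arrow.
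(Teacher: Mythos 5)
Your proof is correct and follows essentially the same route as the paper's: transpose $f$ along $L\dashv\fn{F}$, use that $F$ (being a 1-arrow in $\QRDtn$) sends $q$ to a quotient arrow $\fn{F}q$, apply $\SDoc$-projectivity of $P$, and transpose the resulting lift back. The only difference is presentational: you spell out the final bijection-of-transposes argument that the paper compresses into ``transposing the above diagram we get the thesis.''
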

\begin{proof}
Consider an arrow \fun{f}{LP}{Y} and a quotient arrow \fun{q}{X}{Y} in the base of $\RDoc$. 
Since $F$ preserves quotients, we have that the arrow \fun{\fn{F}q}{\fn{F}X}{\fn{F}Y} is a quotient arrow in the base of $\SDoc$. 
Then, since $P$ is $\SDoc$-projective, we get an arrow \fun{h}{P}{\fn{F}X} such that the following diagram commutes
\[\xymatrix{
& \fn{F}X \ar[d]^-{\fn{F}q} 
\\
  P \ar[ru]^-{h} \ar[r]^-{f^\sharp} 
& \fn{F}Y 
}\]
where $f^\sharp$ is the transpose of $f$ along the adjunction $L \dashv\fn{F}$. 
Hence, transposing the above diagram we get the thesis. 
\end{proof}

For a relational doctrine $\RDoc$ with quotients, 
we denote by $\RProj\RDoc$ the restriction of a relational doctrine $\RDoc$ to the full subcategory $\Proj\RDoc$ of its base spanned by $\RDoc$-projective objects.  
The next definition will provide the central property for our characterization of the extensional quotient completion. 

\begin{definition}\label[def]{def:enough-proj}
Let \fun{\RDoc}{\bop\CC}{\Pos} be a relational doctrine with quotients. 
A full subcategory $\ct{G}$ of $\Proj\RDoc$ is said to be a \emph{$\RDoc$-projective cover} 
if, for every object $X$ in \CC, there is a quotient arrow \fun{q}{P}{X} where $P$ is an object of $\ct{G}$. 
We say that $\RDoc$ \emph{has enough projectives} if it has a $\RDoc$-projective cover. 
\end{definition}

In other words, a relational doctrine $\RDoc$ has enough projectives when every object can be presented as a quotient of a $\RDoc$-projective object. 
In a sense, if $\ct{G}$ is a $\RDoc$-projective cover, we know that objects of $\ct{G}$, that are $\RDoc$-projective, suffices to ``generate'' all objects in the base of $\RDoc$- 
This idea is made precise in the next theorem and the subsequent corollary. 
In the following, 
given a 1-arrow \oneAr{F}{\SDoc}{\RDoc} in \RDtn into an extensional relational doctrine with quotients, we denote by \oneAr{F^\sharp}{\EQR\SDoc}{\RDoc} its transpose along the biadjunction $\REQFun\dashv\EQRFun$, that is, 
$F^\sharp = Q^\RDoc\circ\EQR{F}$, where $Q^\RDoc$ is the component of the counit at $\RDoc$. 
Furthermore, we say that a 1-arrow $F$ is \emph{fully faithful} if $\fn{F}$ is fully faithful and $\lift{F}$ is an isomorphism. 

\begin{theorem}\label[thm]{thm:proj-obj}
Let \fun{\RDoc}{\bop\CC}{\Pos}  be an extensional relational doctrine with quotients and 
\oneAr{F}{\SDoc}{\RDoc} a fully-faithful  1-arrow in \RDtn. 
Then, the following are equivalent:
\begin{enumerate}
\item\label{thm:proj-obj:1} the 1-arrow \oneAr{F^\sharp}{\EQR\SDoc}{\RDoc} is an equivalence in \EQRDtn;
\item\label{thm:proj-obj:2} $F$ factors through $\RProj\RDoc$ and 
for every object $X$ in \CC, there is an object $P_X$ in the base of $\SDoc$ and a quotient arrow \fun{q_X}{\fn{F}P_X}{X} in \CC. 
\end{enumerate}
\end{theorem}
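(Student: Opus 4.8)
The plan is to characterize when $F^\sharp$ is an equivalence in $\EQRDtn$ by the two standard ingredients: that $\fn{F^\sharp}$ is an equivalence of the base categories and that $\lift{F^\sharp}$ is a fibrewise order isomorphism (together these let one build a pseudo-inverse in the $2$-category). First I would unfold $F^\sharp = Q^\RDoc\circ\EQR{F}$ concretely. Since $\EQR{F}$ sends $\ple{P,\eqrelr}$ to $\ple{\fn{F}P,\lift{F}_{P,P}(\eqrelr)}$ and $Q^\RDoc$ computes chosen quotients, the object $\fn{F^\sharp}\ple{P,\eqrelr}$ is the codomain of a quotient arrow $\theta_{\ple{P,\eqrelr}}\colon\fn{F}P\to\fn{F^\sharp}\ple{P,\eqrelr}$ for the $\RDoc$-equivalence $\lift{F}_{P,P}(\eqrelr)$, and on fibres $\lift{F^\sharp}(\relr)=\gr{\theta_{\ple{P,\eqrelr}}}\rconv\rcomp\lift{F}(\relr)\rcomp\gr{\theta_{\ple{P',\eqrelr'}}}$. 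As $F$ is strict, $\lift{F}_{P,P}(\rid_P)=\rid_{\fn{F}P}$, so $\theta_{\ple{P,\rid_P}}$ is an isomorphism and $\fn{F^\sharp}\ple{P,\rid_P}\cong\fn{F}P$; moreover $F^\sharp$ is strict, being a composite of strict $1$-arrows, and hence preserves graphs and quotient arrows.

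For \ref{thm:proj-obj:2}$\Rightarrow$\ref{thm:proj-obj:1} I would argue four points. \emph{(Fibrewise iso.)} Because $F$ is fully faithful, $\lift{F}$ is an order isomorphism, and because the $\theta$ are effective descent quotients, the assignments $\relr\mapsto\gr{\theta}\rconv\rcomp\lift{F}(\relr)\rcomp\gr{\theta'}$ and $\sigma\mapsto\lift{F}^{-1}(\gr{\theta}\rcomp\sigma\rcomp\gr{\theta'}\rconv)$ are mutually inverse; the verification is a short computation using effectiveness $\gr{\theta}\rcomp\gr{\theta}\rconv=\lift{F}(\eqrelr)$, descent $\gr{\theta}\rconv\rcomp\gr{\theta}=\rid$, and the descent-datum identity, so $\lift{F^\sharp}$ is iso. \emph{(Essential surjectivity.)} Given $X$, the hypothesis supplies a quotient arrow $q_X\colon\fn{F}P_X\to X$; setting $\eqrelr_X=\lift{F}^{-1}(\gr{q_X}\rcomp\gr{q_X}\rconv)$, which transports to an $\SDoc$-equivalence across the strict iso $\lift{F}$, one uses that any quotient arrow is also a quotient arrow for its own kernel and that quotient arrows for a fixed equivalence have canonically isomorphic codomains to get $X\cong\fn{F^\sharp}\ple{P_X,\eqrelr_X}$. \emph{(Faithful.)} Since $\EQR\SDoc$ is extensional, two parallel arrows coincide iff their graphs do by \cref{prop:ext-eq}; as $F^\sharp$ preserves graphs and $\lift{F^\sharp}$ is injective, $\fn{F^\sharp}$ is faithful. \emph{(Full.)} This is the crucial step and the only place projectivity enters: given $g\colon\fn{F^\sharp}\ple{P,\eqrelr}\to\fn{F^\sharp}\ple{P',\eqrelr'}$, projectivity of $\fn{F}P$ (hypothesis \ref{thm:proj-obj:2}) lets me lift $g\circ\theta_{\ple{P,\eqrelr}}$ along $\theta_{\ple{P',\eqrelr'}}$ to some $k\colon\fn{F}P\to\fn{F}P'$; fullness of $F$ gives $k=\fn{F}f$, the inequality $\eqrelr\order\SDoc\reidx{f,f}(\eqrelr')$ follows by transporting $\gr{\theta}\rcomp\gr{\theta}\rconv\order\gr{k}\rcomp\gr{\theta'}\rcomp\gr{\theta'}\rconv\rcomp\gr{k}\rconv$ through $\lift{F}^{-1}$, and the defining property of $\fn{F^\sharp}$ on arrows together with $\theta_{\ple{P,\eqrelr}}$ being epic gives $\fn{F^\sharp}(\eqc{f})=g$.

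For \ref{thm:proj-obj:1}$\Rightarrow$\ref{thm:proj-obj:2} I would use that an equivalence $F^\sharp$ has a pseudo-inverse $G$ in $\EQRDtn$, which is in particular a $1$-arrow in $\QRDtn$ with $\fn{F^\sharp}$ left adjoint to $\fn{G}$. By \cref{cor:proj-obj-eqc} each $\ple{P,\rid_P}$ is $\EQR\SDoc$-projective, so \cref{prop:proj-obj-ladj} applied to $G$ with left adjoint $\fn{F^\sharp}$ makes $\fn{F^\sharp}\ple{P,\rid_P}\cong\fn{F}P$ an $\RDoc$-projective object; since projectivity is isomorphism-invariant, $F$ factors through $\RProj\RDoc$. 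For the covering condition, essential surjectivity of $\fn{F^\sharp}$ writes any $X$ as $X\cong\fn{F^\sharp}\ple{P,\eqrelr}$, and composing the iso $\fn{F}P\cong\fn{F^\sharp}\ple{P,\rid_P}$ with $\fn{F^\sharp}(\eqc{\id_P})\colon\fn{F^\sharp}\ple{P,\rid_P}\to\fn{F^\sharp}\ple{P,\eqrelr}$, which is a quotient arrow because $\eqc{\id_P}$ is one by \cref{thm:qc-ec} and $F^\sharp$ preserves quotient arrows, yields the required quotient arrow $\fn{F}P\to X$.

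The main obstacle is the fullness step in \ref{thm:proj-obj:2}$\Rightarrow$\ref{thm:proj-obj:1}: a priori a functional and total relation of $\EQR\SDoc$ need not be the graph of a base arrow, and it is exactly the projective-cover hypothesis that repairs this, by allowing a choice of a lift through the quotient arrow $\theta_{\ple{P',\eqrelr'}}$. Getting the bookkeeping of the various quotient arrows $\theta$, the strict iso $\lift{F}$, and the extensionality of $\EQR\SDoc$ to line up correctly is where most of the care is needed.
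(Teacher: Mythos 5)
Your argument is correct in substance, but it is organized differently from the paper's proof. For \ref{thm:proj-obj:2}$\Rightarrow$\ref{thm:proj-obj:1} the paper never verifies equivalence criteria for $F^\sharp$: it explicitly \emph{constructs} a pseudo-inverse \oneAr{G}{\RDoc}{\EQR\SDoc}, choosing for each $X$ a projective cover \fun{p_X}{\fn{F}P_X}{X}, setting $\eqrelr_X$ to be the unique equivalence with $\lift{F}_{P_X,P_X}(\eqrelr_X)=\gr{p_X}\rcomp\gr{p_X}\rconv$, defining $\fn{G}$ on arrows by exactly the projectivity-plus-fullness lifting you use for fullness, defining $\lift{G}$ by the same $\lift{F}^{-1}$-transport you use for the fibrewise isomorphism, and then exhibiting the two invertible 2-arrows by hand. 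So the mathematical ingredients coincide; what differs is the packaging. Your route is more modular, and for \ref{thm:proj-obj:1}$\Rightarrow$\ref{thm:proj-obj:2} your appeal to \cref{cor:proj-obj-eqc} combined with \cref{prop:proj-obj-ladj} (applied to an adjoint pseudo-inverse) is cleaner than the paper's direct verification that each $\fn{F}A$ is projective via fully-faithfulness of $\fn{G}$. What the paper's explicit construction buys is that membership of the inverse in \EQRDtn (strictness and quotient preservation) is visible by inspection, with no appeal to a general principle.

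Two points in your write-up need repair, though neither breaks the proof. First, the assembly principle you invoke --- that essential surjectivity plus fully-faithfulness of $\fn{F^\sharp}$ plus $\lift{F^\sharp}$ being a fibrewise order-isomorphism yields an equivalence \emph{in \EQRDtn} --- is asserted, not proved. You must check that a quasi-inverse of $\fn{F^\sharp}$ can be equipped with a fibre part making it a strict 1-arrow, that the unit and counit satisfy the 2-arrow inequalities in both directions, and that the resulting inverse preserves quotient arrows so the equivalence lands in \EQRDtn rather than merely in \RDtn (the last point follows from \cref{lem:ladj-quot}, since a pseudo-inverse is in particular a left adjoint). This is routine but not free, and it is precisely the work the paper discharges by constructing $G$ concretely. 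Second, your justification that $F^\sharp$ preserves quotient arrows ``being strict'' is a misattribution: strictness gives preservation of graphs but says nothing about quotient arrows. The correct reason is that $F^\sharp=Q^\RDoc\circ\EQR{F}$ is a 1-arrow of \EQRDtn: $Q^\RDoc$ is a reflection left adjoint, hence preserves quotients by \cref{lem:ladj-quot}, and $\EQR{F}$ preserves them because quotient arrows in the completion have identity underlying arrows (cf.\ \cref{thm:qc-ec}). Both gaps are repairable with material already in the paper.
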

\begin{proof}
Let us first prove the implication from \cref{thm:proj-obj:1} to \cref{thm:proj-obj:2}. 
Let \oneAr{G}{\RDoc}{\EQR\SDoc} be the pseudoinverse of $F^\sharp$ in \EQRDtn, 
hence, by hypothesis, both $F^\sharp$ and $G$ preserve quotients. 
First we show that, for every object $A$ in the base of $\SDoc$, $\fn{F}A$ is $\RDoc$-projective, which immediately implies that $F$ factors through $\RProj\RDoc$. 
Consider a quotient arrow \fun{q}{X}{Y} and an arrow \fun{f}{\fn{F}A}{Y} in \CC.
Since $\fn{F}A \cong \fn{F^\sharp}\ple{A,\rid_A}$ and $G$ is a pseudoinverse of $F^\sharp$, we deduce that 
$\fn{G}\fn{F}A \cong \ple{A,\rid_A}$, hence by \cref{cor:proj-obj-eqc} we get that $\fn{G}\fn{F}A$ is $\EQR\SDoc$-projective. 
Since $G$ preserves quotients, we know that 
\fun{\fn{G}q}{\fn{G}X}{\fn{G}Y} is a quotient arrow, hence by projectivity of $\fn{G}\fn{F}A$, we get an arrow
as in the next commutative diagram:
\[\xymatrix{
& \fn{G}X \ar[d]^-{\fn{G}q}  \\
  \fn{G}\fn{F}A \ar[r]^-{\fn{G}f} \ar[ur]^-{\eqc{h}} 
& \fn{G}Y 
}\]
Finally, since $\fn{G}$ is fully-faiful we conclude that there is an arrow \fun{h}{\fn{F}A}{X} such that 
$f = q\circ h$, as needed. 
 
Consider now an object $X$ in \CC  and suppose that $\fn{G}X = \ple{P_X,\eqrelr_X}$. 
Let us consider the object $P = \fn{F^\sharp}\ple{P_X,\rid_{P_X}}$ in \CC. 
The arrow \fun{\id_{P_X}}{\ple{P_X,\rid_{P_X}}}{\ple{P_X,\eqrelr_X}} is a quotient arrow for the $\SDoc$-equivalence relation $\eqrelr_X$,  
thus the arrow \fun{\fn{F^\sharp}\id_{P_X}}{P}{\fn{F^\sharp}\ple{P_X,\eqrelr_X}} is a quotient arrow in $\RDoc$, because $F^\sharp$ preserves quotients. 
Therefore, because $\fn{F^\sharp}\ple{P_X,\eqrelr_X} = \fn{F^\sharp}\fn{G} X$ is isomorphic to $X$, as $F^\sharp$ is a pseudoinverse of $G$, and $\fn{F^\sharp}P_X = Q^\RDoc\ple{\fn{F}P_X,\rid_{\fn{F}P_X}}$ is isomorphic to $P$, 
we get the thesis. 

\medskip 

Towards a proof of the other direction, 
we build a pseudoinverse of $F^\sharp$. 
By hypothesis, 
for every object $X$ in \CC, we have an object $P_X$ in the base of $\SDoc$ and 
a quotient arrow \fun{p_X}{\fn{F}P_X}{X}, where $\fn{F}P_X$ is $\RDoc$-projective.
Moreover, since $\lift{F}$ is an isomorphism, 
we let $\eqrelr_X$ be the unique $\SDoc$-equivalence relation on $P_X$ such that 
$\lift{F}_{P_X,P_X}(\eqrelr_X) = \gr{p_X}\rcomp\gr{p_X}\rconv$. 
Consider now an arrow \fun{f}{X}{Y} in \CC. 
Since $\fn{F}P_X$ is $\RDoc$-projective and $\fn{F}$ is full, we get an arrow \fun{\hat{f}}{P:X}{P_Y} such that 
$p_Y\circ\fn{F}\hat{f} = f\circ p_X$. 
Moreover, we have that 
\begin{align*} 
\lift{F}_{P_X,P_X}(\eqrelr_X) 
  & = \gr{p_X}\rcomp\gr{p_X}\rconv 
    \order \gr{p_X}\rcomp\gr{f}\rcomp\gr{f}\rconv\rcomp\gr{p_X}\rconv 
    = \gr{\fn{F}\hat f}\rcomp\gr{p_Y}\rcomp\gr{p_Y}\rconv\rcomp\gr{\fn{F}\hat f}\rconv 
\\
  & = \lift{F}_{P_X,P_X}(\gr{\hat f}\rcomp\eqrelr_Y\rcomp\gr{\hat f}\rconv) 
\end{align*} 
Since $\lift{F}$ is an isomorphism, we deduce that 
$\eqrelr_X\order\gr{\hat f}\rcomp\eqrelr_Y\rcomp\gr{\hat f}\rconv$ holds, 
thus proving that \fun{\eqc{\hat f}}{\ple{P_X,\eqrelr_X}}{\ple{P_Y,\eqrelr_Y}} is a well defined arrow in the base of $\EQR\SDoc$. 
Furthermore, if $g,h$ are both arrows such that $p_Y\circ \fn{F}g = f\circ p_X$ and $p_Y\circ \fn{F}h = f\circ p_X$,  we have that 
\begin{align*} 
\lift{F}_{P_X,P_Y}(\gr{g}\rcomp\eqrelr_Y) 
  & = \gr{\fn{F}g}\rcomp\gr{p_Y}\rcomp\gr{p_Y}\rconv 
    = \gr{\fn{F}h}\rcomp\gr{p_Y}\rcomp\gr{p_Y}\rconv 
\\
  & = \lift{F}_{P_X,P_Y}(\gr{h}\rcomp\eqrelr_Y)
\end{align*} 
Again, since $\lift{F}$ is an isomorphism, we deduce that 
$\gr{g}\rcomp\eqrelr_Y = \gr{h}\rcomp\eqrelr_Y$, 
thus showing that $\eqc{g} = \eqc{h}$. 
Therefore, the arrow $\eqc{\hat f}$ is uniquely determined and so  the assignements 
$\fn{G}X = \ple{P_X,\eqrelr_X}$ and $\fn{G}f = \eqc{\hat f}$ determine a functor from \CC to the base of $\EQR\SDoc$. 
Furthre, given a relation $\relr\in\RDoc(X,Y)$, we denote by $\hat\relr$ the unique relation in $\SDoc(P_X,P_Y)$ such that $\lift{F}_{P_X,P_Y}(\hat\relr) = \gr{p_X}\rcomp\relr\rcomp\gr{p_Y}\rconv$, which again exists because $\lift{F}$ is an isomorphism. 
Then, we set $\lift{G}_{X,Y}(\relr) = \hat\relr$ and 
it is easy to check that \oneAr{G}{\RDoc}{\EQR\SDoc} is a well-defined 1-arrow in \RDtn. 
 
We check that $G$ preserves quotients. 
Let \fun{q}{X}{W} be a quotient arrow in $\RDoc$ for $\eqrelr$ and consider an arow \fun{\eqc{f}}{\ple{P_X,\eqrelr_X}}{\ple{Y,\eqrels}} such that 
$\hat\eqrelr \order \gr{f}\rcomp\eqrels\rcomp\gr{f}\rconv$. 
Then, we have the following diagram 
\[\xymatrix{
  \fn{F}P_X 
  \ar[rrrd]^-{\fn{F}f} 
  \ar[rd]_-{\fn{F}\hat q} 
  \ar[dd]_-{p_X} 
\\
& \fn{F}P_W 
  \ar@{..>}[rr]_-{\fn{F} h} 
  \ar[dd]^-{p_W} 
& 
& \fn{F}Y 
  \ar[dd]^-{p} 
\\ 
  X 
  \ar@{..>}[rrrd]^-{f'} 
  \ar[rd]_-{q} 
\\
& W 
  \ar@{..>}[rr]^-{h'} 
&
& Z 
}\]
where \fun{p}{\fn{F}Y}{Z} is a quotient arrow for $\lift{F}_{Y,Y}(\eqrels)$ in $\RDoc$, 
$f'$ and $h'$ are uniquely determined by the universal property of the quotient arrows $p_X$ and $q$, respectively, 
and $h$ is determined by projectivity of $\fn{F}P_W$ and fullness of $\fn{F}$. 
Note that the upper triangle does not commute. 
However, the above diagram uniquely determines the arrow \fun{\eqc{h}}{\ple{P_W,\eqrelr_W}}{\ple{Y,\eqrels}} in the base of $\EQR\SDoc$ and from 
\begin{align*} 
\lift{F}_{P_X,Y}(\gr{\hat q}\rcomp\gr{h}\rcomp\eqrels) 
  & = \gr{\fn{F}\hat q}\rcomp\gr{\fn{F}h}\rcomp\gr{p}\rcomp\gr{p}\rconv 
    = \gr{p_X}\rcomp\gr{q}\rcomp\gr{h'}\rcomp\gr{p}\rconv 
    = \gr{p_X}\rcomp\gr{f'}\rcomp\gr{p}\rconv
\\
  & = \gr{\fn{F}f}\rcomp\gr{p}\rcomp\gr{p}\rconv 
    = \lift{F}_{P_X,Y}(\gr{f}\rcomp\eqrels) 
\end{align*} 
we deduce that $\gr{\hat q}\rcomp\gr{h}\rcomp\eqrels = \gr{f}\eqrels$, because $\lift{F}$ is an isomorphism, 
thus proving that $\eqc{h}\circ\eqc{\hat q} = \eqc{f}$. 
Therefore, we conclude that $\eqc{\hat q}$ is a quotient arrow for $\hat\eqrelr$ and so $G$ preserves quotients. 

We conclude by showing that $F^\sharp$ and $G$ are pseudoinverse to each other. 
Recall that $F^\sharp = Q^\RDoc\circ\EQR{F}$, where 
\oneAr{Q^\RDoc}{\EQR\RDoc}{\RDoc} is the component at $\RDoc$ of the counit of the biadjunction $\REQFun\dashv\EQRFun$ of \cref{cor:eqc-mnd}. 
Hence, for every object \ple{X,\eqrelr} in the base of $\EQR\SDoc$, there is a quotient arrow \fun{q_\ple{X,\eqrelr}}{\fn{F}X}{\fn{Q^\RDoc}\ple{\fn{F}X,\lift{F}_{X,X}(\eqrelr)}}, where $\fn{Q^\RDoc}\ple{\fn{F}X,\lift{F}_{X,X}(\eqrelr)} = \fn{F^\sharp}\ple{X,\eqrelr}$, and, 
for every $\relr \in \EQR\SDoc(\ple{X,\eqrelr},\ple{Y,\eqrels})$, we have 
$\lift{F^\sharp}_{\ple{X,\eqrelr},\ple{Y,\eqrels}}(\relr) = \gr{q_\ple{X,\eqrelr}}\rconv\rcomp\lift{F}_{X,Y}(\relr)\rcomp\gr{q_\ple{Y,\eqrels}}$. 
We have an invertible 2-arrow \twoAr{\theta}{F^\sharp\circ G}{\Id_\RDoc} given by the following diagram, for every object $X$ 
\[\xymatrix{
  \fn{F}P_X 
  \ar[d]_-{q_\ple{P_X,\eqrelr_X}}
  \ar[rd]^-{p_X} 
\\
  \fn{F^\sharp}\ple{P_X,\eqrelr_X} 
  \ar@{..>}[r]^-{\theta_X}_-{\cong} 
& X 
}\]
where the arrow $\theta_X$ exists and is an isomorphism because both $p_X$ and $q_\ple{P_X,\eqrelr_X}$ are quotient arrows for $\lift{F}_{P_X,P_X}(\eqrelr_X)$. 
On the other hand, 
for an object \ple{X,\eqrelr} in the base of $\EQR\SDoc$, we have the arrows $f_\ple{X,\eqrelr}$ and $g_\ple{X,\eqrelr}$ as in the following diagram, where $W = \fn{F^\sharp}\ple{X,\eqrelr}$ 
\[\xymatrix{
  \fn{F}P_W 
  \ar[rd]_-{p_W}
  \ar@{..>}@/^5pt/[rr]^-{\fn{F}f_\ple{X,\eqrelr}} 
& 
& \fn{F}X 
  \ar[ld]^-{q_\ple{X,\eqrelr}} 
  \ar@{..>}@/^5pt/[ll]^-{\fn{F}g_\ple{X,\eqrelr}} 
\\
& W 
}\]
determined by the fact that both $\fn{F}X$ and $\fn{F}P_W$ are $\RDoc$-projective and $\fn{F}$ is full. 
As already observed, these data uniquely determine the arrows 
\fun{\eqc{f_\ple{X,\eqrelr}}}{\ple{P_W,\eqrelr_W}}{\ple{X,\eqrelr}} and \fun{\eqc{g_\ple{X,\eqrelr}}}{\ple{X,\eqrelr}}{\ple{P_W,\eqrelr_W}} in the base of $\EQR\SDoc$, thus they are inverse to each other. 
Therefore, the 2-arrow \twoAr{\phi}{G\circ F}{\ID_{\EQR{\RProj\RDoc}}} given by $\phi_\ple{X,\eqrelr} = \eqc{f_\ple{X,\eqrelr}}$ is well defined and invertible, as needed. 
\end{proof}

Recall from \refItem{ex:rel-doc}{cb} that, if \fun\RDoc{\bop\CC}{\Pos} is a relational doctrine and \fun{F}{\D}{\CC} is a functor, 
we denote by $F^\star\RDoc$ the change of base along $F$, that is, the functor $\RDoc\circ\bop{F}$. 
Furthermore, if \D is a full subcategory of \CC, we denote by $I_\D$ the associated inclusion functor. 

\begin{corollary}\label[cor]{cor:proj-obj}
Let \fun\RDoc{\bop\CC}{\Pos}  be an extensional relational doctrine with quotients and \ct{G} a full subcategory of \CC. 
Then, \ct{G} is an $\RDoc$-projective cover 
if and only if 
$\RDoc$ is equivalent to $\EQR{I_\ct{G}^\star\RDoc}$ in $\EQRDtn$.
\end{corollary}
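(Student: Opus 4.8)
The plan is to read this corollary off from \cref{thm:proj-obj} by taking for $\SDoc$ the change-of-base doctrine $I_\ct{G}^\star\RDoc$ (\cf \refItem{ex:rel-doc}{cb}) and for the 1-arrow the canonical inclusion \oneAr{F}{I_\ct{G}^\star\RDoc}{\RDoc}. Explicitly, I would define its underlying functor $\fn{F}$ to be the subcategory inclusion $I_\ct{G}$, and, since by definition of change of base $I_\ct{G}^\star\RDoc(A,B) = \RDoc(A,B)$ for all objects $A,B$ of \ct{G}, the natural transformation $\lift{F}$ to be the family of identities; in particular $F$ is a strict 1-arrow in \RDtn. Crucially, $F$ is fully faithful in the sense required by \cref{thm:proj-obj}: $\fn{F} = I_\ct{G}$ is fully faithful because \ct{G} is a \emph{full} subcategory of \CC, and $\lift{F}$, being componentwise the identity, is an isomorphism.

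With this choice I would then check that the two conditions of \cref{thm:proj-obj} are literally the two sides of the biconditional to be proved. Since $\fn{F}P = P$ for every object $P$ of \ct{G}, condition \refItem{thm:proj-obj}{2} unfolds as follows: ``$F$ factors through $\RProj{\RDoc}$'' says exactly that every object of \ct{G} is $\RDoc$-projective, i.e. that \ct{G} is a full subcategory of $\Proj{\RDoc}$, while the remaining clause asserts that each $X$ in \CC admits a quotient arrow \fun{q_X}{\fn{F}P_X}{X} with $P_X$ in the base \ct{G} of $\SDoc$, that is (as $\fn{F}P_X = P_X$) a quotient arrow onto $X$ from an object of \ct{G}. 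Taken together these are precisely the two requirements in \cref{def:enough-proj} for \ct{G} to be an $\RDoc$-projective cover. On the other hand, condition \refItem{thm:proj-obj}{1} states that the transpose \oneAr{F^\sharp}{\EQR{I_\ct{G}^\star\RDoc}}{\RDoc} is an equivalence in \EQRDtn, which is exactly the assertion that $\RDoc$ is equivalent to $\EQR{I_\ct{G}^\star\RDoc}$ in \EQRDtn. Invoking \cref{thm:proj-obj} then yields the equivalence of the two statements.

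The argument is thus a pure specialisation of \cref{thm:proj-obj}, so the only genuine work is the bookkeeping just described: confirming that $F$ meets the hypotheses (in particular that $\lift{F}$ is invertible, which is immediate since it is the identity) and matching condition \refItem{thm:proj-obj}{2} against \cref{def:enough-proj}. The one point deserving explicit mention, and the closest thing to an obstacle here, is that the equivalence featuring in the corollary is witnessed by the \emph{canonical} comparison $F^\sharp$: in the left-to-right direction \cref{thm:proj-obj} produces $F^\sharp$ as an equivalence from the projective-cover hypothesis, while in the right-to-left direction the stated equivalence is read as condition \refItem{thm:proj-obj}{1} for this same $F^\sharp$, from which \ct{G} being an $\RDoc$-projective cover follows. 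No new construction beyond \cref{thm:proj-obj} is required.
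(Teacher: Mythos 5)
Your proposal is correct and takes essentially the same route as the paper, whose proof likewise reads the corollary off from \cref{thm:proj-obj} applied to the obvious fully faithful 1-arrow from $I_\ct{G}^\star\RDoc$ to $\RDoc$. The bookkeeping you make explicit (that $\lift{F}$ is the identity hence invertible, that the 1-arrow is strict, and that condition \refItem{thm:proj-obj}{2} unfolds to \cref{def:enough-proj}, with the equivalence in the statement witnessed by the canonical comparison $F^\sharp$) is exactly what the paper leaves implicit.
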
 
\begin{proof}
It follows from \cref{thm:proj-obj} by considering the obvious fully-faithful 1-arrow from $I_\ct{G}^\star\RDoc$ to $\RDoc$. 
\end{proof}

In other words, \cref{cor:proj-obj} ensures that 
relational doctrines obtained by the extensional quotient completion are, up to equivalence, extensional relational doctrines with quotients and enough projectives. 
Moreover, observe that, combining observations in \refItem{ex:quot-sep-comp}{span} and \refItem{ex:proj-obj}{jspan}, we recover as a special case of \cref{cor:proj-obj} the standard characterization of the exact completion of a category with weak finite limits.  

A key result about the exact completion shows that the category of algebras for a monad on an exact category satisfying a form of the Axiom of Choice is equivalent to the exact completion of the full subcategory of free algebras, that is, the Kleisli category \cite{Vitale94}. 
We conclude this section by showing how we can extend this result in the context of relational doctrines using \cref{cor:proj-obj}. 

Let \fun{\RDoc}{\bop\CC}{\Pos}  be an extensional relational doctrine with quotients and $\mnd = \ple{T,\eta,\mu}$ a monad on $\RDoc$ in $\EQRDtn$,  see \cite{Street72} for  the general 2-categorical definition and \cite{DagninoP24,DagninoR21} for the instantiation on (relational) doctrines. 
We can define the Eilenberg-Moore  doctrine \fun{\RDoc^\mnd}{\bop{\CC^{\fn\mnd}}}{\Pos} whose base is the Eilenberg-Moore category for the monad 
$\fn{\mnd} = \ple{\fn{T},\eta,\mu}$ on \CC and relations  from \ple{X,a} to \ple{Y,b} are relations $\relr\in\RDoc(X,Y)$ such that 
$\gr{a}\rconv\rcomp\lift{T}_{X,Y}(\relr)\rcomp\gr{b}\order \relr$, namely, relations that are closed under the operations of the algebras they relate. 
One can prove that $\RDoc^\mnd$ is extensional and has quotients (see \cite{DagninoP24}). 
For a full subcategory $\D$ of $\CC$, we denote by $\D_{\fn{\mnd}}$ the full subcategory of the Eilenberg-Moore category $\CC^{\fn\mnd}$ spanned by free algebras generated by objects in $\D$, i.e., algebras of the form \ple{\fn{T}X,\mu_X} for $X$ and object of \D. 
We also write \fun{K_\D}{\D_{\fn\mnd}}{\CC^{\fn\mnd}} for the inclusion functor.
Note that when \D coincides with \CC, the category $\D_{\fn\mnd}$ is the Kleisli category of the monad $\fn\mnd$. 

There is an obvious forgetful 1-arrow \oneAr{U}{\RDoc^\mnd}{\RDoc} in \EQRDtn which has a left  adjoint \oneAr{F}{\RDoc}{\RDoc^\mnd}, where $\fn{F}X = \ple{\fn{T}X,\mu_X}$ and $\lift{F}_{X,Y}(\relr) = \lift{T}_{X,Y}(\relr)$. 
The component of the counit at an algebra \ple{X,a} is \fun{\epsilon_\ple{X,a}}{\ple{\fn{T}X,\mu_X}}{\ple{X,a}} given by $\epsilon_\ple{X,a} = a$. 
The arrow $\fn{U}\epsilon_\ple{X,a} = a$ is a split epimorphism, hence a split $\RDoc$-surjection and so, by the next proposition, it is a quotient arrow in $\RDoc$. 

\begin{proposition}\label[prop]{prop:surj-quot-split}
Let $\RDoc$ be an extensional relational doctrine. 
An arrow is a split effective descent quotient arrow if and only if it is a split $\RDoc$-surjective arrow. 
\end{proposition}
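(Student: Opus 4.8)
The plan is to settle the forward implication by merely unfolding definitions and to concentrate all the effort on the converse, where extensionality is the decisive ingredient.

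For the forward direction, assume \fun{f}{X}{Y} is a split effective descent quotient arrow. The property of being split is also part of the conclusion, so it is retained; and \emph{descent} is by definition the inequality $\rid_Y\order\gr{f}\rconv\rcomp\gr{f}$, which is exactly $\RDoc$-surjectivity of $f$. Hence this direction requires nothing beyond unfolding the definitions.

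For the converse, take \fun{f}{X}{Y} that is $\RDoc$-surjective and admits a section \fun{s}{Y}{X}, so that $\gr{s}\rcomp\gr{f}=\rid_Y$. I would take as candidate equivalence the kernel $\eqrelr=\gr{f}\rcomp\gr{f}\rconv$, which is an $\RDoc$-equivalence on $X$ since $\gr{f}$ is total and functional (\cref{prop:graph-fun}); for this choice effectiveness holds by construction and descent is the assumed surjectivity, so the only real task is the universal property of \cref{def:quotient}. First I would record two identities. From surjectivity of $f$ together with functionality of $\gr{f}$ one gets $\gr{f}\rconv\rcomp\gr{f}=\rid_Y$. Next, for any \fun{g}{X}{Z} with $\eqrelr\order\gr{g}\rcomp\gr{g}\rconv$, totality of $f$ gives $\gr{g}\order\gr{f}\rcomp\gr{f}\rconv\rcomp\gr{g}$, while the kernel bound together with functionality of $g$ gives the reverse inequality, so that $\gr{f}\rcomp\gr{f}\rconv\rcomp\gr{g}=\gr{g}$. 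Rewriting $\gr{f}\rconv=\gr{s}\rcomp\gr{f}\rcomp\gr{f}\rconv$ by means of the section, these combine to $\gr{f}\rconv\rcomp\gr{g}=\gr{s}\rcomp\gr{g}=\gr{g\circ s}$; hence $h\coloneqq g\circ s$ satisfies $\gr{f}\rcomp\gr{h}=\gr{g}$, i.e.\ $h\circ f\exteq g$, and extensionality yields $h\circ f=g$. Uniqueness runs along the same lines: if $h'\circ f=g$ then precomposing $\gr{f}\rcomp\gr{h'}=\gr{g}$ with $\gr{f}\rconv$ and using $\gr{f}\rconv\rcomp\gr{f}=\rid_Y$ gives $\gr{h'}=\gr{f}\rconv\rcomp\gr{g}=\gr{h}$, so $h'\exteq h$ and thus $h'=h$.

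The step I expect to be the crux, and the reason the hypotheses are exactly as stated, is the passage from graphs to arrows. The relational calculus only ever produces equalities of \emph{graphs}; it is extensionality, through \cref{prop:ext-eq}, that promotes $\gr{h\circ f}=\gr{g}$ and $\gr{h'}=\gr{h}$ to honest equalities of morphisms, so the statement fails without it. The section plays a separate but equally necessary role: the descent datum $\gr{f}\rconv\rcomp\gr{g}$ obtained from the abstract computation need not a priori be the graph of any morphism, and it is precisely $s$ that realizes it as $\gr{g\circ s}$, letting us name the universal factorisation as a genuine arrow of the base.
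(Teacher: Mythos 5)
Your proof is correct and takes essentially the same route as the paper's: both (implicitly or explicitly) take the kernel as the equivalence relation, construct the factorisation by composing with the section, and use a graph computation plus extensionality to promote graph equality to equality of arrows. The only cosmetic differences are that the paper upgrades an inequality of graphs to an equality via \cref{prop:fun-ord} where you compute equationally, and it obtains uniqueness directly from the quotient arrow being a split epimorphism rather than via $\gr{f}\rconv\rcomp\gr{f}=\rid_Y$ and extensionality.
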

\begin{proof}
The left-to-right implication is trivial. 
For the other direction, let \fun{q}{X}{Y} be a split $\RDoc$-surjective arrow with section \fun{s}{Y}{X}. 
Consider an arrow \fun{f}{X}{Z} such that $\gr{q}\rcomp\gr{q}\rconv \order \gr{f}\rcomp\gr{f}\rconv$, hence we get 
$\gr{q}\rcomp\gr{s}\rcomp\gr{f} \order \gr{f}$, which implies $\gr{q}\rcomp\gr{s}\rcomp\gr{f} = \gr{f}$ by \cref{prop:fun-ord}. 
Therefore, by extensionality, we derive $f\circ s \circ q = f$ and, since $q$ splits,  it is an epimorphism and so $f\circ s$ is unique, proving that $q$ is a quotient arrow. 
\end{proof} 

\begin{proposition}\label[prop]{prop:mnd-quot-reflect}
Let $\RDoc$ be an extensional relational doctrine with quotients and $\mnd$ a monad on it in \EQRDtn. 
If \fun{q}{\ple{X,a}}{\ple{Y,b}} is an algebra homomorphism such that \fun{q}{X}{Y} is a quotient arrow in $\RDoc$, 
then $q$ is a quotient arrow in $\RDoc^\mnd$ as well. 
\end{proposition}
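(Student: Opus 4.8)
The plan is to exhibit $q$ as a quotient arrow in $\RDoc^\mnd$ for a carefully chosen $\RDoc^\mnd$-equivalence relation, then transfer the universal property from $\RDoc$ and finally promote the induced factorisation to an algebra homomorphism. Since a quotient arrow is always a quotient of some $\RDoc$-equivalence $\eqrelr$ on $X$ with $\eqrelr\order\gr{q}\rcomp\gr{q}\rconv$, I would first note that $q$ is automatically a quotient arrow for its own kernel $\eqrels = \gr{q}\rcomp\gr{q}\rconv$: any \fun{f}{X}{Z} with $\eqrels\order\gr{f}\rcomp\gr{f}\rconv$ also satisfies $\eqrelr\order\gr{f}\rcomp\gr{f}\rconv$, so the universal property for $\eqrelr$ restricts to one for $\eqrels$. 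I use the kernel because, unlike the arbitrary $\eqrelr$, it is guaranteed to live in $\RDoc^\mnd$: since $q$ is an algebra homomorphism (so $q\circ a = b\circ\fn{T}q$) and $T$ is strict (hence $\lift{T}$ preserves graphs), a short calculation reducing $\gr{a}\rconv\rcomp\lift{T}_{X,Y}(\gr{q})\rcomp\gr{b}$ to $\gr{a}\rconv\rcomp\gr{a}\rcomp\gr{q}$ and using functionality of $\gr{a}$ shows $\gr{q}\in\RDoc^\mnd(\ple{X,a},\ple{Y,b})$, whence its kernel $\eqrels$ is a $\RDoc^\mnd$-equivalence on $\ple{X,a}$. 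I would also record that, as the relational operations and identities of $\RDoc^\mnd$ are inherited from $\RDoc$, graphs and kernels computed in the two doctrines coincide.

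Next I would verify the universal property in $\RDoc^\mnd$. Given an algebra homomorphism \fun{f}{\ple{X,a}}{\ple{Z,c}} whose $\RDoc^\mnd$-kernel dominates $\eqrels$, the same inequality holds in $\RDoc$, so regarding $f$ as a bare arrow of $\CC$ and invoking the universal property of $q$ downstairs produces a unique \fun{h}{Y}{Z} in $\CC$ with $f = h\circ q$. Uniqueness of $h$ as an algebra homomorphism is then free, since any such competitor is in particular a $\CC$-arrow factoring $f$ through $q$. Thus the entire content reduces to showing that this $h$ is an algebra homomorphism, that is, $h\circ b = c\circ\fn{T}h$.

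This last step is the main obstacle. The plan is to precompose both sides with $\fn{T}q$ and chase the available squares: using $b\circ\fn{T}q = q\circ a$, then $h\circ q = f$, then the homomorphism law $f\circ a = c\circ\fn{T}f$, and finally functoriality of $\fn{T}$ together with $f = h\circ q$, I expect to obtain $(h\circ b)\circ\fn{T}q = (c\circ\fn{T}h)\circ\fn{T}q$. The crux is then to cancel $\fn{T}q$ on the right, which is where the hypothesis that $\mnd$ is a monad in $\EQRDtn\subseteq\QRDtn$ becomes essential: $T$ preserves quotient arrows, so $\fn{T}q$ is itself a quotient arrow in $\RDoc$ and hence an epimorphism. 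Cancelling it yields $h\circ b = c\circ\fn{T}h$, completing the proof that $q$ is a quotient arrow for $\eqrels$ in $\RDoc^\mnd$. The only subtlety I anticipate beyond this is the bookkeeping identifying graphs and kernels across $\RDoc$ and $\RDoc^\mnd$; everything substantive is the epi-cancellation driven by quotient-preservation of $T$.
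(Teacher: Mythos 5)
Your proof is correct and follows essentially the same route as the paper's: the paper's argument is exactly the prism diagram you describe, with $h$ induced by the universal property of $q$ in $\RDoc$ and the algebra-homomorphism identity $h\circ b = c\circ\fn{T}h$ obtained by cancelling $\fn{T}q$, which is an epimorphism because $T$ preserves quotient arrows (this is why the paper's proof opens by noting that $\mnd$ being a monad in \EQRDtn makes $T$ quotient-preserving). Your extra bookkeeping --- identifying the kernel $\gr{q}\rcomp\gr{q}\rconv$ as the relevant equivalence, checking via strictness of $\lift{T}$ and functionality of $\gr{a}$ that it lies in $\RDoc^\mnd$, and noting that graphs and kernels agree in $\RDoc$ and $\RDoc^\mnd$ --- is left implicit in the paper but is sound and a welcome precision.
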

\begin{proof}
Suppose $\mnd = \ple{T,\eta,\mu}$, hence $T$ preserves quotients. 
Then, we get the thesis by the following commutative diagram in the base of $\RDoc$. 
\[\xymatrix{
  \fn{T}X 
  \ar[rrd]^-{\fn{T}f} 
  \ar[rd]_-{\fn{T}q}
  \ar[dd]_-{a} 
\\ 
& \fn{T}Y 
  \ar[r]_-{\fn{T}h}
  \ar[dd]^-{b} 
& \fn{T}W 
  \ar[dd]^-{c} 
\\
  X 
  \ar[rrd]^-{f} 
  \ar[rd]_-{q}
\\
& Y 
  \ar[r]^-{h} 
& W 
}\]
where $h$ uniquely exists because $q$ is a quotient arrow in $\RDoc$. 
\end{proof}

In other words, \cref{prop:mnd-quot-reflect} shows that the forgetful 1-arrow \oneAr{U}{\RDoc^\mnd}{\RDoc} reflects quotient arrows. 
As a consequence, 
\cref{prop:surj-quot-split,prop:mnd-quot-reflect} ensure that the components of the counit \fun{\epsilon_\ple{X,a}}{\ple{\fn{T}X,\mu_X}}{\ple{X,a}} are quotient arrows in $\RDoc^\mnd$, thus showing that every algebra is a quotient of a free algebra. 
We are now ready to prove  our main result. 

\begin{theorem}\label[thm]{thm:eqc-mnd} 
Let \fun\RDoc{\bop\CC}{\Pos} be an extensional relational doctrine with quotients and $\ct{G}$ a subcategory of \CC. 
Then, the following are equivalent. 
\begin{enumerate}
\item\label{thm:eqc-mnd:1}
\ct{G} is an $\RDoc$-projective cover.
\item\label{thm:eqc-mnd:2}
For every monad $\mnd = \ple{T,\eta,\mu}$ on $\RDoc$ in \EQRDtn, $\ct{G}_{\fn\mnd}$ is an $\RDoc^\mnd$-projective cover.
\end{enumerate}
\end{theorem}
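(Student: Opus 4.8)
The plan is to prove the two implications separately, disposing of \refItem{thm:eqc-mnd}{2}$\Rightarrow$\refItem{thm:eqc-mnd}{1} by specialisation and devoting the work to the converse. For the easy direction I would instantiate the hypothesis at the identity monad $\mnd = \Id_\RDoc$, which is a monad on $\RDoc$ in \EQRDtn\ precisely because $\RDoc$ is itself an object of \EQRDtn. For this monad $\fn\mnd = \Id_\CC$, so the Eilenberg--Moore category $\CC^{\fn\mnd}$ identifies with \CC\ (every object carrying the trivial structure $\id$), and $\RDoc^\mnd = \RDoc$, since the closure condition $\gr{\id}\rconv\rcomp\relr\rcomp\gr{\id}\order\relr$ reduces to $\relr\order\relr$ using $\gr{\id_X}=\rid_X$. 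Under this identification a free algebra $\ple{\fn{T}P,\mu_P}$ is just $P$, so $\ct{G}_{\fn\mnd}$ identifies with $\ct{G}$ and the hypothesis becomes exactly the assertion that $\ct{G}$ is an $\RDoc$-projective cover.

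The substance is \refItem{thm:eqc-mnd}{1}$\Rightarrow$\refItem{thm:eqc-mnd}{2}. Fix a monad $\mnd=\ple{T,\eta,\mu}$ on $\RDoc$ in \EQRDtn; recall that $T$ preserves quotient arrows and that the forgetful $1$-arrow \oneAr{U}{\RDoc^\mnd}{\RDoc} lies in \EQRDtn\ (hence in \QRDtn), with $\fn{U}$ admitting the free functor \fun{\fn{F}}{\CC}{\CC^{\fn\mnd}}, $\fn{F}P=\ple{\fn{T}P,\mu_P}$, as left adjoint. I must establish two things: that every object of $\ct{G}_{\fn\mnd}$ is $\RDoc^\mnd$-projective, and that every algebra is covered by a quotient arrow out of $\ct{G}_{\fn\mnd}$. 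The first is immediate: an object of $\ct{G}_{\fn\mnd}$ is $\fn{F}P=\ple{\fn{T}P,\mu_P}$ with $P\in\ct{G}$ hence $\RDoc$-projective, so applying \cref{prop:proj-obj-ladj} to the $1$-arrow $U$ (whose underlying functor $\fn{U}$ has left adjoint $\fn{F}$) yields that $\fn{F}P$ is $\RDoc^\mnd$-projective.

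For the covering condition, fix an algebra $\ple{X,a}$. Since $\ct{G}$ is an $\RDoc$-projective cover there is a quotient arrow \fun{p}{P}{X} in $\RDoc$ with $P\in\ct{G}$. Transposing $p$ along $\fn{F}\dashv\fn{U}$ gives the algebra homomorphism \fun{a\circ\fn{T}p}{\ple{\fn{T}P,\mu_P}}{\ple{X,a}}, whose underlying arrow in \CC\ is $a\circ\fn{T}p$. Here $\fn{T}p$ is a quotient arrow because $T$ preserves them, while $a=\fn{U}\epsilon_{\ple{X,a}}$ is a quotient arrow by the discussion preceding the theorem (it is a split $\RDoc$-surjection, so \cref{prop:surj-quot-split} applies). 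Granting that quotient arrows compose, $a\circ\fn{T}p$ is then a quotient arrow in $\RDoc$, and \cref{prop:mnd-quot-reflect} promotes the homomorphism $a\circ\fn{T}p$ to a quotient arrow in $\RDoc^\mnd$ out of $\ple{\fn{T}P,\mu_P}\in\ct{G}_{\fn\mnd}$, which is what is needed.

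The main obstacle is precisely the closure of quotient arrows under composition, and this is where extensionality of $\RDoc$ is used. First I would record that in an extensional relational doctrine every injective arrow $i$ is monic: since $\gr{i}$ is total (\cref{prop:graph-fun}) and $i$ is injective, $\gr{i}\rcomp\gr{i}\rconv=\rid$, so $i\circ f=i\circ g$ forces $\gr{f}=\gr{g}$, whence $f\exteq g$ (\cref{prop:ext-eq}) and $f=g$ by extensionality. Combined with \cref{prop:qi-lift,prop:qi-factor}, this exhibits the quotient arrows as the left class of an orthogonal factorization system: given quotient arrows $q_1,q_2$, factor $q_2\circ q_1=i\circ q'$ with $q'$ a quotient and $i$ injective; two applications of the diagonal fill-in of \cref{prop:qi-lift} produce an arrow $e$ with $i\circ e=\id$, and since $i$ is monic the identity $i\circ e\circ i=i$ gives $e\circ i=\id$, so $i$ is invertible and $q_2\circ q_1$ is a quotient arrow. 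This closure is the only genuinely new computation; everything else is bookkeeping with the Eilenberg--Moore adjunction and the cited propositions.
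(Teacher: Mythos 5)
Your proof is correct and, in its main structure, it is the same as the paper's: the implication from \refItem{thm:eqc-mnd}{2} to \refItem{thm:eqc-mnd}{1} is disposed of by instantiating at the identity monad; projectivity of the objects of $\ct{G}_{\fn\mnd}$ follows from \cref{prop:proj-obj-ladj} applied to the forgetful 1-arrow $U$, whose base functor has the free functor as left adjoint; and every algebra \ple{X,a} is covered by composing $\fn{T}q_X$ (a quotient arrow because $T$ preserves them) with $\epsilon_{\ple{X,a}}$ (a quotient arrow by \cref{prop:surj-quot-split}), the composite being promoted to $\RDoc^\mnd$ by \cref{prop:mnd-quot-reflect}. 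Where you genuinely add something is your last paragraph: the paper invokes ``the fact that quotient arrows compose'' with no justification, while you prove it --- injective arrows are monic by extensionality, factor $q_2\circ q_1 = i\circ q'$ via \cref{prop:qi-factor}, split $i$ by two applications of \cref{prop:qi-lift}, and conclude that $i$ is invertible. This argument is sound, modulo the routine remark that an isomorphism composed with a quotient arrow is again a quotient arrow. One observation worth recording: extensionality is not actually needed for this closure property, so your diagnosis that this is ``where extensionality is used'' is too strong. In a doctrine with quotients every quotient arrow is automatically effective descent, since by the universal property it differs from the canonical effective descent one by an isomorphism; and descent quotient arrows compose in an arbitrary relational doctrine by a direct computation: given $f$ with $\gr{q_2\circ q_1}\rcomp\gr{q_2\circ q_1}\rconv\order\gr{f}\rcomp\gr{f}\rconv$, totality of $\gr{q_2}$ yields a unique factorization $f = g\circ q_1$, and then descent of $q_1$, the hypothesis on $f$, and functionality of $\gr{q_1}$ give $\gr{q_2}\rcomp\gr{q_2}\rconv\order\gr{q_1}\rconv\rcomp\gr{f}\rcomp\gr{f}\rconv\rcomp\gr{q_1}\order\gr{g}\rcomp\gr{g}\rconv$, so that $g$ factors uniquely through $q_2$. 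Thus your appeal to extensionality is legitimate under the theorem's hypotheses but inessential, and the composition fact remains available in the non-extensional setting of \cref{sect:quotients}.
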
 
\begin{proof}
The fact that \cref{thm:eqc-mnd:2} implies \cref{thm:eqc-mnd:1} is immediate by considering the identity monad on $\RDoc$. 
Towards a proof of the other direction, 
consider a monad $\mnd = \ple{T,\eta,\mu}$ on $\RDoc$ in \EQRDtn. 
Since every object in $\ct{G}_{\fn\mnd}$ is the image through a left adjoint of an object in \ct{G}, which is  an $\RDoc$-projective cover by hypothesis, 
by \cref{prop:proj-obj-ladj}, we get that all objects in $\ct{G}_{\fn{\mnd}}$ are $\RDoc^\mnd$-projective. 
Consider then an algebra \ple{X,a}. 
Since \ct{G} is an $\RDoc$-projective cover by hypothesis, we know there is a quotient arrow \fun{q_X}{P_X}{X} from an object $P_X$ in \ct{G}. 
Then, the composition 
$\xymatrix{\ple{\fn{T}P_X,\mu_{P_X}}\ar[r]^-{\fn{T}q} & \ple{\fn{T}X,\mu_X}\ar[r]^-{\epsilon_{X,a}} & \ple{X,a}}$
is a quotient arrow by \cref{prop:mnd-quot-reflect} and the fact that quotient arrows compose. 
\end{proof}

Combining \cref{cor:proj-obj,thm:eqc-mnd}, we deduce that 
the relational doctrine of algebras for a quotient preserving monad on   a relational doctrine with a projective cover is the extensional quotient completion  of its restriction to free algebras with projective generators. 
Since relational doctrines with a projective cover are exactly those obtained through the extensional quotient completion, 
this result applies to all quotient preserving monads  on such doctrines, thus providing a wide range of examples. 

The analogous result for the exact completion does not require the monad to preserve quotients. 
It assumes instead that the underlying category enjoys a form of the Axiom of Choice. 
We conclude this section by showing how we can achieve a similar result in our general context. 
A way of expressing the Axiom of Choice is by requiring that all surjection splits. 
The next proposition shows how this is related to balancing and projectivity.

\begin{proposition} \label[prop]{prop:proj-obj-choice}
Let \fun{\RDoc}{\bop\CC}{\Pos} be  a relational doctrine with quotients. 
Then, the following hold. 
\begin{enumerate}
\item\label{prop:proj-obj-choice:1}
All objects in \CC are $\RDoc$-projective if and only if every quotient arrow splits. 
\item\label{prop:proj-obj-choice:3}
If $\RDoc$ is extensional, then 
$\RDoc$ is balanced and all objects of \CC are $\RDoc$-projective if and only if every $\RDoc$-surjective arrow in \CC splits. 
\end{enumerate}
\end{proposition}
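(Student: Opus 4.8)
The plan is to prove the two items in turn, using item \refItem{prop:proj-obj-choice}{1} as a stepping stone for item \refItem{prop:proj-obj-choice}{3}, together with \cref{prop:quot-surj-balance} and one auxiliary observation. Item \refItem{prop:proj-obj-choice}{1} is a direct unfolding of the definitions. For the right-to-left implication, given \fun{f}{P}{Y} and a quotient arrow \fun{q}{X}{Y}, a section \fun{s}{Y}{X} of $q$ yields $h = s\circ f$ with $q\circ h = f$, so every object is $\RDoc$-projective. For the left-to-right implication, projectivity of the codomain $Y$ applied to $\id_Y$ along the quotient arrow $q$ produces a section of $q$.

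The key auxiliary fact, which I would isolate first, is that in a relational doctrine with quotients every quotient arrow is $\RDoc$-surjective. Indeed, a quotient arrow $q$ for an $\RDoc$-equivalence $\eqrelr$ is determined up to a post-composed isomorphism among all quotient arrows for $\eqrelr$; hence $q = j\circ q^e$ with $j$ an isomorphism and $q^e$ the effective descent quotient arrow provided by the hypothesis that $\RDoc$ has quotients. Since $q^e$ is descent, that is $\RDoc$-surjective, $j$ is an isomorphism (hence bijective, hence $\RDoc$-surjective), and $\RDoc$-surjective arrows are closed under composition by monotonicity of composition, $q$ is $\RDoc$-surjective. Granting this, the left-to-right implication of item \refItem{prop:proj-obj-choice}{3} is short: by \cref{prop:quot-surj-balance} balancedness makes every $\RDoc$-surjective arrow a quotient arrow, and all objects being projective makes every quotient arrow split by item \refItem{prop:proj-obj-choice}{1}; so every $\RDoc$-surjective arrow splits.

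For the right-to-left implication of item \refItem{prop:proj-obj-choice}{3}, assume every $\RDoc$-surjective arrow splits. Projectivity of all objects is then automatic: every quotient arrow is $\RDoc$-surjective by the observation above, hence splits, hence item \refItem{prop:proj-obj-choice}{1} applies. The substantive point is balancedness, and here extensionality is essential. Given a bijective \fun{f}{X}{Y}, it is in particular $\RDoc$-surjective, so it has a section \fun{s}{Y}{X}, giving $\gr{s}\rcomp\gr{f} = \rid_Y$; moreover $f$ is injective and total, so $\gr{f}\rcomp\gr{f}\rconv = \rid_X$ by \cref{prop:graph-fun}. I would then compute
\[
\gr{f}\rcomp\gr{s}
  = \gr{f}\rcomp\gr{s}\rcomp\gr{f}\rcomp\gr{f}\rconv
  = \gr{f}\rcomp\rid_Y\rcomp\gr{f}\rconv
  = \rid_X,
\]
so that $\gr{s\circ f} = \gr{\id_X}$, i.e.\ $s\circ f\exteq\id_X$, and extensionality upgrades this to $s\circ f = \id_X$, making $f$ an isomorphism. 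The main obstacle is exactly this last step: both the collapse $\gr{f}\rcomp\gr{f}\rconv = \rid_X$ (which needs injectivity together with totality) and the passage from $\RDoc$-equality to genuine equality (which needs extensionality) are indispensable, and it is here that all the hypotheses of item \refItem{prop:proj-obj-choice}{3} are actually used.
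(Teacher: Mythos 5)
Your proof is correct, and its skeleton matches the paper's: item \refItem{prop:proj-obj-choice}{1} is proved by exactly the same direct unfolding (project the codomain along $\id$ to get a section; precompose a section to get a lifting), and the left-to-right half of item \refItem{prop:proj-obj-choice}{3} is the same two-step argument via \cref{prop:quot-surj-balance} and item \refItem{prop:proj-obj-choice}{1}. Where you genuinely diverge is the right-to-left half. The paper's proof of item \refItem{prop:proj-obj-choice}{3} consists of the single word ``Immediate'' together with citations of \cref{prop:surj-quot-split,prop:quot-surj-balance}: its intended route to balancedness is that every $\RDoc$-surjective arrow splits by hypothesis, is therefore a quotient arrow by \cref{prop:surj-quot-split} (this is where extensionality enters), and then \cref{prop:quot-surj-balance} applies. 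You instead inline a self-contained relational computation, deriving $\gr{f}\rcomp\gr{s} = \rid_X$ from the section, injectivity and totality, and then invoking extensionality via \cref{prop:ext-eq}; this essentially re-proves the relevant content of \cref{prop:surj-quot-split} in the bijective case, so it is a bit longer but needs no external lemma beyond \cref{prop:graph-fun,prop:fun-ord}. The more valuable difference is your auxiliary fact that in a doctrine with quotients every quotient arrow is $\RDoc$-surjective (it is an effective descent quotient followed by an isomorphism, and surjective arrows are closed under composition). The paper never states this, yet it, or something equivalent, is precisely what makes the projectivity half of the right-to-left implication work — a general quotient arrow is only descent up to isomorphism, so one cannot directly apply the splitting hypothesis to it without this observation. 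Your write-up therefore fills in a step that the paper's terse proof leaves implicit.
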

\begin{proof}
\cref{prop:proj-obj-choice:1}. 
Suppose all objects in \CC are $\RDoc$-projective and consider a quotient arrow \fun{q}{X}{W}. 
Since $W$ is $\RDoc$-projective, there is an arrow \fun{s}{W}{X} such that $q\circ s = \id_W$ as neded. 
On the other hand, if all quotient arrows split, given an arrow \fun{f}{X}{Z} and a quotient arrow \fun{q}{Y}{Z}, we know that $q$ has a section \fun{s}{Z}{Y} and so we have $q\circ s \circ f = f$, proving that $X$ is $\RDoc$-projective. 

\cref{prop:proj-obj-choice:3}. 
Immediate using \cref{prop:proj-obj-choice:1,prop:surj-quot-split,prop:quot-surj-balance}. 
\end{proof}

Therefore, we can finally prove the following stronger form of \cref{thm:eqc-mnd}. 

\begin{theorem}\label[thm]{thm:eqc-mnd-choice} 
Let \fun\RDoc{\bop\CC}{\Pos} be an extensional relational doctrine with quotients. 
Then, the following are equivalent
\begin{enumerate}
\item\label{thm:eqc-mnd-choice:1}
Every $\RDoc$-surjective arrow in \CC splits. 
\item\label{thm:eqc-mnd-choice:2}
For every monad $\mnd = \ple{T,\eta,\mu}$ on $\RDoc$ in $\RDtn$, $\RDoc^\mnd$ is a balanced and extensional relational doctrine with quotients and $\CC_{\fn\mnd}$ is a $\RDoc^\mnd$-projective cover.
\end{enumerate}
\end{theorem}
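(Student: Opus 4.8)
The plan is to prove the two implications separately, routing the Axiom‑of‑Choice content through \refItem{prop:proj-obj-choice}{3}. For \cref{thm:eqc-mnd-choice:2}$\Rightarrow$\cref{thm:eqc-mnd-choice:1} I would simply instantiate \cref{thm:eqc-mnd-choice:2} at the identity monad on $\RDoc$, for which $\RDoc^\mnd = \RDoc$ and $\CC_{\fn\mnd} = \CC$. Then \cref{thm:eqc-mnd-choice:2} asserts that $\RDoc$ is balanced and that $\CC$ itself is an $\RDoc$-projective cover; by \cref{def:enough-proj} the latter forces every object of $\CC$ to be $\RDoc$-projective. Since $\RDoc$ is extensional by hypothesis, \refItem{prop:proj-obj-choice}{3} then yields exactly that every $\RDoc$-surjective arrow splits, which is \cref{thm:eqc-mnd-choice:1}.

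For the converse, assume \cref{thm:eqc-mnd-choice:1}. First I would apply \refItem{prop:proj-obj-choice}{3} to deduce that $\RDoc$ is balanced and all objects of $\CC$ are $\RDoc$-projective. Fix a monad $\mnd = \ple{T,\eta,\mu}$ in $\RDtn$. Since the identity relations and reindexings of $\RDoc^\mnd$ are inherited from $\RDoc$, two parallel homomorphisms are $\RDoc^\mnd$-equal exactly when their underlying arrows are $\RDoc$-equal; extensionality of $\RDoc$ thus transfers to $\RDoc^\mnd$. Likewise a homomorphism is bijective in $\RDoc^\mnd$ iff its underlying arrow is bijective in $\RDoc$, so balancedness of $\RDoc$ together with the routine fact that the inverse of a bijective homomorphism is again a homomorphism gives that $\RDoc^\mnd$ is balanced.

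The crux is to show that $\RDoc^\mnd$ has quotients, which is precisely where \cref{prop:mnd-quot-reflect} cannot be invoked since here $T$ need not preserve quotients. The key observation is that under \cref{thm:eqc-mnd-choice:1} every effective descent quotient arrow of $\RDoc$, being $\RDoc$-surjective, \emph{splits}; hence applying $\fn{T}$ to such an arrow yields a split epimorphism, and being epi is all that is actually used in the proof of \cref{prop:mnd-quot-reflect} to transport algebra structures and check universal properties. Concretely, given an $\RDoc^\mnd$-equivalence $\eqrelr$ on $\ple{X,a}$ (which is in particular an $\RDoc$-equivalence on $X$), I would take an effective descent quotient arrow \fun{q}{X}{W} of $\eqrelr$ in $\RDoc$ with a section \fun{s}{W}{X}, set $c = q\circ a\circ\fn{T}s$, and verify, using \cref{prop:fun-ord} and extensionality, that $q\circ a\circ\fn{T}(s\circ q) = q\circ a$; since $\fn{T}q$ is a split epimorphism this makes $\ple{W,c}$ an $\fn{\mnd}$-algebra and $q$ a homomorphism. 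The same epi-ness of $\fn{T}q$ shows that the factorisations granted by the universal property of $q$ in $\RDoc$ are again homomorphisms, so $q$ is an effective descent quotient arrow in $\RDoc^\mnd$.

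It remains to prove that $\CC_{\fn\mnd}$ is an $\RDoc^\mnd$-projective cover. For the covering condition, the counit component \fun{\epsilon_{\ple{X,a}} = a}{\ple{\fn{T}X,\mu_X}}{\ple{X,a}} has a split $\RDoc$-surjective underlying arrow (with section $\eta_X$), hence by \cref{prop:surj-quot-split} and the argument just sketched it is a quotient arrow in $\RDoc^\mnd$, exhibiting every algebra as a quotient of a free one. For projectivity of a free algebra $\ple{\fn{T}P,\mu_P}$, given a quotient arrow \fun{q}{\ple{X,a}}{\ple{Y,b}} and a homomorphism \fun{f}{\ple{\fn{T}P,\mu_P}}{\ple{Y,b}}, I would first observe that $q$ is isomorphic to the effective descent quotient of its own kernel built above, so its underlying arrow is $\RDoc$-surjective and splits via some \fun{s_0}{Y}{X}; transposing $f$ to \fun{f^\flat = f\circ\eta_P}{P}{Y} along $\fn{F}\dashv\fn{U}$ and letting $h = a\circ\fn{T}(s_0\circ f^\flat)$ be the transpose of $s_0\circ f^\flat$, a short computation using $q\circ s_0 = \id_Y$ and the triangle identities gives $q\circ h = f$, so $h$ witnesses projectivity of $\ple{\fn{T}P,\mu_P}$. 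I expect the transport of the algebra structure across a quotient that $T$ does not preserve — specifically, deriving the algebra laws for $c$ purely from epi-ness of $\fn{T}q$ and extensionality — to be the main obstacle, as this is exactly the point at which the weaker hypothesis on $\mnd$ must be compensated by the splitting furnished by the Axiom of Choice.
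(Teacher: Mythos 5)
Your proof is correct: the backward direction (identity monad plus \refItem{prop:proj-obj-choice}{3}) is exactly the paper's, but your forward direction takes a genuinely different and considerably longer route, and the divergence stems from a misreading worth flagging. You claim that \cref{prop:mnd-quot-reflect} ``cannot be invoked since here $T$ need not preserve quotients''; in fact the crux of the paper's proof is that \cref{thm:eqc-mnd-choice:1} \emph{forces} $T$ to preserve quotients: a quotient arrow $q$ is $\RDoc$-surjective, hence splits by hypothesis, so $\fn{T}q$ is a split $\RDoc$-surjective arrow, and \cref{prop:surj-quot-split} (applicable since $\RDoc$ is extensional) makes $\fn{T}q$ a split effective descent quotient arrow. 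Thus $\mnd$ is automatically a monad in \EQRDtn, and everything then follows from the existing machinery: $\RDoc^\mnd$ is extensional with quotients (as recalled at the start of \cref{sect:proj}), $\CC_{\fn\mnd}$ is an $\RDoc^\mnd$-projective cover by \cref{thm:eqc-mnd}, and balancedness follows from \cref{prop:quot-surj-balance,prop:mnd-quot-reflect}. Your workaround --- endowing the quotient with the structure $c = q\circ a\circ\fn{T}s$, proving $q\circ a\circ\fn{T}(s\circ q) = q\circ a$ by extensionality, checking the universal property (where, as you note, epi-ness of $\fn{T}q$ suffices; in fact the section gives it even faster, since $h\circ c = c'\circ\fn{T}h$ follows from $q\circ s = \id_W$ alone), and lifting maps out of free algebras by an explicit transpose computation --- does go through, but it amounts to re-proving, in this special case, \cref{prop:surj-quot-split}, \cref{prop:mnd-quot-reflect} and the relevant half of \cref{thm:eqc-mnd}. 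What your route buys is self-containedness and a precise accounting of which hypotheses are used at each step; what the paper's route buys is brevity and the additional observation, recorded right after the theorem, that on such a doctrine every monad in \RDtn is already a monad in \EQRDtn.
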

\begin{proof}
We first prove that \cref{thm:eqc-mnd-choice:1} implies \cref{thm:eqc-mnd-choice:2}. 
By \refItem{prop:proj-obj-choice}{3} we know that $\RDoc$ is balanced and all objects in its base are $\RDoc$-projective, hence \CC is a $\RDoc$-projective cover. 
Consider a monad $\mnd = \ple{T,\eta,\mu}$ on $\RDoc$ in \RDtn. 
We show that $T$ preserves quotient arrows, hence that $\mnd$ is a monad in \EQRDtn. 
Let \fun{q}{X}{W} be a quotient arrow, then, since it is $\RDoc$-surjective, by hypothesis we know that it has a section \fun{s}{W}{X}. 
Therefore, \fun{\fn{T}q}{\fn{T}X}{\fn{T}W} has a section \fun{\fn{T}s}{\fn{T}W}{\fn{T}X} and so it is a split surjection. 
Then, by \cref{prop:surj-quot-split} , we conclude that $\fn{T}q$ is a quotient arrow. 
Therefore, we deduce that $\RDoc^\mnd$ is an extensional relational doctrine with quotients and, by \cref{thm:eqc-mnd},  that $\CC_{\fn\mnd}$ is an $\RDoc^\mnd$-projective cover. 
Finally, the thesis follows by observing that, 
since $\RDoc$ is balanced, by \cref{prop:quot-surj-balance,prop:mnd-quot-reflect}, $\RDoc^\mnd$ is balanced as well. 

Towards a proof of the other direction, let us consider the identity monad on $\RDoc$, whose Eilenberg-Moore doctrine is $\RDoc$ itself. 
Then, from the hypothesis we derive that 
$\RDoc$ is balanced and all the objects in its base are $\RDoc$-projective, because they are the free algebras  for the identity monad. 
Therefore, the thesis follows by \refItem{prop:proj-obj-choice}{3}. 
\end{proof}

Notice that in the above theorem we have also proved that every monad in \RDtn on an extensional relational doctrine with quotients where surjections split is actually a monad in $\EQRDtn$, that is, its underlying 1-arrow preserves quotients. 

\cref{thm:eqc-mnd-choice} almost resembles the analogous result for the exact completion \cite{Vitale94}, but it is not a proper generalization. 
Indeed, \cref{thm:eqc-mnd-choice} applies to any relational doctrine and not just to that of jointly monic spans, which is the one behind the exact completion, 
however, it considers monads compatible with relational operations, while the result in \cite{Vitale94} applies to arbitrary monads (not necessarily preserving pullbacks and factorizations). 
Nevertheless, we can still recover a result for arbitrary monads on an exact category as the following example shows. 

\begin{example}\label[ex]{ex:eqc-mnd-exact} 
Let \CC be an exact category where regular epimorphisms split
and consider a monad $\mnd = \ple{T,\eta,\mu}$ on \CC. 
The Eilenberg-Moore category $\CC^\mnd$ is exact as well. 
The forgetful functor \fun{U}{\CC^\mnd}{\CC}, being a right adjoint, preserves monomorphisms. 
Moreover, it preserves coequalizers of equivalence relations. 
Hence, it extends to a 1-arrow \oneAr{\JSpan{U}}{\JSpan{\CC^\mnd}}{\JSpan\CC} in \EQRDtn, where $\fn{\JSpan{U}} = U$. 
The functor $U$ has a left adjoint \fun{F}{\CC}{\CC^\mnd} and the component at the algebra \ple{X,a} counit of this adjunction is the algebra homomorphism \fun{\epsilon_\ple{X,a}}{\ple{TX,\mu_X}}{\ple{X,a}} given by $\epsilon_\ple{X,a} = a$ is a coeqalizer and so a regular pimorphism, that is, a quotient arrow in $\JSpan{\CC^\mnd}$. 
Since every regular epimorphism in \CC splits, every object of \CC is $\JSpan\CC$-projective. 
Therefore, by \cref{prop:proj-obj-ladj}, we get that the Kleisli category $\CC_\mnd$ is a $\JSpan{\CC^\mnd}$-projective cover, 
hence, by \cref{cor:proj-obj}, 
we get that $\JSpan{\CC^\mnd}$ is equivalent to the extensional quotient completion of its restriction to $\CC_\mnd$. 
In other words, if \fun{K}{\CC_\mnd}{\CC^\mnd} is the inclusion functor of $\CC_\mnd$ into $\CC^\mnd$, we have that 
$\JSpan{\CC^\mnd}$ is equivalent to $\EQR{K^\star\JSpan{\CC^\mnd}}$. 
Finally, it is not difficult to observe that $K^\star\JSpan{\CC^\mnd}$ is equivalent to $\Span{\CC_\mnd}$, thus recovering the known result in \cite{Vitale94}. 
\end{example}


\section{Related Structures}
\label{sect:eed} 

There are many categorical models abstracting the essence of the calculus of relations, such as
cartesian bicategories \cite{CarboniW87} or allegories \cite{FreydS90} which are both special cases of ordered categories with involution \cite{Lambek99}. Also existential and elementary doctrines, \ie those doctrines that model $(\exists, \Land, \top, =)$-fragment of first order logic, encode a calculus of relations. A natural question is how relational doctrines differ from these models.

We show that when working with an ordered category, one implicitly accepts two logical principles, which are not necessarily there in a relational doctrine, and we show that when working with existential elementary doctrines, one implicitly accepts to work with variables, which are not necessarily there in relational doctrines. These comparison are carried out restricting to the 2-category $\RDtn$ where 1-arrows are strict.

\subsection{Ordered categories with involution.}
An ordered category with involution \cite{Lambek99}
is a \Pos-enriched category \CC together with an identity-on-objects and self inverse \Pos-functor \fun{(\blank)\rconv}{\CC\op}{\CC}. 
Intuitively, arrows can be seen as relations whose converse is given by the involution.

A relational doctrine \fun{\RDoc}{\bop\CC}{\Pos}
defines an ordered category with involution $\Ord\RDoc$ as follows: 
objects are those of \CC, the poset of arrows between $X$ and $Y$ is the fibre $\RDoc(X,Y)$, 
composition and identities are given by relational ones and the involution is given by the converse operation. The assignment extends to a 2-functor $\fun{\ROFun}{\RDtn}{\OCI}$, where $\OCI$ is the 2-category of ordered categories with involution 
whose 1-arrows \oneAr{F}{\CC}{\D} are ordered functors preserving involution and 
a 2-arrows \twoAr{\theta}{F}{G} are lax natural transformations.
 that is, 
$\theta_Y \circ F(f)  \leq G(f) \circ \theta_X$.
 
To see how to obtain a relational doctrine from an ordered category, first note that any ordered category with involution \CC induces 
a category $\Map\CC$, called the category of maps in \CC, whose  objects are those of \CC and an arrow \fun{f}{X}{Y} is an arrow in \CC such that \fun{f\rconv}{Y}{X} is its right adjoint, that is $f\circ f\rconv\order \id_Y$ and $\id_X \order f\rconv \circ f$. 
We define a relational doctrine 
\fun{\RMap\CC}{\bop{\Map\CC}}{\Pos} where  $\RMap\CC(X,Y) = \Hom\CC{X}{Y}$ is the poset of all arrows in \CC from $X$ to $Y$ and, 
for \fun{f}{A}{X} and \fun{g}{B}{Y} arrows in \Map\CC, the map \fun{\RMap\CC\reidx{f,g}}{\RMap\CC(X,Y)}{\RMap\CC(A,B)} sends $\relr$ to the composition $g\rconv \circ\ \relr \circ f$. 
Relational composition and identities are composition and identities of \CC and the relational converse is given by the involution $(\blank)\rconv$. 

The assignment easily extends to a fully faithful 2-functor $\fun{\ORFun}{\OCI}{\RDtn}$. 
Our goal is to characterize the essential image of $\ORFun$ by identifying some logical principles a relational doctrine has to satisfy in order to belong to it. 
These are extensionality and the rule of unique choice as defined below. 

\begin{definition}\label[def]{def:ruc}
Let \fun{\RDoc}{\bop\CC}{\Pos} be a relational doctrine. 
We say that $\RDoc$ satisfies the \emph{rule of unique choice}, \RUC for short, if, 
for every functional and total relation $\relr\in\RDoc(X,Y)$, there is an arrow \fun{f}{X}{Y} in \CC such that $\gr{f}\order\relr$. 
\end{definition}
Note that the inequality $\gr{f}\order\relr$ is actually an equality by \cref{prop:fun-ord}. 

Intuitively, this choice rule states that, whenever a relation $\relr$ from $X$ to $Y$ is functional and total, there is an arrow in the base that, for every element $x$ in $X$, picks the unique $y$ in $Y$ related to $x$ by $\relr$. 
We denote by $\ERUCRDtn$ the 2-full 2-subcategory of $\RDtn$ on those relational doctrines which are extensional and satisfy the rule of unique choice. 
The following proposition  shows that the 2-functor $\ORFun$ corestricts to $\ERUCRDtn$. 

\begin{proposition}\label[prop]{prop:oci-ruc-ext}
Let \CC be an ordered category with involution. 
Then, the relational doctrine $\RMap\CC$ is extensional and satisfies the rule of unique choice. 
\end{proposition}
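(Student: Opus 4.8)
The plan is to unfold the construction of $\RMap\CC$ and to isolate two observations that make both claims routine. First, the graph of a base arrow is the arrow itself: for $f$ an arrow of $\Map\CC$ we have $\gr{f} = \RMap\CC\reidx{f,\id_Y}(\rid_Y) = \id_Y\rconv \circ \rid_Y \circ f = f$, using that relational identities are the identities of $\CC$ and that the involution fixes them. Second, I would spell out functionality and totality in $\RMap\CC$: for $\relr \in \RMap\CC(X,Y) = \Hom\CC{X}{Y}$, recalling that relational composition is $\CC$-composition and relational converse is the involution, the conditions $\relr\rconv \rcomp \relr \order \rid_Y$ and $\rid_X \order \relr \rcomp \relr\rconv$ read $\relr \circ \relr\rconv \order \id_Y$ and $\id_X \order \relr\rconv \circ \relr$. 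These are precisely the conditions stating that $\relr\rconv$ is right adjoint to $\relr$ in $\CC$, i.e. that $\relr$ is a map.

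For the rule of unique choice (\cref{def:ruc}), let $\relr \in \RMap\CC(X,Y)$ be functional and total. By the second observation $\relr$ is a map, hence an arrow of the base $\Map\CC$ from $X$ to $Y$. Taking $f = \relr$, the first observation gives $\gr{f} = \relr$, so in particular $\gr{f} \order \relr$, which is what \RUC requires; as noted after \cref{def:ruc}, this inequality is in fact an equality.

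For extensionality (\cref{def:ext-eq}), the cleanest route is through \cref{prop:ext-eq}: for parallel maps $f, g \colon X \to Y$ we have $f \exteq g$ iff $\gr{f} = \gr{g}$, and since $\gr{f} = f$ and $\gr{g} = g$ as elements of the poset $\RMap\CC(X,Y)$, this is exactly $f = g$. Equivalently, one can compute directly $\RMap\CC\reidx{f,g}(\rid_Y) = g\rconv \circ f$, so that $f \exteq g$ means $\id_X \order g\rconv \circ f$; postcomposing with $g$ and using the counit $g \circ g\rconv \order \id_Y$ of the map adjunction yields $g \order f$, whence $f = g$ by discreteness of functional and total relations (\cref{prop:graph-fun,prop:fun-ord}).

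There is no substantial obstacle here: the entire argument consists in matching the composition and converse conventions of $\RMap\CC$ against the abstract notions of graph, functionality, totality and $\RDoc$-equality. The only point needing care is the orientation of relational composition, namely that $\relr \rcomp \rels$ corresponds to $\rels \circ \relr$ in $\CC$, which one pins down by comparing the defining reindexing $g\rconv \circ \relr \circ f$ with the identity $\RDoc\reidx{f,g}(\relr) = \gr{f} \rcomp \relr \rcomp \gr{g}\rconv$ from \cref{prop:left-adj}.
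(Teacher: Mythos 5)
Your proof is correct and follows essentially the same route as the paper's: the paper also observes that $\gr{f} = f$ for arrows of $\Map\CC$ (giving extensionality via \cref{prop:ext-eq}) and that functional and total relations in $\RMap\CC$ are exactly the maps of \CC (giving \RUC immediately). You simply spell out in full the unwinding of conventions—orientation of $\rcomp$, the adjunction conditions defining maps—that the paper's two-line proof leaves implicit.
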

\begin{proof}
Extensionality trivially holds because for any arrow \fun{f}{X}{Y} in $\Map\CC$ we have $\gr{f} = f$. 
The rule of unique choice trivially holds as well because funcional and total relations in $\RMap\CC$ are exactly arrows of $\Map\CC$. 
\end{proof}

The next theorem provides the characterization we are looking for, 
proving that ordered categories with involution correspond to extensional relational doctrines  satisfying the rule o unique choice. 

\begin{theorem}\label[thm]{thm:oci-rdtn-equiv} 
The 2-categories \OCI and \ERUCRDtn are 2-equivalent. 
\end{theorem}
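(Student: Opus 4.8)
The plan is to exhibit the given fully faithful 2-functor $\ORFun$ as one half of a 2-equivalence, its pseudo-inverse being the restriction of $\ROFun$ to $\ERUCRDtn$. By \cref{prop:oci-ruc-ext} the 2-functor $\ORFun$ corestricts to $\ERUCRDtn$, and it is fully faithful by assumption, so it induces isomorphisms on hom-categories. Hence it will be a 2-equivalence as soon as it is essentially surjective, i.e.\ as soon as every extensional relational doctrine satisfying \RUC is isomorphic in $\RDtn$ to one of the form $\RMap{\CC}$. I would establish this by showing that the two round-trips are (isomorphic to) identities.

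First I would compute the round-trip $\ROFun\circ\ORFun$. Given an ordered category with involution $\CC$, unfolding the definitions shows that $\Ord{\RMap{\CC}}$ coincides with $\CC$ on the nose: its objects are those of $\Map{\CC}$, hence of $\CC$; its hom-poset from $X$ to $Y$ is $\RMap{\CC}(X,Y)=\Hom{\CC}{X}{Y}$; and its composition, identities and involution are, by the very construction of $\RMap{\CC}$, exactly those of $\CC$. Thus $\ROFun\circ\ORFun=\Id_\OCI$, which settles one triangle.

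The substantial step is the other round-trip: for $\RDoc$ in $\ERUCRDtn$ I would build an isomorphism $\RDoc\cong\RMap{\Ord{\RDoc}}=\ORFun(\ROFun\,\RDoc)$ in $\RDtn$ out of the graph construction. The candidate strict 1-arrow \ple{\fn{\GrFun},\lift{\GrFun}} has underlying functor $\fn{\GrFun}\colon\CC\to\Map{\Ord{\RDoc}}$ that is the identity on objects and sends $f$ to $\gr{f}$; this is well defined and functorial since $\gr{f}$ is functional and total (\cref{prop:graph-fun}), hence a map in $\Ord{\RDoc}$, and since graphs respect composition and identities. On fibres $\lift{\GrFun}_{X,Y}$ is the identity, both fibres being $\RDoc(X,Y)$, and its naturality is precisely the equality $\RDoc\reidx{f,g}(\relr)=\gr{f}\rcomp\relr\rcomp\gr{g}\rconv$ of \cref{prop:left-adj}; as relational identity, composition and converse are literally inherited from $\RDoc$, the 1-arrow is strict. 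It remains to see it is invertible, and since $\lift{\GrFun}$ is the identity it suffices that $\fn{\GrFun}$ be an isomorphism of categories: it is bijective on objects, faithful by extensionality (by \cref{prop:ext-eq}, $\gr{f}=\gr{g}$ forces $f\exteq g$, hence $f=g$), and full by the rule of unique choice (\cref{def:ruc}), which provides for every map, i.e.\ every functional total relation, an arrow whose graph it is.

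Finally I would check that this family of isomorphisms is 2-natural in $\RDoc$ — which is routine, since strict 1-arrows preserve graphs — so that it assembles into a 2-natural isomorphism $\Id_{\ERUCRDtn}\cong\ORFun\circ\ROFun$; together with $\ROFun\circ\ORFun=\Id_\OCI$ this yields the 2-equivalence. I expect the main obstacle to be purely bookkeeping: keeping the two opposite composition conventions (categorical versus relational) straight while verifying functoriality of $\fn{\GrFun}$ and that $\lift{\GrFun}$ transports reindexing correctly, the conceptual content being entirely concentrated in the observation that fullness of $\fn{\GrFun}$ is exactly \RUC and faithfulness is exactly extensionality.
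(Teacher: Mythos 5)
Your proposal is correct and takes essentially the same route as the paper: both reduce to essential surjectivity of the fully faithful 2-functor $\ORFun$, established by the identity-on-objects graph 1-arrow $\RDoc \to \RMap{\Ord\RDoc}$ (identity on fibres), whose faithfulness is exactly extensionality and whose fullness is exactly \RUC. The additional material you include — the on-the-nose computation $\ROFun\circ\ORFun = \Id_\OCI$ and the 2-naturality check — merely makes explicit the pseudo-inverse that the paper leaves implicit when invoking fully-faithful plus essentially-surjective.
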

\begin{proof}
The corestriction of $\ORFun$ to $\ERUCRDtn$ is fully-faithful. 
Hence, to conclude it suffices to show that it is essentially surjective. 
To this end, we show that 
if $\RDoc$ is an extensional relational doctrine satisfying \RUC, then it is isomorphic to $\RMap{\Ord\RDoc}$. 
Consider the 1-arrow \oneAr{G}{\RDoc}{\RMap{\Ord\RDoc}} defined by 
$\fn{G}X = X$, $\fn{G}f = \gr{f}$ and $\lift{G}_{X,Y}(\relr) = \relr$, which is well defined because arrows in the base of $\RMap{\Ord\RDoc}$ are functional and total relations in $\RDoc$ and $\RMap{\Ord\RDoc}(X,Y) = \Hom{\Ord\RDoc}{X}{Y} = \RDoc(X,Y)$. 
We have that 
$\fn{G}$ is faithful because $\RDoc$ is extensional and 
$\fn{G}$ is full because $\RDoc$ satisfies \RUC. 
Hence, because $\fn{G}$ is the identity on objects by definition, it is an isomorphism of categories. 
Then, the thesis follows because $\lift{G}$ is an isomorphism by definition. 
\end{proof}

The equivalence stated in \cref{thm:oci-rdtn-equiv} generalises a similar result proved in \cite{BonchiSSS21}, which compares cartesian bicategories and existential elementary doctrines. 
Examples of relational doctrines that are not in \OCI because they are not extensional were given in \cref{sect:ext-sep}. The following example presents a relational doctrine outside \OCI because it does not satisfy \RUC. 

\begin{example}\label[ex]{ex:rel-doc-non-oc}
Take a set $A$ with more than one element. The set $\PP(A)$ of subsets of $A$ is a complete Heyting algebra, therefore a commutative quantale. Recall from \refItem{ex:rel-doc}{vrel} that in the relational doctrine   \fun{\VRel{\PP(A)}}{\bop\Set}{\Pos} of
 $\PP(A)$-relations, for every set $X$ the relation $\rid_X$ maps $(x,x')$ to $A$ if $x=x'$ and to $\emptyset$ if $x\not=x'$. This relational doctrine does not satisfy \RUC. 
Consider $\relr\in \VRel{\PP(A)}(1,A)$ given by $\relr(*,a)=\{a\}$, it holds
\[\rid_1 = A=\bigcup_{a\in A}\{a\} = \relr\rcomp\relr\rconv\quad\text{and}\quad(\relr\rconv\rcomp\relr)(a,a') = \{a\}\cap \{a'\}\subseteq \rid_A\]
Suppose $\fun{f}{1}{A}$ is such that $\gr{f}\subseteq\relr$, \ie $\rid_A(f(*),a)\subseteq\relr(*,a)=\{a\}$. Then $A=\rid_A(f(*),f(*))\subseteq\{f(*)\}$, but this inclusion is contradictory with the assumption that $A$ has more than one element.
\end{example}

\subsection{Existential elementary doctrines.} 
Doctrines $\fun{\PDoc}{\CC\op}{\Pos}$ can be regarded as functorial representations of theories in (fragments of) first order predicate logic: 
objects and arrows of  $\ct{C}$ model contexts and terms, while fibres $\PDoc(X)$ collect  the predicates with free variables over  $X$ ordered by logical entailment. 
To sustain this intuition, the base category $\CC$ must have finite products to model context concatenation (see also \cite{PittsCL}).
In this setting, we can rephrase the well-known encoding of the calculus of relations into first order logic as a construction extracting 
a relational doctrine out of $\PDoc$, provided it has enough structure. 
Indeed, we can consider the functor \fun{\DRel\PDoc}{\bop\CC}{\Pos} mapping $\ple{X,Y}$ to $\PDoc(X\times Y)$ and $\ple{f,g}$ to $\PDoc\reidx{f\times g}$, that is, a relation from $X$ to $Y$ is just a binary predicate over $X\times Y$. 
In order to define relational composition and identities in the standard way, 
we have to consider doctrines modelling at least and the  $(\exists, \Land, \top, =)$-fragment of first order logic. 
These  are called elementary existential doctrines.
Let us recall from \cite{MaiettiME:eleqc,MaiettiME:quofcm} their definition. 
If \ct{C} is a category with finite products and $X,Y$ are two objects of \ct{C}, we write 
\fun{\fpr[X,Y]}{X\times Y}{X} and \fun{spr[X,Y]}{X\times Y}{Y} for the first and second projection, respectively, 
\fun{\diagar_X}{X}{X\times X} for the diagonal arrow, and 
\fun{\terar_X}{X}{1} for the unique arrow into the terminal object. 

\begin{definition}\label[def]{def:eed} 
A doctrine $\fun{\PDoc}{\CC\op}{\Pos}$ is existential elementary if all the following hold: 
\begin{itemize}
\item $\ct{C}$ has finite products; 
\item every fibre has finite meets and these are preserved by reindexing; 
\item for every $\fun{f}{X}{Y}$ in $\CC$ the reindexing $\PDoc\reidx{f}$ has a left adjoint $\fun{\Ex\reidx{f}}{\PDoc(X)}{\PDoc(Y)}$ such that for every $\phi\in \PDoc(X)$ and every $\psi\in\PDoc(Y)$ \emph{Frobenius reciprocity} holds 
\[ \Ex\reidx{f}(\phi)\wedge \psi = \Ex\reidx{f}(\phi\wedge \PDoc\reidx{f}\psi) \] 
\item for every arrow $\fun{f}{A}{B}$ in $\CC$ and every object $X$ in $\CC$ the \emph{Beck-Chevalley condition} holds 
\[ \PDoc\reidx{f}\Ex\reidx{\fpr[B,X]} = \Ex\reidx{\fpr[A,X]}\PDoc\reidx{f\times\id_X} \] 
\end{itemize}
\end{definition} 

\begin{example} \label[ex]{ex:eed} 
An archetypal example of existential elementary doctrine is the contravariant powerset functor  \fun{\PP}{\Set\op}{\Pos}. For a function $\fun{f}{X}{Y}$, the left adjoint $\Ex\reidx{f}$ is the direct image mapping. Two instances are of interest. The first is when  $f$ is the diagonal $\fun{\Delta_X}{X}{X\times X}$. In this case the direct image evaluated on the the top element (\ie the whole $X$) is the diagonal relation, that is $\Ex\reidx{\Delta_X}(\top_X)=\{(x,x')\in X\times X\mid x=x'\}$. The other is when $f$ is a projection $\fun{\pi_2}{X\times Y}{Y}$. In this case $\Ex\reidx{\pi_2}(\phi)=\{y\in Y\mid \exists_{x\in X}\, (x,y)\in \phi\}$.

\end{example}

The previous example shows the key idea that, in an existential elementary doctrine, left adjoints along diagonals compute diagonal relations, lefts adjoints along projections compute existential quantifications.

Then, for every existential elementary  doctrine 
 \fun{\PDoc}{\CC\op}{\Pos}, 
the functor 
\fun{\DRel\PDoc}{\bop\CC}{\Pos} 
defined above is a relational doctrine where 
\[
\rid_X = \Ex\reidx{\diagar_X}(\top)
\qquad 
\relr \rcomp \rels = \Ex\reidx{\ple{\fpr[X,Y,Z],\tpr[X,Y,Z]}} (\PDoc\reidx{\ple{\fpr[X,Y,Z],\spr[X,Y,Z]}}(\relr) \wedge \PDoc\reidx{\ple{\spr[X,Y,Z],\tpr[X,Y,Z]}}(\rels)) 
\qquad 
\relr\rconv = \PDoc\reidx{\ple{\spr[X,Y],\fpr[X,Y]}}(\relr)
\]
for $\relr\in\PDoc(X\times Y)$ and $\rels\in\PDoc(Y\times Z)$. 

This assignment easily extends to a 2-functor  \fun{\EEDRFun}{\EED}{\RDtn}  where  \EED denotes the 2-category whose objects are existential elementary doctrines, 1-arrows \oneAr{F}{\PDoc}{\QDoc} are pairs \ple{\fn{F},\lift{F}} where tha functor \fun{\fn{F}}{\CC}{\D} preserves finite products and 
\nt{\lift{F}}{\PDoc}{\QDoc\circ \fn{F}} preserves finite meets and commutes with left adjoints, and 
2-arrows \twoAr{\theta}{F}{G} are natural transformations \nt{\theta}{\fn{F}}{\fn{G}} such that $\lift{F}_X(\phi)\order\QDoc\reidx{\theta_X}(\lift{G}_X(\phi))$, for all $\phi \in \PDoc(X)$. 

\begin{example}\label[ex]{ex:eed-to-rel} 
Consider the powerset functor as an existential and elementary doctrine as in \cref{ex:eed}. It is immediate to see that $\DRel\PP$ is $\Rel$.
\end{example}

We devote the rest of this section to the characterization of the essential image of the 2-functor $\EEDRFun$. 
To achieve this, the key observation is the following: 
from a relational doctrine of the form $\DRel\PDoc$, we can recover $\PDoc$ mapping $A$ to $\DRel\PDoc(A,1)=\PDoc(A\times 1)\cong \PDoc(A)$. 

First of all note that existential elementary doctrines have finite products in the base, finite meets on all fibres preserved by reindexing, while relational doctrines need not have. 
Hence, our first step is to add this structure to relational doctrines, defining \emph{cartesian} relational doctrines. 
However, this is not as straightforward as one may imagine, because we have to come up with the right interaction laws between finite meets in the fibres and relational operations. 
To achieve this, we proceed in two steps: 
first we give a \emph{global} description of cartesian relational doctrines in terms of adjunctions in \RDtn and then we derive a local characterisation closer to the definition of existential elementary doctrines. 

We start by observing that the 2-category \RDtn has finite products. 
The terminal relational doctrine \fun{\TerDoc}{\bop\One}{\Pos} is defined on the terminal category $\One$, with a single object $\star$ and only the identity on it, and its only fibre $\TerDoc(\star,\star)$ is the singleton poset. 
Given relational doctrines \fun\RDoc{\bop\CC}{\Pos} and \fun\SDoc{\bop\D}{\Pos}, their product is the relational doctrine 
\fun{\RDoc\times\SDoc}{\bop{\CC\times\D}}{\Pos} whose fibres are $(\RDoc\times\SDoc)(\ple{X,A},\ple{Y,B}) = \RDoc(X,Y)\times\SDoc(A,B)$ and all relational operations are defined componentwise. 
Then, for every relational doctrine $\RDoc$, we can define in the obvious way a 
terminal 1-arrow \oneAr{\TerAr_\RDoc}{\RDoc}{\TerDoc} and 
a diagonal 1-arrow \oneAr{\DiagAr_\RDoc}{\RDoc}{\RDoc\times\RDoc}, 
obtaining the following definition. 

\begin{definition}\label[def]{def:rdoc-fin-prod} 
We say that a relational doctrine $\RDoc$ is \emph{cartesian} if 
the 1-arrows $\TerAr_\RDoc$ and $\DiagAr_\RDoc$ have right adjoints in \RDtn. 
\end{definition} 

Spelling out the definition, we have that \fun\RDoc{\bop\CC}{\Pos} is a relatioanl doctrine if and only if 
\begin{itemize}
\item \CC has finite products
\item for all objects $X,Y,A,B$ in \CC, we have a monotone function 
\fun\reltimes{\RDoc(X,Y)\times\RDoc(A,B)}{\RDoc(X\times A,Y\times B)}
natural in $X,Y,A,$ and $B$
\end{itemize}
and the following (in)equations holds, for all $\relr\in\RDoc(X,Y)$, $\rels\in\RDoc(Y,Z)$, $\relr'\in\RDoc(X',Y')$ and $\rels'\in\RDoc(Y',Z')$
\begin{align*} 
\rid_{X\times Y} &= \rid_X\reltimes\rid_Y  & 
(\relr\reltimes\relr')\rcomp(\rels\reltimes\rels') &= (\relr\rcomp\rels)\reltimes(\relr'\reltimes\rels') \\ 
(\relr\reltimes\relr')\rconv &= \relr\rconv\reltimes{\relr'}\rconv  & 
\relr &\order \gr{\terar_X}\rcomp\gr{\terar_Y}\rconv \\ 
\relr\reltimes\relr' &\order \gr{\fpr[X,X']}\rcomp\relr\rcomp\gr{\fpr[Y,Y']}\rconv  & 
\relr\reltimes\relr' &\order \gr{\spr[X,X']}\rcomp\relr'\rcomp\gr{\spr[Y,Y']}\rconv  \\ 
\relr &\order \gr{\diagar_X}\rcomp (\relr\reltimes\relr)\rcomp\gr{\diagar_Y}\rconv 
\end{align*} 
Note that these conditions together with naturality of $\reltimes$ imply that 
$\gr{f\times g} = \gr{f}\reltimes\gr{g}$. 

\begin{example}\label[ex]{ex:cartesian-rel} 
The doctrine $\DRel\PP=\fun{\Rel}{\bop\Set}{\Pos}$ is cartesian. The right adjoint to $\Delta_{\Rel}$ is given using products. Indeed for $\ple{\ple{A,B},\ple{X,Y}}$ the base of $\Rel\times \Rel$ the natural transformation $\Rel(A, B)\times \Rel(X,Y)\stackrel.\to\Rel(A\times X,B\times Y)$ maps $\relr\in \Rel(A,B)$ and $\rels\in\Rel(X,Y)$ to $\{\ple{\ple{a,x},\ple{b,y}}\mid \ple{a,b}\in \relr \text{ and } \ple{x,y}\in\rels\}$.
\end{example}

It is not difficult to see that, 
if $\RDoc$ is a cartesian relational doctrine, 
then every fibre of $\RDoc$ has finite meets given by 
\[
\relr\land \rels = \gr{\diagar_X}\rcomp(\relr\reltimes\rels)\rcomp\gr{\diagar_Y}\rconv 
\qquad 
\top = \gr{\terar_X}\rcomp\gr{\terar_Y}\rconv 
\]
where $\relr,\rels\in\RDoc(X,Y)$. 
Indeed, the following proposition holds. 

\begin{proposition}\label[prop]{prop:cartesian}
A relational doctrine \fun\RDoc{\bop\CC}{\Pos} is cartesian if and only if the following properties hold:
\begin{enumerate} 
\item\label{prop:cartesian:1}  \CC has finite products
\item\label{prop:cartesian:2}  every fibre of $\RDoc$ has finite meets and reindexing preserves them 
\item\label{prop:cartesian:3}  for all objects $X,Y$ in \CC, we have 
\[ \rid_1 = \top_1 \qquad \rid_{X\times Y} = (\gr{\fpr[X,Y]}\rcomp\gr{\fpr[X,Y]}\rconv)\land(\gr{\spr[X,Y]}\rcomp\gr{\spr[X,Y]}\rconv) \] 
\item\label{prop:cartesian:4}  for all relations $\relr\in\RDoc(A,X)$, $\rels\in\RDoc(X,B)$, $\relr'\in\RDoc(A,Y)$ and $\rels'\in\RDoc(Y,B)$ we have 
\[ (\relr\rcomp\rels)\land(\relr'\rcomp\rels') = ((\relr\rcomp\gr{\fpr[X,Y]}\rconv)\land(\relr'\rcomp\gr{\spr[X,Y]}\rconv)) \rcomp ((\gr{\fpr[X,Y]}\rcomp\rels)\land(\gr{\spr[X,Y]}\rcomp\rels')) \] 
\item\label{prop:cartesian:5}  for all relations $\relr,\rels\in\RDoc(X,Y)$ we have 
\[ (\relr\land_{X,Y}\rels)\rconv = \relr\rconv\land_{Y,X}\rels\rconv  \qquad \top_{X,Y}\rconv = \top_{Y,X} \] 
\end{enumerate} 
\end{proposition}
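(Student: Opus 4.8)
The plan is to prove the equivalence by setting up a dictionary between the tensorial operation $\reltimes$ from the explicit description of cartesianness spelled out just after \cref{def:rdoc-fin-prod} and the fibrewise finite meets of \refItem{prop:cartesian}{2}. Concretely, I would pass from $\reltimes$ to meets via $\relr\land\rels = \gr{\diagar_X}\rcomp(\relr\reltimes\rels)\rcomp\gr{\diagar_Y}\rconv$ and $\top = \gr{\terar_X}\rcomp\gr{\terar_Y}\rconv$, and back via $\relr\reltimes\relr' = (\gr{\fpr[X,X']}\rcomp\relr\rcomp\gr{\fpr[Y,Y']}\rconv)\land(\gr{\spr[X,X']}\rcomp\relr'\rcomp\gr{\spr[Y,Y']}\rconv)$. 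Since \refItem{prop:cartesian}{1} (finite products in \CC) is common to both formulations, the real content is the interplay between $\reltimes$ and $\land$. Throughout I would use the graph calculus $\gr{g\circ f} = \gr{f}\rcomp\gr{g}$, $\gr{\id} = \rid$, and $\RDoc\reidx{f,g}(\relr) = \gr{f}\rcomp\relr\rcomp\gr{g}\rconv$ from \cref{prop:left-adj}, together with the product identities $\fpr[X,X]\circ\diagar_X = \spr[X,X]\circ\diagar_X = \id_X$ and $\terar_1 = \id_1$.

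For the forward direction, assuming the explicit cartesian structure, I would define $\land$ and $\top$ as above and verify \refItem{prop:cartesian}{2} first. The inequality $\relr\land\rels\order\relr$ follows by applying the projection law $\relr\reltimes\rels\order\gr{\fpr[X,X]}\rcomp\relr\rcomp\gr{\fpr[Y,Y]}\rconv$ and then collapsing $\gr{\diagar_X}\rcomp\gr{\fpr[X,X]} = \rid_X$ and $\gr{\fpr[Y,Y]}\rconv\rcomp\gr{\diagar_Y}\rconv = \rid_Y$; the greatest-lower-bound property comes from the diagonal law $\relt\order\gr{\diagar_X}\rcomp(\relt\reltimes\relt)\rcomp\gr{\diagar_Y}\rconv$ together with monotonicity of $\reltimes$, and $\relr\order\top$ is literally the inequation $\relr\order\gr{\terar_X}\rcomp\gr{\terar_Y}\rconv$. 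Preservation of meets by reindexing then follows from naturality of $\reltimes$ rewritten through graphs. Clause \refItem{prop:cartesian}{3} reduces to $\terar_1 = \id_1$ (for $\rid_1 = \top_1$) and to the identity law $\rid_{X\times Y} = \rid_X\reltimes\rid_Y$ unpacked via the projection laws, while \refItem{prop:cartesian}{5} follows from $(\relr\reltimes\relr')\rconv = \relr\rconv\reltimes{\relr'}\rconv$ and $\rid\rconv = \rid$, using that converse reverses the composite and so interchanges the two diagonal graphs.

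For the converse direction, assuming \refItem{prop:cartesian}{1} through \refItem{prop:cartesian}{5}, I would take $\reltimes$ as defined above, note it is monotone and natural because relational composition and $\land$ are, and check the explicit equations one by one: the identity law is \refItem{prop:cartesian}{3}, the converse law is \refItem{prop:cartesian}{5}, the two projection inequalities are immediate since a meet lies below each of its conjuncts, and the diagonal inequality is in fact an equality, obtained by pushing $\RDoc\reidx{\diagar_X,\diagar_Y}(\blank)$ through the meet (using that reindexing preserves meets, \refItem{prop:cartesian}{2}) and then collapsing the projection-diagonal composites as above. The interchange law $(\relr\reltimes\relr')\rcomp(\rels\reltimes\rels') = (\relr\rcomp\rels)\reltimes(\relr'\rcomp\rels')$ is precisely \refItem{prop:cartesian}{4} after expanding both occurrences of $\reltimes$.

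The main obstacle I expect is \refItem{prop:cartesian}{4}, the interchange between relational composition and fibre meets: in the forward direction it encodes that $\reltimes$ preserves composition and must be extracted by expanding $(\relr\rcomp\rels)\land(\relr'\rcomp\rels')$ through the diagonal/tensor dictionary and matching it against the meet of projection-mediated composites, and in the backward direction it is exactly what validates functoriality of $\reltimes$. Unlike the remaining clauses, which are bookkeeping in the graph calculus, \refItem{prop:cartesian}{4} genuinely mixes the monoidal and the meet structure and requires Frobenius-style manipulation of $\gr{\fpr[X,Y]}$ and $\gr{\spr[X,Y]}$ with their converses; this is where I would concentrate the effort, upgrading inequalities to equalities via \cref{prop:fun-ord} wherever the witnesses are functional and total.
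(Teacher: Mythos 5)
Your proposal is correct and follows essentially the same route as the paper's proof: the same two translation formulas between $\reltimes$ and fibrewise meets, with every clause except \refItem{prop:cartesian}{4} treated as routine graph-calculus bookkeeping, and \refItem{prop:cartesian}{4} identified --- in both directions --- as exactly the interchange law $(\relr\reltimes\relr')\rcomp(\rels\reltimes\rels') = (\relr\rcomp\rels)\reltimes(\relr'\rcomp\rels')$ read through that dictionary. The computation you flag as the main effort is carried out in the paper by inserting $\gr{\diagar_{X\times Y}}\rconv\rcomp\gr{\diagar_{X\times Y}}$ between the two tensored factors and collapsing $(\fpr[X,Y]\times\spr[X,Y])\circ\diagar_{X\times Y} = \id_{X\times Y}$, which is precisely the manipulation of projections and their converses you anticipate.
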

\begin{proof}
Suppose that $\RDoc$ is cartesian. 
All items follow easily from \cref{def:rdoc-fin-prod} and the definition of $\land$ and $\top$ given above. 
We focus only on \cref{prop:cartesian:4}, which is slightly less trivial. 
We have the following equations
\begin{align*}
(\relr\rcomp\rels)\land(\relr'\rcomp\rels') 
  &= \gr{\diagar_A}\rcomp((\relr\rcomp\rels)\reltimes(\relr'\rcomp\rels'))\rcomp\gr{\diagar_B}\rconv \\
  &= \gr{\diagar_A}\rcomp((\relr\reltimes\relr')\rcomp(\rels\reltimes\rels'))\rcomp\gr{\diagar_B}\rconv \\
  &= \gr{\diagar_A}\rcomp((\relr\rcomp\gr{\fpr}\rconv)\reltimes(\relr'\rcomp\gr{\spr}\rconv))\rcomp\gr{\diagar_{X\times Y}}\rconv\rcomp\gr{\diagar_{X\times Y}}\rcomp((\gr{\fpr}\rcomp\rels)\reltimes(\gr{\spr}\rcomp\rels'))\rcomp\gr{\diagar_B}\rconv \\ 
  &= ((\relr\rcomp\gr{\fpr}\rconv)\land(\relr'\rcomp\gr\spr))\rcomp((\gr\fpr\rcomp\rels)\land(\gr\spr\rcomp\rels')) 
\end{align*}
using the following commutative diagram
\[\xymatrix{
& X\times Y 
  \ar[ld]_-{\diagar_{X\times Y}}
  \ar[rd]^-{\diagar_X\times\diagar_Y} 
\\
  X\times Y \times X \times Y 
  \ar[rr]_-{\id_X\times\ple{\spr,\fpr}\times\id_Y}
  \ar[d]_-{\fpr[X,Y]\times\spr[X,Y]} 
&
& X\times X \times Y \times Y 
  \ar[d]^-{\fpr[X,X]\times\spr[Y,Y]}
\\
  X\times Y 
  \ar[rr]_-{\id_{X\times Y}} 
& 
& X\times Y 
}\]

Let us now suppose that $\RDoc$ satisfies all the items above and prove it is cartesian. 
For relations $\relr\in\RDoc(X,Y)$ and $\rels\in\RDoc(A,B)$ we define 
\[
\relr\reltimes\rels = (\gr{\fpr[X,A]}\rcomp\relr\rcomp\gr{\fpr[Y,B]}\rconv) \land (\gr{\spr[X,A]}\rcomp\rels\rcomp\gr{\spr[Y,B]}\rconv)  
\]
The only non-trivial equation is the compatibility between $\reltimes$ and relational composition. 
Indeed, we have two following equalities
\begin{align*}
(\relr_1\rcomp\relr_2)\reltimes(\rels_1\rcomp\rels_2)
  &= (\gr\fpr\rcomp\relr_1\rcomp\relr_2\rcomp\gr\fpr\rconv)\land(\gr\spr\rcomp\rels_1\rcomp\rels_2\rcomp\gr\spr\rconv) \\
  &= ((\gr\fpr\rcomp\relr_1\gr\fpr\rconv)\land(\gr\spr\rcomp\rels_1\rcomp\gr\spr\rconv))\rcomp((\gr\fpr\rcomp\relr_2\rcomp\gr\fpr\rconv)\land(\gr\spr\rcomp\rels_2\rcomp\gr\spr\rconv)) \\ 
  &= (\relr_1\reltimes\rels_1)\rcomp(\relr_2\reltimes\rels_2) 
\end{align*}
\end{proof}

\begin{corollary}\label[cor]{cor:cartesian}
Let $\RDoc$ be a cartesian relational doctrine. 
The following equalities hold: 
\[
\gr{\terar_X} = \top_{X,1} 
\qquad
\gr{\fpr[X,Y]}\rconv\rcomp\gr{\spr[X,Y]} = \top_{X,Y} 
\qquad 
\gr{\diagar_X} = \gr{\fpr[X,X]}\rconv\land\gr{\spr[X,X]}\rconv 
\]
\end{corollary}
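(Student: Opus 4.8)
The three identities share the same toolkit: the defining formula $\top_{X,Y} = \gr{\terar_X}\rcomp\gr{\terar_Y}\rconv$ for the top element, the local description of the cartesian structure in \cref{prop:cartesian}, the adjunction $\gr{f}\dashv\gr{f}\rconv$ between a map and its converse (\cref{prop:graph-fun,prop:left-adj}), and the discreteness of functional total relations (\cref{prop:fun-ord}). The plan is to treat the three equalities separately, in increasing order of difficulty.

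For the first, $\gr{\terar_X} = \top_{X,1}$, I would simply unfold the definition: since $1$ is terminal we have $\terar_1 = \id_1$, hence $\gr{\terar_1}\rconv = \rid_1\rconv = \rid_1$, and therefore $\top_{X,1} = \gr{\terar_X}\rcomp\gr{\terar_1}\rconv = \gr{\terar_X}\rcomp\rid_1 = \gr{\terar_X}$ by the right unit law. This is immediate. For the second, $\gr{\fpr[X,Y]}\rconv\rcomp\gr{\spr[X,Y]} = \top_{X,Y}$, the key move is to instantiate \refItem{prop:cartesian}{4} at $\relr = \rid_X$, $\rels = \top_{X,Y}$, $\relr' = \top_{X,Y}$, $\rels' = \rid_Y$. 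The left-hand side collapses to $\top_{X,Y}\land\top_{X,Y} = \top_{X,Y}$. On the right-hand side I would use that reindexing preserves the top (the empty meet, by \refItem{prop:cartesian}{2}) to rewrite $\top_{X,Y}\rcomp\gr{\spr[X,Y]}\rconv = \top_{X,X\times Y}$ and $\gr{\fpr[X,Y]}\rcomp\top_{X,Y} = \top_{X\times Y,Y}$, so that meeting with these tops leaves $\gr{\fpr[X,Y]}\rconv$ and $\gr{\spr[X,Y]}$ untouched; the right-hand side is then exactly $\gr{\fpr[X,Y]}\rconv\rcomp\gr{\spr[X,Y]}$. Finding this substitution is the only subtle point here.

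For the third, $\gr{\diagar_X} = \gr{\fpr[X,X]}\rconv\land\gr{\spr[X,X]}\rconv$, I would prove the two halves by an indirect comparison rather than a direct computation, since the right-hand side cannot be lower-bounded termwise. Writing $D$ for the right-hand side, the inequality $\gr{\diagar_X}\order D$ follows from $\gr{\diagar_X}\rcomp\gr{\fpr[X,X]} = \gr{\fpr[X,X]\circ\diagar_X} = \rid_X$ together with the adjunction $\gr{\fpr[X,X]}\dashv\gr{\fpr[X,X]}\rconv$ (and symmetrically for $\spr$), giving $\gr{\diagar_X}\order\gr{\fpr[X,X]}\rconv$ and $\gr{\diagar_X}\order\gr{\spr[X,X]}\rconv$, hence $\gr{\diagar_X}$ is below their meet. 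For the converse inequality I would show that $D$ is itself functional and total and then invoke \cref{prop:fun-ord}. Totality is inherited upward from $\gr{\diagar_X}\order D$, since $\gr{\diagar_X}$ is total (\cref{prop:graph-fun}); functionality is where the cartesian axioms enter, using \refItem{prop:cartesian}{5} to get $D\rconv = \gr{\fpr[X,X]}\land\gr{\spr[X,X]}$ and monotonicity of composition to obtain both $D\rconv\rcomp D\order\gr{\fpr[X,X]}\rcomp\gr{\fpr[X,X]}\rconv$ and $D\rconv\rcomp D\order\gr{\spr[X,X]}\rcomp\gr{\spr[X,X]}\rconv$, so that $D\rconv\rcomp D$ lies below the meet of these two, which is exactly $\rid_{X\times X}$ by \refItem{prop:cartesian}{3}. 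Since $\gr{\diagar_X}$ and $D$ are both functional and total with $\gr{\diagar_X}\order D$, \cref{prop:fun-ord} forces $\gr{\diagar_X} = D$.

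I expect this last equality to be the main obstacle. The naive attempt to bound $D$ below by $\gr{\diagar_X}$ directly fails, essentially because projections need not be relationally surjective, so no "witness map" is available pointwise; the discreteness of functional total relations is the right device to upgrade the one-sided inequality $\gr{\diagar_X}\order D$ into an equality, once functionality of $D$ has been extracted from \refItem{prop:cartesian}{3} and \refItem{prop:cartesian}{5}.
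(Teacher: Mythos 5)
Your proof is correct. The paper gives no proof of \cref{cor:cartesian} at all --- it is stated as an immediate consequence of \cref{prop:cartesian} --- so there is no official argument to diverge from; your derivation fills in the omitted verification using exactly the toolkit the paper supplies. All three steps check out: the first equality is immediate from $\terar_1 = \id_1$, $\gr{\id_1} = \rid_1$ and the unit law; the substitution $\relr = \rid_X$, $\rels = \relr' = \top_{X,Y}$, $\rels' = \rid_Y$ in \refItem{prop:cartesian}{4} is well-typed, and since $\top_{X,Y}\rcomp\gr{\spr[X,Y]}\rconv$ and $\gr{\fpr[X,Y]}\rcomp\top_{X,Y}$ are reindexings of $\top_{X,Y}$ (by \cref{prop:left-adj}) and hence tops (reindexing preserves empty meets by \refItem{prop:cartesian}{2}), the right-hand side collapses to $\gr{\fpr[X,Y]}\rconv\rcomp\gr{\spr[X,Y]}$ as you claim; and for the third equality, the one-sided bound $\gr{\diagar_X}\order\gr{\fpr[X,X]}\rconv\land\gr{\spr[X,X]}\rconv$ (from $\fpr[X,X]\circ\diagar_X = \spr[X,X]\circ\diagar_X = \id_X$ and totality of graphs), upgraded to an equality by showing the meet is functional and total (via \refItem{prop:cartesian}{5}, monotonicity of composition, and \refItem{prop:cartesian}{3}) and invoking \cref{prop:fun-ord}, is sound --- and indeed mirrors the way the paper itself deploys \cref{prop:fun-ord} elsewhere, e.g.\ in \cref{prop:ext-eq,prop:surj-quot-split}.
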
 

For a cartesian relational doctrine $\RDoc$ over the category \CC, consider the doctrine \fun{\RelD\RDoc}{\CC\op}{\Pos} 
obtained by precomposing $\RDoc$ with the functor \fun{\ple{-1}}{\CC\op}{\bop\CC}. 
By \cref{prop:cartesian:1,prop:cartesian:2} of \cref{prop:cartesian}, we know that \CC has finite products and $\RelD\RDoc$ factors through the category of meet-semilattices. 
Moreover, \cref{prop:left-adj} ensures that $\RelD\RDoc$ has left adjoints of all reindexing maps. 
Therefore, the missing ingredients are the Beck-Chevalley condition and Frobenius reciprocity. 
Let us start by the latter, considering its relational version. 

\begin{definition}\label[def]{def:frobenius}
Let \fun\RDoc{\bop\CC}{\Pos} be a relational doctrine. 
We say that $\RDoc$ satisfies \emph{Frobenius reciprocity} if, 
for every arrow \fun{f}{X}{Y} in \CC and every relation $\relr\in\RDoc{A,X}$ and $\rels \in \RDoc(A, Y)$, the following equality holds: 
\[ \relr\rcomp\gr{f} \land \rels = (\relr \land \rels\rcomp\gr{f}\rconv)\gr{f} \] 
\end{definition}

The nice fact is that Frobenius reciprocity implies the Beck-Chevalley condition in the relational setting. 

\begin{proposition}\label[prop]{prop:frobenius-bc}
Let $\RDoc$ be a cartesian relational doctrine satisfying Frobenius reciprocity. 
Then, the Beck-Chevalley condition holds: 
for every relation $\relr\in\RDoc(X,Y)$ we have 
\[ \relr\rcomp\gr{\fpr[Y,A]}\rconv = \gr{\fpr[X,A]}\rconv\rcomp (\relr\reltimes\rid_A) \] 
\end{proposition}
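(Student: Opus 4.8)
The plan is to unfold the right-hand side with the explicit description of the fibrewise product $\reltimes$ and then reduce the whole equality to one application of Frobenius reciprocity. Throughout I abbreviate $p=\fpr[X,A]$, $q=\fpr[Y,A]$, $p'=\spr[X,A]$ and $q'=\spr[Y,A]$. From the description of $\reltimes$ obtained in the proof of \cref{prop:cartesian} (which, by uniqueness of right adjoints, is the fibrewise product of any cartesian relational doctrine) together with $\rid_A$ being neutral, I first record
\[
\relr\reltimes\rid_A = (\gr{p}\rcomp\relr\rcomp\gr{q}\rconv)\land(\gr{p'}\rcomp\gr{q'}\rconv).
\]
Hence the statement to prove reads $\relr\rcomp\gr{q}\rconv = \gr{p}\rconv\rcomp(\relr\reltimes\rid_A)$, and the crucial point to notice is that the first meetand equals $\gr{p}\rcomp\delta$ for $\delta=\relr\rcomp\gr{q}\rconv$, i.e. it already carries a prepended $\gr{p}$.

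The key step I would carry out first is to produce a left-handed form of Frobenius reciprocity. Applying $(\blank)\rconv$ to the identity of \cref{def:frobenius} and using that the converse distributes over binary meets, \refItem{prop:cartesian}{5}, gives, for a map $f$ and relations of matching type,
\[
(\gr{f}\rconv\rcomp\gamma)\land\delta = \gr{f}\rconv\rcomp\big(\gamma\land(\gr{f}\rcomp\delta)\big).
\]
I would then instantiate this with $f=p$, $\gamma=\gr{p'}\rcomp\gr{q'}\rconv$ and $\delta=\relr\rcomp\gr{q}\rconv$. Since $\gr{p}\rcomp\delta=\gr{p}\rcomp\relr\rcomp\gr{q}\rconv$ is exactly the first meetand of $\relr\reltimes\rid_A$, reading the displayed identity from right to left yields
\[
\gr{p}\rconv\rcomp(\relr\reltimes\rid_A) = (\gr{p}\rconv\rcomp\gr{p'}\rcomp\gr{q'}\rconv)\land(\relr\rcomp\gr{q}\rconv).
\]

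It then remains to show that the left meetand is the top relation, so that the meet collapses to $\relr\rcomp\gr{q}\rconv$. Here I would use \cref{cor:cartesian}, which gives $\gr{p}\rconv\rcomp\gr{p'}=\top_{X,A}$, and expand $\top_{X,A}=\gr{\terar_X}\rcomp\gr{\terar_A}\rconv$ to compute
\[
\gr{p}\rconv\rcomp\gr{p'}\rcomp\gr{q'}\rconv = \gr{\terar_X}\rcomp(\gr{q'}\rcomp\gr{\terar_A})\rconv = \gr{\terar_X}\rcomp\gr{\terar_{Y\times A}}\rconv = \top_{X,Y\times A},
\]
where the middle equality uses $\terar_A\circ q'=\terar_{Y\times A}$ by uniqueness of arrows into the terminal object and $\gr{g\circ f}=\gr{f}\rcomp\gr{g}$. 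Since $\relr\rcomp\gr{q}\rconv\order\top_{X,Y\times A}$, the meet equals $\relr\rcomp\gr{q}\rconv$, which is the claim.

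I expect the main obstacle to be bookkeeping rather than insight: deriving the left-handed Frobenius identity with the right variance, so that exactly the $\relr$-carrying meetand (and not the $\rid_A$-carrying one) is the factor prepended with $\gr{p}$, and checking that the explicit formula for $\reltimes$ is legitimately available on an arbitrary cartesian doctrine and not merely on the one constructed in the proof of \cref{prop:cartesian}. Once the left Frobenius rule is in place, the rest is a routine collapse of a meet against $\top$.
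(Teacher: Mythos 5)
Your proof is correct and takes essentially the same route as the paper's: expand $\relr\reltimes\rid_A$ via the explicit meet formula, apply the converse-conjugated (left-handed) form of Frobenius reciprocity to move $\gr{\fpr[X,A]}\rconv$ inside the meet, and collapse the remaining meetand $\gr{\fpr[X,A]}\rconv\rcomp\gr{\spr[X,A]}\rcomp\gr{\spr[Y,A]}\rconv$ against the top relation using \cref{cor:cartesian}. The paper performs exactly these steps, merely leaving implicit the two points you spell out, namely the derivation of the left-handed Frobenius law by conjugating with $(\blank)\rconv$ and the identification of $\top_{X,A}\rcomp\gr{\spr[Y,A]}\rconv$ with $\top_{X,Y\times A}$.
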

\begin{proof}
Using \cref{prop:cartesian,cor:cartesian} and Frobenius reciprocity, we have the following equalities: 
\begin{align*}
\gr{\fpr[XA]}\rconv \rcomp (\relr\reltimes\rid_A)  
  &= \gr{\fpr[X,A]}\rconv \rcomp ((\gr{\fpr[X,A]}\rcomp\relr\rcomp\gr{\fpr[Y,A]}\rconv) \land (\gr{\spr[X,A]}\rcomp\gr{\spr[Y,A]}\rconv) \\
  &= \relr\rcomp\gr{\fpr[Y,A]}\rconv \land (\gr{\fpr[X,A]}\rconv\rcomp \gr{\spr[X,A]}\rcomp\gr{\spr[Y,A]}\rconv) \\ 
  &= \relr\rcomp\gr{\fpr[Y,A]}\rconv \land \top\rcomp\gr{\spr[Y,A]}\rconv \\
  &= \relr\rcomp\gr{\fpr[Y,A]}\rconv 
\end{align*}
\end{proof}

Hence, in particular \cref{prop:frobenius-bc} implies that 
for every arrow \fun{f}{X}{Y} in the base of $\RDoc$ we have 
$ \gr{f} \rcomp \gr{\fpr[Y,A]}\rconv = \gr{\fpr[X,A]}\rconv \rcomp \gr{f\times \id_A} $. 
This shows that if $\RDoc$ is a cartesian relational doctrine satisfying Frobenius reciprocity, then 
$\RelD\RDoc$ is an existential elementary doctrine. 
However, this is not the end of the story because, in order to get a proper charaterization, we still have to show that the relational doctrine extracted from $\RelD\RDoc$ is isomorphic to $\RDoc$, that is, 
$\DRel{\RelD\RDoc} \cong \RDoc$ in an appropriate 2-category. 
In particular this means that we have to show that 
$\RDoc(X,Y)$ and $\RDoc(X\times Y,1)$ are naturally isomorphic  and all relational operations on $\RDoc$ can be defined through the logic of $\RelD\RDoc$. 
To achieve this, we  prove the following lemmas.

\begin{lemma}\label[lem]{lem:frob}
Let \fun\RDoc{\bop\CC}{\Pos}  be a cartesian relational doctrine satisfying Frobenius reciprocity. 
Then, for every object $X$ in \CC, the following equation holds:
\[ (\rid_X\reltimes\gr{\diagar_X})\rcomp(\gr{\diagar_X}\rconv\reltimes\rid_X) = \gr{\diagar_X}\rconv\rcomp\gr{\diagar_X} \]
\end{lemma}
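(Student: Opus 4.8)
The plan is to turn the left-hand side into a single reindexing of an identity relation, expand that identity into a finite meet, and collapse the result by one application of Frobenius reciprocity. First I would recognise both tensor factors as graphs of product maps: using the identity $\gr{f\times g}=\gr f\reltimes\gr g$, together with the facts that $\reltimes$ preserves the converse and $\rid_X\rconv=\rid_X$, I get $\rid_X\reltimes\gr{\diagar_X}=\gr u$ and $\gr{\diagar_X}\rconv\reltimes\rid_X=\gr v\rconv$, where $u=\id_X\times\diagar_X$ and $v=\diagar_X\times\id_X$ are the two maps $X\times X\to X\times X\times X$. Hence the left-hand side equals $\gr u\rcomp\gr v\rconv$, and by \cref{prop:left-adj} this is $\RDoc\reidx{u,v}(\rid_{X\times X\times X})$.

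Next I would decompose the identity on the triple product. Iterating \refItem{prop:cartesian}{3} and using that reindexing preserves meets (\refItem{prop:cartesian}{2}) gives $\rid_{X\times X\times X}=\bigwedge_{i=1}^{3}(\gr{\pi_i}\rcomp\gr{\pi_i}\rconv)$, where $\pi_1,\pi_2,\pi_3\colon X\times X\times X\to X$ are the product projections. Applying $\RDoc\reidx{u,v}$ termwise (again \refItem{prop:cartesian}{2}) and using $\gr u\rcomp\gr{\pi_i}=\gr{\pi_i\circ u}$ and $\gr{\pi_i}\rconv\rcomp\gr v\rconv=\gr{\pi_i\circ v}\rconv$, I only need the composites $\pi_i\circ u,\pi_i\circ v$, each of which is $p=\fpr[X,X]$ or $q=\spr[X,X]$. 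This yields
\[ \gr u\rcomp\gr v\rconv=(\gr p\rcomp\gr p\rconv)\land(\gr q\rcomp\gr p\rconv)\land(\gr q\rcomp\gr q\rconv). \]

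Finally I would collapse the meet. The last two meetands share the left factor $\gr q$, and since $\gr q\rcomp(\blank)$ is a reindexing it preserves meets; with $\gr{\diagar_X}=\gr p\rconv\land\gr q\rconv$ from \cref{cor:cartesian} this gives $(\gr q\rcomp\gr p\rconv)\land(\gr q\rcomp\gr q\rconv)=\gr q\rcomp\gr{\diagar_X}$, so the expression becomes $(\gr p\rcomp\gr p\rconv)\land(\gr q\rcomp\gr{\diagar_X})$. At this point I apply Frobenius reciprocity (\cref{def:frobenius}) with $f=\diagar_X$, $\relr=\gr q$ and $\rels=\gr p\rcomp\gr p\rconv$. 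Since $\gr p\rconv\rcomp\gr{\diagar_X}\rconv=(\gr{\diagar_X}\rcomp\gr p)\rconv=\gr{\fpr[X,X]\circ\diagar_X}\rconv=\rid_X$, we have $\rels\rcomp\gr{\diagar_X}\rconv=\gr p$, hence $\relr\land\rels\rcomp\gr{\diagar_X}\rconv=\gr q\land\gr p=\gr{\diagar_X}\rconv$ (again \cref{cor:cartesian}), and Frobenius reciprocity rewrites the meet as $(\gr p\land\gr q)\rcomp\gr{\diagar_X}=\gr{\diagar_X}\rconv\rcomp\gr{\diagar_X}$, which is the right-hand side.

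I expect this last step to be the only non-formal one, and thus the main obstacle: everything up to the triple meet is bookkeeping with the cartesian structure, whereas the passage from $(\gr p\rcomp\gr p\rconv)\land(\gr q\rcomp\gr{\diagar_X})$ to $\gr{\diagar_X}\rconv\rcomp\gr{\diagar_X}$ is precisely where Frobenius reciprocity is essential. It is the relational form of the Beck--Chevalley equation for the pullback square formed by the two diagonals, which does \emph{not} follow from the projection Beck--Chevalley of \cref{prop:frobenius-bc} alone; so I would first secure the auxiliary identities $\gr p\rconv\rcomp\gr{\diagar_X}\rconv=\rid_X$ and $\gr p\land\gr q=\gr{\diagar_X}\rconv$ before invoking \cref{def:frobenius}.
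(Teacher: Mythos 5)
Your proof is correct and follows essentially the same route as the paper's: both reduce the left-hand side to the triple meet $(\gr{\fpr}\rcomp\gr{\fpr}\rconv)\land(\gr{\spr}\rcomp\gr{\fpr}\rconv)\land(\gr{\spr}\rcomp\gr{\spr}\rconv)$, collapse the last two meetands to $\gr{\spr}\rcomp\gr{\diagar_X}$ via \cref{cor:cartesian}, and then apply Frobenius reciprocity with $f=\diagar_X$, $\relr=\gr{\spr}$, $\rels=\gr{\fpr}\rcomp\gr{\fpr}\rconv$ to land on $(\gr{\fpr}\land\gr{\spr})\rcomp\gr{\diagar_X}=\gr{\diagar_X}\rconv\rcomp\gr{\diagar_X}$. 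The only difference is that you derive the first equality explicitly (viewing the tensors as graphs of $\id_X\times\diagar_X$ and $\diagar_X\times\id_X$ and decomposing $\rid_{X\times X\times X}$), whereas the paper leaves that bookkeeping implicit.
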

\begin{proof}
Using \cref{prop:cartesian,cor:cartesian} and Frobenius reciprocity we derive 
\begin{align*}
(\rid_X\reltimes\gr{\diagar_X})\rcomp(\gr{\diagar_X}\rconv\reltimes\rid_X) 
  &= (\gr\fpr\rcomp\gr\fpr\rconv) \land (\gr\spr\rcomp\gr\fpr\rconv) \land (\gr\spr\rcomp\gr\spr\rconv) \\
  &= (\gr\fpr\rcomp\gr\fpr\rconv) \land (\gr\spr\rcomp(\gr\fpr\rconv\land\gr\spr\rconv) \\ 
  &= (\gr\fpr\rcomp\gr\fpr\rconv) \land (\gr\spr\rcomp\gr{\diagar_X}) \\ 
  &= ((\gr\fpr\rcomp\gr\fpr\rconv\rcomp\gr{\diagar_X}\rconv) \land \gr\spr)\rcomp\gr{\diagar_X} \\ 
  &= (\gr\fpr\land\gr\spr)\rcomp\gr{\diagar_X} \\ 
  &= \gr{\diagar_X}\rconv\rcomp\gr{\diagar_X} 
\end{align*}
\end{proof}

\begin{lemma}\label[lem]{lem:frob-iso}
Let $\RDoc$ be a cartesian relational doctrine satisfying Frobenius reciprocity. 
Then, the natural transformations 
\nt{\phi}{\RDoc}{\DRel{\RelD\RDoc}} and \nt{\psi}{\DRel{\RelD\RDoc}}{\RDoc} defined by 
\[\begin{split} 
\phi_{X,Y}(\relr) &= (\relr\reltimes\rid_Y)\rcomp\gr{\diagar_Y}\rconv\gr{\terar_Y} \\ 
\psi_{X,Y}(\rels) &= \gr{\fpr[X,Y]}\rconv\rcomp(\rid_X\reltimes\gr{\diagar_Y})\rcomp(\rels\reltimes\rid_Y)\rcomp\gr{\spr[1,Y]} 
\end{split}\]
are inverse to each other 
and preserve relational composition, identities and fibred finite meets. 
\end{lemma}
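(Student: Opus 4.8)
The plan is to establish everything by direct equational reasoning in the relational calculus, using only the axioms of cartesian relational doctrines (\cref{prop:cartesian}), the derived equalities of \cref{cor:cartesian}, Frobenius reciprocity (\cref{def:frobenius}) and its consequence the Beck--Chevalley condition (\cref{prop:frobenius-bc}), together with the key identity of \cref{lem:frob}. First I would record that $\phi$ and $\psi$ are monotone and natural: monotonicity is immediate since both are built from the monotone operations $\reltimes$, $\rcomp$ and $(\blank)\rconv$, while naturality in $X$ and $Y$ follows from naturality of $\reltimes$ and the fact that graphs of the structural arrows ($\diagar$, $\terar$, $\fpr$, $\spr$) interact with reindexing through \cref{prop:left-adj}. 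A useful reduction to state up front is that, once $\phi_{X,Y}$ and $\psi_{X,Y}$ are shown to be mutually inverse, it suffices to check that \emph{one} of them, say $\phi$, preserves relational identities, composition and fibred meets, since the inverse of a structure-preserving order-isomorphism preserves the same structure automatically.

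The heart of the argument is the two inverse equations $\psi_{X,Y}\circ\phi_{X,Y}=\id$ and $\phi_{X,Y}\circ\psi_{X,Y}=\id$. For the first I would substitute the definitions and immediately eliminate the terminal object: \cref{cor:cartesian} identifies $\gr{\terar_Y}$ with $\top_{Y,1}$ and $\gr{\fpr}\rconv\rcomp\gr{\spr}$ with $\top$, which lets the factors $\gr{\terar_Y}$ and $\gr{\spr[1,Y]}$ be absorbed. What survives is a composite of $\reltimes$'s interleaved with diagonals and projection graphs, and this is exactly the shape collapsed by \cref{lem:frob}, whose rewriting of $(\rid_X\reltimes\gr{\diagar_X})\rcomp(\gr{\diagar_X}\rconv\reltimes\rid_X)$ into $\gr{\diagar_X}\rconv\rcomp\gr{\diagar_X}$ I expect to be the pivotal step; afterwards the functionality/totality identities for $\gr{\diagar_Y}$ and Beck--Chevalley return $\relr$. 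The reverse composite is handled by the same toolkit, using Frobenius reciprocity to commute the meets past compositions.

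Preservation of identities reduces to computing $\phi_{X,X}(\rid_X)$ and comparing it with the identity relation of $\DRel{\RelD\RDoc}$ at $X$, which by the definition of $\DRel{\blank}$ is the left adjoint of $\RelD\RDoc$ along the diagonal applied to $\top$; both sides unwind, via \cref{cor:cartesian}, to the same composite of diagonal graphs. For finite meets I would use the meet formula $\relr\land\rels=\gr{\diagar_X}\rcomp(\relr\reltimes\rels)\rcomp\gr{\diagar_Y}\rconv$ recorded after \cref{def:rdoc-fin-prod}, together with the componentwise description of meets and $\reltimes$ in the product doctrine, so that the claim becomes an instance of the interchange law \refItem{prop:cartesian}{4}.

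The genuinely delicate point, and the step I expect to be the main obstacle, is preservation of composition: the composition in $\DRel{\RelD\RDoc}$ is defined by a left adjoint along a projection out of $X\times Y\times Z$ combined with a conjunction of two reindexings, and reconciling this ``existential-quantifier-plus-conjunction'' composite with the primitive $\rcomp$ of $\RDoc$ requires trading the left adjoint for a composition with a projection graph via \cref{prop:frobenius-bc} and then applying Frobenius reciprocity and \cref{lem:frob} repeatedly to match the two expressions. The bookkeeping of the several projection graphs on $X\times Y\times Z$, their converses, and the interleaving diagonals is where the danger of an invalid manipulation lies, so I would discipline the computation by ensuring that every rewrite is explicitly an instance of \cref{prop:cartesian}, \cref{cor:cartesian}, \cref{prop:frobenius-bc}, \cref{lem:frob} or Frobenius reciprocity, and by associating all products consistently throughout.
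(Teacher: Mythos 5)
Your proposal follows essentially the same route as the paper's proof: first establish the two inverse identities using \cref{lem:frob}, \cref{prop:frobenius-bc}, \cref{cor:cartesian} and Frobenius reciprocity, then verify preservation of identities, composition and fibred meets for $\phi$ alone and transfer these to $\psi$ via the inverse property — exactly the paper's structure, down to identifying composition preservation as the delicate step handled by repeated use of \cref{prop:frobenius-bc} and \cref{lem:frob}. The only minor divergence is that for meets the paper reduces the claim to preservation of the relational product $\reltimes$ rather than invoking \refItem{prop:cartesian}{4} directly, and the paper of course carries out the equational bookkeeping (including the specific base-category identities such as $\terar_Y\circ\spr[1,Y] = \fpr[1,Y]$) that your plan only outlines.
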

\begin{proof}
Using compatibility of $\reltimes$ with relational composition, \cref{lem:frob}, and the fact that $\spr[1,Y]\circ(\id_Y\times\terar_Y) = \spr[Y,Y]$, 
we derive that 
\[
\psi_{X,Y}(\phi_{X,Y}(\relr)) 
  = \gr{\fpr[X,Y]}\rconv\rcomp(\relr\reltimes\rid_Y)\rcomp\gr{\diagar_Y}\rconv\rcomp\gr{\diagar_Y}\rcomp\gr{\spr[Y,Y]} 
\]
Then, using \cref{prop:frobenius-bc}, we conclude 
\[
\psi_{X,Y}(\phi_{X,Y}(\relr)) 
  = \relr\rcomp\gr{\fpr[X,Y]}\rconv\rcomp\gr{\diagar_Y}\rconv\rcomp\gr{\diagar_Y}\rcomp\gr{\spr[Y,Y]} 
  = \relr 
\]
On the other hand, first observe that, by \cref{prop:frobenius-bc}, we have 
\[
(\gr{\spr[1,Y]}\reltimes\rid_Y)\rcomp\gr{\diagar_Y}\rconv 
  = \gr{\spr[1,Y\times Y]}\rcomp\gr{\diagar_Y}\rconv 
  = (\rid_1\reltimes\gr{\diagar_Y}\rconv)\rcomp\gr{\spr[1,Y]} 
\]
Then, using this fact and again 
compatibility of $\reltimes$ with relational composition and \cref{lem:frob}, 
we derive that 
\[
\phi_{X,Y}(\psi_{Y,Y}(\rels) ) 
  = (\gr{\fpr[X,Y]}\rconv\reltimes\rid_Y)\rcomp(\rid_X\reltimes\gr{\diagar_Y}\rconv)\rcomp(\rid_X\reltimes\gr{\diagar_Y})\rcomp(\rels\reltimes\rid_Y)\rcomp\gr{\spr[1,Y]} \rcomp \gr{\terar_Y} 
\]
Observing that 
$\terar_Y\circ\spr[1,Y] = \fpr[1,Y]$, 
$\fpr[X,Y]\times\id_Y = \id_X\times\spr[Y,Y]$, and 
$\fpr[X\times Y,Y] = \id_X\times\fpr[Y,Y]$, 
and using \cref{prop:frobenius-bc}, we conclude
\[
\phi_{X,Y}(\psi_{Y,Y}(\rels) ) 
  = (\rid_X\reltimes\gr{\spr[Y,Y]}\rconv)\rcomp(\rid_X\reltimes\gr{\diagar_Y}\rconv)\rcomp(\rid_X\reltimes\gr{\diagar_Y})\rcomp(\rid_X\reltimes\gr{\fpr[Y,Y]})\rcomp\rels 
  = \rels 
\]
To conclude the proof we have to check that $\phi$ preserves relational composition, identities and fibred finite meets. Analogous facts for $\psi$ will then follow because it is a natural inverse of $\phi$. 
\begin{itemize}
\item $\phi$ preserves relational identities because we have 
$\phi_{X,X}(\rid_X) = \gr{\diagar_X}\rconv\rcomp\gr{\terar_X} = \gr{\diagar_X}\rconv\rcomp\top_{X,1}$, 
which is the equality predicate in $\RelD\RDoc$ and so the identity relation in $\DRel{\RelD\RDoc}$. 
\item The fact that $\phi$ preserves relational composition follows from the equalities below, where we use \cref{prop:frobenius-bc,lem:frob}. 
\begin{align*}
\relr\rcomp\rels 
  &= \relr\rcomp\gr{\fpr}\rconv\rcomp\gr{\diagar_Y}\rconv\rcomp\gr{\diagar_Y}\rcomp\gr{\spr}\rcomp\rels \\ 
  &= \gr{\fpr}\rconv\rcomp(\relr\reltimes\rid_Y)\rcomp(\rid_Y\reltimes\gr{\diagar_Y}\rconv)\rcomp(\gr{\diagar_Y}\reltimes\rid_Y)\rcomp(\rid_Y\reltimes\rels)\rcomp\gr{\spr} \\ 
  &= \gr{\fpr}\rconv\rcomp(\rid_X\reltimes\gr{\diagar_Y})\rcomp(\relr\reltimes\rid_Y\reltimes\rels)\rcomp(\gr{\diagar_Y}\rconv\reltimes\rid_Y)\rcomp\gr{\spr} \\ 
  &= \gr{\fpr}\rconv\rcomp(\rid_X\reltimes\gr{\diagar_Y})\rcomp(\phi_{X,Y}(\relr) \reltimes \rels) \rcomp \gr{\spr} 
\end{align*}
Indeed, applying $\phi_{X,Z}$ to $\relr\rcomp\rels$ and using again \cref{prop:frobenius-bc} we get the following 
\begin{align*}
\phi_{X,Z}(\relr\rcomp\rels) 
  &= (\gr\fpr\rconv \reltimes \rid_Z)\rcomp(\rid_X\reltimes\gr{\diagar_Y}\reltimes\rid_Z)\rcomp(\phi_{X,Y}(\relr)\reltimes\phi_{Y,Z}(\rels))\rcomp\gr{\diagar_1}\rconv \\ 
  &= \gr{\ple{\fpr,\tpr}}\rconv\rcomp( (\gr{\ple{\fpr,\spr}}\rcomp\phi_{X,Y}(\relr))\land(\gr{\ple{\spr,\tpr}}\rcomp\phi_{Y,Z}(\rels)))
\end{align*}
which is exactly the composition of $\phi_{X,Y}(\relr)$ and $\phi_{Y,Z}(\rels)$ in $\DRel{\RelD\RDoc}$. 
\item To check that $\phi$ preserves fibred finite meets, it suffices to show that it preserves the relational product$\reltimes$. 
This follows by noting that 
\begin{align*}
\phi_{X\times A, Y\times B}(\relr\reltimes\rels) 
  &= (\relr\reltimes\rels\reltimes\rid_{Y\times B})\rcomp\gr{\diagar_{Y\times B}}\rconv\rcomp\gr{\terar_{Y\times B}} \\ 
  &= (\rid_X\reltimes\gr{\ple{\spr,\fpr}}\reltimes\rid_B)\rcomp(\relr\reltimes\rid_Y\reltimes\rels\reltimes\rid_B)\rcomp(\gr{\diagar_Y}\reltimes\gr{\diagar_B}) \rcomp (\gr{\terar_Y}\reltimes\gr{\terar_B})\rcomp\gr{\diagar_1}\rconv \\ 
  &= (\rid_X\reltimes\gr{\ple{\spr,\fpr}}\reltimes\rid_B) \rcomp (\phi_{X,Y}(\relr)\reltimes\phi_{A,B}(\rels)) \rcomp \gr{\diagar_1}\rconv 
\end{align*}
which is exactly the relational product of $\phi_{X,Y}(\relr)$ and $\phi_{A,B}(\rels)$. 
\end{itemize}
\end{proof}

The isomorphism in \cref{lem:frob-iso} generalizes and analogous  correspondence proved in \cite{BonchiSSS21} for cartesian bicategories. 
However, it is not enough to prove the characterization we are looking for, because we still have to show that it preserves the relational converse. 
To achieve this result, we need to strengthen Frobenius reciprocity. 
We take inspiration from allegories \cite{FreydS90} and give the following definition. 

\begin{definition}\label[def]{def:modular}
A cartesian relational doctrine $\RDoc$ is said \emph{modular} if the \emph{the modular law} holds: 
for all relations $\relr\in\RDoc(A,X)$, $\rels\in\RDoc(A,Y)$ and $\relt\in\RDoc(X,Y)$ we have 
\[ \relr\rcomp\relt \land \rels \order (\relr \land \rels\rcomp\relt\rconv)\rcomp\relt \] 
\end{definition}

It is immediate to see that the modular law implies Frobenius reciprocity: it suffices to take $\relt$ to be the graph of an arrow. 
The converse instead is not true, as the following example shows. 

\begin{example}\label[ex]{ex:not-modular}
Consider the set $H = \{00,01,10,11\}$ with the partial order  generated by 
$00\leq 01\leq 11$ and $00\leq 10\leq 11$. 
It is easy to see that this is a complete lattice (it is the product of the two-element lattice with itself). 
Consider also the monotone involution $(\blank)\rconv:H\to H$ defined by 
\[ 00\rconv = 00 \qquad 01\rconv = 10 \qquad 10\rconv = 01 \qquad 11\rconv = 11 \]
This poset can be regarded as a relational doctrine on the terminal category $\One$, 
where relational composition is given by meet $x\rcomp y = x\land y$, 
relational identity by top $\rid_1 = 11$ and 
the converse operation is given by the involution defined above. 
Using \cref{prop:cartesian} it is easy to see that this doctrine is cartesian. 
Moreover, it obviously satisfies Frobenius reciprocity, because the only arrow in the base is an identity. 
However, it does not satisfy the modular law: 
we have $11\land (01\rcomp 01) = 01$ while 
$((11\rcomp01\rconv)\land 01)\rcomp 01 = (10\land 01)\rcomp 01 = 00$

Note that, 
if we denote by $\RDoc$ the relational doctrine we have just described, 
then $\DRel{\RelD\RDoc}$ is not isomorphic to $\RDoc$ because the converse operation on $\DRel{\RelD\RDoc}$ is the identity. 
\end{example}

The next lemma shows a key property of cartesian and modular relational doctrine. 

\begin{lemma}\label[lem]{lem:mod-converse}
Let $\RDoc$ be a cartesian and modular relational doctrine. 
For every relation $\relr\in\RDoc(X,Y)$ the following equation holds: 
\[ (\relr\reltimes\rid_Y)\rcomp\gr{\diagar_Y}\rconv\rcomp\gr{\terar_Y} = (\rid_X\reltimes\relr\rconv)\rcomp\gr{\diagar_X}\rconv\rcomp\gr{\terar_X} \]
\end{lemma}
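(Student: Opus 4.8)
The plan is to reduce both sides to a common \emph{canonical predicate} over $X\times Y$, that is, to a single relation in $\RDoc(X\times Y,1)$, and then to prove the two canonical forms equal using the modular law. First I would establish the two reduction identities
\[
(\relr\reltimes\rid_Y)\rcomp\gr{\diagar_Y}\rconv = (\gr{\fpr[X,Y]}\rcomp\relr)\land\gr{\spr[X,Y]}
\]
\[
(\rid_X\reltimes\relr\rconv)\rcomp\gr{\diagar_X}\rconv = \gr{\fpr[X,Y]}\land(\gr{\spr[X,Y]}\rcomp\relr\rconv)
\]
so that, composing on the right with $\gr{\terar_Y}$ and $\gr{\terar_X}$ respectively, the lemma becomes the single equality $P_1=P_2$, where $P_1=((\gr{\fpr[X,Y]}\rcomp\relr)\land\gr{\spr[X,Y]})\rcomp\gr{\terar_Y}$ and $P_2=(\gr{\fpr[X,Y]}\land(\gr{\spr[X,Y]}\rcomp\relr\rconv))\rcomp\gr{\terar_X}$. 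Notably, these reductions use only the cartesian structure and \emph{not} modularity, which is consistent with the fact that \cref{ex:not-modular} satisfies Frobenius reciprocity.

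To prove the first reduction identity I would pass to converses: since precomposition with the graph of an arrow preserves finite meets (because reindexing preserves meets by \cref{prop:cartesian} and is expressed through graphs by \cref{prop:left-adj}), and since $\gr{\diagar_Y}\rconv=\gr{\fpr[Y,Y]}\land\gr{\spr[Y,Y]}$ by \cref{cor:cartesian}, taking the converse of the left-hand side and distributing $\gr{\diagar_Y}$ over the meet leaves expressions of the form $\gr{\diagar_Y}\rcomp\gr{\fpr[Y,Y]}=\gr{\fpr[Y,Y]\circ\diagar_Y}=\rid_Y$ (and likewise for $\spr$), using $\gr{g\circ f}=\gr{f}\rcomp\gr{g}$ and $\fpr[Y,Y]\circ\diagar_Y=\spr[Y,Y]\circ\diagar_Y=\id_Y$. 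Converting back via \cref{prop:cartesian}(converse distributes over meets) yields exactly $(\gr{\fpr[X,Y]}\rcomp\relr)\land\gr{\spr[X,Y]}$. The second identity follows by the same computation with the roles of the two factors and of $X,Y$ interchanged, producing $\relr\rconv$ in place of $\relr$.

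The crux is then $P_1=P_2$, and this is the one place where the modular law (\cref{def:modular}) is genuinely needed rather than mere Frobenius reciprocity. Applying the modular law with $\relt:=\relr$ gives $(\gr{\fpr[X,Y]}\rcomp\relr)\land\gr{\spr[X,Y]}\order(\gr{\fpr[X,Y]}\land(\gr{\spr[X,Y]}\rcomp\relr\rconv))\rcomp\relr$; post-composing with $\gr{\terar_Y}$ and absorbing the trailing factor via $\relr\rcomp\gr{\terar_Y}\order\gr{\terar_X}$ (which holds because $\relr\order\gr{\terar_X}\rcomp\gr{\terar_Y}\rconv$ by the cartesian axioms and $\gr{\terar_Y}$ is functional by \cref{prop:graph-fun}) yields $P_1\order P_2$. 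The symmetric application of the modular law with $\relt:=\relr\rconv$, together with $\relr\rconv\rcomp\gr{\terar_X}\order\gr{\terar_Y}$, gives $P_2\order P_1$, whence equality. The main obstacle I anticipate is precisely the bookkeeping of this final step: one must orient each modular-law instance correctly so that the converse is transferred onto $\relr$ on the opposite component, and must verify that the terminal maps absorb cleanly so that the two inequalities combine into an equality rather than a one-sided bound.
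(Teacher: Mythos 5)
Your proposal is correct and is essentially the paper's own proof: the paper likewise rewrites $(\relr\reltimes\rid_Y)\rcomp\gr{\diagar_Y}\rconv\rcomp\gr{\terar_Y}$ as $((\gr{\fpr[X,Y]}\rcomp\relr)\land\gr{\spr[X,Y]})\rcomp\gr{\terar_Y}$ using only the cartesian structure, applies the modular law with $\relt=\relr$, and absorbs $\relr\rcomp\gr{\terar_Y}\order\gr{\terar_X}$ to obtain one inequality, leaving the reverse one as ``similar''. Your explicit symmetric instance with $\relt=\relr\rconv$ together with $\relr\rconv\rcomp\gr{\terar_X}\order\gr{\terar_Y}$ is precisely that omitted half, so the two arguments coincide.
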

\begin{proof}
Using \cref{prop:cartesian} and the modular law, we have 
\begin{align*}
(\relr\reltimes\rid_Y)\rcomp\gr{\diagar_Y}\rconv\rcomp\gr{\terar_Y} 
  &= ((\gr{\fpr[X,Y]}\rcomp\relr)\land\gr{\spr[X,Y]})\rcomp\gr{\terar_Y} \\
  &\order (\gr{\fpr[X,Y]} \land (\gr{\spr[X,Y]}\rcomp\relr\rconv))\rcomp\relr\rcomp\gr{\terar_Y}  \\ 
  &\order (\gr{\fpr[X,Y]} \land (\gr{\spr[X,Y]}\rcomp\relr\rconv)) \rcomp \gr{\terar_X} \\ 
  &= (\rid_X\reltimes\relr\rconv)\rcomp\gr{\diagar_X}\rconv\rcomp\gr{\terar_X} 
\end{align*}
The other inequality follows in a similar way. 
\end{proof}

Let us denote by \MCRDtn the 2-full 2-subcategory of \RDtn whose objects are cartesian and modular relational doctrines and 1-arrows are those preserving the cartesian structure, that is, finite products in the base and finite meets in the fibres. 
As an immediate consequence of the following lemma, we get that the 2-functor $\EEDRFun$ corestricts to the 2-category \MCRDtn. 

\begin{lemma}\label[lem]{lem:drel-mod}
Let $\PDoc$ be an existential elementary doctrine. 
Then, $\DRel\PDoc$ is a cartesian and modular relational doctrine. 
\end{lemma}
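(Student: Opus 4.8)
The plan is to treat the two properties separately, reducing each to a computation in the internal logic of the existential elementary doctrine $\PDoc$, where the relational operations of $\DRel\PDoc$ are the standard encodings recalled just before the statement. That $\DRel\PDoc$ is a relational doctrine is already known, so it remains only to add the cartesian structure and to check the modular law.

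For the cartesian part I would verify the local characterisation of \cref{prop:cartesian}. Its first two items are immediate: by \cref{def:eed} the base \CC has finite products, and each fibre $\DRel\PDoc(X,Y) = \PDoc(X\times Y)$ inherits finite meets — preserved by reindexing — directly from $\PDoc$. For the remaining three items I would unfold the defining formulas $\rid_X = \Ex\reidx{\diagar_X}(\top)$, the composition given by a left adjoint along $\ple{\fpr,\tpr}$, and $\relr\rconv = \PDoc\reidx{\ple{\spr,\fpr}}(\relr)$, and then rewrite the resulting predicates using Frobenius reciprocity and the Beck--Chevalley condition of $\PDoc$. The identities for $\rid_1$ and $\rid_{X\times Y}$ and the meet--converse compatibility are routine manipulations of reindexing along diagonals and swaps; the genuinely laborious one is the composition--meet law of \refItem{prop:cartesian}{4}, which requires commuting a fibred meet past two existential quantifications and is where Beck--Chevalley does the real work.

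For the modular law of \cref{def:modular} I would work over the object $A\times X\times Y$ equipped with its three projections, writing $\pi\colon A\times X\times Y\to A\times Y$ for the map forgetting $X$. Applying Frobenius reciprocity for $\PDoc$ along $\pi$ rewrites $\relr\rcomp\relt\land\rels$ as $\Ex\reidx{\pi}$ of the conjunction of the pullbacks of $\relr$, $\relt$ and $\rels$ to $A\times X\times Y$. By monotonicity of $\Ex\reidx{\pi}$ it then suffices to bound that conjunction by the pullback to $A\times X$ of $\relr\land(\rels\rcomp\relt\rconv)$ conjoined with the pullback of $\relt$, and this inequality holds because the pullbacks of $\relt$ and $\rels$ already exhibit, via the witness $y'=y$, an instance of the existential defining $\rels\rcomp\relt\rconv$. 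Concretely this last step is reindexing along a diagonal on the $Y$-component together with a Beck--Chevalley rewriting, and it is exactly here that the full existential structure — rather than mere Frobenius reciprocity — is used, in accordance with \cref{ex:not-modular}.

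The main obstacle I anticipate is the bookkeeping of the numerous projection and pairing maps and, relatedly, pinpointing precisely when to invoke Frobenius reciprocity versus Beck--Chevalley: the two delicate points are the composition--meet law \refItem{prop:cartesian}{4} and the single inequality of the modular law, both of which hinge on correctly commuting substitutions with existential quantification. I would organise the computation so that each reindexing map is named once and the naturality of the relational operations, available since $\DRel\PDoc$ is already a relational doctrine, absorbs the routine coherence between these maps.
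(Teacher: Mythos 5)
Your proposal is correct and in substance the same as the paper's proof: the paper treats the cartesian structure as routine and settles the modular law by appealing to its derivability in regular logic (citing Freyd--Scedrov), which is exactly the Frobenius/Beck--Chevalley computation you sketch, with the witness $y'=y$ instantiating the existential. The only difference is that you spell out, via \cref{prop:cartesian} and the explicit reindexing bookkeeping, the verification that the paper delegates to the citation.
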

\begin{proof}
We only have to check the modular law, but this is the case because it can be proven in regular logic \cite{FreydS90}. 
\end{proof}

We are finally able to prove the following theorem. 

\begin{theorem}\label[thm]{thm:cara} 
The 2-categories \MCRDtn and \EED are 2-equivalent. 
\end{theorem}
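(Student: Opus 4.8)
The plan is to establish a 2-equivalence $\MCRDtn \simeq \EED$ by exhibiting the 2-functor $\EEDRFun\colon\EED\to\MCRDtn$ (whose corestriction to $\MCRDtn$ is guaranteed by \cref{lem:drel-mod}) as one half of an adjoint 2-equivalence, with pseudoinverse given by the assignment $\RDoc \mapsto \RelD\RDoc$ already studied in this section. Concretely, I would first verify that $\RelD{\blank}$ extends to a 2-functor $\MCRDtn\to\EED$: on objects, the discussion preceding \cref{def:frobenius} together with \cref{prop:frobenius-bc} shows that $\RelD\RDoc$ is an existential elementary doctrine whenever $\RDoc$ is cartesian and modular (modularity implies Frobenius reciprocity, which implies Beck--Chevalley); on 1- and 2-arrows one checks that a cartesian-structure-preserving (strict) morphism of relational doctrines restricts, via the embedding $\CC^\op\to\bop\CC$, to a morphism preserving finite products, finite meets and left adjoints, and similarly for 2-cells. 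These verifications are routine bookkeeping on reindexings along projections and diagonals.

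The core of the argument is to produce the two pseudonatural isomorphisms $\DRel{\RelD\RDoc}\cong\RDoc$ in $\MCRDtn$ and $\RelD{\DRel\PDoc}\cong\PDoc$ in $\EED$. For the first, \cref{lem:frob-iso} already supplies natural transformations $\phi$ and $\psi$ that are mutually inverse and preserve relational composition, identities and fibred finite meets; the one remaining obligation is that $\phi$ (equivalently $\psi$) preserves the relational converse, and this is exactly what \cref{lem:mod-converse} delivers, since the defining formula for $\phi_{X,Y}$ is precisely the left-hand side of that lemma and its right-hand side computes $\phi_{Y,X}(\relr\rconv)$ up to the symmetry of the construction. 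Thus modularity — not merely Frobenius reciprocity — is what upgrades the isomorphism of \cref{lem:frob-iso} to an isomorphism of \emph{relational} doctrines, and \cref{ex:not-modular} shows this strengthening is genuinely necessary. For the second isomorphism, I would use the observation made at the start of the characterization: $\RelD{\DRel\PDoc}(A)=\DRel\PDoc(A,1)=\PDoc(A\times 1)\cong\PDoc(A)$, natural in $A$, and check that under this identification the elementary-existential structure (finite meets, left adjoints along diagonals and projections) transported from $\DRel\PDoc$ agrees with that of $\PDoc$.

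Having both unit and counit as isomorphisms, I would conclude that $\EEDRFun$ and $\RelD{\blank}$ form an adjoint 2-equivalence, hence $\MCRDtn$ and $\EED$ are 2-equivalent. Since the chosen morphisms are strict and the comparison transformations are componentwise isomorphisms that are natural in the strict sense, the 2-equivalence is in fact strict (as the theorem asserts with ``2-equivalent''), though even establishing a biequivalence would suffice for the stated conclusion.

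\textbf{The main obstacle} I anticipate is confirming that the isomorphisms $\phi,\psi$ and the fibrewise iso $\RelD{\DRel\PDoc}\cong\PDoc$ are \emph{pseudonatural} (or 2-natural) in the relational-doctrine and elementary-doctrine arguments respectively, and that they are compatible with the 1-arrow data on both sides — i.e.\ that a morphism $F$ in $\MCRDtn$ is carried to $\RelD F$ and back to a morphism isomorphic to $F$ via $\phi$. The object-level verification is handed to us by \cref{lem:frob-iso,lem:mod-converse}, but pushing these through the full 2-categorical naturality, while conceptually straightforward, involves careful diagram-chasing with the lax/strict preservation conditions and is where the real care is required; everything else reduces to the lemmas already proved.
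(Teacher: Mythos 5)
Your proposal is correct, and its mathematical core coincides with the paper's: both arguments hinge on \cref{lem:frob-iso} together with \cref{lem:mod-converse} to show that the natural isomorphism $\phi$ also respects the relational converse, hence that $\DRel{\RelD\RDoc}\cong\RDoc$ in $\MCRDtn$ for every cartesian and modular $\RDoc$. Where you genuinely differ is in how the equivalence is packaged. The paper never constructs the assignment $\RDoc\mapsto\RelD\RDoc$ as a 2-functor: it shows that $\EEDRFun\colon\EED\to\MCRDtn$ (well-typed by \cref{lem:drel-mod}) is fully faithful on hom-categories, and then uses the isomorphism $\RDoc\cong\DRel{\RelD\RDoc}$ only as a witness of essential surjectivity, concluding the 2-equivalence from full faithfulness plus essential surjectivity. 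You instead build an explicit pseudoinverse, which obliges you to verify 2-functoriality of $\RDoc\mapsto\RelD\RDoc$ on 1- and 2-arrows (where one should be slightly careful that 1-arrows of $\MCRDtn$ preserve products only up to the canonical isomorphisms) and to prove the second comparison $\RelD{\DRel\PDoc}\cong\PDoc$ via the fibrewise isomorphism $\PDoc(A\times 1)\cong\PDoc(A)$ and its compatibility with meets and left adjoints; none of this appears in the paper, while conversely the paper's route requires the full-faithfulness claim that you bypass. This is the usual trade-off between a quasi-inverse argument and an ff-plus-eso argument: yours is more explicit but carries more routine verifications, which you correctly flag as the only remaining work. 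One small caveat: the equivalence is not strict in the literal sense, since $\DRel{\RelD\RDoc}(X,Y)=\RDoc(X\times Y,1)$ is not equal to $\RDoc(X,Y)$ and $\PDoc(A\times 1)$ is not equal to $\PDoc(A)$, so the unit and counit are invertible (pseudo)natural transformations rather than identities; this does not affect the stated conclusion.
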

\begin{proof}
By \cref{lem:drel-mod}, we know that the 2-functor $\EEDRFun$ has type $\EED\to\MCRDtn$. 
It is also easy to see that it is fully faithful. 
Consider now a cartesian and modular relational doctrine $\RDoc$. 
Since every cartesian and modular relational doctrine satisfies Frobenius reciprocity, by \cref{lem:frob-iso}, we have a natural isomorphism 
\nt{\phi}{\RDoc}{\DRel{\RelD\RDoc}}, 
which preserves relational composition, identities and fibred finite meets. 
Then, to conclude, it suffices to show that $\phi$ preserves the relational converse. 
Indeed, by \cref{lem:mod-converse} we have 
\begin{align*} 
\phi_{Y,X}(\relr\rconv) 
  &= (\relr\reltimes\rid_X)\rcomp\gr{\diagar_X}\rconv\rcomp\gr{\terar_X} 
   = (\rid_Y\reltimes\relr)\rcomp\gr{\diagar_Y}\rconv\rcomp\gr{\terar_Y}   \\ 
  &= \gr{\ple{\spr,\fpr}}\rcomp(\relr\reltimes\rid_Y)\rcomp\gr{\diagar_Y}\rconv\rcomp\gr{\terar_Y} 
   = \gr{\ple{\spr,\fpr}}\rcomp\phi_{X,Y}(\relr) 
\end{align*} 
\end{proof}

It is not difficult to see that both the intensional quotient completion and the extensional collapse (hence, also the extensional quotient completion) of relational doctrines preserve the cartesian modular structure. 
More precisely, the 2-monades $\QMnd$, $\EMnd$, and $\EQMnd$ restrict to the 2-category \MCRDtn. 
Moreover, 
relying on the 2-equivalence of \cref{thm:cara}, 
we can observe that 
the completion of an existential elementary doctrine with quotients, introduced in \cite{MaiettiME:quofcm}, is equivalent to the intensional quotient completion of the corresponding (cartesian and modular) relational doctrine;
similarly, 
. The elementary quotient completion of an existential elementary doctrine, introduced in \cite{MaiettiME:eleqc}, is equivalent to the extensional quotient completion of the corresponding (cartesian and modular) relational doctrine. 
This results in a wide range of examples of relational doctrines and their completions such as realisability doctrines, doctrines of (strong/weak) subobjects and syntactic doctrines \cite{JacobsB:catltt, PittsCL,OostenJ:reaait}. Also dependent Types Theories give rise to existential elementary doctrines whose elementary quotient completion is the category of setoids  \cite{MaiettiME:eleqc,MaiettiME:quofcm}.

In summary, 
we have observed that cartesian and modular relational doctrines and existential elementary doctrines are equivalent and the completions presented in this paper, when restricted to this class of relational doctrines, coincide with those introduced by Maietti and Rosolini. 
More precisely, we also note hat
both  of them work on larger classes of doctrines. 
Indeed, the completions proposed by Maietti and Rosolini can be applied to doctrines that need not be existential in the sense that they need not have left adjoints to all the reindexing maps, but they need finite products in the base and finite meets in the fibres.
On the other hand, relational doctrines intrinsically have left adjoints to all reindexing maps, but the completions described in this paper work also on relational doctrines that need not be cartesian or modular.
In particular, relational doctrines are not require to have products in the base category or to satisfy the the modular law, which is a crucial fact to cover the quantitative examples as well as to propertly states the results involving projective objects (e.g., \cref{thm:proj-obj,thm:eqc-mnd}), because projective objects are rairly closed under products. 

Finally, let us notice that we can consider relational doctrines of the form $\DRel\PDoc$ also when 
$\PDoc$ is either an existential elementary linear doctrine \cite{DagninoP22} or an existential biased  elementary doctrine\footnote{At least when we have a choice of weak finite products.} \cite{Cioffo24}. 
For instance, relational doctrines of the form $\VRel\Qtl$ of \refItem{ex:rel-doc}{vrel} are obtained in this way starting from an existential elementary linear doctrine. 
However, the characterization of relational doctrines arising in these ways is left for future work.

\section{Conclusions}
\label{sect:conclu} 

We introduced relational doctrines as a functorial description of the essence of the calculus of relations, 
Relying on this structure, we defined quotients and a universal construction adding them, dubbed intensional quotient completion, capable to cover quantitative examples as well. 
Then, we studied extensional equality in relational doctrines, showing it captures various notion of separation in metric and topological structures. 
Moreover, we described a universal construction forcing extensionality, thus separation, analysing how it interacts with quotients. 
this led us to the definition of the extensional quotient completion as a combination of the two previously introduced constructions. 
We characterized its essential image through the notion of projective cover and applied this result to show that, under suitable conditions, doctrines of algebras are obtained as the extensional quotient completion of their restriction to free algebras. 
Finally, we compared relational doctrines with two important classes of examples: 
ordered categories with involution, proving these correspond to relational doctrines having both extensional equality and the rule of unique choice, 
and existential elementary doctrines, showing they correspond to cartesian relational doctrines satisfying the modular law 

There are many directions for future work. 
The first one 
is the study of choice rules in the framework of relational doctrines, extending known results for doctrines \cite{xyz, Maietti-Rosolini16}, giving them a quantitative interpretation, for instance in terms of completeness, 
following the connection between \RUC and Cauchy completeness pointed out in \cite{Lawvere73}. 
We have already made some steps in this direction \cite{DagninoP24}. 
Moreover, this could lead us to the definition of a quantitative counterpart of the tripos-to-topos construction, generalising known results 
\cite{Jonas,MaiettiME:exacf}, which could generate categories of complete (partial) metric spaces. 

We also plan to bring the study of relations to the proof-relevant setting of type theories. 
Algebraically this can be done moving from doctrines to arbitrary fibrations as it is common practice, 
which in this case means working with framed bicategories (a.k.a. equipements) \cite{Shulman08}, endowed with a suitable involution to model the relational converse. 
These are a special kind of double categories and equivalence relation can be regarded as (symmetric) monads inside them. 
Then, it would be interesting studying the connections between our results and those for monads in such double categories 
\cite{Shulman08,CrutwellS10,FioreGK11}. 
On the syntactic side, instead, things are much less clear: developing a proper syntax and rules for a ``relational type theory'' is something interesting per se.
Actually, we do not even have a syntactic calculus behind relational doctrines. 
Then, another interesting direction is to design it, possibly in a diagrammatic way, for instance in the style of string diagrams. 

Finally, a promising direction would be the use of relational doctrines as an abstract framework where to formulate and develop relational techniques  used in the study of programming languages and software systems, 
such as (coalgebraic) bisimulation, program equivalence or operational semantics, as well as, quantitative equational theories and rewriting.

\bibliographystyle{elsarticle-num} 
\bibliography{biblio}

\begin{thebibliography}{10}
\expandafter\ifx\csname url\endcsname\relax
  \def\url#1{\texttt{#1}}\fi
\expandafter\ifx\csname urlprefix\endcsname\relax\def\urlprefix{URL }\fi
\expandafter\ifx\csname href\endcsname\relax
  \def\href#1#2{#2} \def\path#1{#1}\fi

\bibitem{BCP:2003}
G.~Barthe, V.~Capretta, O.~Pons, Setoids in type theory, Journal of Functional
  Programming 13~(2) (2003) 261--293.
\newblock \href {https://doi.org/10.1017/S0956796802004501}
  {\path{doi:10.1017/S0956796802004501}}.

\bibitem{MR1485515}
M.~Hofmann, Extensional constructs in intensional type theory, CPHC/BCS
  Distinguished Dissertations, Springer-Verlag London Ltd., London, 1997.

\bibitem{CarboniA:freecl}
A.~Carboni, R.~{Celia Magno}, The free exact category on a left exact one,
  Journal of the Australian Mathematical Society. Series A. Pure Mathematics
  and Statistics 33 (1982) 295 -- 301.

\bibitem{CarboniA:regec}
A.~Carboni, E.~Vitale, Regular and exact completions, Journal of Pure and
  Applied Algebra 125 (1998) 79--117.

\bibitem{MaiettiME:quofcm}
M.~Maietti, G.~Rosolini,
  \href{https://doi.org/10.1007/s11787-013-0080-2}{Quotient completion for the
  foundation of constructive mathematics}, Logica Universalis 7~(3) (2013)
  371--402.
\newblock \href {https://doi.org/10.1007/s11787-013-0080-2}
  {\path{doi:10.1007/s11787-013-0080-2}}.
\newline\urlprefix\url{https://doi.org/10.1007/s11787-013-0080-2}

\bibitem{MaiettiME:eleqc}
M.~E. Maietti, G.~Rosolini, Elementary quotient completion, Theory and
  Applications of Categories 27~(17) (2013) 445--463.

\bibitem{Lawvere73}
F.~W. Lawvere, Metric spaces, generalized logic, and closed categories, Rend.
  Sem. Mat. Fis. Milano 43 (1973) 135--166.

\bibitem{Adamek22}
J.~Ad{\'{a}}mek, Varieties of quantitative algebras and their monads, in:
  C.~Baier, D.~Fisman (Eds.), Proceedings of the 37th Annual {ACM/IEEE}
  Symposium on Logic in Computer Science, {LICS} 2022, {ACM}, 2022, pp.
  9:1--9:10.
\newblock \href {https://doi.org/10.1145/3531130.3532405}
  {\path{doi:10.1145/3531130.3532405}}.

\bibitem{MardarePP16}
R.~Mardare, P.~Panangaden, G.~D. Plotkin, Quantitative algebraic reasoning, in:
  M.~Grohe, E.~Koskinen, N.~Shankar (Eds.), Proceedings of the 31st Annual
  {ACM/IEEE} Symposium on Logic in Computer Science, {LICS} 2016, {ACM}, 2016,
  pp. 700--709.
\newblock \href {https://doi.org/10.1145/2933575.2934518}
  {\path{doi:10.1145/2933575.2934518}}.

\bibitem{MardarePP17}
R.~Mardare, P.~Panangaden, G.~D. Plotkin, On the axiomatizability of
  quantitative algebras, in: Proceedings of the 32nd Annual {ACM/IEEE}
  Symposium on Logic in Computer Science, {LICS} 2017, {IEEE} Computer Society,
  2017, pp. 1--12.
\newblock \href {https://doi.org/10.1109/LICS.2017.8005102}
  {\path{doi:10.1109/LICS.2017.8005102}}.

\bibitem{Lawvere69}
F.~W. Lawvere, Adjointness in foundations, Dialectica 23 (1969) 281--296, also
  available as Repr. Theory Appl. Categ., 16 (2006) 1--16.
\newblock \href {https://doi.org/10.1111/j.1746-8361.1969.tb01194.x}
  {\path{doi:10.1111/j.1746-8361.1969.tb01194.x}}.

\bibitem{Lawvere70}
F.~W. Lawvere, Equality in hyperdoctrines and comprehension schema as an
  adjoint functor, in: A.~Heller (Ed.), Proceedings of the {N}ew {Y}ork
  {S}ymposium on {A}pplication of {C}ategorical {A}lgebra, American
  {M}athematical {S}ociety, 1970, pp. 1--14.

\bibitem{JacobsB:catltt}
B.~P.~F. Jacobs,
  \href{http://www.elsevierdirect.com/product.jsp?isbn=9780444508539}{Categorical
  Logic and Type Theory}, Vol. 141 of Studies in logic and the foundations of
  mathematics, North-Holland, 2001.
\newline\urlprefix\url{http://www.elsevierdirect.com/product.jsp?isbn=9780444508539}

\bibitem{PittsCL}
A.~M. Pitts, Categorical logic, in: Handbook of logic in computer science,
  {V}ol. 5, Vol.~5 of Handbook of Logic in Computer Science, Oxford Univ.
  Press, New York, 2000, pp. 39--128.

\bibitem{OostenJ:reaait}
J.~van Oosten, Realizability: {An Introduction to its Categorical S}ide, Vol.
  152 of Studies in Logic and the Foundations of Mathematics, North Holland
  Publishing Company, 2008.

\bibitem{DagninoP22}
F.~Dagnino, F.~Pasquali, Logical foundations of quantitative equality, in:
  C.~Baier, D.~Fisman (Eds.), Proceedings of the 37th Annual {ACM/IEEE}
  Symposium on Logic in Computer Science, {LICS} 2022, {ACM}, 2022, pp.
  16:1--16:13.
\newblock \href {https://doi.org/10.1145/3531130.3533337}
  {\path{doi:10.1145/3531130.3533337}}.

\bibitem{Givant1}
H.~Andréka, S.~Givant, P.~Jipsen, I.~Németi, On tarski’s axiomatic
  foundations of the calculus of relations, The Journal of Symbolic Logic
  82~(3) (2017) 966--994.

\bibitem{Peirce}
C.~S. Peirce, The logic of relatives, The Monist 7~(2) (1897) 161--217.

\bibitem{Tarski41}
A.~Tarski, On the calculus of relations, Journal of Symbolic Logic 6~(3) (1941)
  73--89.
\newblock \href {https://doi.org/10.2307/2268577} {\path{doi:10.2307/2268577}}.

\bibitem{Givant06}
S.~Givant, \href{https://doi.org/10.1007/s10817-006-9062-x}{The calculus of
  relations as a foundation for mathematics}, Journal of Automated Reasoning
  37~(4) (2006) 277--322.
\newblock \href {https://doi.org/10.1007/s10817-006-9062-x}
  {\path{doi:10.1007/s10817-006-9062-x}}.
\newline\urlprefix\url{https://doi.org/10.1007/s10817-006-9062-x}

\bibitem{tarski1988formalization}
A.~Tarski, S.~Givant, A Formalization of Set Theory Without Variables, no. v.
  41 in A formalization of set theory without variables, American Mathematical
  Soc., 1988.

\bibitem{DalLagoG22}
U.~{Dal Lago}, F.~Gavazzo, A relational theory of effects and coeffects,
  Proceedings of the {ACM} on Programming Languages 6~({POPL}) (2022) 1--28.
\newblock \href {https://doi.org/10.1145/3498692} {\path{doi:10.1145/3498692}}.

\bibitem{Gavazzo18}
F.~Gavazzo, Quantitative behavioural reasoning for higher-order effectful
  programs: Applicative distances, in: A.~Dawar, E.~Gr{\"{a}}del (Eds.),
  Proceedings of the 33rd Annual {ACM/IEEE} Symposium on Logic in Computer
  Science, {LICS} 2018, {ACM}, 2018, pp. 452--461.
\newblock \href {https://doi.org/10.1145/3209108.3209149}
  {\path{doi:10.1145/3209108.3209149}}.

\bibitem{GavazzoF23}
F.~Gavazzo, C.~D. Florio, Elements of quantitative rewriting, Proceedings of
  the {ACM} on Programming Languages 7~({POPL}) (2023) 1832--1863.
\newblock \href {https://doi.org/10.1145/3571256} {\path{doi:10.1145/3571256}}.

\bibitem{Lack2010}
S.~Lack, \href{https://doi.org/10.1007/978-1-4419-1524-5_4}{A 2-Categories
  Companion}, Springer New York, New York, NY, 2010, pp. 105--191.
\newblock \href {https://doi.org/10.1007/978-1-4419-1524-5_4}
  {\path{doi:10.1007/978-1-4419-1524-5_4}}.
\newline\urlprefix\url{https://doi.org/10.1007/978-1-4419-1524-5_4}

\bibitem{KurzV16}
A.~Kurz, J.~Velebil,
  \href{https://doi.org/10.1016/j.jlamp.2015.08.002}{Relation lifting, a
  survey}, J. Log. Algebraic Methods Program. 85~(4) (2016) 475--499.
\newblock \href {https://doi.org/10.1016/j.jlamp.2015.08.002}
  {\path{doi:10.1016/j.jlamp.2015.08.002}}.
\newline\urlprefix\url{https://doi.org/10.1016/j.jlamp.2015.08.002}

\bibitem{BettiP88}
R.~Betti, J.~Power, On local adjointness of distributive bicategories,
  Bollettino della Unione Matematica Italiana 2~(4) (1988) 931--947.

\bibitem{BlackwellKP89}
R.~Blackwell, G.~M. Kelly, J.~Power, Two-dimensional monad theory, Journal of
  Pure and Applied Algebra 59~(1) (1989) 1--41.
\newblock \href {https://doi.org/10.1016/0022-4049(89)90160-6}
  {\path{doi:10.1016/0022-4049(89)90160-6}}.

\bibitem{Kock95}
A.~Kock, Monads for which structures are adjoint to units, Journal of Pure and
  Applied Algebra 104 (1995) 41--59.
\newblock \href {https://doi.org/10.1016/0022-4049(94)00111-U}
  {\path{doi:10.1016/0022-4049(94)00111-U}}.

\bibitem{KellyL97}
G.~M. Kelly, S.~Lack, On property-like structures, Theory and Applications of
  Categories 3~(9) (1997) 213--250.

\bibitem{Street72}
R.~Street, The formal theory of monads, Journal of Pure and Applied Algebra
  2~(2) (1972) 149 -- 168.
\newblock \href {https://doi.org/10.1016/0022-4049(72)90019-9}
  {\path{doi:10.1016/0022-4049(72)90019-9}}.

\bibitem{Vitale94}
E.~M. Vitale, \href{http://eudml.org/doc/91556}{On the characterization of
  monadic categories over set}, Cahiers de Topologie et Géométrie
  Différentielle Catégoriques 35~(4) (1994) 351--358.
\newline\urlprefix\url{http://eudml.org/doc/91556}

\bibitem{Lambek99}
J.~Lambek, Diagram chasing in ordered categories with involution, Journal of
  Pure and Applied Algebra 143~(1) (1999) 293--307.
\newblock \href {https://doi.org/https://doi.org/10.1016/S0022-4049(98)00115-7}
  {\path{doi:https://doi.org/10.1016/S0022-4049(98)00115-7}}.

\bibitem{DagninoP23}
F.~Dagnino, F.~Pasquali, Quotients and extensionality in relational doctrines,
  in: M.~Gaboardi, F.~van Raamsdonk (Eds.), 8th International Conference on
  Formal Structures for Computation and Deduction, {FSCD} 2023, Vol. 260 of
  LIPIcs, Schloss Dagstuhl - Leibniz-Zentrum f{\"{u}}r Informatik, 2023, pp.
  25:1--25:23.
\newblock \href {https://doi.org/10.4230/LIPIcs.FSCD.2023.25}
  {\path{doi:10.4230/LIPIcs.FSCD.2023.25}}.

\bibitem{Shulman08}
M.~Shulman, Framed bicategories and monoidal fibrations, Theory and
  Applications of Categories 20~(18) (2008) 650–738.

\bibitem{Lambert22}
M.~Lambert, Double categories of relations, Theory and Applications of
  Categories 38~(33) (2022) 1249--1283.

\bibitem{HofmannST14}
D.~Hofmann, G.~J. Seal, W.~Tholen, Monoidal Topology: A Categorical Approach to
  Order, Metric, and Topology, Vol. 153, Cambridge University Press, 2014.

\bibitem{LairdMMP13}
J.~Laird, G.~Manzonetto, G.~McCusker, M.~Pagani, Weighted relational models of
  typed lambda-calculi, in: Proceedings of the 28th Annual {ACM/IEEE} Symposium
  on Logic in Computer Science, {LICS} 2013, {IEEE} Computer Society, 2013, pp.
  301--310.
\newblock \href {https://doi.org/10.1109/LICS.2013.36}
  {\path{doi:10.1109/LICS.2013.36}}.

\bibitem{Ong17}
C.~L. Ong, Quantitative semantics of the lambda calculus: Some generalisations
  of the relational model, in: Proceedings of the 32nd Annual {ACM/IEEE}
  Symposium on Logic in Computer Science, {LICS} 2017, {IEEE} Computer Society,
  2017, pp. 1--12.
\newblock \href {https://doi.org/10.1109/LICS.2017.8005064}
  {\path{doi:10.1109/LICS.2017.8005064}}.

\bibitem{Barr70}
M.~Barr, Relational algebras, in: S.~MacLane, H.~Applegate, M.~Barr, B.~Day,
  E.~Dubuc, Phreilambud, A.~Pultr, R.~Street, M.~Tierney, S.~Swierczkowski
  (Eds.), Reports of the Midwest Category Seminar IV, Springer Berlin
  Heidelberg, Berlin, Heidelberg, 1970, pp. 39--55.

\bibitem{Thijs96}
A.~Thijs, Simulations and fixpoint semantics, Ph.D. thesis, Rijksuniversiteit
  Groningen (1996).

\bibitem{ScottD:newcds}
D.~Scott, A new category? {D}omains, spaces and equivalence relations,
  available at {\texttt{http://www.cs.cmu.edu/Groups/LTC/}} (1996).

\bibitem{bishop1967foundations}
E.~Bishop, Foundations of Constructive Analysis, McGraw-Hill series in higher
  mathematics, McGraw-Hill, 1967.

\bibitem{bishop2012constructive}
E.~Bishop, D.~Bridges, Constructive Analysis, Grundlehren der mathematischen
  Wissenschaften, Springer Berlin Heidelberg, 2012.

\bibitem{MaiettiPR24}
M.~E. Maietti, F.~Pasquali, G.~Rosolini,
  \href{https://arxiv.org/abs/2111.15299}{Quasitoposes as elementary quotient
  completions} (2024).
\newblock \href {http://arxiv.org/abs/2111.15299} {\path{arXiv:2111.15299}}.
\newline\urlprefix\url{https://arxiv.org/abs/2111.15299}

\bibitem{DagninoP24}
F.~Dagnino, F.~Pasquali,
  \href{https://doi.org/10.48550/arXiv.2402.19266}{Cauchy-completions and the
  rule of unique choice in relational doctrines}, CoRR abs/2402.19266 (2024).
\newblock \href {http://arxiv.org/abs/2402.19266} {\path{arXiv:2402.19266}},
  \href {https://doi.org/10.48550/ARXIV.2402.19266}
  {\path{doi:10.48550/ARXIV.2402.19266}}.
\newline\urlprefix\url{https://doi.org/10.48550/arXiv.2402.19266}

\bibitem{DagninoR21}
F.~Dagnino, G.~Rosolini,
  \href{https://doi.org/10.1017/S0960129521000207}{Doctrines, modalities and
  comonads}, Mathematical Structures in Computer Science 31~(7) (2021)
  769--798.
\newblock \href {https://doi.org/10.1017/S0960129521000207}
  {\path{doi:10.1017/S0960129521000207}}.
\newline\urlprefix\url{https://doi.org/10.1017/S0960129521000207}

\bibitem{CarboniW87}
A.~Carboni, R.~Walters, Cartesian bicategories {I}, Journal of Pure and Applied
  Algebra 49~(1-2) (1987) 11--32.

\bibitem{FreydS90}
P.~J. Freyd, A.~Scedrov, Categories, allegories, Vol.~39 of North-Holland
  mathematical library, North-Holland, 1990.

\bibitem{BonchiSSS21}
F.~Bonchi, A.~Santamaria, J.~Seeber, P.~Sobocinski, On doctrines and cartesian
  bicategories, in: F.~Gadducci, A.~Silva (Eds.), 9th Conference on Algebra and
  Coalgebra in Computer Science, {CALCO} 2021, Vol. 211 of LIPIcs, Schloss
  Dagstuhl - Leibniz-Zentrum f{\"{u}}r Informatik, 2021, pp. 10:1--10:17.
\newblock \href {https://doi.org/10.4230/LIPIcs.CALCO.2021.10}
  {\path{doi:10.4230/LIPIcs.CALCO.2021.10}}.

\bibitem{Cioffo24}
C.~J. Cioffo, \href{https://arxiv.org/abs/2304.03066}{Biased elementary
  doctrines and quotient completions} (2023).
\newblock \href {http://arxiv.org/abs/2304.03066} {\path{arXiv:2304.03066}}.
\newline\urlprefix\url{https://arxiv.org/abs/2304.03066}

\bibitem{xyz}
M.~E. Maietti, F.~Pasquali, G.~Rosolini,
  \href{https://doi.org/10.1515/tmj-2017-0106}{{Triposes, exact completions,
  and Hilbert's epsilon-operator}}, Tbilisi Mathematical Journal 10~(3) (2017)
  141 -- 166.
\newblock \href {https://doi.org/10.1515/tmj-2017-0106}
  {\path{doi:10.1515/tmj-2017-0106}}.
\newline\urlprefix\url{https://doi.org/10.1515/tmj-2017-0106}

\bibitem{Maietti-Rosolini16}
M.~{Maietti}, G.~{Rosolini}, Relating quotient completions via categorical
  logic., in: D.~Probst, P.~S. (eds.) (Eds.), Concepts of Proof in Mathematics,
  Philosophy, and Computer Science, De Gruyter, 2016, pp. 229--250.

\bibitem{Jonas}
J.~Frey,
  \href{https://www.sciencedirect.com/science/article/pii/S0168007214001109}{Triposes,
  q-toposes and toposes}, Annals of Pure and Applied Logic 166~(2) (2015)
  232--259.
\newblock \href {https://doi.org/https://doi.org/10.1016/j.apal.2014.10.005}
  {\path{doi:https://doi.org/10.1016/j.apal.2014.10.005}}.
\newline\urlprefix\url{https://www.sciencedirect.com/science/article/pii/S0168007214001109}

\bibitem{MaiettiME:exacf}
M.~E. Maietti, G.~Rosolini,
  \href{https://doi.org/10.1007/s10485-013-9360-5}{Unifying exact completions},
  Appl. Categorical Struct. 23~(1) (2015) 43--52.
\newblock \href {https://doi.org/10.1007/s10485-013-9360-5}
  {\path{doi:10.1007/s10485-013-9360-5}}.
\newline\urlprefix\url{https://doi.org/10.1007/s10485-013-9360-5}

\bibitem{CrutwellS10}
G.~Cruttwell, M.~Shulman, A unified framework for generalized multicategories,
  Theory and Applications of Categories 24~(21) (2010) 580--655.

\bibitem{FioreGK11}
T.~M. Fiore, N.~Gambino, J.~Kock, Monads in double categories, Journal of Pure
  and Applied Algebra 215~(6) (2011) 1174--1197.
\newblock \href {https://doi.org/https://doi.org/10.1016/j.jpaa.2010.08.003}
  {\path{doi:https://doi.org/10.1016/j.jpaa.2010.08.003}}.

\end{thebibliography}

\end{document}

\endinput